\documentclass[11pt]{article}

\usepackage[a4paper,margin=2.4cm]{geometry}
\usepackage{amsmath,amssymb,amsthm,mathtools,mathrsfs}
\usepackage[T1]{fontenc}
\usepackage{lmodern}
\usepackage{booktabs,tabularx,array,microtype}
\usepackage[hidelinks]{hyperref}
\newtheorem{theorem}{Theorem}[section]
\newtheorem{lemma}[theorem]{Lemma}
\newtheorem{proposition}[theorem]{Proposition}
\newtheorem{corollary}[theorem]{Corollary}
\theoremstyle{remark}
\newtheorem*{remark}{Remark}
\theoremstyle{plain}
\numberwithin{equation}{section}
\newcommand{\R}{\mathbb R}
\newcommand{\C}{\mathbb C}

\newcommand{\Div}{\operatorname{div}}

\newcommand{\norm}[1]{\left\lVert#1\right\rVert}

\allowdisplaybreaks
\title{Logarithmic Stability for the Inverse Boundary Problem
in Three-Dimensional Isotropic Elasticity}

\author{
Huaian Diao\textsuperscript{1}
\qquad
Umidjon Durdiev\textsuperscript{1,2,3}\\[0.8em]
\small
\textsuperscript{1}School of Mathematics, Jilin University,\\[-0.1em]
\small Changchun, P.\ R.\ of China\\[0.2em]
\small
\textsuperscript{2}Bukhara State University, Uzbekistan\\[0.2em]
\small
\textsuperscript{3}V.~I.~Romanovskiy Institute of Mathematics,\\[-0.1em]
\small Uzbekistan Academy of Science\\[0.6em]
\small
\textit{E-mail addresses:}\\[0.15em]
\small
\texttt{diao@jlu.edu.cn} (H.~Diao)\\[0.1em]
\small
\texttt{umidjan93@list.ru} (U.~Durdiev)
}

\date{}
\hypersetup{
  pdfauthor={Huaian Diao and Umidjon Durdiev},
  pdftitle={Logarithmic Stability for the Inverse Boundary Problem
  in Three-Dimensional Isotropic Elasticity}
}
\begin{document}
\maketitle

\begin{abstract}
We establish logarithmic stability for the simultaneous
recovery of both Lam\'e parameters in three-dimensional
static isotropic elasticity from the full
Dirichlet-to-Neumann map.
The $L^\infty(\Omega)$ estimate holds uniformly on
every fixed bounded $C^{22}$ class of uniformly
strongly convex coefficients, without prescribed
exterior coefficients, agreement of boundary values
or derivatives, or smallness assumptions on material
gradients.
For every integer $s\ge2$, we also obtain boundary
stability estimates with exponents $(s-q)/s$ for
all Cartesian derivatives of orders $0\le q<s$,
where the measurement error $\delta$ is the
operator-norm discrepancy between the
Dirichlet-to-Neumann maps.
These exponents are sharp on admissible classes
containing a constant material satisfying the
a priori inequalities strictly.
On such classes, no uniform modulus of continuity
exists for boundary derivatives of order $s$.

The proof combines an exact four-component reduction
with finite-regularity complex geometrical optics
constructions, while deriving complete matrix-valued
integral identities from the Betti identity for
elastic displacements.
An amplitude annihilated by the leading physical
adjoint lift produces, at the first nonvanishing order,
the third polarization in a rank-three characteristic
quotient.
Holomorphically weighted matrix moments,
Cauchy--Plemelj factorization, and a fixed holomorphic
projection then yield quantitative control of
the relative transport and recovery of both coefficients.
Boundary stability estimates and paired extensions
transfer the analysis from an auxiliary common-exterior
setting to the original measurement domain.
The exact-data argument also establishes global
uniqueness for $C^{22}$ Lam\'e parameters.
\end{abstract}

\medskip
\noindent\textbf{Keywords.} Inverse boundary value problem; isotropic elasticity;
Lam\'e parameters; logarithmic stability; boundary determination;
Dirichlet-to-Neumann map; complex geometrical optics; transport equations.
\par\smallskip
\noindent\textbf{Mathematics Subject Classification (2020).}
Primary 35R30; Secondary 74B05, 35J57.
\clearpage
\setcounter{tocdepth}{2}
\tableofcontents
\clearpage

\section{Introduction}\label{sec:w0-introduction}

The inverse boundary value problem for isotropic elasticity
concerns the recovery of the two spatially varying Lam\'e
parameters from boundary displacement and traction measurements.
We investigate the stability of this recovery in three dimensions,
establishing logarithmic stability for the coefficients in the
interior and power-type stability estimates for their boundary
derivatives. Both estimates are expressed in terms of the
discrepancy between the full Dirichlet-to-Neumann maps
on the original boundary.

\subsection{The elasticity system and the inverse boundary problem}
\label{sec:r2-model}

Let $\Omega\subset\R^3$ be a bounded domain with smooth boundary,
and let $\lambda$ and $\mu$ be real-valued Lam\'e parameters.
For a displacement field $u:\Omega\to\C^3$, the strain tensor,
stress tensor, and elasticity operator are given by
\begin{equation}\label{eq:w0-physical-operator}
\begin{gathered}
\varepsilon(u)=\frac12(\nabla u+\nabla u^T),\\
\sigma_{\lambda,\mu}(u)
=\lambda(\Div u)I_3+2\mu\varepsilon(u),\\
L_{\lambda,\mu}u=\Div\sigma_{\lambda,\mu}(u),
\end{gathered}
\end{equation}
where $I_3$ denotes the $3\times3$ identity matrix.
We consider the static isotropic elasticity system
\begin{equation}\label{eq:w0-elastic-dirichlet}
\begin{cases}
\displaystyle
\Div\bigl(
\lambda(\Div u)I_3+
\mu(\nabla u+\nabla u^T)
\bigr)=0
&\text{in }\Omega,\\[0.4em]
u=f
&\text{on }\partial\Omega.
\end{cases}
\end{equation}
Throughout the paper, we use the component conventions
\begin{equation}\label{eq:r2-tensor-conventions}
(\nabla u)_{ij}=\partial_j u_i,\qquad
(a\otimes b)_{ij}=a_i b_j,\qquad
A:B=\sum_{i,j}A_{ij}B_{ij}.
\end{equation}

For an integer $s\ge2$ and fixed constants $M,c_0>0$,
define the admissible class
\begin{equation}\label{eq:w0-interior-class}
\begin{aligned}
\mathscr C^s(\Omega;M,c_0)
=\bigl\{(\lambda,\mu)\in C^s(\overline\Omega;\R)^2:\ &
\mu\ge c_0,\quad 3\lambda+2\mu\ge c_0,\\
&
\|\lambda\|_{C^s(\overline\Omega)}
+\|\mu\|_{C^s(\overline\Omega)}
\le M
\bigr\}.
\end{aligned}
\end{equation}
The inequalities in \eqref{eq:w0-interior-class} hold pointwise
on $\overline\Omega$.  The strong-convexity conditions $\mu\ge c_0$ and $3\lambda+2\mu\ge c_0$
in \eqref{eq:w0-interior-class}, together with Korn's
inequality \cite{Ciarlet1988}, ensure that \eqref{eq:w0-elastic-dirichlet}
has a unique weak solution
$u_f\in H^1(\Omega;\C^3)$
for each $f\in H^{1/2}(\partial\Omega;\C^3)$ (cf. \cite{Ciarlet1988},).

Let $\nu$ denote the outward unit normal to $\partial\Omega$.
For each $f\in H^{1/2}(\partial\Omega;\C^3)$, let $u_f$
be the unique weak solution of \eqref{eq:w0-elastic-dirichlet}.
The Dirichlet-to-Neumann (DN) map associates the prescribed
boundary displacement with the corresponding elastic traction:
\begin{equation}\label{eq:w0-dn-map}
\begin{aligned}
\Lambda_{\lambda,\mu}^{\Omega}&:
H^{1/2}(\partial\Omega;\C^3)
\longrightarrow H^{-1/2}(\partial\Omega;\C^3),\\
\Lambda_{\lambda,\mu}^{\Omega}f
&=\sigma_{\lambda,\mu}(u_f)\nu.
\end{aligned}
\end{equation}
The traction is understood in the weak sense defined
by \eqref{eq:w5-physical-dn}.

For two admissible coefficient pairs
$(\lambda_j,\mu_j)\in\mathscr C^s(\Omega;M,c_0)$,
$j=1,2$, we define their boundary measurement
discrepancy by
\begin{equation}\label{eq:w0-measured-error}
\delta:=
\bigl\|
\Lambda_{\lambda_1,\mu_1}^{\Omega}
-\Lambda_{\lambda_2,\mu_2}^{\Omega}
\bigr\|_{H^{1/2}\to H^{-1/2}}.
\end{equation}
The operator norm in this definition is taken from
$H^{1/2}(\partial\Omega;\C^3)$ to
$H^{-1/2}(\partial\Omega;\C^3)$.
In particular, $\delta=0$ if and only if the two
full DN maps coincide.

For any smooth domain $D$ on which the coefficient
pairs are defined, we write
\begin{equation}\label{eq:r2-material-map-abbreviation}
\Lambda_j^D=\Lambda_{\lambda_j,\mu_j}^{D},
\qquad j=1,2.
\end{equation}

The inverse boundary value problem is to determine
the two Lam\'e parameters $\lambda$ and $\mu$ from the full DN map,
\[
\Lambda_{\lambda,\mu}^{\Omega}
\longmapsto
(\lambda,\mu)\big|_{\Omega},
\]
and to quantify their recovery in terms of the
measurement discrepancy $\delta$.
In particular, we seek stability estimates for
both coefficients in $L^\infty(\Omega)$ and for
their boundary derivatives.
The measurements are made on the original boundary
$\partial\Omega$; no exterior coefficients are prescribed,
and the candidate materials need not have matching
boundary values or derivatives.

To formulate the boundary stability estimates,
we use the multi-index notation
\begin{equation}\label{eq:r2-multi-index}
a\in\mathbb N_0^3,\qquad
|a|=a_1+a_2+a_3,\qquad
\partial^a=
\partial_1^{a_1}\partial_2^{a_2}\partial_3^{a_3}.
\end{equation}
For each integer $0\le q\le s$, define
\begin{equation}\label{eq:w0-boundary-jets}
J_q=
\sum_{|a|\le q}
\left(
\|\partial^a(\lambda_1-\lambda_2)\|_{C^0(\partial\Omega)}
+
\|\partial^a(\mu_1-\mu_2)\|_{C^0(\partial\Omega)}
\right).
\end{equation}
Here the derivatives are the boundary traces
of the corresponding Cartesian derivatives
of the coefficients in $C^s(\overline\Omega)$.

\subsection{Previous results and contributions}

The inverse boundary value problem for isotropic elasticity
is a system analogue of Calder\'on's conductivity problem
\cite{Calderon1980}.
Sylvester and Uhlmann \cite{SylvesterUhlmann1987}
established global uniqueness for smooth positive
conductivities in dimensions $n\ge3$ using complex
geometrical optics (CGO) solutions. In two dimensions, Nachman \cite{Nachman1996} established global uniqueness and reconstruction for smooth conductivities, while Astala and P\"aiv\"arinta  \cite{AstalaPaivarinta2006} studied uniqueness for bounded measurable conductivities.  On the quantitative side, Alessandrini
\cite{Alessandrini1988} established logarithmic
stability under suitable a priori assumptions,
whereas Mandache \cite{Mandache2001}
proved exponential instability for the inverse
Schr\"odinger problem, with corresponding
consequences for conductivity.
The elasticity system introduces additional
difficulties through its coupled principal part
and the simultaneous variation of two
Lam\'e parameters.

Boundary identification of the Lam\'e parameters,
including stability estimates for their boundary values,
was established in \cite{AkamatsuNakamuraSteinberg1991}.
Higher-order boundary determination and reconstruction
from localized measurements were subsequently developed
in \cite{NakamuraUhlmann1995,LinNakamura2017}.
For smooth Lam\'e parameters on a smooth Riemannian manifold,
on a smooth Riemannian manifold, the full symbol of the elastic DN map was computed, and the determination of all boundary derivatives was established in \cite{TanLiu2023}.  
The boundary estimates in the present paper quantify
derivative recovery under finite regularity assumptions.
Quantitative boundary reconstruction in the presence of
random measurement errors was studied in
\cite{CaroLaiLinZhou2021}.
The latter result concerns stochastic perturbations of
boundary measurements, rather than the deterministic
operator-norm discrepancy considered here.

In two dimensions, Imanuvilov and Yamamoto
\cite{ImanuvilovYamamoto2015} established full-data global
uniqueness for the isotropic Lam\'e system without a
smallness assumption on the coefficients.  For three-dimensional interior recovery, Nakamura and
Uhlmann developed a CGO approach to the elasticity system
\cite{NakamuraUhlmann1994,NakamuraUhlmannErratum2003}.
Their corrected uniqueness theorem requires sufficiently
small gradients of both candidate shear moduli in a
sufficiently high regularity norm, without an analogous
smallness assumption on the first Lam\'e parameter.
Related CGO constructions and perturbative uniqueness
results were obtained in
\cite{EskinRalston2002,EskinRalston2004,ER}.
More recent uniqueness results for three-dimensional
isotropic elasticity impose axisymmetry, variable
separation, or directional quasianalyticity assumptions
on the shear modulus
\cite{ChenJiangLiuTao2026}.
For partial Cauchy data on an arbitrary open boundary
subset, Imanuvilov, Uhlmann, and Yamamoto
\cite[Theorem~0.1]{ImanuvilovUhlmannYamamoto2012}
proved uniqueness when the shear moduli are constant
and the two first Lam\'e parameters agree on the
inaccessible part of the boundary.

Quantitative recovery has also been studied under
more restrictive a priori assumptions.
In \cite{BerettaFranciniVessella2014}, uniqueness and Lipschitz stability were established for piecewise constant Lam\'e  parameters on a prescribed partition, under certain geometric assumptions. 
The analysis was extended to non-flat interfaces in
\cite{BerettaFranciniMorassiRossetVessella2014}.
For time-harmonic elastic waves, uniqueness and Lipschitz
stability from local DN data were established for
piecewise constant Lam\'e parameters and density on a
known partition, under geometric and frequency assumptions
\cite{BerettaDeHoopFranciniVessellaZhai2017}.
Lipschitz stability has also been obtained from
Neumann-to-Dirichlet measurements using monotonicity
and localized potentials, for finite-dimensional
coefficient classes satisfying additional regularity
and ordering assumptions
\cite{EberleHarrachMeftahiRezgui2021}.
These results rely on restricted admissible classes,
in contrast to the full bounded regularity classes
considered in the present paper.

A related but distinct problem is the determination
of an elastic inclusion.
Logarithmic stability for recovering an unknown
inclusion in a piecewise homogeneous isotropic body
was established in
\cite{AlessandriniDiCristoMorassiRosset2014}.
In that setting the unknown is the inclusion geometry,
whereas the present problem concerns the simultaneous
recovery of two spatially varying Lam\'e parameters.

In our preceding work \cite{DDsource}, developed
independently of the OpenAI preprint
\cite{OpenAIElasticity2026}, we established full-data
global uniqueness for smooth, strongly convex Lam\'e
parameters in three dimensions without smallness or
structural assumptions.
A central feature of our approach is the derivation
of complete, holomorphically weighted $3\times3$
matrix identities from the Betti identity for
physical elastic solutions.
The missing third physical adjoint polarization is
obtained from the first nonvanishing correction to
a reduced amplitude annihilated by the leading lift.
These matrix identities then provide the compatibility
needed to construct an invertible, exterior-normalized
relative transport through Cauchy--Plemelj factorization.
The present paper develops this physical-solution
and matrix-moment approach into a quantitative
stability theory.

The main result of this paper is logarithmic stability for
the simultaneous recovery of both Lam\'e parameters from
the full Dirichlet-to-Neumann map defined by \eqref{eq:w0-dn-map}.
Theorem~\ref{thm:w0-target-A} establishes the
$L^\infty(\Omega)$ estimate \eqref{eq:w0-target-stability}
uniformly on every fixed admissible class
$\mathscr C^{22}(\Omega;M,c_0)$, without smallness
assumptions on the material gradients, prescribed
boundary agreement, or known exterior coefficients.
A central ingredient is the quantitative recovery
of complete matrix moments from physical elastic
solutions (Theorem~\ref{thm:w1-all-moments}).
The exact reduction alone does not identify the unrestricted
boundary map of the auxiliary system with the measured
elastic Dirichlet-to-Neumann map.
Quantitative control of the relative transport,
together with boundary determination and paired
extensions, then yields stability from measurements
on the original boundary.

Theorem~\ref{thm:w0-target-B} gives a separate family
of boundary stability estimates.
For every integer $s\ge2$ and $0\le q<s$, the
Cartesian boundary derivatives satisfy
\eqref{eq:w0-target-boundary}, with the power
$\delta^{(s-q)/s}$.
These estimates hold uniformly on each fixed
admissible $C^s$ class, without additional
prior-slack assumptions.
Proposition~\ref{prop:w0-boundary-sharpness}
shows that the powers are optimal on classes
containing a constant material satisfying the
a priori inequalities strictly.
Under the same hypothesis, no uniform vanishing
modulus exists for derivatives of order $s$.
The sharpness assertion concerns the boundary
estimates, not the interior logarithmic exponent.

Finally, the zero-error assertion of
Proposition~\ref{prop:w0-interior-assembly},
with $m=4$, yields global uniqueness for
$C^{22}(\overline\Omega)$ Lam\'e parameters
(Corollary~\ref{cor:w0-target-C}).
This relaxes the $C^\infty$ regularity assumptions
in the independent uniqueness results
\cite{DDsource,OpenAIElasticity2026}.
The conclusion follows from the present
quantitative argument without invoking either
of the preceding uniqueness theorems.

\subsection{Main results}\label{sec:m4-main-results}

\begin{theorem}\label{thm:w0-target-A}
Fix the bounded smooth domain $\Omega\subset\R^3$ and constants $M,c_0>0$.
Let $\mathscr C^{22}(\Omega;M,c_0)$ be the class defined in
\eqref{eq:w0-interior-class} with $s=22$. There exists $\gamma_{*,\mathrm{ext}}\in(0,1)$,
depending only on $\Omega$, $M$, and $c_0$, with
the following property: for every $\alpha$ satisfying
\begin{equation}\label{eq:w0-target-exponent}
0<\alpha<\frac{\gamma_{*,\mathrm{ext}}}{112},
\end{equation}
there exist constants $C_\alpha,\delta_{A,0}>0$ such that
\begin{equation}\label{eq:w0-target-stability}
\|\lambda_1-\lambda_2\|_{L^\infty(\Omega)}
+
\|\mu_1-\mu_2\|_{L^\infty(\Omega)}
\le C_\alpha |\log\delta|^{-\alpha}
\end{equation}
for every two pairs
$(\lambda_j,\mu_j)\in
\mathscr C^{22}(\Omega;M,c_0)$, $j=1,2$,
with $0<\delta<\delta_{A,0}$.
Here $\delta$ is defined in \eqref{eq:w0-measured-error},
and the constants $C_\alpha,\delta_{A,0}$ depend only
on $\Omega$, $M$, $c_0$, and $\alpha$.
\end{theorem}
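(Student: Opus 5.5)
The proof proceeds in three stages: boundary determination, transfer to a common-exterior setting, and interior recovery from CGO-based integral identities.

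\textbf{Boundary stage.} I first apply the boundary stability estimates of Theorem~\ref{thm:w0-target-B} with $s=22$. They control the Cartesian boundary jets by
\[
J_q\le C\delta^{(22-q)/22},\qquad 0\le q<22.
\]
These bounds let me build paired extensions. I take $\Omega\Subset B$ with $B$ a large ball, and extend each $(\lambda_j,\mu_j)$ to $B$ by a fixed Whitney-type extension operator, keeping both pairs in a class $\mathscr C^{m}(B;M',c_0')$ for a slightly lower $m$. The extensions are chosen so that the two pairs agree near $\partial B$. Their difference in $B\setminus\Omega$ is then bounded in $C^{m}$ by a power of $\delta$, since it is controlled by the jets $J_q$.

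By Alessandrini-type transmission arguments, namely Betti's identity on $B\setminus\overline\Omega$ together with well-posedness of the exterior problem, I then bound $\|\Lambda^B_1-\Lambda^B_2\|$ and, more usefully, the bilinear discrepancy
\[
\int_\Omega\bigl((\lambda_1-\lambda_2)\Div u\,\Div v+2(\mu_1-\mu_2)\varepsilon(u):\varepsilon(v)\bigr)
\]
for solutions $u$ of $L_{\lambda_1,\mu_1}u=0$ and $v$ of $L_{\lambda_2,\mu_2}v=0$ in $B$. The bound is $C\delta^{\theta}e^{C\tau}$ times norms of $u,v$, where $\tau$ is the CGO frequency.

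\textbf{Interior stage.} I use the exact four-component reduction and the finite-regularity CGO solutions, with remainders decaying like $\tau^{-1}$ in $H^1$ given $C^{22}$ coefficients. Inserting these into the Betti identity, Theorem~\ref{thm:w1-all-moments} gives quantitative control of the complete holomorphically weighted matrix moments along each complex direction $\zeta$. The error has the form
\[
C\bigl(e^{C\tau}\delta^\theta+\tau^{-\gamma}\bigr),
\]
with $\gamma$ tied to $\gamma_{*,\mathrm{ext}}$. The third polarization comes from the first nonvanishing correction of the amplitude annihilated by the leading lift.

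From these moments, the Cauchy--Plemelj factorization and the fixed holomorphic projection give a relative transport $\Phi$ that is invertible and exterior-normalized. This yields an estimate on $\Phi-I$, and hence on the coefficient combination carried by the transport, namely $\nabla\log\mu_1-\nabla\log\mu_2$ and the coupled $\lambda$ terms. The estimate is taken in a weak norm, for instance on Fourier transforms for $|\xi|\le\rho$ of the low-frequency part.

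Next I interpolate. The Fourier bound on $|\xi|\le\rho$ is combined with the a priori $C^{22}$ bound on the tail $|\xi|>\rho$ to get an $H^{-1}$ estimate. That estimate is then interpolated with the a priori bound to reach $L^\infty$ via Sobolev embedding.

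Finally I optimize the parameters. Choosing $\tau\sim\rho\sim c|\log\delta|$ balances $e^{C\tau}\delta^\theta$ against $\tau^{-\gamma}$. The successive losses multiply the exponent by fixed fractions: the moment-to-transport step, the transport-to-coefficient step that recovers $\mu$ and then $\lambda$ through the reduced system, and the Fourier/Sobolev interpolations. Tracking these losses is what produces the condition $\alpha<\gamma_{*,\mathrm{ext}}/112$. The restriction $\delta<\delta_{A,0}$ ensures $\tau$ exceeds the CGO threshold.

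\textbf{Main obstacle.} The hardest step is making the transport recovery quantitative. Inverting the Cauchy--Plemelj factorization needs a lower bound on the determinant of the factorized matrix, uniformly over the class and without smallness of gradients. My first attempt would be to use the exterior normalization, where $\Phi=I$ near $\partial B$, together with a continuity/Gronwall argument along the complex lines: propagate the bound $|\Phi-I|\lesssim$ (moment error)$^{\kappa}$ from the exterior inward, with $\kappa$ controlled by the $C^{22}$ bounds. Stability of the holomorphic projection then costs a further power, which I expect to be the source of $\gamma_{*,\mathrm{ext}}$.
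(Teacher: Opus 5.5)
The boundary stage and the enlarged-domain DN comparison follow the paper. The interior stage, however, has a genuine gap at exactly the point you flag as the main obstacle.

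First, the relative transport $2\partial_{\bar z}G+P_1G-GP_2=0$ is a Cauchy--Riemann system on each leaf disk, not an ODE along lines. So there is no Gronwall propagation from the exterior inward. More fundamentally, $\Phi-I$ is \emph{not} small in the interior. The transport carries the unknown material discrepancy: after projection one finds $d_H\approx e^{-\rho/2}$ and $\mathsf C_H\approx b_\rho^TQ$, with $\rho=\log(\mu_1/\mu_2)$. An interior bound on $\Phi-I$ therefore already contains the conclusion and cannot be an intermediate step.

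In the paper, invertibility is complex-analytic and topological, and only $G-I$ on $\partial B_R$ is small. The argument runs as follows.
\begin{itemize}
\item $W=Y_2^TX_1$ is invertible on the whole closed leaf disk, so $\det W_\partial$ has winding number zero.
\item The moments bound the low negative Fourier modes of $W_\partial$ (degree $n$ controls mode $-(n+1)$). Laurent decay on the common-exterior annulus bounds the high ones.
\item Balancing the two gives $\|(W_\partial)_-\|_{C^0}\le C\eta_D^{\gamma'/14}$ (Lemma~\ref{lem:w3-negative}). This splitting, not the projection, is the origin of $\gamma_*$.
\item $R_C=W_\partial-(W_\partial)_-$ is then invertible on the circle by a Neumann series, and in the disk by the argument principle (Lemma~\ref{lem:w3-cauchy}).
\item The local solutions $G_a=X_1R_C^{-1}Y_2^T$ are glued with leaf-constant weights using the maximum-principle overlap estimate.
\end{itemize}
Also, the moment error itself is $CK_{n,\varsigma}(h+\delta_Dh^{-2}e^{C_0/h})$, linear in $h$; no $\tau^{-\gamma}$ appears there.

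Second, your coefficient-recovery step lacks a mechanism. The moments depend nonlinearly on the materials through the frames $X_1,Y_2$. They are not Fourier transforms of any coefficient difference, so there is no low-frequency bound on $\nabla\log\mu_1-\nabla\log\mu_2$ to interpolate.

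The missing idea is the structure of the holomorphic projection. At each fixed $x$, $H_0=\Pi H$ lies in the nine-dimensional block space of Lemma~\ref{lem:w3-holomorphic-form}. Proposition~\ref{prop:w3-defect} bounds $H-H_0$ in $C^3_x$ by $\omega_{\rm F}^{\beta_m}$: a square-root loss from the direction derivative, then interpolation against the $C^m_x$ bound. The four block equations of $\mathcal T(H_0)$ then recover $\kappa_1-\kappa_2$ and $\rho$ \emph{linearly}, through the shell-normalized trace-free Hessian estimate (Theorems~\ref{thm:w4-hessian} and~\ref{thm:w4-rigidity}). This is why $112=14\cdot 8$: the factor $1/14$ comes from $\varsigma=\eta_D^{1/14}$, and $\beta_4=1/8$. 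Your list of losses does not produce this, although, since $\gamma_{*,\mathrm{ext}}$ is existential, the precise constant is not the real issue.

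Finally, your extension step needs more care. An extension operator that samples interior values makes the exterior difference depend on the unknown interior difference $\lambda_1-\lambda_2$, so it is not controlled by the boundary jets. A Whitney extension from order-$(s-1)$ boundary Taylor polynomials is controlled by $J_{s-1}$, but it is only $C^{s-1,1}$, because $J_s$ has no modulus (Proposition~\ref{prop:w5-boundary-sharpness}). The compensated collar of Theorem~\ref{thm:v62-compensated-extension} keeps exact $C^s$ matching with each original material. It needs only $L^\infty$ smallness of the exterior mismatch, which is what Proposition~\ref{prop:v61-enlargement-error} uses.
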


\begin{remark}
The estimate \eqref{eq:w0-target-stability}  holds uniformly over the full admissible class
$\mathscr C^{22}(\Omega;M,c_0)$.
No exterior coefficients are prescribed, the boundary traces and
derivatives of the two materials need not agree, and no smallness
condition is imposed on their gradients.

The $C^{22}$ regularity assumption is sufficient for the requirements
of the present quantitative construction. Indeed, the $C_x^3$ projection estimate \eqref{eq:w3-C3-output}, obtained
by interpolation, yields a positive power of the error when $m>3$.
The mixed-derivative count in \eqref{eq:w3-order-budget}, together
with the regularity requirements in \eqref{eq:r2-regularity-orders},
gives the sufficient material regularity order $s=2m+14$.
The smallest admissible integer $m=4$ therefore yields $s=22$.
We explain this derivative count in
Subsection~\ref{sec:m4-regularity-scope}.

Theorem~\ref{thm:w0-target-A} follows from the positive-error
assertion of Proposition~\ref{prop:w0-interior-assembly} with $m=4$.
The auxiliary enlarged coefficient class is constructed
from $\mathscr C^{22}(\Omega;M,c_0)$ in
Subsection~\ref{sec:r2-original-assembly}.
Its bounds, geometry, and exponent parameter
$\gamma_{*,\mathrm{ext}}$ depend only on the original
a priori data and are independent of the measurement error.
In particular, the auxiliary construction imposes
no additional assumptions on the original materials.
\end{remark}

Boundary derivatives satisfy the following order-dependent estimates.

\begin{theorem}\label{thm:w0-target-B}
Fix an integer $s\ge2$ and the class
$\mathscr C^s(\Omega;M,c_0)$ in \eqref{eq:w0-interior-class}.
There exist constants $C,\delta_{B,0}>0$, depending only on
$\Omega,s,M,c_0$, such that for every two members of this
class, simultaneously for all integers $0\le q<s$,
\begin{equation}\label{eq:w0-target-boundary}
 J_q\le C\delta^{(s-q)/s},\qquad q\in\{0,\ldots,s-1\},\quad
 0<\delta<\delta_{B,0}.
\end{equation}
Here $J_q$ and $\delta$ are defined in
\eqref{eq:w0-boundary-jets} and \eqref{eq:w0-measured-error}, respectively.
If $\delta=0$, then $J_q=0$ for every $0\le q<s$.
\end{theorem}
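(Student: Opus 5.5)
The plan is the standard boundary-determination route. A quantitative symbol calculus for the elastic DN map gives Lipschitz control of the boundary jets, and interpolation against the a priori $C^s$ bound converts this into the powers $\delta^{(s-q)/s}$.

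\emph{Step 1 (localized oscillatory test functions).} Fix $x_0\in\partial\Omega$, boundary normal coordinates near $x_0$, a tangential covector $\xi'$ with $|\xi'|=1$, and a cutoff $\chi$. For a large parameter $N$ and a polarization vector $b\in\C^3$, take
\[
f_N(x')=\chi\bigl(N^{1/2}(x'-x_0')\bigr)e^{iNx'\cdot\xi'}b .
\]
Build approximate solutions $U_N=\sum_{k\le s}N^{-k}a_k(x',x_3N)e^{iNx'\cdot\xi'}$, where the profiles decay exponentially in the normal variable $x_3N$. The profiles come from the usual Rayleigh-type ODE system for the boundary layer, which is solvable by strong convexity. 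The exact correction is controlled by Korn's inequality and costs $O(N^{-s})$ in energy, with constants depending on $\Omega,s,M,c_0$ through the $C^s$ norms. Green's identity then gives
\[
\langle(\Lambda_1-\Lambda_2)f_N,\overline{f_N}\rangle
= N\,\|f_N\|^2\Bigl(\sum_{k=0}^{s-1}N^{-k}\Phi_k(x_0,\xi',b)+O(N^{-s+1})\Bigr),
\]
up to normalization. Here $\Phi_k$ is the difference of the $k$-th homogeneous symbol terms. For the leading term, $\Phi_0$ is the explicit principal-symbol difference. For each $k$, $\Phi_k$ equals an explicit linear combination of $\partial_{x_3}^k(\lambda_1-\lambda_2)$ and $\partial_{x_3}^k(\mu_1-\mu_2)$ at $x_0$, plus terms involving only lower-order jets of the differences (multiplied by bounded jets of the coefficients).

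\emph{Step 2 (inversion of the triangular system).} I will show that the map $(\partial_3^k\lambda,\partial_3^k\mu)\mapsto\Phi_k$ is injective with a uniform bound. To do this, choose two polarizations $b$ (tangential along $\xi'$ and normal) and compute the $2\times2$ determinant from the Akamatsu--Nakamura--Steinberg formulas \cite{AkamatsuNakamuraSteinberg1991}. I expect this determinant to be nonvanishing and bounded below in terms of $c_0$ and $M$. Tangential derivatives of the jets are tangential derivatives of lower-order traces, and mixed Cartesian derivatives are combinations of these. Together this yields the recursive estimate
\[
J_q\le C\bigl(N^{q}\cdot N^{-1}\delta' \text{-terms}\bigr),
\]
or, cleanly, $J_q\le C\sum_{j\le q}N^{q-j}\epsilon_j$ with remainder $CN^{q-s}$. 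Concretely, since $|\langle(\Lambda_1-\Lambda_2)f_N,\overline{f_N}\rangle|\le\delta\|f_N\|_{H^{1/2}}^2\sim\delta N\|f_N\|^2$, dividing by $N\|f_N\|^2$ and multiplying by $N^q$ gives
\[
|\Phi_q|\le C\bigl(N^q\delta+N^{q-s}\bigr)+C\sum_{k<q}|\text{lower jets}|\,N^{q-k}\cdot(\dots).
\]
I would organize this by induction in the form
\[
J_q\le C\bigl(N^q\delta+N^{q-s}\bigr)\qquad\text{for all }N\ge N_0 .
\]

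\emph{Step 3 (optimization).} Choose $N=\delta^{-1/s}$, which is admissible once $\delta<\delta_{B,0}:=N_0^{-s}$. This gives $J_q\le C\delta^{(s-q)/s}$ simultaneously for all $q<s$. If $\delta=0$, letting $N\to\infty$ gives $J_q=0$.

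\emph{Main obstacle.} The hard step is the uniform remainder bound $O(N^{q-s})$ with only $C^s$ regularity, since only $s$ derivatives are available to build the expansion. I would first try truncating the boundary-layer expansion at order $s-1$ and estimating the residual directly in $H^{-1}$ using Taylor remainders of the coefficients. If the bookkeeping fails to reach order $s$, a fallback is interpolation: prove Lipschitz bounds on the low jets, then use $J_q\le CJ_0^{(s-q)/s}M^{q/s}$ via Gagliardo--Nirenberg on a boundary collar. This inequality is a genuine interior-to-boundary interpolation, so it needs care near $\partial\Omega$.
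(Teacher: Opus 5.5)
Your architecture matches the paper's: a boundary-layer energy expansion, a triangular $2\times2$ system for each new pair of normal derivatives, and the choice $\epsilon=\delta^{1/s}$. Two steps, however, fail as written.

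\emph{The extraction in Step~2 does not isolate $\Phi_q$.} Multiplying the single-scale identity by $N^q$ leaves $\sum_{k=q+1}^{s-1}N^{q-k}\Phi_k$. These higher coefficients are only bounded a priori, not small, so they contribute $O(N^{-1})$. Balancing $N^q\delta$ against $N^{-1}$ gives only $\delta^{1/(q+1)}$, which is strictly weaker than $\delta^{(s-q)/s}$ whenever $1\le q\le s-2$. The paper removes these terms by evaluating the expansion at the $s$ scales $\ell\epsilon$, $\ell=1,\ldots,s$. It combines them with Vandermonde weights $w_{q\ell}$ satisfying $\sum_\ell w_{q\ell}\ell^r=1$ for $r=q$ and $0$ for $r\ne q$, $0\le r\le s-1$ (Lemma~\ref{lem:v63-extraction}). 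Every polynomial coefficient except the $q$th cancels, and the error is exactly $C(\delta\epsilon^{-q}+\epsilon^{s-q})$.

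\emph{Your tests have the wrong scaling.} The function $\chi(N^{1/2}(x'-x_0'))e^{iNx'\cdot\xi'}b$ has tangential extent $N^{-1/2}$ but boundary-layer thickness $N^{-1}$. Tangential Taylor terms of the coefficients and derivatives of the cutoff then enter at half-integer orders $N^{-j/2}$. The coefficient of $N^{-k}$ involves tangential derivatives up to order $2k$. With only $C^s$ coefficients, the Taylor remainder on a tangential disk of radius $N^{-1/2}$ is $O(N^{-s/2})$, not $O(N^{-s})$. So the Step~1 expansion, with $N$-independent $\Phi_k$ depending on jets of order $\le k$ and remainder $O(N^{-s})$, does not hold for these tests. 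This is what your ``main obstacle'' runs into, and it is a scaling mismatch rather than bookkeeping. The paper uses isotropic tests $\epsilon^{-1/2}\chi_p(z')\phi(z'/\epsilon)$ with $\widehat\phi$ supported in $\{1<|\xi|<2\}$. After rescaling, the tensor is $\mathbb A_p(\epsilon y)$ and the order-$s$ Taylor remainder is $C\epsilon^s|y|^s$. The weighted decay of the half-space profiles absorbs the factor $|y|^s$ (Theorem~\ref{thm:v63-energy-expansion}, Lemma~\ref{lem:v64-profile-budget}).

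\emph{Your fallback $J_q\le CJ_0^{(s-q)/s}M^{q/s}$ is false,} because boundary values do not control normal derivatives. The paper's family \eqref{eq:v63-sharp-family} with $q\ge1$ stays in the $C^s$ class, has $J_0=0$, and has $J_q\ge c\,a^{s-q}$. The only interpolation available is tangential, applied to each normal trace after it has been recovered. Since $\partial_\nu^a(f_1-f_2)|_{\partial\Omega}$ is $C^{s-a}$ on the surface, one gets $\|\partial_{y'}^b\partial_\nu^a(f_1-f_2)\|_{C^0}\le Cn_a^{1-b/(s-a)}$. The identity $\frac{s-a}{s}\bigl(1-\frac{q-a}{s-a}\bigr)=\frac{s-q}{s}$ then preserves the rate when this is interleaved with the normal induction (Step~2 in the proof of Theorem~\ref{thm:v63-boundary-jet-stability}). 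Your Step~2 needs exactly this estimate to bound the mixed lower-order jets entering $\Phi_q$, and you do not supply it.

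\emph{The $2\times2$ determinant must be computed at each order.} For $q\ge1$, the matrix acting on $(\partial_\nu^q\lambda,\partial_\nu^q\mu)$ does not follow from leading-order impedance formulas. In the paper, $V_q$ drops out because $\mathfrak a_0(V_q,V_0)=0$. Only the weighted moments $\int t^q|\operatorname{div}V_0|^2$ and $\int t^q|\varepsilon(V_0)|^2$ of the two polarizations enter, giving $\det\mathsf M_q=-2^{-2q-1}I_q^2d_0^2\ne0$ (Proposition~\ref{prop:v63-triangular}).
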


\begin{remark}
The boundary values satisfy a Lipschitz estimate, while the
powers for higher-order derivatives arise from balancing the
measurement error against the Taylor remainder in
\eqref{eq:v63-coefficient-noise}.
In particular, the estimate
$J_{s-1}\le C\delta^{1/s}$ provides the boundary control
required for the paired extension in
Section~\ref{sec:w5-extensions}.
This construction uses the uniform $C^s$ a priori bound
but requires no stability estimate for $J_s$.
\end{remark}

The exponents in Theorem~\ref{thm:w0-target-B} are optimal
on admissible classes $\mathscr C^s(\Omega;M,c_0)$ containing a constant material
satisfying the a priori inequalities strictly.

\begin{proposition}
\label{prop:w0-boundary-sharpness}
Let $s\ge2$ be an integer, and suppose that
$\mathscr C^s(\Omega;M,c_0)$ contains a constant pair
$(\lambda_b,\mu_b)$ satisfying
\begin{equation}\label{eq:w0-sharpness-slack}
\mu_b>c_0,\qquad
3\lambda_b+2\mu_b>c_0,\qquad
\|\lambda_b\|_{C^s(\overline\Omega)}
+\|\mu_b\|_{C^s(\overline\Omega)}<M.
\end{equation}
For each $q\in\{0,\ldots,s\}$, there exist constants
$c_q,C_q>0$ and a family of smooth coefficient pairs
$(\lambda_{a,q},\mu_b)\in\mathscr C^s(\Omega;M,c_0)$,
defined for all sufficiently small $a>0$, such that
\begin{equation}\label{eq:r2-calculation-001}
0<\delta_{a,q}\le C_q a^s,
\qquad
J_{q,a}\ge c_q a^{s-q}.
\end{equation}
Here $\delta_{a,q}$ and $J_{q,a}$ denote the discrepancies
defined in \eqref{eq:w0-measured-error} and
\eqref{eq:w0-boundary-jets}, respectively, between
$(\lambda_{a,q},\mu_b)$ and $(\lambda_b,\mu_b)$.

Consequently, for $0\le q<s$, no estimate
\[
J_q\le\omega(\delta),
\qquad
\omega(t)=o\bigl(t^{(s-q)/s}\bigr)
\quad\text{as }t\downarrow0,
\]
holds uniformly on this class.
For $q=s$, no uniform estimate
$J_s\le\omega(\delta)$ is possible with
$\omega(t)\to0$ as $t\downarrow0$.
\end{proposition}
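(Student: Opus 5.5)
The plan is to build, for each $q$, a perturbation of $\lambda_b$ supported in a ball of radius comparable to $a$ centred at a boundary point, whose $C^s$ norm stays bounded while its $q$-th derivative at the boundary has size $a^{s-q}$. Fix $x_0\in\partial\Omega$ and a bump $\chi\in C_c^\infty(\R^3)$ supported in the unit ball, with $\partial^a\chi(0)\neq0$ for some multi-index $|a|=q$. For instance, take $\chi(y)=y_1^q\eta(y)$ with $\eta$ a cutoff equal to $1$ near $0$; then $\partial_1^q\chi(0)=q!$. Set
\[
\lambda_{a,q}(x)=\lambda_b+\epsilon\, a^{s}\chi\bigl((x-x_0)/a\bigr),
\]
with $\epsilon>0$ small and fixed. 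Then $\|\lambda_{a,q}-\lambda_b\|_{C^k}\lesssim \epsilon a^{s-k}$ for every $k\le s$, so the $C^s$ norm increases by at most $C\epsilon$. Because \eqref{eq:w0-sharpness-slack} has strict slack, the pair $(\lambda_{a,q},\mu_b)$ stays in $\mathscr C^s(\Omega;M,c_0)$ for $\epsilon$ small, uniformly in $a\le1$. At $x_0$ the Cartesian derivative $\partial_1^q(\lambda_{a,q}-\lambda_b)(x_0)=\epsilon a^{s-q}q!$, which gives $J_{q,a}\ge c_q a^{s-q}$.

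For the upper bound on $\delta_{a,q}$, I would use the Alessandrini/Betti identity. With $h=\lambda_{a,q}-\lambda_b$, it gives for $f,g\in H^{1/2}$
\[
\langle(\Lambda_1-\Lambda_2)f,g\rangle=\int_\Omega h\,(\Div u_1)(\Div \overline{v_2})\,dx ,
\]
where $u_1$ solves the system with $(\lambda_{a,q},\mu_b)$ and boundary data $f$, and $v_2$ solves the constant-coefficient system with data $g$. A naive Cauchy–Schwarz bound gives only $\delta\lesssim\|h\|_{L^\infty}\lesssim a^s$. That is exactly the required bound $\delta_{a,q}\le C_q a^s$, since the energy estimates are uniform in the class by Korn's inequality.

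Positivity of $\delta_{a,q}$ also has to be checked. I would take $f=g$ real and use the monotonicity (Loewner-type) inequality for elasticity. Arrange $h\ge0$ by replacing $\chi$ with $\chi+K\eta_0$, where $\eta_0\ge0$ is a bump and $K$ is large; this keeps $\partial^a\chi(0)\neq0$ if $\eta_0$ is flat to order $q$ at $0$, or one can simply choose $\chi\ge0$ with nonvanishing $q$-th derivative. Then $\langle(\Lambda_1-\Lambda_2)f,f\rangle\ge\int h|\Div v_2|^2>0$ for a constant-coefficient solution with $\Div v_2\neq0$ near $x_0$, for example a linear field $v_2=x$. Alternatively, positivity follows from the uniqueness part of Theorem~\ref{thm:w0-target-B}: $\delta=0$ would force $J_q=0$. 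This is the cleanest route, and I would use it.

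Finally, the consequences follow directly. If $J_q\le\omega(\delta)$ with $\omega(t)=o(t^{(s-q)/s})$, then $c_qa^{s-q}\le\omega(\delta_{a,q})$. Using that $\delta_{a,q}\le C_q a^s$ and assuming $\omega$ is nondecreasing (otherwise replace it by its upper envelope), we get $\omega(\delta_{a,q})=o(a^{s-q})$, a contradiction as $a\to0$. For $q=s$ we have $J_{s,a}\ge c_s>0$ while $\delta_{a,s}\to0$, which contradicts $\omega(t)\to0$. The main obstacle is only the careful handling of positivity and of monotonicity of $\omega$; the construction itself is routine.
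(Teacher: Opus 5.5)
Your construction, the class-membership check, the bound $J_{q,a}\ge\epsilon\,q!\,a^{s-q}$ from $\partial_1^q$ at $x_0$, and $\delta_{a,q}\le C\epsilon a^s$ from \eqref{eq:w5-forward-lipschitz} are all fine. The only difference from the paper there is the localization: you use an isotropic bump at scale $a$, while the paper keeps a fixed tangential cutoff and scales only a normal profile $\vartheta_q(t/a)$ that is nonnegative inside $\Omega$.

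The gap is the strict positivity $\delta_{a,q}>0$. It is part of the claim for every $q\le s$, and the $q=s$ conclusion depends on it, since $\omega$ is only constrained as $t\downarrow0$.

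\emph{The monotonicity inequality points the wrong way.} Apply the Dirichlet principle for the perturbed material, tested with the background solution $v_2$. With $w=u_1-v_2\in H^1_0$ and $\mathfrak a_1$ the perturbed Hermitian energy, one gets $\int_\Omega h|\Div v_2|^2-\langle(\Lambda_1-\Lambda_2)f,f\rangle=\mathfrak a_1(w,w)\ge0$. So $v_2=x$ gives only the \emph{upper} bound $\langle(\Lambda_1-\Lambda_2)f,f\rangle\le 9\int_\Omega h$. The lower bound involves $\Div u_1$, which you do not control.

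\emph{The Theorem~\ref{thm:w0-target-B} route does not cover $q=s$ as stated.} That route works for $q<s$. But the theorem yields $J_{q'}=0$ only for $q'<s$, and for $\chi=y_1^s\eta$ these can all vanish. If $\partial\Omega$ is flat near $x_0$ with normal $e_1$, every derivative of $h$ of order $<s$ carries a factor $y_1$. Then $J_{s-1,a}=0$ for small $a$, and $\delta_{a,s}=0$ contradicts nothing.

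Any one of the following repairs closes the gap.
\begin{enumerate}
\item Your pairs are smooth, so you may apply Theorem~\ref{thm:w0-target-B} in a class $\mathscr C^{s+1}$. The bound may depend on $a$, which is harmless for this qualitative use; compare Corollary~\ref{cor:w5-smooth-exact-jets}. This gives $J_{s,a}=0$ whenever $\delta_{a,s}=0$, a contradiction.
\item Take $\chi\ge0$ with $\chi(0)>0$ and $\partial_1^q\chi(0)\ne0$, e.g.\ $\chi(y)=e^{y_1}\eta(y)$ with $0\le\eta\le1$ and $\eta=1$ near $0$. Then $J_{0,a}>0$, and the order-zero exact-data conclusion applies for every $q\le s$.
\item Avoid inverse results entirely, as the paper does. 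With $h\ge0$ as in the previous option and $f(x)=x$, write $u_a=x+w$ with $w\in H^1_0$, and use the exact identity $\langle(\Lambda_{a,q}-\Lambda_b)f,f\rangle=\mathfrak a_b(w,w)+\int_\Omega h|\Div u_a|^2$. Both terms are nonnegative. If the sum vanished, coercivity would force $w=0$, and then the second term equals $9\int_\Omega h>0$.
\end{enumerate}

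Finally, you do not need monotonicity of $\omega$ in the last step. Since $\delta_{a,q}\downarrow0$ through positive values, $\omega(\delta_{a,q})/\delta_{a,q}^{(s-q)/s}\to0$ directly, and $\delta_{a,q}^{(s-q)/s}\le Ca^{s-q}$.
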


The zero-error conclusion of Proposition~\ref{prop:w0-interior-assembly}
also yields uniqueness under finite regularity.

\begin{corollary}
\label{cor:w0-target-C}
Let
$(\lambda_j,\mu_j)\in C^{22}(\overline\Omega;\mathbb R)^2$,
$j=1,2$, satisfy
\[
\mu_j>0,\qquad 3\lambda_j+2\mu_j>0
\quad\text{on }\overline\Omega.
\]
If
\[
\Lambda_{\lambda_1,\mu_1}^{\Omega}
=
\Lambda_{\lambda_2,\mu_2}^{\Omega},
\]
then
\[
\lambda_1=\lambda_2,\qquad
\mu_1=\mu_2
\quad\text{in }\Omega.
\]
\end{corollary}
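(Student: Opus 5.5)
The plan is to obtain Corollary~\ref{cor:w0-target-C} from the zero-error assertion of Proposition~\ref{prop:w0-interior-assembly} with $m=4$ (so $s=2m+14=22$). That assertion is uniform over a fixed class $\mathscr C^{22}(\Omega;M,c_0)$. The work is therefore to place the two given materials into one such class, and then check that the zero-error case of the assembly needs no positive-error machinery.

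\textbf{Step 1: choose the class.} Since $\overline\Omega$ is compact and $\lambda_j,\mu_j$ are continuous, the strict inequalities $\mu_j>0$ and $3\lambda_j+2\mu_j>0$ give a common positive lower bound. Set
\[
c_0:=\min_{j=1,2}\min_{\overline\Omega}\min\{\mu_j,\,3\lambda_j+2\mu_j\}>0,
\qquad
M:=\max_{j=1,2}\bigl(\|\lambda_j\|_{C^{22}(\overline\Omega)}+\|\mu_j\|_{C^{22}(\overline\Omega)}\bigr).
\]
By \eqref{eq:w0-interior-class}, both pairs then lie in $\mathscr C^{22}(\Omega;M,c_0)$. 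The hypothesis on the DN maps says exactly that $\delta=0$ in \eqref{eq:w0-measured-error}.

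\textbf{Step 2: run the exact-data chain.} Theorem~\ref{thm:w0-target-B} with $s=22$ gives $J_q=0$ for all $q\le 21$ when $\delta=0$. Hence all boundary jets of order at most $21$ agree. This allows the paired extensions of Section~\ref{sec:w5-extensions} to be chosen with identical exterior coefficients, so the two extended DN maps on the auxiliary enlarged domain also coincide. The quantitative moment recovery (Theorem~\ref{thm:w1-all-moments}) then yields exact equality of all holomorphically weighted matrix moments. The transport and projection estimates, each bounded by a power of the data error, therefore force the relative transport to be trivial and the coefficient differences to vanish.

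\textbf{Step 3: extract the conclusion.} I would read the zero-error statement of Proposition~\ref{prop:w0-interior-assembly} as asserting $\lambda_1=\lambda_2$ and $\mu_1=\mu_2$ in $\Omega$ whenever $\delta=0$. As a fallback, one can use the logarithmic bound \eqref{eq:w0-target-stability} for a sequence of auxiliary errors tending to zero. The cleaner route is to note that every intermediate estimate in the chain has the form (error)$^{\theta}$ with $\theta>0$, and therefore vanishes identically at zero error.

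\textbf{Main obstacle.} The delicate point is not the logic but confirming that the constants of the auxiliary enlarged class depend only on $(\Omega,M,c_0)$ and remain valid at $\delta=0$. Such a dependence would be needed for a bound like $|\log\delta|^{-\alpha}$, which is undefined at $\delta=0$. The remark after Theorem~\ref{thm:w0-target-A} asserts exactly this independence from the measurement error, so I would cite it and check that the extension step uses only $J_{21}=0$ together with the uniform $C^{22}$ bound.
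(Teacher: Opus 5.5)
Your Step~1 and the first sentence of Step~3 are exactly the paper's proof. You place both pairs in one class $\mathscr C^{22}(\Omega;M,c_0)$, note $\delta=0$, and invoke the zero-error assertion of Proposition~\ref{prop:w0-interior-assembly} with $m=4$. Your exact min/max choices work; the paper only adds slack by halving $c_0$ and using $1+\max$ for $M$. That citation alone proves the corollary.

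The trouble is Step~2, which you offer as the check that ``the zero-error case of the assembly needs no positive-error machinery''. Its key step is not available. With $s=22$, Theorem~\ref{thm:w0-target-B} gives $J_q=0$ only for $q\le 21$. If two $C^{22}$ extensions coincided on $\mathbb R^3\setminus\overline\Omega$, their difference would be a $C^{22}$ function vanishing on the exterior. All its derivatives through order $22$ would then vanish on $\partial\Omega$, that is, $J_{22}=0$. The boundary analysis stops at order $s-1$, and Proposition~\ref{prop:w5-boundary-sharpness} shows that $J_s$ admits no uniform control by $\delta$. So unless you separately prove $J_{22}=0$ for this fixed pair, you cannot choose extensions with identical exteriors, and the enlarged DN maps need not coincide. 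This is exactly why Theorem~\ref{thm:v62-compensated-extension} needs a positive tolerance. Shrinking its collar to zero width gives, in the limit, a function that is in general only $C^{21,1}$ across $\partial\Omega$.

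In the paper the zero-error case therefore genuinely uses the positive-error estimate.
\begin{itemize}
\item With $e_\partial=0$ and $\epsilon_{\rm tol}=t>0$, one gets extensions in one fixed enlarged class with $\Delta_e(t)\le C_AC_Et$, see \eqref{eq:w0-exact-original-noise}. This quantity may be nonzero.
\item If $\Delta_e(t)>0$, Proposition~\ref{prop:w0-common-assembly}, whose constants do not depend on $t$, bounds the $t$-independent quantity $\|\lambda_1-\lambda_2\|_{L^\infty(\Omega)}+\|\mu_1-\mu_2\|_{L^\infty(\Omega)}$ by $C_{\rm com}|\log(K_tt)|^{-\alpha}$.
\item One then lets $t\downarrow0$, see \eqref{eq:r2-calculation-042}.
\end{itemize}

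Your ``cleaner route'' is also imprecise inside the common-exterior step. Even when $\delta_D=0$, the error parameter is $\eta_D=h>0$, so no estimate vanishes identically and a limit $h\downarrow0$ is needed. The paper obtains $\mathfrak d_D\le Ch^{a_{\rm chain}}$ and lets $h\downarrow0$.

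Your fallback points at the right mechanism, but as written it applies \eqref{eq:w0-target-stability}. The right side of that estimate is in terms of the original $\delta$, which is identically zero here. The auxiliary errors must instead be the enlarged-domain discrepancies $\Delta_e(t)$, fed into the common-exterior estimate. The $t$-uniformity you flag under ``Main obstacle'' is precisely what makes that limit legitimate.

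Either cite the proposition and stop, or replace Step~2 by this tolerance argument.
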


The boundary statements and the uniqueness corollary are
proved in Subsection~\ref{sec:r2-main-boundary-proofs}.

\subsection{Regularity requirements}
\label{sec:m4-regularity-scope}

The $C^{22}$ regularity assumption in
Theorem~\ref{thm:w0-target-A} arises from the
quantitative transport construction.
The coefficient-rigidity theorem
(Theorem~\ref{thm:w4-rigidity}) requires the
first-order transport residual to be small in $C_x^2$.
By \eqref{eq:w3-residual-Leibniz}, this requires
$C_x^3$ control of the projection error.
Proposition~\ref{prop:w3-defect} yields the
interpolation estimate \eqref{eq:w3-C3-output},
\[
\|H-H_0\|_{C_x^3C_\zeta^0}
\le C\omega_{\rm F}^{\beta_m},
\qquad
\beta_m=\frac{m-3}{2m}.
\]
Thus, a positive power of the error is obtained
precisely when $m>3$.

The remaining regularity requirements arise from
mixed spatial and complex-directional derivatives
of the transport frames.
With $N=m+2$, the derivative count
\eqref{eq:w3-order-budget} gives
\[
N+(N+3)=2m+7<r_{\rm fr}=2m+8.
\]
Applying Theorem~\ref{thm:finite-frames} with
$r=r_{\rm fr}+2$ requires $C^{r_{\rm fr}+4}$
bounds for the transport coefficients.
Proposition~\ref{prop:w1-exact-reduction} shows
that the elastic reduction involves derivatives
of the Lam\'e parameters through order two;
see \eqref{eq:r2-reduced-coefficient-bound}.
Consequently, Corollary~\ref{cor:w2-physical-budgets}
gives the sufficient material regularity
\[
s=r_{\rm fr}+6=2m+14,
\]
as specified in \eqref{eq:r2-regularity-orders}.
The smallest admissible choice $m=4$ therefore
gives $r_{\rm fr}=16$ and $s=22$.

This regularity threshold is sufficient for the
present construction.
It also suffices for global uniqueness, as stated in
Corollary~\ref{cor:w0-target-C}.
The higher-regularity result is given in
Proposition~\ref{prop:w0-interior-assembly},
with the admissible logarithmic exponents specified
in \eqref{eq:m4-general-exponent}.

\subsection{Proof strategy}
\label{sec:r2-proof-strategy}

The argument first treats a fixed common-exterior class and then transfers
the resulting estimate to the original domain by boundary recovery and
paired extensions.

For the auxiliary problem, fix an integer $m\ge4$
and set $s=2m+14$. Let $D\subset\R^3$ be a bounded
smooth domain, and let
$(\lambda_{\rm bg},\mu_{\rm bg})$ be a fixed,
uniformly strongly convex $C^s$ pair on $\R^3$,
constant outside a fixed ball. We consider global
coefficient pairs with uniform $C^s$ bounds satisfying
\begin{equation}\label{eq:m4-common-exterior}
(\lambda_j,\mu_j)=(\lambda_{\rm bg},\mu_{\rm bg})
\quad\text{on }\R^3\setminus D,\qquad j=1,2.
\end{equation}
For a fixed measurement domain $D$, write
\begin{equation}\label{eq:w0-domain-error}
\begin{gathered}
\delta_D=
\norm{\Lambda_1^D-\Lambda_2^D}_{
H^{1/2}(\partial D;\C^3)
\to H^{-1/2}(\partial D;\C^3)},\\
\delta_\Omega=\delta,\qquad
\delta_{B_{R_e}}=\Delta_e.
\end{gathered}
\end{equation}
Here $B_{R_e}$ is the enlarged measurement domain
introduced in the second part of the argument.
The common-exterior geometry is fixed in
Subsection~\ref{sec:r2-common-exterior-setup}.
In particular, the coefficients agree in a
neighborhood of the larger working sphere
$\partial B_R$.
Lemma~\ref{lem:w1-domain-comparison} gives
$\delta_{B_R}\le C\delta_D$.
Thus the Betti identity for the elastic system
(Lemma~\ref{lem:w1-betti}) may be applied on $B_R$
with the boundary pairing bounded in terms of the
discrepancy measured on $\partial D$.

The exact four-component reduction
(Proposition~\ref{prop:w1-exact-reduction}),
together with the finite-order transport frames
and right inverse of
Theorems~\ref{thm:finite-frames} and
\ref{thm:finite-right-inverse}, provides the
required CGO solutions with uniform remainder
estimates. A central difficulty is that the leading
physical adjoint lift has a two-dimensional image
in the rank-three characteristic quotient.
Proposition~\ref{prop:w1-high-frames} supplies the
missing third polarization through the first
nonvanishing correction to an amplitude annihilated
by the leading lift.
Applying the Betti identity to these solutions relates
the polynomially weighted matrix moments in the fixed
primal and dual quotient frames to DN pairings of
elastic solutions.
Theorem~\ref{thm:w1-all-moments} bounds all nine
entries in terms of the DN discrepancy $\delta_D$
and the CGO remainders.
The dependence on the test degree and localization
width is given explicitly by $K_{n,\varsigma}$ in
\eqref{eq:w1-test-family}, while the constants and
the threshold $h_*$ are independent of these test
parameters.

The moment estimates control the negative Fourier
modes of the boundary traces of the frame products
$W_a=Y_{2,a}^{T}X_{1,a}$; see \eqref{eq:w3-WF}
and Lemma~\ref{lem:w3-negative}.
Lemma~\ref{lem:w3-cauchy} shows that the corresponding
Cauchy extensions $R_{C,a}$ are pointwise invertible
on the closed leaf disks.
The primal and dual quotient frames then define
$G_a=X_{1,a}R_{C,a}^{-1}Y_{2,a}^{T}$, which satisfies
\[
2\partial_{\bar z}G_a+P_1G_a-G_aP_2=0;
\]
see \eqref{eq:w3-local-G}.
Quantitative comparison on overlaps yields the
global invertible quotient endomorphisms $G$ and
$H=G^{-T}$, with uniform regularity bounds and
controlled deviation from the identity on
$\partial B_R$; see Proposition~\ref{prop:w3-gluing}.
At each fixed physical point, the projection
$H_0=\Pi H$ is holomorphic in the complex null
direction. Proposition~\ref{prop:w3-defect}
controls the projection error.
The first-order transport equation then yields
the $C_x^2$ residual estimate
\eqref{eq:w3-rigidity-output}.
The trace-free Hessian estimate and the
coefficient-rigidity theorem
(Theorem~\ref{thm:w4-rigidity})
convert this residual into estimates for both
Lam\'e parameters.

More precisely, the estimates assembled in
\eqref{eq:w0-CE1} give
\[
\mathfrak d_D
\le C\left(
h+\delta_Dh^{-2}e^{C_0/h}
\right)^{\gamma'\beta_m/14},
\qquad
\beta_m=\frac{m-3}{2m},
\quad 0<\gamma'<\gamma_*,
\]
whenever the auxiliary parameters satisfy the
fixed smallness conditions.
Here $h>0$ is the CGO parameter,
$\mathfrak d_D$ is the $L^\infty$ coefficient
discrepancy defined in
\eqref{eq:r2-common-error-definitions},
and $\gamma_*$ is given by \eqref{eq:w3-gamma}.
Logarithmic optimization yields the
common-exterior stability estimate in
Proposition~\ref{prop:w0-common-assembly}.

To remove the common-exterior assumption, we first
recover the boundary derivatives directly from
the physical DN map.
The uniform boundary energy expansion
(Theorem~\ref{thm:v63-energy-expansion})
and the triangular recovery of successive normal
derivatives yield, after tangential interpolation,
the estimates
\[
J_q\le C\delta^{(s-q)/s},
\qquad 0\le q<s,
\]
of Theorem~\ref{thm:v63-boundary-jet-stability}.
The perturbation construction in
Proposition~\ref{prop:w5-boundary-sharpness}
establishes the sharpness of these powers
under the stated strict-slack assumptions.

The highest-order estimate needed for the
extension argument is
\[
J_{s-1}\le C\delta^{1/s}.
\]
Theorem~\ref{thm:v62-compensated-extension}
uses this boundary control and an arbitrary
positive tolerance $\epsilon_{\rm tol}$ to
construct paired $C^s$ extensions with fixed
strong-convexity bounds and a common exterior
background. The extensions agree with the
original coefficients on $\overline\Omega$,
but need not agree with each other in
$B_{R_e}\setminus\overline\Omega$.
Writing $\epsilon_{\rm ext}$ for their exterior
$L^\infty$ discrepancy, the extension theorem
and Proposition~\ref{prop:v61-enlargement-error}
give
\[
\begin{aligned}
\epsilon_{\rm ext}
&\le C(J_{s-1}+\epsilon_{\rm tol}),\\
\Delta_e
&\le C(\delta+\epsilon_{\rm ext})
 \le C(\delta+J_{s-1}+\epsilon_{\rm tol}).
\end{aligned}
\]
These are the estimates
\eqref{eq:v62-compensated-error}
and \eqref{eq:w0-enlargement}, respectively.
In particular, $\Delta_e$ is controlled by
the original measurements and is not
an additional datum.

For $0<\delta$ sufficiently small, choosing
$\epsilon_{\rm tol}=\delta+J_{s-1}$ gives
\begin{equation*}
\Delta_e\le C\delta^{1/s},
\end{equation*}
as established in
Corollary~\ref{cor:w5-noise-transfer}.
All extensions belong to one fixed enlarged
coefficient class, whose bounds and geometry
are independent of $\delta$ and
$\epsilon_{\rm tol}$.
The common-exterior stability estimate can
therefore be applied with uniform constants.
The power $1/s$ changes only the multiplicative
constant in the logarithmic comparison, not
the admissible logarithmic exponent.

Finally, when $\delta=0$, the boundary estimates
give $J_{s-1}=0$.
For every sufficiently small $t>0$, the
extension construction remains valid with
$\epsilon_{\rm tol}=t$, and
\eqref{eq:w0-exact-original-noise} gives
\[
\Delta_e(t)\le Ct.
\]
If $\Delta_e(t)=0$, the common-exterior
exact-data result applies directly.
Otherwise, its positive-error estimate
provides a numerical upper bound for the
unchanged original coefficient discrepancy
that tends to zero as $t\downarrow0$.
This proves the exact-data conclusion without
taking a limit of the extensions themselves.
Proposition~\ref{prop:w0-interior-assembly}
completes both parts of the argument.

\subsection{Notation and conventions}
\label{sec:r2-notation}

The tensor conventions are fixed in \eqref{eq:r2-tensor-conventions}. A
superscript $T$ denotes ordinary matrix transpose or formal transpose for
the complex-bilinear pairing, without complex conjugation. The
three-component physical displacement $u$ is distinguished from the
four-component reduced unknown $\boldsymbol z$ in
\eqref{eq:r2-reduced-unknown}. The elastic traction uses the outward normal,
whereas boundary-layer coordinates are directed inward.

For a normalized null direction $\theta=e_1+ie_2$,
the complex leaf coordinate $z$ and transverse
coordinate $t_\perp$ are defined in
\eqref{eq:w1-leaf-coordinates}.
The affine spinor coordinates $\zeta$ and $\upsilon$
are defined in \eqref{eq:r2-spinor-charts}.
The Fourier projections $P_+$ and $P_-$ act on
the boundaries of the leaf disks, as in
\eqref{eq:r2-circle-projections}.
The fixed projection $\Pi$ in
\eqref{eq:w3-fixed-Pi} acts on endomorphism sections
over the projective space of complex null directions
at each fixed physical point $x$.
Thus the projected field $H_0=\Pi H$, defined in
\eqref{eq:r2-transport-projection-definitions},
is holomorphic in the null direction, whereas
the Cauchy extensions $R_{C,a}$ are holomorphic
in the leaf coordinate $z$.

The CGO parameter is $h>0$, with $\tau=h^{-1}$.
We use $\varsigma$ for the transverse localization
width, $\epsilon$ for the boundary-layer scale,
and $\epsilon_{\rm tol}$ for the positive tolerance
in the paired extension construction.
Their respective roles are specified in
\eqref{eq:w1-test-family},
\eqref{eq:v63-physical-tests}, and
\eqref{eq:w5-extension-input}.
The integer $m\ge4$ indexes the regularity in
the interior estimates.

For the common-exterior problem, set
\begin{equation}\label{eq:r2-common-error-definitions}
\begin{gathered}
\eta_D=h+\delta_Dh^{-2}e^{C_0/h},
\qquad
\beta_m=\frac{m-3}{2m},\\
\mathfrak d_D=
\|\lambda_1-\lambda_2\|_{L^\infty(D)}
+
\|\mu_1-\mu_2\|_{L^\infty(D)}.
\end{gathered}
\end{equation}
Here $\delta_D$ is defined in
\eqref{eq:w0-domain-error}, and $C_0$ is the
fixed constant from
Theorem~\ref{thm:w1-all-moments}.
The Fourier error $\omega_{\rm F}$ and rigidity
error $\varepsilon_{\rm rig}$ are specified in
\eqref{eq:r2-fourier-error} and
\eqref{eq:w4-rigidity-input}, respectively.
Unless otherwise indicated, $C$ denotes a positive
constant depending on the fixed geometry, a priori
bounds, and explicitly fixed auxiliary parameters,
and may change from line to line.

\section{Physical identities and the common-exterior framework}
\label{sec:w1-reduction}

This section fixes the physical variational pairing, compares the elastic DN
maps defined by \eqref{eq:w0-dn-map} on nested domains, and establishes the
exact reduction. Under the common-collar hypothesis, the Betti identity
expresses a reduced volume pairing in terms of displacement and traction
data.

\subsection{Physical variational identities and uniform energy bounds}
\label{sec:physical-preliminaries}

For any fixed smooth domain $D$, let $\varepsilon(u)$ be the strain in
\eqref{eq:w0-physical-operator}. Define the bilinear and Hermitian energies by
\begin{equation}\label{eq:w5-physical-form}
\begin{split}
 a_D(u,v)&=\int_D\{\lambda\operatorname{div}u\operatorname{div}v
                     +2\mu\varepsilon(u):\varepsilon(v)\}\,\mathrm dx,
 \qquad \mathfrak a_D(u,v)=a_D(u,\overline v).
\end{split}
\end{equation}
The form $a_D$ in \eqref{eq:w5-physical-form} is complex bilinear,
and $\mathfrak a_D$ is Hermitian. The elastic Betti identities use $a_D(u,v)$,
whereas coercive energy estimates use
$\mathfrak a_D(u,u)=a_D(u,\overline u)$.
Both forms act on $H^1(D;\mathbb C^3)$. The equation is the static
elasticity equation $\operatorname{div}\sigma_{\lambda,\mu}(u)=0$.
For the solution $u_f$ with trace $f$, the DN map defined
as in \eqref{eq:w0-dn-map} on $D$ satisfies
\begin{equation}\label{eq:w5-physical-dn}
 \langle\Lambda^D f,g\rangle=a_D(u_f,v_g),\qquad
 f,g\in H^{1/2}(\partial D;\mathbb C^3).
\end{equation}
Here $\langle\cdot,\cdot\rangle$ denotes the complex-bilinear duality
between $H^{-1/2}(\partial D;\mathbb C^3)$ and
$H^{1/2}(\partial D;\mathbb C^3)$, and $v_g$ is any $H^1$ lifting of $g$.
The weak equation makes it independent of the lifting and agrees with
traction for the outward unit normal. The original-domain discrepancy remains $\delta$ from
\eqref{eq:w0-measured-error}; the domain-dependent discrepancy is
$\delta_D$ from \eqref{eq:w0-domain-error}.

For a symmetric complex matrix $E$, define its trace-free part by
\begin{equation}\label{eq:r3-energy-deviator}
E_0=E-\tfrac13\operatorname{tr}(E)I_3.
\end{equation} The priors
\eqref{eq:w0-interior-class} give
\begin{equation}\label{eq:r2-convex-energy}
 \begin{aligned}
 \lambda|\operatorname{tr}E|^2+2\mu|E|^2
 &=2\mu|E_0|^2+\frac{3\lambda+2\mu}{3}|\operatorname{tr}E|^2
 \ge c_0|E|^2.
 \end{aligned}
\end{equation}
Zero extension, Plancherel's identity and Poincar\'e's inequality yield
\begin{equation}\label{eq:r2-energy-coercivity}
 \begin{gathered}
 2\|\varepsilon(w)\|_{L^2(D)}^2
   =\|\nabla w\|_{L^2(D)}^2+\|\operatorname{div}w\|_{L^2(D)}^2,\\
 \mathfrak a_D(w,w)\ge c_D\|w\|_{H^1(D)}^2,
 \qquad w\in H_0^1(D;\mathbb C^3).
 \end{gathered}
\end{equation}
Applying \eqref{eq:r2-energy-coercivity} to the zero-trace difference between
the solution and a bounded lifting of its trace, we obtain
\begin{equation}\label{eq:w5-dirichlet-bound}
 \|u_f\|_{H^1(D)}\le C_D\|f\|_{H^{1/2}(\partial D)}.
\end{equation}
The trace lifting can equivalently be defined using the quotient norm
of the trace of $H^1(D)$; on the fixed smooth boundary that norm is
equivalent to the usual $H^{1/2}$ norm. This observation fixes the norm
used in the physical estimates of Sections~\ref{sec:w1-reduction},
\ref{sec:v63-boundary-jets}, and \ref{sec:w5-extensions}. On $\Omega$ we abbreviate
$\mathfrak a_\Omega$ by
\begin{equation}\label{eq:r2-energy-abbreviation}
 \mathfrak a=\mathfrak a_\Omega.
\end{equation}
A single material is understood when this abbreviation is used.

\subsection{Comparison of DN discrepancies on nested domains}\label{sec:r2-domain-comparison}
To transfer measurements to an enlarged domain, we compare the DN
discrepancies on $\partial\Omega$ and $\partial B_{R_e}$, defined in
\eqref{eq:w0-domain-error}, while retaining the exterior material mismatch.
For the scalar conductivity equation, a formula relating the original
and enlarged-domain DN maps is given in
\cite[Section~2]{GarciaZhang2013}.

Fix a ball $B_{R_e}$ with $\overline\Omega\Subset B_{R_e}$. The comparison
uses only bounded strongly convex material values and allows an exterior
mismatch at the inner boundary.

\begin{proposition}\label{prop:v61-enlargement-error}
Let two material pairs on the fixed $B_{R_e}$ be uniformly bounded and
strongly convex, and let $\delta$ be their DN discrepancy on
$\Omega$. Put
\begin{equation}\label{eq:v61-exterior-error}
 \epsilon_{\rm ext}^{B}=
 \|\lambda_1-\lambda_2\|_{L^\infty(B_{R_e}\setminus\overline\Omega)}+
 \|\mu_1-\mu_2\|_{L^\infty(B_{R_e}\setminus\overline\Omega)}.
\end{equation}
Then the enlarged-domain discrepancy satisfies
\begin{equation}\label{eq:v61-enlargement-bound}
 \Delta_e\le
 C(\delta+\epsilon_{\rm ext}^{B}).
\end{equation}
Here $\delta$ and $\Delta_e$ are defined in
\eqref{eq:w0-measured-error} and \eqref{eq:w0-domain-error}, respectively. The constant depends only on the two fixed domains, the upper material
bounds and the strong-convexity lower bound.
\end{proposition}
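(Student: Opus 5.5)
We bound the bilinear pairing $\langle(\Lambda_1^{B}-\Lambda_2^{B})f,g\rangle$ for $f,g\in H^{1/2}(\partial B_{R_e};\C^3)$, writing $B=B_{R_e}$ and $\Lambda_j^B=\Lambda^{B}_{\lambda_j,\mu_j}$. Let $u_j$ solve the elastic Dirichlet problem on $B$ for material $j$ with trace $f$. Let $v_2$ solve the problem for material $2$ with trace $g$; since the material is symmetric, $v_2$ also serves as the adjoint solution. The argument splits into an interior part, controlled by $\delta$, and an exterior part, controlled by $\epsilon_{\rm ext}^B$.

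\emph{Step 1: the Alessandrini-type identity on $B$.} By \eqref{eq:w5-physical-dn} and the symmetry of $a_B$,
\[
\langle(\Lambda_1^B-\Lambda_2^B)f,g\rangle
=a_B^{(1)}(u_1,v_2)-a_B^{(2)}(u_2,v_2)
=a_B^{(1)}(u_1,v_2)-a_B^{(2)}(u_1,v_2),
\]
where the second equality uses that $u_1-u_2\in H^1_0(B)$ and that $v_2$ is weakly $L_{\lambda_2,\mu_2}$-harmonic. We split the integral into $B\setminus\overline\Omega$ and $\Omega$. The exterior part is
\[
\int_{B\setminus\overline\Omega}\bigl\{(\lambda_1-\lambda_2)\Div u_1\Div v_2+2(\mu_1-\mu_2)\varepsilon(u_1):\varepsilon(v_2)\bigr\}\,dx .
\]
By \eqref{eq:w5-dirichlet-bound} on $B$ with constants uniform in the material bounds, this is at most $C\epsilon_{\rm ext}^B\|f\|_{H^{1/2}}\|g\|_{H^{1/2}}$.

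\emph{Step 2: the interior part.} Since $u_1$ and $v_2$ are solutions on $\Omega$ for materials $1$ and $2$, with traces $f_\Omega=u_1|_{\partial\Omega}$ and $g_\Omega=v_2|_{\partial\Omega}$, the integral over $\Omega$ of the coefficient difference equals $a^{(1)}_\Omega(u_1,v_2)-a^{(2)}_\Omega(u_1,v_2)$. To convert this into $\langle(\Lambda_1^\Omega-\Lambda_2^\Omega)f_\Omega,g_\Omega\rangle$ we need the second solution's Dirichlet data to coincide, which is not the case here. We repair this as follows. Let $w$ solve the material-$2$ problem on $\Omega$ with trace $f_\Omega$. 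Then
\[
a^{(1)}_\Omega(u_1,v_2)-a^{(2)}_\Omega(u_1,v_2)
=\langle\Lambda_1^\Omega f_\Omega,g_\Omega\rangle-a^{(2)}_\Omega(u_1,v_2),
\]
and $a^{(2)}_\Omega(u_1,v_2)=a^{(2)}_\Omega(w,v_2)=\langle\Lambda_2^\Omega f_\Omega,g_\Omega\rangle$, because $u_1-w\in H^1_0(\Omega)$ and $v_2$ solves the material-$2$ equation in $\Omega$. Hence the interior term equals $\langle(\Lambda_1^\Omega-\Lambda_2^\Omega)f_\Omega,g_\Omega\rangle$ and is bounded by $\delta\|f_\Omega\|_{H^{1/2}(\partial\Omega)}\|g_\Omega\|_{H^{1/2}(\partial\Omega)}$.

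\emph{Step 3: norms.} By the trace theorem and \eqref{eq:w5-dirichlet-bound} on $B$, we have $\|f_\Omega\|_{H^{1/2}(\partial\Omega)}\le C\|u_1\|_{H^1(B)}\le C\|f\|_{H^{1/2}(\partial B)}$, and likewise for $g_\Omega$. Combining the three steps and taking the supremum over $f,g$ gives \eqref{eq:v61-enlargement-bound}. Every constant comes from Korn/coercivity via \eqref{eq:r2-convex-energy}--\eqref{eq:w5-dirichlet-bound} and from the trace theorem on the two fixed domains, so the constant depends only on the stated data.

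The main obstacle is Step 2. The naive split leaves a cross-material pairing on $\Omega$ whose traces come from different solutions, and it is the auxiliary solution $w$ that identifies this pairing exactly with the DN difference on $\partial\Omega$. A further point needs checking: the rewriting in Step 2 uses the weak DN definition on $\Omega$, which must hold for traces that are merely $H^{1/2}$ restrictions of $B$-solutions. The independence of the lifting stated after \eqref{eq:w5-physical-dn} settles this.
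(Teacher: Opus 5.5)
Your proposal is correct and follows the paper's proof. The paper uses the same three ingredients: the Alessandrini identity on $B_{R_e}$, obtained by replacing the first argument of the material-two form by the material-two solution with the same trace; the exact split into an $\Omega$ part, identified with $\langle(\Lambda_1^\Omega-\Lambda_2^\Omega)f,g\rangle$, and an exterior part; and Cauchy--Schwarz with the uniform Dirichlet and trace bounds. Your auxiliary solution $w$ simply makes explicit the replacement step that the paper applies on $\Omega$ without comment.
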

\begin{proof}
Let $u_1,u_2$ solve the two problems on $B_{R_e}$ with outer traces $F,G$,
respectively. By the definition in \eqref{eq:w5-physical-dn}, symmetry of
the bilinear energy and the weak equations,
\begin{equation}\label{eq:w5-physical-alessandrini}
\begin{aligned}
 &\langle(\Lambda_1^{B_{R_e}}-\Lambda_2^{B_{R_e}})F,G\rangle\\
 &\quad=\int_{B_{R_e}}(\lambda_1-\lambda_2)
       \operatorname{div}u_1\operatorname{div}u_2\,\mathrm dx\\
 &\qquad+2\int_{B_{R_e}}(\mu_1-\mu_2)
       \varepsilon(u_1):\varepsilon(u_2)\,\mathrm dx.
\end{aligned}
\end{equation}
More explicitly, $a_1(u_1,u_2)=\langle\Lambda_1F,G\rangle$.
In $a_2(u_1,u_2)$ the first argument may be replaced by the material-two
solution with trace $F$, because its difference from $u_1$ has zero trace
and $u_2$ solves the material-two equation. This gives the second DN
pairing and proves the identity without a boundary-matching assumption.

The restrictions $u_j|_\Omega$ solve their respective equations. Define
\begin{equation}\label{eq:r3-inner-displacement-traces}
f=\operatorname{Tr}_{\partial\Omega}u_1,\qquad g=\operatorname{Tr}_{\partial\Omega}u_2.
\end{equation}
Then the integral over $\Omega$
in \eqref{eq:w5-physical-alessandrini} is
$\langle(\Lambda_1^\Omega-\Lambda_2^\Omega)f,g\rangle$.
We therefore have the exact split identity
\begin{equation}\label{eq:v61-enlargement-identity}
\begin{aligned}
 \langle(\Lambda_1^{B_{R_e}}-\Lambda_2^{B_{R_e}})F,G\rangle
 ={}&\langle(\Lambda_1^\Omega-\Lambda_2^\Omega)f,g\rangle\\
 &+\int_{B_{R_e}\setminus\overline\Omega}
       (\lambda_1-\lambda_2)\operatorname{div}u_1\operatorname{div}u_2\,\mathrm dx\\
 &+2\int_{B_{R_e}\setminus\overline\Omega}
       (\mu_1-\mu_2)\varepsilon(u_1):\varepsilon(u_2)\,\mathrm dx.
\end{aligned}
\end{equation}
The uniform Dirichlet estimate \eqref{eq:w5-dirichlet-bound} and the
trace bound on the fixed inner domain give
$\|u_1\|_{H^1(B_{R_e})}+\|f\|_{H^{1/2}(\partial\Omega)}
\le C\|F\|_{H^{1/2}(\partial B_{R_e})}$, and the analogous estimate
for $u_2,g,G$. Cauchy--Schwarz bounds the two exterior terms by
$C\epsilon_{\rm ext}^{B}\|F\|_{H^{1/2}}\|G\|_{H^{1/2}}$.
The inner term is bounded by the same expression with $\delta$.
All pairings in this proof are bilinear; Hermitian energy was used only for
the solution norm estimates. Taking the supremum over $F,G\in H^{1/2}(\partial
B_{R_e};\mathbb C^3)$ with $\|F\|_{H^{1/2}}=\|G\|_{H^{1/2}}=1$, we obtain
\eqref{eq:v61-enlargement-bound} by the definition of $\Delta_e$ in
\eqref{eq:w0-domain-error}. This completes the proof.
\end{proof}

\begin{remark}
This comparison separates two sources of enlarged-domain error:
the measured discrepancy inside $\Omega$ and the material mismatch
outside it. Thus the paired extension needs to make only the exterior
coefficient values close; the comparison itself does not require their
boundary derivatives to agree.
\end{remark}

The identity \eqref{eq:w5-physical-alessandrini}, applied on a single
fixed domain $D$, gives the forward bound
used by the sharpness examples:
\begin{equation}\label{eq:w5-forward-lipschitz}
 \|\Lambda_1^D-\Lambda_2^D\|_{H^{1/2}\to H^{-1/2}}
 \le C_D(\|\lambda_1-\lambda_2\|_\infty+\|\mu_1-\mu_2\|_\infty).
\end{equation}
Indeed, \eqref{eq:w5-dirichlet-bound} bounds each solution, and
Cauchy--Schwarz bounds the two terms in
\eqref{eq:w5-physical-alessandrini}.

\subsection{Conventions, coefficient bounds, and operator types}\label{sec:r2-reduction-definitions}

The four-component reduction has Laplacian principal part. A related
reduction obtained by adjoining the divergence of the displacement appears
in \cite{AngIkehataTrongYamamoto1998}. The reduced unknown is defined in
\eqref{eq:r2-reduced-unknown}; the material variables and operator mapping
properties are specified below before the factorization is stated.

For an open set $O\subset\mathbb R^3$, use the physical operator,
stress and strain from \eqref{eq:w0-physical-operator}, with
$u:O\to\mathbb C^3$ and the component conventions
\eqref{eq:r2-tensor-conventions}. Superscript $T$ denotes the ordinary
matrix or formal differential transpose for the complex-bilinear pairing.
The Hermitian energy is $\mathfrak a_D$ from \eqref{eq:w5-physical-form}.

For an integer $r\ge2$, assume real coefficients with
$\|\lambda\|_{C^r(O)}+\|\mu\|_{C^r(O)}\le M$, $M\ge1$, and
$\mu\ge c_0>0$, $3\lambda+2\mu\ge c_0$. Define
\begin{equation}\label{eq:w1-material-variables}
 \kappa=\frac{\lambda+\mu}{\lambda+2\mu},\quad
 c=\mu^{-1/2},\quad \ell_\mu=\log\mu,\quad g_\mu=\nabla\ell_\mu,
 \quad d=\kappa\sqrt\mu,\quad e=\tfrac12(3\kappa-2)\sqrt\mu.
\end{equation}
All these are scalar fields except $g_\mu:O\to\mathbb R^3$.
The following bounds explicitly control the denominators:
\begin{equation}\label{eq:w1-denominators}
 \begin{gathered}
 5c_0/3\le\lambda+2\mu\le3M,\qquad
 1-\kappa\ge c_0/(3M),\qquad
 \kappa-1/4\ge c_0/(12M),\\
 M^{-1/2}\le c\le c_0^{-1/2},\qquad
 \lambda=\mu\frac{2\kappa-1}{1-\kappa}.
 \end{gathered}
\end{equation}
Indeed, $\lambda+2\mu=((3\lambda+2\mu)+4\mu)/3$,
$1-\kappa=\mu/(\lambda+2\mu)$, and
$4\kappa-1=(3\lambda+2\mu)/(\lambda+2\mu)$.

Set
\begin{equation}\label{eq:w1-reduced-coefficients}
 \begin{aligned}
 F_\mu&=2c[\nabla^2\ell_\mu-g_\mu\otimes g_\mu
                    +( |g_\mu|^2-\Delta\ell_\mu)I_3],\\
 V_\mu&=-\nabla^2\ell_\mu-\tfrac12g_\mu\otimes g_\mu
               -(\tfrac12\Delta\ell_\mu+\tfrac14|g_\mu|^2)I_3,\\
 q_\mu&=\Delta\ell_\mu-|g_\mu|^2.
 \end{aligned}
\end{equation}
$F_\mu,V_\mu$ are symmetric $3\times3$ matrix fields, and $q_\mu$ is
scalar. We use the physical transformation in \cite[Section~3, p.~6]{ER}
in the following normalization:

\begin{equation}\label{eq:w1-T-S-M}
 \begin{aligned}
 T_\mu(v,f)&=cv+\mu^{-1}\nabla f-f\nabla(\mu^{-1}),\\
 S_{\mu,\kappa}(p,q)&=\sqrt\mu\,p+\nabla\big(q/(1-\kappa)\big),\\
 \mathcal M_{\mu,\kappa}\binom vf
 &=\binom{\Delta v-g_\mu\operatorname{div}v+F_\mu\nabla f
                      +V_\mu v+F_\mu g_\mu f}
 {\Delta f+d\operatorname{div}v-e\,g_\mu\cdot v+q_\mu f}.
 \end{aligned}
\end{equation}
Here $v:O\to\mathbb C^3$, $f:O\to\mathbb C$, and
\begin{equation}\label{eq:r2-reduced-unknown}
 \boldsymbol z=\binom vf:O\longrightarrow\mathbb C^4,\qquad
 T=T_\mu,\quad S=S_{\mu,\kappa},\quad
 \mathcal M=\mathcal M_{\mu,\kappa},\quad L=L_{\lambda,\mu}.
\end{equation}
The operators $T$ and $S$ in \eqref{eq:w1-T-S-M} are $3\times4$
first-order differential operators. On any fixed bounded smooth
$O'\Subset O$, the coefficient bounds \eqref{eq:w1-denominators}
and the product rule give bounded maps
\begin{equation}\label{eq:r2-physical-mappings}
 \begin{gathered}
 T,S:H^2(O';\mathbb C^4)\longrightarrow H^1(O';\mathbb C^3),\\
 \mathcal M:H^2(O';\mathbb C^4)\longrightarrow L^2(O';\mathbb C^4),\qquad
 S:L^2(O';\mathbb C^4)\longrightarrow H^{-1}(O';\mathbb C^3).
 \end{gathered}
\end{equation}

Let $e_1,e_2,e_3$ denote the coordinate vectors in $\R^3$.
For $h>0$ and a nonzero constant null vector $\theta\in\C^3$, set
\begin{equation}\label{eq:r2-phase-parameters}
\begin{gathered}
\theta\cdot\theta=0,\qquad \varphi=\theta\cdot x,\qquad
D_\theta=\theta\cdot\nabla_x,\qquad e_\theta=(\theta,0)^T,\qquad
\sigma_{\rm ph}\in\{+1,-1\}.
\end{gathered}
\end{equation}
We use the exponential $e^{\sigma_{\rm ph}\varphi/h}$,
with no additional factor of $i$.

\begin{proposition}
\label{prop:w1-exact-reduction}
Under the coefficient assumptions of this subsection, let
$L,T,S,\mathcal M$ be defined by \eqref{eq:w1-T-S-M} and
\eqref{eq:r2-reduced-unknown}, and fix a bounded smooth
$O'\Subset O$. The following identities hold in the sense
of distributions.
\begin{enumerate}
\item[(i)] The elastic operator admits the factorization
\begin{equation}\label{eq:w1-factorization}
\begin{gathered}
LT=S\mathcal M,\qquad
\mathcal M=\Delta I_4+\sum_{k=1}^3A_k\partial_k+B,\\
A_k=\begin{pmatrix}
-g_\mu e_k^T&F_\mu e_k\\
d e_k^T&0
\end{pmatrix},\qquad
B=\begin{pmatrix}
V_\mu&F_\mu g_\mu\\
-e\,g_\mu^T&q_\mu
\end{pmatrix}.
\end{gathered}
\end{equation}
For $C^2$ Lam\'e parameters, this factorization holds in
$H^{-1}_{\rm loc}(O;\C^3)$ for every
$\boldsymbol z\in H^2_{\rm loc}(O;\C^4)$.
For every integer $r\ge2$ under the stated $C^r$ bounds,
\begin{equation}\label{eq:r2-reduced-coefficient-bound}
\sum_{k=1}^3\|A_k\|_{C^{r-2}(O')}
+\|B\|_{C^{r-2}(O')}
\le C(r,M,c_0,O').
\end{equation}

\item[(ii)] The formal transpose $S^T$ and the composite operator
\begin{equation}\label{eq:r2-composite-K}
\mathcal K=S^TT:
H^2(O';\C^4)\longrightarrow L^2(O';\C^4)
\end{equation}
are given by
\begin{equation}\label{eq:w1-ST-K}
\begin{gathered}
S^Tu=\binom{\sqrt\mu\,u}{-\operatorname{div}u/(1-\kappa)},\\
\mathcal K(v,f)=
\binom{v+c\nabla f+cf g_\mu}
{-\big[c\operatorname{div}v-\tfrac12c\,g_\mu\cdot v
+\mu^{-1}\Delta f+\mu^{-1}q_\mu f\big]/(1-\kappa)}.
\end{gathered}
\end{equation}
They satisfy the formal transpose identity
\begin{equation}\label{eq:w1-adjoint-intertwining}
\mathcal M^TS^T=T^TL.
\end{equation}

\item[(iii)] With the phase conventions in
\eqref{eq:r2-phase-parameters}, one has
\begin{equation}\label{eq:w1-phase-identities}
\begin{gathered}
e^{-\sigma_{\rm ph}\varphi/h}\mathcal M
e^{\sigma_{\rm ph}\varphi/h}
=\mathcal M+\sigma_{\rm ph}h^{-1}(2D_\theta+A_\theta),
\qquad A_\theta=\sum_{k=1}^3\theta_kA_k,\\
e^{-\sigma_{\rm ph}\varphi/h}\mathcal K
e^{\sigma_{\rm ph}\varphi/h}
=\mathcal K+\sigma_{\rm ph}h^{-1}J_\theta,\\
J_\theta(v,f)=\binom{cf\theta}
{-(c\theta\cdot v+2\mu^{-1}D_\theta f)/(1-\kappa)}.
\end{gathered}
\end{equation}
The first-order operators $S^T$ and $J_\theta$ define bounded maps
\begin{equation}\label{eq:r2-lift-mappings}
\begin{gathered}
S^T:H^1(O';\C^3)\longrightarrow L^2(O';\C^4),\\
J_\theta:H^1(O';\C^4)\longrightarrow L^2(O';\C^4).
\end{gathered}
\end{equation}
\end{enumerate}
\end{proposition}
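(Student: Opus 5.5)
The proof is a direct symbolic computation, organized so that the bookkeeping stays manageable. I will prove the three parts in the order (i), (ii), (iii).

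\textbf{Part (i).} First expand $L_{\lambda,\mu}w$ for a general $C^1$ field $w$ in the form
\[
L w=\mu\Delta w+(\lambda+\mu)\nabla\operatorname{div}w+(\nabla\lambda)\operatorname{div}w+(\nabla w+\nabla w^T)\nabla\mu .
\]
Then substitute $w=Tz=cv+\mu^{-1}\nabla f-f\nabla(\mu^{-1})$. It is convenient to rewrite $\lambda+\mu=\kappa(\lambda+2\mu)$ and $\lambda+2\mu=\mu/(1-\kappa)$, and to use $\nabla(\mu^{-1})=-\mu^{-1}g_\mu$ and $c=\mu^{-1/2}$. Every derivative of $c$ or $\mu^{-1}$ can then be expressed through $g_\mu$ and $\nabla^2\ell_\mu$.

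The identity is checked separately on the two pieces $z=(v,0)$ and $z=(0,f)$, since both sides are linear in $z$.

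For the $f$-piece, note that $\mu^{-1}\nabla f-f\nabla\mu^{-1}$ is built so that its symmetric gradient contains $\mu^{-1}\nabla^2 f$ plus lower-order terms. After grouping, $L T(0,f)$ should take the form $\sqrt\mu\,(F_\mu\nabla f+F_\mu g_\mu f)+\nabla\bigl((\Delta f+q_\mu f)/(1-\kappa)\bigr)$.

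For the $v$-piece, the contribution is $\sqrt\mu(\Delta v-g_\mu\operatorname{div}v+V_\mu v)+\nabla\bigl((d\operatorname{div}v-e\,g_\mu\cdot v)/(1-\kappa)\bigr)$.

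Reading off the coefficients gives $A_k$ and $B$. The main obstacle is exactly this matching of lower-order terms: the third-order terms in $f$ must cancel, and the zeroth-order coefficients must reproduce $V_\mu$, $F_\mu$ and $q_\mu$. To keep the computation under control I will work with the scalar potential $\psi=f/\mu$, so that $\mu^{-1}\nabla f-f\nabla\mu^{-1}=\nabla\psi+2\psi\,\nabla\ell_\mu$ up to rearrangement, and use $L\nabla\psi=\nabla((\lambda+2\mu)\Delta\psi)+\dots$. Because the transformation is taken from \cite{ER}, the verification amounts to confirming the normalization there.

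The distributional statement is routine. For $z\in H^2_{\rm loc}$ and $C^2$ coefficients, $Tz\in H^1_{\rm loc}$ and $LTz\in H^{-1}_{\rm loc}$. The identity holds for smooth $z$, and both sides are continuous $H^2_{\rm loc}\to H^{-1}_{\rm loc}$ (the right side via the mapping $S:L^2\to H^{-1}$ in \eqref{eq:r2-physical-mappings}), so it extends by density. The bound \eqref{eq:r2-reduced-coefficient-bound} follows because $A_k$ and $B$ involve at most second derivatives of $\ell_\mu$, smooth functions of $\mu^{\pm1/2}$, and $\kappa$. The denominators are controlled by \eqref{eq:w1-denominators}, and the Leibniz and chain rules in $C^{r-2}$ then give the constant $C(r,M,c_0,O')$.

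\textbf{Part (ii).} The formal transpose of $S(p,q)=\sqrt\mu\,p+\nabla(q/(1-\kappa))$ under the bilinear pairing is obtained by integrating by parts against $u$. This gives $S^Tu=(\sqrt\mu\,u,\,-\operatorname{div}u/(1-\kappa))$. Composing with $T$ yields $\mathcal K$. In the first component, $\sqrt\mu\,c=1$ and $\sqrt\mu(\mu^{-1}\nabla f+\mu^{-1}fg_\mu)=c\nabla f+cfg_\mu$. In the second component one computes $\operatorname{div}(cv)=c\operatorname{div}v-\tfrac12 c\,g_\mu\cdot v$ and $\operatorname{div}(\mu^{-1}\nabla f+\mu^{-1}f g_\mu)=\mu^{-1}(\Delta f+q_\mu f)$, where the $g_\mu\cdot\nabla f$ terms cancel. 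The intertwining identity \eqref{eq:w1-adjoint-intertwining} is simply the formal transpose of $LT=S\mathcal M$, using $L^T=L$, which holds by the symmetry of $a_D$.

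\textbf{Part (iii).} Conjugation by $e^{\sigma\varphi/h}$ replaces $\partial_k$ by $\partial_k+\sigma\theta_k/h$. Since $\theta\cdot\theta=0$, the Laplacian yields $\Delta+2\sigma h^{-1}D_\theta$ with no $h^{-2}$ term, and the first-order part contributes $\sigma h^{-1}A_\theta$. The same substitution applied to $\mathcal K$ produces $J_\theta$: the $\Delta f$ term again has no $h^{-2}$ term by nullity, leaving $2\mu^{-1}D_\theta f$. The mapping properties \eqref{eq:r2-lift-mappings} follow because both operators are first order with bounded $C^1$ coefficients.
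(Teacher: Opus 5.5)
Your proposal is correct and follows the paper's proof: the same split of $LT=S\mathcal M$ into the $v$- and $f$-pieces with the correct target forms (the paper organizes the $f$-piece through $a=\mu^{-1}$ and $p_f=\Delta f-(\Delta a/a)f$ instead of your $\psi=f/\mu$, which is only a bookkeeping difference), integration by parts for $S^T$ and $\mathcal K$, transposition with $L^T=L$, the substitution $\partial_k\mapsto\partial_k+\sigma_{\rm ph}h^{-1}\theta_k$ with $\theta\cdot\theta=0$, and Leibniz plus density for the coefficient bounds and the weak setting. The one point to tighten is that for merely $C^2$ Lam\'e parameters the field $f\nabla(\mu^{-1})$ is only $C^1$ and third derivatives of $\mu$ enter the intermediate algebra, so ``the identity holds for smooth $z$'' needs justification, either by also approximating $(\lambda,\mu)$ in $C^2$, as the paper does, or by a careful distributional product rule.
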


\begin{proof}
The scalar and vector identities \eqref{eq:w1-app-scalar} and
\eqref{eq:w1-app-vector} give $LT=S\mathcal M$ with the
coefficients in \eqref{eq:w1-factorization}.
For compactly supported smooth $(p,q)$ and $u$,
\begin{equation}\label{eq:r2-transpose-pairing}
\begin{aligned}
\int_O S(p,q)\cdot u\,dx
&=\int_O\left(\sqrt\mu\,p\cdot u-
q\frac{\operatorname{div}u}{1-\kappa}\right)dx\\
&=\int_O(p,q)\cdot S^Tu\,dx.
\end{aligned}
\end{equation}
This proves the formula for $S^T$ in \eqref{eq:w1-ST-K}.
Substitution of $T$ from \eqref{eq:w1-T-S-M} gives the
formula for $\mathcal K$. Transposing $LT=S\mathcal M$
and using $L^T=L$ yields \eqref{eq:w1-adjoint-intertwining}.

The substitution
\begin{equation}\label{eq:r2-phase-substitution}
e^{-\sigma_{\rm ph}\varphi/h}\partial_k
e^{\sigma_{\rm ph}\varphi/h}
=\partial_k+\sigma_{\rm ph}h^{-1}\theta_k,
\qquad \theta\cdot\theta=0,
\end{equation}
applied to \eqref{eq:w1-factorization} and \eqref{eq:w1-ST-K}
gives \eqref{eq:w1-phase-identities}.
The product rule gives the boundedness in
\eqref{eq:r2-lift-mappings}.

The coefficient formulas \eqref{eq:w1-reduced-coefficients}
involve at most two derivatives of the Lam\'e parameters.
The denominator bounds \eqref{eq:w1-denominators} and
Leibniz' rule therefore give
\eqref{eq:r2-reduced-coefficient-bound}.
For $C^2$ Lam\'e parameters and
$\boldsymbol z\in H^2_{\rm loc}(O;\C^4)$,
\eqref{eq:r2-physical-mappings} and the boundedness of
$L:H^1(O';\C^3)\to H^{-1}(O';\C^3)$ imply
\begin{equation}\label{eq:r2-factorization-weak-spaces}
\begin{gathered}
T\boldsymbol z\in H^1_{\rm loc},\qquad
\mathcal M\boldsymbol z\in L^2_{\rm loc},\\
LT\boldsymbol z-S\mathcal M\boldsymbol z\in H^{-1}_{\rm loc}.
\end{gathered}
\end{equation}
Approximation of the Lam\'e parameters in $C^2$ and of
$\boldsymbol z$ in $H^2_{\rm loc}$ extends the identities
to the stated distributional setting.
\end{proof}

\begin{remark}
The factorization maps a reduced solution
$\boldsymbol z\in H^2_{\rm loc}(O;\C^4)$ to an elastic
displacement $u=T\boldsymbol z$ satisfying $Lu=0$.
For a weak elastic solution $u\in H^1_{\rm loc}(O;\C^3)$,
the formal transpose identity gives the lift
$S^Tu\in L^2_{\rm loc}(O;\C^4)$ satisfying
$\mathcal M^T(S^Tu)=0$ in $\mathcal D'(O;\C^4)$.
These identities relate solutions of the two systems;
they do not, by themselves, identify the unrestricted
boundary map of the reduced system with the measured
elastic Dirichlet-to-Neumann map.
\end{remark}

\subsection{A common background and the paired quotient coordinates}\label{sec:r2-quotient-definitions}

The constant-background first-order term is removed before constructing
paired coordinates on the characteristic quotient and its dual.

Suppose the materials equal a fixed constant pair outside a fixed ball,
and use the material variables in \eqref{eq:w1-material-variables}.
The common background multiplier and transformed operators are
\begin{equation}\label{eq:w1-background}
 d_*=\kappa_*\sqrt{\mu_*},\qquad
 \mathsf R_*(x)=\begin{pmatrix}I_3&0\\-d_*x^T/2&1\end{pmatrix},\qquad
 \widetilde{\mathcal M}=\mathsf R_*^{-1}\mathcal M\mathsf R_*,\qquad
 \widetilde T=T\mathsf R_*,\quad\widetilde S=S\mathsf R_*.
\end{equation}
$\mathsf R_*$ is a $4\times4$ multiplication operator. Since
$\Delta(x\cdot v)=x\cdot\Delta v+2\operatorname{div}v$, the constant
background satisfies $\mathcal M_*\mathsf R_*=\mathsf R_*\Delta$.
The coefficients of $\widetilde{\mathcal M}-\Delta I_4$ thus have fixed compact
support, and their formulas involve at most two derivatives of the
Lam\'e parameters. Explicitly,
\begin{equation}\label{eq:w1-background-coefficients}
 \begin{aligned}
 \widetilde A_k&=\mathsf R_*^{-1}(2\partial_k\mathsf R_*+A_k\mathsf R_*),\\
 \widetilde B&=\mathsf R_*^{-1}
 (\Delta\mathsf R_*+\sum_kA_k\partial_k\mathsf R_*+B\mathsf R_*),\\
 \widetilde S^T&=\mathsf R_*^TS^T,\qquad
 \widetilde{\mathcal K}=\mathsf R_*^T\mathcal K\mathsf R_*,\qquad
 \widetilde J_\theta=\mathsf R_*^TJ_\theta\mathsf R_*.
 \end{aligned}
\end{equation}
For a material pair, define
\begin{equation}\label{eq:r2-contracted-differences}
 A_{\Delta,\theta}=A_{1,\theta}-A_{2,\theta},\qquad
 \widetilde A_{\Delta,\theta}=\widetilde A_{1,\theta}-\widetilde A_{2,\theta},
\end{equation}
where $A_{j,\theta}$ is the contraction in \eqref{eq:w1-phase-identities}.
These are differential compositions; in the last identity $J_\theta$
still differentiates $\mathsf R_*$. For the transformed primal and dual fields
\begin{equation}\label{eq:r2-transformed-fields}
 \widetilde{\boldsymbol z}=\mathsf R_*^{-1}\boldsymbol z,\qquad
 \widetilde y=\mathsf R_*^Ty,
\end{equation}
with $\mathsf R_*$ from \eqref{eq:w1-background}, one has
\begin{equation}\label{eq:w1-exact-covariance}
 \widetilde y^T(\widetilde{\mathcal M}_1-\widetilde{\mathcal M}_2)\widetilde{\boldsymbol z}
   =y^T(\mathcal M_1-\mathcal M_2)\boldsymbol z,\qquad
 \widetilde A_{\Delta,\theta}=\mathsf R_*^{-1}A_{\Delta,\theta}\mathsf R_*.
\end{equation}
The first equality includes all derivatives of $\mathsf R_*$.
The leading transport satisfies the exact operator identity
\begin{equation}\label{eq:w1-background-transport}
 2D_\theta+\widetilde A_\theta
 =\mathsf R_*^{-1}(2D_\theta+A_\theta)\mathsf R_*,\qquad
 \widetilde e_\theta=\mathsf R_*^{-1}e_\theta
  =\binom{\theta}{d_*x\cdot\theta/2}.
\end{equation}
Indeed the first coefficient formula contributes
$2\mathsf R_*^{-1}(D_\theta\mathsf R_*)$ as well as
$\mathsf R_*^{-1}A_\theta\mathsf R_*$. Thus the transformed
characteristic line is preserved. If the paired lifts are
$\widetilde{\mathsf U}=\mathsf R_*^T\mathsf U$ and
$\widetilde{\mathsf V}=\mathsf R_*^{-1}\mathsf V$, their pairing
remains the identity. The quotient connection is unchanged when the spatial derivatives of the
transported lifts $\widetilde{\mathsf U}$ and $\widetilde{\mathsf V}$ are
included, by \eqref{eq:w1-background-transport}. In particular
$\widetilde{\mathcal Y}_2^T\widetilde A_{\Delta,\theta}
\widetilde Z_1=\mathcal Y_2^TA_{\Delta,\theta} Z_1$.

For a normalized null vector $\theta=e_1+i e_2$, where $e_1,e_2$ are
real orthonormal vectors, let
\begin{equation}\label{eq:w1-leaf-coordinates}
 \begin{gathered}
 \nu_{\rm leaf}=e_1\times e_2,\qquad
 z=e_1\cdot x+i e_2\cdot x,\qquad t_\perp=\nu_{\rm leaf}\cdot x,\\
 \partial_{\bar z}=\tfrac12(\partial_{\Re z}+i\partial_{\Im z}),\qquad
 D_\theta=2\partial_{\bar z}.
 \end{gathered}
\end{equation}
The coordinate vectors in this display are local to the normalized
frame, not the coordinate vectors used in \eqref{eq:w1-factorization}.
For $e_\theta$ from \eqref{eq:r2-phase-parameters}, one has
$A_\theta e_\theta=0$. Define the primal quotient and its bilinear dual by
\begin{equation}\label{eq:r2-characteristic-quotient}
 E_\theta=\mathbb C^4/\mathbb Ce_\theta,\qquad
 E_\theta^*=\{(p,\eta)\in\mathbb C^3\times\mathbb C:\theta^Tp=0\}.
\end{equation} Introduce homogeneous spinor
coordinates $[s_0:s_1]\in\mathbb{CP}^1$ and
\begin{equation}\label{eq:w1-spinor}
 \begin{gathered}
 \theta=(s_0^2-s_1^2,i(s_0^2+s_1^2),2s_0s_1)^T,\quad
 Q=\tfrac12\begin{pmatrix}-s_1&-s_0\\i s_1&-i s_0\\s_0&-s_1\end{pmatrix},
 \quad n_\theta=(-s_1,s_0)^T,\\
 \theta^TQ=0,\qquad Qn_\theta=-\theta/2,\qquad
 \mathfrak r_a=-2a^TQ\quad(a\in\mathbb C^3),\qquad
 \mathsf U_\theta=\begin{pmatrix}Q&0\\0&1\end{pmatrix}.
 \end{gathered}
\end{equation}
The matrix $Q$ has size $3\times2$, $\mathfrak r_a$ has size
$1\times2$, and $\mathsf U_\theta$ is a $4\times3$ embedding of
$E_\theta^*$ from \eqref{eq:r2-characteristic-quotient}.
On the two affine spinor charts, set
\begin{equation}\label{eq:r2-spinor-charts}
 \zeta=s_1/s_0,\qquad \upsilon=s_0/s_1=1/\zeta,\qquad
 D_{\rm tr}(\zeta)=\operatorname{diag}(\zeta I_2,1).
\end{equation}
Here $n_\theta$ is a two-vector, never the polynomial degree.
$Q$ has rank two: $Q(a,b)^T=0$ implies
$s_1a=s_0b=s_0a-s_1b=0$, including the chart axes.
On a finite refinement of compact spinor charts select three rows of
$\mathsf U_\theta$ with a nonzero minor. We invert the selected nonzero minor of $\mathsf U_\theta$ and obtain a
holomorphic $4\times3$ matrix $\mathsf V_\theta$ satisfying
\begin{equation}\label{eq:w1-paired-lifts}
 \mathsf U_\theta^T\mathsf V_\theta=I_3,\qquad
 \ker\mathsf U_\theta^T=\mathbb Ce_\theta.
\end{equation}
Each selected determinant is bounded away from zero after shrinking the
compact subchart; the minimum over the finite cover is positive.
This is the additional geometric denominator, independent of materials.
More explicitly, let $q_{12},q_{13},q_{23}$ be the two-row minors of $Q$.
Direct substitution in \eqref{eq:w1-spinor} gives
\begin{equation}\label{eq:w1-minor-bound}
 |q_{12}|^2+|q_{13}|^2+|q_{23}|^2
       =\frac{(|s_0|^2+|s_1|^2)^2}{8}.
\end{equation}
At a unit spinor at least one has modulus at least $1/\sqrt{24}$;
the corresponding three-row minor of $\mathsf U_\theta$ has that
same modulus. Fixed smaller chart neighborhoods therefore give an
explicit geometric lower bound before any material is chosen.
On a holomorphic chart overlap the paired local choices obey
\begin{equation}\label{eq:w1-lift-overlap}
 \mathsf V_0=\mathsf V_\infty D_{\rm tr}^{-T}
                   +e_{\theta_0}\mathsf L,
\end{equation}
where $D_{\rm tr}$ is defined in \eqref{eq:r2-spinor-charts} and
$\mathsf L$ is a holomorphic $1\times3$ row. In fact
$\mathsf U_0=\mathsf U_\infty D_{\rm tr}$, so applying
$\mathsf U_0^T$ to the difference gives zero. Its kernel is the
characteristic line. A fixed nonzero component of $e_{\theta_0}$
on a smaller chart extracts the holomorphic row $\mathsf L$.

Substitution of $y=\mathsf U_\theta(\xi,\eta)^T$ into
$2D_\theta y-A_\theta^Ty=0$ gives
\begin{equation}\label{eq:w1-quotient-connection}
 \Gamma=\begin{pmatrix}n_\theta\mathfrak r_{g_\mu}&2d n_\theta\\
                   -\theta^TF_\mu Q&0\end{pmatrix},\qquad
 \mathcal A=\mathsf U_\theta^TA_\theta\mathsf V_\theta=-\Gamma^T,
 \qquad P=\mathcal A/2.
\end{equation}
Here $\Gamma,\mathcal A,P$ are $3\times3$ connection matrices;
$Q,n_\theta,\mathfrak r_{g_\mu}$ are defined in \eqref{eq:w1-spinor},
and $d,F_\mu$ in \eqref{eq:w1-material-variables} and
\eqref{eq:w1-reduced-coefficients}.
The substitution gives
$A_\theta^T\mathsf U_\theta=-\mathsf U_\theta\Gamma$;
transpose this equality and multiply by $\mathsf V_\theta$.
Thus the primal and dual frame equations are
\begin{equation}\label{eq:w1-quotient-equations}
 (2D_\theta+\mathcal A)X=0,\qquad
 (2D_\theta+\Gamma)Y=0,
 \quad\text{or}\quad
 2\partial_{\bar z}X+PX=0,\quad
 2\partial_{\bar z}Y-P^TY=0.
\end{equation}
The matrices $X,Y$ are $3\times3$ primal and dual quotient frames,
respectively; $D_\theta$ is defined in \eqref{eq:r2-phase-parameters}.
For any given primal solution $X$, the defect of $\mathsf V_\theta X$
lies in $\mathbb Ce_\theta$. Choose a spatially constant row on the chart satisfying
\begin{equation}\label{eq:r2-characteristic-complement-row}
 \ell_\theta\in\mathbb C^{1\times4},\qquad \ell_\theta e_\theta=1.
\end{equation}
Solve
\begin{equation}\label{eq:w1-primal-lift}
 \begin{gathered}
 2D_\theta\alpha_{\rm lift}=-\ell_\theta(2D_\theta+A_\theta)\mathsf V_\theta X,
 \qquad Z=\mathsf V_\theta X+e_\theta\alpha_{\rm lift},\\
 (2D_\theta+A_\theta)Z=0,\qquad \mathsf U_\theta^TZ=X,\qquad
 \mathsf U_\theta^TA_{\Delta,\theta} Z_1=2(P_1-P_2)X_1.
 \end{gathered}
\end{equation}
Here $Z$ is a $4\times3$ ambient amplitude, $\alpha_{\rm lift}$ is a
$1\times3$ row, and $A_{\Delta,\theta}$ is defined in
\eqref{eq:r2-contracted-differences}.
The construction of $X$ with uniform finite-order bounds uses
Theorem~\ref{thm:finite-frames}. The row $\alpha_{\rm lift}$ has size $1\times3$. Its three scalar
equations are solved on fixed leaf disks as described in
Section~\ref{sec:w1-physical-moments}.

With $\upsilon$ and $D_{\rm tr}$ as in \eqref{eq:r2-spinor-charts}, the representatives
$(1,\zeta)$ and $(\upsilon,1)$ give
\begin{equation}\label{eq:w1-chart-law}
 \begin{gathered}
 Q_0=\zeta Q_\infty,\quad \theta_0=\zeta^2\theta_\infty,\quad
 \Gamma_0=\zeta^2D_{\rm tr}^{-1}\Gamma_\infty D_{\rm tr},\\
 H^{(0)}=D_{\rm tr}^{-1}H^{(\infty)}D_{\rm tr},\qquad
 G^{(0)}=D_{\rm tr}^{T}G^{(\infty)}D_{\rm tr}^{-T},\qquad H=G^{-T}.
 \end{gathered}
\end{equation}
These are coordinate laws for a given endomorphism, not an assertion
that separately constructed transports agree. When
$\theta_{\rm n}=a\theta_{\rm h}$ for a positive, spatially constant
normalization factor, define
\begin{equation}\label{eq:r2-positive-rescaling}
 S_a=\operatorname{diag}(\sqrt a I_2,1),\qquad a>0.
\end{equation}
Then
\begin{equation}\label{eq:w1-normalization}
 \mathsf U_{\rm n}=\mathsf U_{\rm h}S_a,\qquad
 \Gamma_{\rm n}=aS_a^{-1}\Gamma_{\rm h}S_a,\qquad
 H_{\rm h}=S_aH_{\rm n}S_a^{-1}.
\end{equation}
The factor $a$ need not be holomorphic. Antiholomorphic derivatives must
be taken after returning to holomorphic coordinates and at fixed physical
$x$. In particular,
\begin{equation}\label{eq:w1-chart-dbar}
 \partial_{\bar\zeta}H^{(0)}
 =-\bar\zeta^{-2}D_{\rm tr}^{-1}
    (\partial_{\bar\upsilon}H^{(\infty)})D_{\rm tr}.
\end{equation}
For a spatially constant $b\in\mathbb C^*$, the characteristic
line and affine complex leaves of $\theta$ and $b\theta$ agree.
Choose a local square root of $b$ and define
\begin{equation}\label{eq:r2-complex-rescaling}
 S_b=\operatorname{diag}(\sqrt b I_2,1),\qquad b\in\mathbb C^*.
\end{equation}
Then
\begin{equation}\label{eq:w1-representative-law}
 \Gamma_b=bS_b^{-1}\Gamma S_b,\qquad H_b=S_b^{-1}HS_b,
 \quad
 2D_{b\theta}H_b+\Gamma_{1,b}H_b-H_b\Gamma_{2,b}
  =bS_b^{-1}(2D_\theta H+\Gamma_1H-H\Gamma_2)S_b.
\end{equation}
This proves representative invariance of the equation and its identity
boundary normalization. Lemma~\ref{lem:w3-comparison} compares independently constructed
local transports after these coordinate changes.

\subsection{The common-collar Betti identity and its exact measurement domain}\label{sec:r2-betti-domain}

Under agreement near the working sphere, the Betti identity relates the
reduced volume pairing to the elastic displacement--traction pairing. When
the materials also agree in the region between the measurement boundary and
the working sphere, their DN discrepancies can be compared without an
exterior mismatch term.

We use the elastic DN maps $\Lambda_j^D$ from
\eqref{eq:r2-material-map-abbreviation}, with the weak pairing
\eqref{eq:w5-physical-dn} and discrepancy \eqref{eq:w0-domain-error}.
In particular, $\delta_{B_R}$ is obtained by taking $D=B_R$.

Under agreement near the working sphere, the factorization transfers
the reduced volume identity to a pairing of genuine elastic solutions.
The next lemma identifies the corresponding DN pairing on
$\partial B_R$.

\begin{lemma}\label{lem:w1-betti}
Let the real $C^4$ materials be uniformly strongly convex on a
neighborhood $O$ of $\overline B_R$. Suppose that there is an
open set $N\subset O$, with $\partial B_R\subset N$, on which both
$\lambda_1=\lambda_2$ and $\mu_1=\mu_2$. Let $\boldsymbol z_1\in H^4(O;\mathbb C^4)$
solve $\mathcal M_1\boldsymbol z_1=0$ on $O$, and let
$u_2\in H^1(B_R;\mathbb C^3)$ solve $L_2u_2=0$. Set
\begin{equation}\label{eq:r2-Betti-physical-fields}
 u_1=T_1\boldsymbol z_1,\qquad y_2=S_2^Tu_2\in L^2(B_R;\mathbb C^4),
\end{equation}
where $T_j,S_j^T$ are defined in \eqref{eq:w1-T-S-M} and \eqref{eq:w1-ST-K}.
Then
\begin{equation}\label{eq:w1-betti}
 \int_{B_R}y_2^T(\mathcal M_1-\mathcal M_2)\boldsymbol z_1\,dx
 =\langle(\Lambda_2^{B_R}-\Lambda_1^{B_R})\operatorname{Tr}u_1,
                                       \operatorname{Tr}u_2\rangle.
\end{equation}
The boundary duality is complex bilinear and has traction in its first
slot.
\end{lemma}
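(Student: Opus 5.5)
The plan is to reduce the identity to the Alessandrini-type formula \eqref{eq:w5-physical-alessandrini} via the factorization of Proposition~\ref{prop:w1-exact-reduction}. First, $u_1=T_1\boldsymbol z_1$ satisfies $L_1u_1=S_1\mathcal M_1\boldsymbol z_1=0$ on $O$, with $u_1\in H^3(O)$ since $T_1$ is first order with $C^3$ coefficients. Writing $\mathcal M_1-\mathcal M_2$, we have $(\mathcal M_1-\mathcal M_2)\boldsymbol z_1=-\mathcal M_2\boldsymbol z_1$. The left side of \eqref{eq:w1-betti} is therefore $-\int_{B_R}(S_2^Tu_2)^T\mathcal M_2\boldsymbol z_1\,dx$. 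Formally this equals $-\int u_2\cdot S_2\mathcal M_2\boldsymbol z_1=-\int u_2\cdot L_2T_2\boldsymbol z_1$. The point to handle is that $S_2$ is first order, so the transposition $S_2\to S_2^T$ produces a boundary term on $\partial B_R$, namely $-\int_{\partial B_R}(\nu\cdot u_2)\,(\mathcal M_2\boldsymbol z_1)_4/(1-\kappa_2)\,dS$. Since materials agree on $N\supset\partial B_R$, near the sphere $\mathcal M_2=\mathcal M_1$, so $\mathcal M_2\boldsymbol z_1=0$ on $N$ and this boundary term vanishes. Rigorously, I would insert a cutoff $\chi\in C_c^\infty(B_R)$ with $\chi=1$ outside $N\cap B_R$. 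Then $(\mathcal M_1-\mathcal M_2)\boldsymbol z_1=\chi(\mathcal M_1-\mathcal M_2)\boldsymbol z_1$, which is compactly supported in $B_R$, and the distributional transpose \eqref{eq:r2-transpose-pairing} applies with $u_2\in H^1$.

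Next, I rewrite $L_2T_2\boldsymbol z_1$. Since $T_1=T_2$ on $N$ (same $\mu$), we get $T_2\boldsymbol z_1=u_1+w$ with $w:=(T_2-T_1)\boldsymbol z_1$ supported away from $\partial B_R$, that is, $w\in H_0^1(B_R)$ in fact compactly supported. Then $L_2T_2\boldsymbol z_1=L_2u_1+L_2w$, and $\int u_2\cdot L_2w=0$ because $u_2$ weakly solves $L_2u_2=0$ and $w$ has compact support. So the left side equals $-\langle L_2u_1,u_2\rangle$ in the $H^{-1}/H^1$ sense. The cutoff bookkeeping must be redone here, since $L_2u_1=(L_2-L_1)u_1$ is supported away from $\partial B_R$. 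This makes the pairing with $u_2\in H^1$ legitimate.

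Finally, $(L_2-L_1)u_1=\Div\big((\lambda_2-\lambda_1)\Div u_1 I_3+2(\mu_2-\mu_1)\varepsilon(u_1)\big)$ is compactly supported in $B_R$. Integrating by parts without boundary terms gives
\[
-\int u_2\cdot(L_2-L_1)u_1=\int_{B_R}(\lambda_2-\lambda_1)\Div u_1\Div u_2+2(\mu_2-\mu_1)\varepsilon(u_1):\varepsilon(u_2)\,dx .
\]
By \eqref{eq:w5-physical-alessandrini} with $F=\operatorname{Tr}u_1$, $G=\operatorname{Tr}u_2$ and indices swapped, this equals $\langle(\Lambda_2^{B_R}-\Lambda_1^{B_R})\operatorname{Tr}u_1,\operatorname{Tr}u_2\rangle$. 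Here the solution $u_1$ is the material-one solution with trace $F$, and $u_2$ is the material-two solution with trace $G$. The sign convention matches, with traction in the first slot.

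The main obstacle is the justification of the transposes with only $u_2\in H^1$ and $y_2\in L^2$. In particular, $\mathcal M_2\boldsymbol z_1\in H^2$, so $S_2\mathcal M_2\boldsymbol z_1\in H^1$, and one must check that all terms supported near $\partial B_R$ cancel exactly, using only coefficient agreement on $N$. I would handle this uniformly by the single cutoff $\chi$, applied before every integration by parts. I would also verify, via the identities of Proposition~\ref{prop:w1-exact-reduction}(i) in $H^{-1}_{\rm loc}$, that $S_2\big(\chi(\mathcal M_1-\mathcal M_2)\boldsymbol z_1\big)=\chi'$-free. Equivalently, I would use that $(\mathcal M_1-\mathcal M_2)\boldsymbol z_1$ already vanishes on $N$, so no cutoff derivatives appear.
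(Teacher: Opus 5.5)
Your proof is correct. Its first half coincides with the paper's, and its second half takes a different route.

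\textbf{Common first half.} Both you and the paper reduce the left side of \eqref{eq:w1-betti} to $-\int_{B_R}u_2^TL_2v$ with $v=T_2\boldsymbol z_1$. The ingredients are the same: $\mathcal M_1\boldsymbol z_1=0$; the transpose transfer for $S_2$, whose boundary term vanishes because $\mathcal M_2\boldsymbol z_1=0$ on $N$; and $S_2\mathcal M_2=L_2T_2$.

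\textbf{The paper's second half.} The paper stays on the boundary. It defines weak tractions for fields with $Lu\in L^2$ and applies the inhomogeneous Betti identity \eqref{eq:w1-inhomogeneous-betti} to the pair $(u_2,v)$ for material two. It closes with the collar equalities $\operatorname{Tr}v=\operatorname{Tr}u_1$ and $t_v^{(2)}=t_{u_1}^{(1)}$, together with the bilinear symmetry of $\Lambda_2$.

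\textbf{Your second half.} You split $v=u_1+(T_2-T_1)\boldsymbol z_1$. The second piece has compact support in $B_R$, so the weak equation for $u_2$ kills it. You then turn $-\int u_2\cdot(L_2-L_1)u_1$ into the Alessandrini volume integral, with no boundary term. Finally you invoke \eqref{eq:w5-physical-alessandrini} on $B_R$, which applies because $u_1\in H^3$ solves $L_1u_1=0$ and is therefore the material-one Dirichlet solution for its own trace.

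\textbf{What each route buys.} Your route avoids weak tractions of the inhomogeneous field $v$. It also avoids the equality $t_v^{(2)}=t_{u_1}^{(1)}$, which the paper asserts from collar agreement and which really rests on using a trace lifting supported in $N$. In addition, it shows explicitly that the moment pairing equals the Alessandrini integral of Proposition~\ref{prop:v61-enlargement-error}. The paper's route is a self-contained Green-type identity. It never integrates coefficient differences over the interior and isolates exactly which boundary facts are used.

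\textbf{Two small remarks.}
\begin{itemize}
\item No swapping of indices in \eqref{eq:w5-physical-alessandrini} is needed. You simply negate both sides, keeping the solution assignment you state.
\item The cutoff $\chi$ in your last paragraph is redundant. The fields $(\mathcal M_1-\mathcal M_2)\boldsymbol z_1$, $(T_2-T_1)\boldsymbol z_1$ and $(L_2-L_1)u_1$ already vanish on $N$. Hence they are supported in the compact set $\overline B_R\setminus N\subset B_R$, and no cutoff derivatives can arise.
\end{itemize}
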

\begin{proof}
Use the bilinear energy $a_{B_R}$ from \eqref{eq:w5-physical-form}.
For $u\in H^1(B_R;\mathbb C^3)$ with $Lu\in L^2$, define its weak traction by
\begin{equation}\label{eq:r2-weak-traction}
 \langle t_u,g\rangle
  =a_{B_R}(u,\mathcal E_{\rm tr}g)
        +\int_{B_R}(Lu)^T\mathcal E_{\rm tr}g\,dx,
\end{equation}
where $\mathcal E_{\rm tr}:H^{1/2}(\partial B_R;\mathbb C^3)\to
H^1(B_R;\mathbb C^3)$ is a bounded right inverse of the trace. Independence of $\mathcal E_{\rm tr}$ follows by testing the
 distributional equation against the zero-trace difference. Hence
$\|t_u\|_{H^{-1/2}}\le C(\|u\|_{H^1}+\|Lu\|_2)$, and subtraction gives
\begin{equation}\label{eq:w1-inhomogeneous-betti}
 \int(u^TLv-v^TLu)=\langle t_v,\operatorname{Tr}u\rangle
                         -\langle t_u,\operatorname{Tr}v\rangle.
\end{equation}
For $Lu=Lv=0$, identity \eqref{eq:w1-inhomogeneous-betti} gives
$\langle\Lambda\operatorname{Tr}u,\operatorname{Tr}v\rangle=\langle\Lambda\operatorname{Tr}v,\operatorname{Tr}u\rangle$.
Thus $\Lambda$ is symmetric for the bilinear pairing.
Uniform solvability follows from \eqref{eq:r2-energy-coercivity}
and \eqref{eq:w5-dirichlet-bound}.

Define
\begin{equation}\label{eq:r2-Betti-comparison-fields}
 w=\mathcal M_2\boldsymbol z_1,\qquad v=T_2\boldsymbol z_1.
\end{equation}
On the common collar $N$ of Lemma~\ref{lem:w1-betti},
\begin{equation}\label{eq:r2-Betti-collar-equalities}
 w=0,\qquad v=u_1,\qquad t_v^{(2)}=t_{u_1}^{(1)}.
\end{equation}
The complete transpose transfer is
\begin{equation}\label{eq:w1-transfer-boundary}
 \int(S_2^Tu_2)^Tw=\int u_2^TS_2w
       -\int_{\partial B_R}(u_2\cdot\nu)\frac{w_4}{1-\kappa_2}\,dS.
\end{equation}
The last term is zero, precisely because $w$ vanishes on $N$.
Since $\mathcal M_1\boldsymbol z_1=0$ and $S_2\mathcal M_2\boldsymbol z_1=L_2v$, the left side of
\eqref{eq:w1-betti} is $-\int u_2^TL_2v$.
By \eqref{eq:w1-inhomogeneous-betti}, equality of the collar tractions, and
the bilinear symmetry of $\Lambda_2$, we obtain
$-\langle\Lambda_1\operatorname{Tr}u_1,\operatorname{Tr}u_2\rangle+
\langle\Lambda_2\operatorname{Tr}u_1,\operatorname{Tr}u_2\rangle$.
There is no assumption that the comparison field $v$ is homogeneous.
\end{proof}

When the materials agree between an inner measurement boundary and the
working sphere, their DN discrepancies satisfy the following comparison.

\begin{lemma}
\label{lem:w1-domain-comparison}
Let $\overline D\Subset B_R$ be fixed smooth domains and let the two
uniformly strongly convex bounded materials coincide on $B_R\setminus
\overline D$. Then
\begin{equation}\label{eq:w1-domain-comparison}
 \delta_{B_R}\le C_{D,R}\delta_D.
\end{equation}
Here both discrepancies are defined in \eqref{eq:w0-domain-error}.
The constant depends only on the fixed domains and the
uniform upper and strong-convexity bounds for the coefficients.
\end{lemma}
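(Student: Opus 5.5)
The plan is to reduce the comparison to the exact split identity \eqref{eq:v61-enlargement-identity} from the proof of Proposition~\ref{prop:v61-enlargement-error}, applied with $B_{R_e}$ replaced by $B_R$ and $\Omega$ replaced by $D$. The exterior mismatch term then vanishes identically, because the materials coincide on $B_R\setminus\overline D$.

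Fix $F,G\in H^{1/2}(\partial B_R;\C^3)$ and let $u_1,u_2\in H^1(B_R;\C^3)$ solve $L_1u_1=0$ and $L_2u_2=0$ with traces $F$ and $G$. As in \eqref{eq:w5-physical-alessandrini}, the bilinear definition \eqref{eq:w5-physical-dn} together with the weak equations gives
\[
\langle(\Lambda_1^{B_R}-\Lambda_2^{B_R})F,G\rangle
=\int_{B_R}(\lambda_1-\lambda_2)\operatorname{div}u_1\operatorname{div}u_2\,\mathrm dx
+2\int_{B_R}(\mu_1-\mu_2)\varepsilon(u_1):\varepsilon(u_2)\,\mathrm dx .
\]
No boundary agreement is needed for this identity. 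Since the coefficient differences vanish on $B_R\setminus\overline D$, only the integral over $D$ survives. The restrictions $u_j|_D$ are weak solutions of the $j$-th system on $D$, with traces $f=\operatorname{Tr}_{\partial D}u_1$ and $g=\operatorname{Tr}_{\partial D}u_2$. Applying the same identity on $D$ therefore identifies the remaining integral with $\langle(\Lambda_1^D-\Lambda_2^D)f,g\rangle$.

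The bound then follows from the uniform Dirichlet estimate \eqref{eq:w5-dirichlet-bound} on $B_R$ and the trace theorem on the fixed smooth boundary $\partial D\subset B_R$:
\[
\|f\|_{H^{1/2}(\partial D)}\le C\|u_1\|_{H^1(B_R)}\le C\|F\|_{H^{1/2}(\partial B_R)},
\]
and similarly for $g$ and $G$. Hence $|\langle(\Lambda_1^{B_R}-\Lambda_2^{B_R})F,G\rangle|\le \delta_D\,C^2\|F\|\,\|G\|$. Taking the supremum over unit $F,G$ yields \eqref{eq:w1-domain-comparison}, with $C_{D,R}$ depending only on the two domains and the upper and strong-convexity bounds through the constant in \eqref{eq:w5-dirichlet-bound}.

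The only point requiring care is the step identifying the $D$-integral with the DN pairing on $\partial D$. One needs that $\langle\Lambda_j^D f,g\rangle=a_D(u^{D}_{j,f},v_g)$ for any $H^1(D)$ lifting $v_g$, and one uses the solution $u_2|_D$ as that lifting. One must also check that $a_{D,2}(u_1|_D,u_2|_D)$ equals $\langle\Lambda_2^D f,g\rangle$: replace $u_1|_D$ by the material-two solution on $D$ with trace $f$, whose difference from $u_1|_D$ lies in $H^1_0(D)$ and is annihilated by the weak equation for $u_2$. This is exactly the argument already given after \eqref{eq:w5-physical-alessandrini}, so I expect no genuine obstacle. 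Notably, the bound is uniform and needs no regularity beyond $L^\infty$ coefficients.
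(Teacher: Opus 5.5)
Your proposal is correct and follows essentially the same route as the paper, which also applies \eqref{eq:w5-physical-alessandrini} on $B_R$ and on $D$, uses equality of the coefficients outside $\overline D$ to obtain \eqref{eq:r2-domain-pairing-identity}, and concludes with the trace bounds \eqref{eq:r2-inner-trace-bounds} derived from \eqref{eq:w5-dirichlet-bound}. Your explicit verification that $u_j|_D$ are the $D$-solutions, and that the lifting may be exchanged via the weak equation and bilinear symmetry, spells out the step the paper states in one line.
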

\begin{proof}
Let $L_1u=L_2v=0$ on $B_R$, with traces $f,g$. Applying \eqref{eq:w5-physical-alessandrini} on $B_R$ and $D$, and using
equality of the coefficients outside $\overline D$, we obtain
\begin{equation}\label{eq:r2-domain-pairing-identity}
 \begin{aligned}
 \langle(\Lambda_1^{B_R}-\Lambda_2^{B_R})f,g\rangle
 &=\int_D\big[(\lambda_1-\lambda_2)\operatorname{div}u\operatorname{div}v
       +2(\mu_1-\mu_2)\varepsilon(u):\varepsilon(v)\big]dx\\
 &=\langle(\Lambda_1^D-\Lambda_2^D)\operatorname{Tr}_D u,
                    \operatorname{Tr}_D v\rangle .
 \end{aligned}
\end{equation}
The restrictions are solutions of their respective equations on $D$.
The fixed-domain trace theorem and \eqref{eq:w5-dirichlet-bound} imply
\begin{equation}\label{eq:r2-inner-trace-bounds}
 \|\operatorname{Tr}_D u\|_{H^{1/2}}\le C\|f\|_{H^{1/2}(\partial B_R)},
 \qquad
 \|\operatorname{Tr}_D v\|_{H^{1/2}}\le C\|g\|_{H^{1/2}(\partial B_R)}.
\end{equation}
Taking the supremum in \eqref{eq:r2-domain-pairing-identity} over $f,g\in
H^{1/2}(\partial B_R;\mathbb C^3)$ with $\|f\|_{H^{1/2}}=\|g\|_{H^{1/2}}=1$,
and using \eqref{eq:r2-inner-trace-bounds}, we obtain
\eqref{eq:w1-domain-comparison}. Both discrepancies are defined in
\eqref{eq:w0-domain-error}.
\end{proof}

When the measurements are made directly on $\partial B_R$,
no domain comparison is needed. Otherwise,
Lemma~\ref{lem:w1-domain-comparison} transfers the discrepancy
from $\partial D$ to $\partial B_R$ under its stated exterior
agreement assumption. In the prescribed-exterior problem,
$D=\Omega$ and $\delta_D=\delta$.
For the paired extensions constructed in
Section~\ref{sec:w5-extensions}, we take $D=B_{R_e}$ and
$\delta_D=\Delta_e$; the comparison with the original error
$\delta$ retains the exterior mismatch through
Proposition~\ref{prop:v61-enlargement-error}.
The collar assumption of Lemma~\ref{lem:w1-betti} is imposed
at the working sphere, not on the original materials at
$\partial\Omega$.

\section{Uniform finite-regularity frames and a local right inverse}
\label{sec:right-inverse}

Two analytic results underlie the CGO construction: finite-order leafwise
transport frames and a local right inverse for the conjugated elliptic
system. Their regularity requirements are distinct.

The matrix-transport construction for conjugated systems is described
in \cite[Sections~1--2]{ER}. The finite-order uniform estimates used
here are proved in Subsections~\ref{sec:r2-proof-frames} and
\ref{sec:r2-proof-inverse}. The frame existence argument uses
\cite[Theorem~1.3]{FP}; the quantitative bounds are obtained in the proof.

\subsection{Coefficient class and fixed geometry}\label{sec:r2-analytic-geometry}

The coefficient support and nested domains below are fixed independently of
the source and phase direction.

In this section $d_{\rm sys}\geq1$ is the system dimension, not the scalar
material coefficient denoted by $d$ elsewhere. The index $k\geq0$ is an
integer Sobolev order. Fix balls $\Omega_1\Subset\Omega_0\subset\mathbb R^3$,
a compact source ball $K_{\rm src}\Subset\Omega_1$, and a compact set
$K_{\rm coef}\Subset\operatorname{int}K_{\rm src}$. All containments mean
positive separation of the corresponding closed sets. Set
\begin{equation}\label{eq:finite-system-class}
 \begin{aligned}
 &\widetilde{\mathcal M}=\Delta I_{d_{\rm sys}}
       +\sum_{j=1}^3\widetilde A_j(x)\partial_j+\widetilde B(x),\\
 &\widetilde A_j,\widetilde B\in
 C_c^{k+14}(\mathbb R^3;\mathbb C^{d_{\rm sys}\times d_{\rm sys}}),\\
 &\operatorname{supp}(\widetilde A_j),\operatorname{supp}(\widetilde B)
        \subset K_{\rm coef},\qquad j=1,2,3,\\
 &\sum_{j=1}^3\lVert\widetilde A_j\rVert_{C^{k+14}}
        +\lVert\widetilde B\rVert_{C^{k+14}}\leq M_k.
 \end{aligned}
\end{equation}
The class in \eqref{eq:finite-system-class} includes all complex matrix
coefficients with this prior; ellipticity here comes from the fixed
Laplacian principal part. Define the closed source space
\begin{equation}\label{eq:w2-source-space}
 E_k=\{f\in H^k(\Omega_0;\mathbb C^{d_{\rm sys}}):
                  \operatorname{supp}f\subset K_{\rm src}\}.
\end{equation}
Support is distributional. Extension by zero identifies this space with
a closed subspace of $H^k(\mathbb R^3;\mathbb C^{d_{\rm sys}})$; it is
bounded because the support stays a fixed distance from $\partial\Omega_0$.

The normalized direction set and the conjugated operator are
\begin{equation}\label{eq:w2-phase-space}
 \begin{split}
 \Theta&=\{e_1+i e_2:e_1,e_2\in\mathbb R^3,
                  |e_1|=|e_2|=1,\ e_1\cdot e_2=0\},\\
 L_{h,\theta}&=e^{\theta\cdot x/h}\widetilde{\mathcal M} e^{-\theta\cdot x/h}
   =\widetilde{\mathcal M}-h^{-1}
       \left(2\theta\cdot\nabla+\sum_j\theta_j\widetilde A_j\right).
 \end{split}
\end{equation}
The dot product in the null condition $\theta\cdot\theta=0$ is bilinear.
No data discrepancy is used in this section; in particular $\delta$ is
not reused as a Fourier parameter.

\subsection{Finite-order leaf frames}\label{sec:r2-leaf-frames}

Uniform leafwise frames are obtained on complete disks from a finite atlas
over the compact coefficient--parameter class.

Let $D\Subset U\subset\mathbb C$ be fixed disks, $r\ge0$ an integer,
and $0<\alpha_{\rm H}<1$. The operator $\partial_{\bar z}$ is defined
in \eqref{eq:w1-leaf-coordinates}. For the uniform assertion, fix a compact
finite-dimensional smooth real parameter manifold $\mathfrak P$ and
larger spatial and parameter neighborhoods. Compact charts with boundary
are allowed. A jointly bounded $C^{r+2}$ family
$\{\mathsf B^{\mathfrak a}(z,p)\}$ has the weaker compact closure
\begin{equation}\label{eq:r2-frame-coefficient-closure}
 \mathscr B_r=\overline{\{\mathsf B^{\mathfrak a}\}}^{\,C^{r+1,\alpha_{\rm H}}},
 \qquad (\mathfrak a,p)\in\mathscr B_r\times\mathfrak P,
\end{equation}
where each $\mathsf B^{\mathfrak a}$ takes values in
$\mathbb C^{d_{\rm sys}\times d_{\rm sys}}$. The coefficient label has
the topology in \eqref{eq:r2-frame-coefficient-closure}; it is not a
differentiation variable.

\begin{theorem}\label{thm:finite-frames}
Let $D\Subset U\subset\C$ be the fixed disks,
$r\ge0$ an integer, and $0<\alpha_{\rm H}<1$.
For every
$\mathsf B\in C^{r,\alpha_{\rm H}}
(U;\C^{d_{\rm sys}\times d_{\rm sys}})$,
there exist a neighborhood $V\subset U$ of $\overline D$
and an invertible frame satisfying
\begin{equation}\label{eq:finite-frame-equation}
\mathsf F\in C^{r+1,\alpha_{\rm H}}
(V;GL(d_{\rm sys},\C)),
\qquad
\partial_{\bar z}\mathsf F=\mathsf B\mathsf F
\quad\text{in }V.
\end{equation}

For the compact coefficient class $\mathscr B_r$
in \eqref{eq:r2-frame-coefficient-closure} and the
fixed parameter space $\mathfrak P$, there exists
a finite cover by coefficient--parameter
neighborhoods carrying invertible frames
$\mathsf F_\ell^{\mathfrak a}(z,p)$, defined near
$\overline D$, satisfying
\[
\partial_{\bar z}\mathsf F_\ell^{\mathfrak a}
=\mathsf B^{\mathfrak a}\mathsf F_\ell^{\mathfrak a}.
\]
On smaller neighborhoods still covering
$\mathscr B_r\times\mathfrak P$, these frames
and their inverses satisfy
\begin{equation}\label{eq:finite-frame-uniform}
\max_{|\gamma|+|\beta|\le r}
\left(
\|\partial_z^\gamma\partial_p^\beta
\mathsf F_\ell^{\mathfrak a}\|_\infty
+
\|\partial_z^\gamma\partial_p^\beta
(\mathsf F_\ell^{\mathfrak a})^{-1}\|_\infty
\right)
\le C_r,
\end{equation}
where $\gamma$ denotes differentiation in the
two real leaf variables.
Both the frames and their inverses depend
continuously on $\mathfrak a$ in the mixed norms
of total order at most $r$.

The cover admits subordinate weights
$w_\ell(\mathfrak a,p)$, continuous in
$\mathfrak a$ and smooth in $p$, such that
\begin{equation}\label{eq:r2-frame-weight-properties}
w_\ell\ge0,\qquad
\sum_\ell w_\ell^2=1,\qquad
\max_{|\beta|\le r}
\|\partial_p^\beta w_\ell\|_\infty\le C_r.
\end{equation}
When $p$ consists of an oriented direction
and a transverse coordinate, the weights are
constant along each complex leaf.

The finite cover and the constants depend only
on the fixed coefficient class, disks, neighborhood
margins, and parameter geometry.
Finitely many systems admit a common finite
product refinement with the same properties.
\end{theorem}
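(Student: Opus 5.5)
The plan is to prove the single-coefficient statement first and then make it uniform by compactness. For a fixed $\mathsf B\in C^{r,\alpha_{\rm H}}(U)$, I would look for the frame in the form $\mathsf F=(I+\mathsf W)\mathsf F_0$, or more simply solve $\partial_{\bar z}\mathsf F=\mathsf B\mathsf F$ on a small disk around each point by a contraction. Take $\mathsf F=I+\mathcal C(\chi\mathsf B\mathsf F)$, where $\mathcal C$ is the Cauchy transform $\mathcal Cg(z)=\pi^{-1}\int g(w)(z-w)^{-1}dA(w)$ and $\chi$ is a cutoff supported in a disk of radius $\rho$.

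On $C^{0,\alpha_{\rm H}}$ of such a disk, $\mathcal C$ gains one derivative (Schauder estimate) and has norm $O(\rho^{1-\alpha_{\rm H}})$ on bounded sets. For small $\rho$ the map is therefore a contraction, giving a local invertible solution close to $I$. Bootstrapping with $\partial_{\bar z}\mathcal C=\mathrm{id}$ and the Schauder estimate $\|\mathcal Cg\|_{C^{j+1,\alpha}}\le C\|g\|_{C^{j,\alpha}}$ yields $\mathsf F\in C^{r+1,\alpha_{\rm H}}$. Local solutions exist near every point, but they must be patched into a single frame near $\overline D$. Two local frames $\mathsf F_a,\mathsf F_b$ differ on overlaps by the holomorphic transition $\mathsf F_a^{-1}\mathsf F_b$.

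The existence of a global invertible frame on a neighborhood of $\overline D$ is exactly the statement that a holomorphic $GL(d_{\rm sys},\C)$ cocycle on a planar domain is trivial. This is the Grauert--Oka principle on the Stein domain, or the Birkhoff/Cartan factorization lemma on disks. I would invoke \cite[Theorem~1.3]{FP} for this step, as the section announces. Writing $\mathsf F=\mathsf F_a h_a$ with $h_a$ holomorphic then gives the global frame, and its regularity is inherited locally.

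For the uniform part, fix $(\mathfrak a_0,p_0)\in\mathscr B_r\times\mathfrak P$ with its frame $\mathsf F_0$ on $V\supset\overline D$. Near it I would seek $\mathsf F=(I+\mathsf W)\mathsf F_0$, so that
\[
\partial_{\bar z}\mathsf W=(\mathsf B-\mathsf B_0)(I+\mathsf W)\quad\text{after conjugation by }\mathsf F_0 .
\]
This is solved by $\mathsf W=\mathcal C_V\bigl(\chi\,\mathsf F_0(\mathsf B-\mathsf B_0)\mathsf F_0^{-1}(I+\mathsf W)\bigr)$ on a slightly shrunken disk containing $\overline D$. The step converges when $\|\mathsf B-\mathsf B_0\|_{C^{r,\alpha_{\rm H}}}$ is small. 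Smoothness in $p$ follows because the fixed-point map depends on $p$ through $\mathsf B^{\mathfrak a}(\cdot,p)$. The family is jointly bounded in $C^{r+2}$, so differentiating the fixed-point equation in $p$ up to order $r$ gives linear equations of the same form, with mixed derivatives controlled. I expect the main obstacle to be this mixed-derivative bookkeeping, together with the loss that $\mathscr B_r$ is only compact in $C^{r+1,\alpha_{\rm H}}$. I would handle it by differentiating the equation in $p$ and applying Schauder to each derivative.

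Continuity of $\mathsf F$ and $\mathsf F^{-1}$ in $\mathfrak a$ follows from continuous dependence of the contraction's fixed point. Compactness of $\mathscr B_r\times\mathfrak P$ yields a finite subcover, with constants $C_r$ as the maximum over it, and inverses bounded since $\mathsf W$ is small.

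The weights come from a partition of unity $\{\psi_\ell\}$ on the parameter space, built as products of a continuous partition in $\mathfrak a$ and a smooth partition in $p$. Setting $w_\ell=\psi_\ell/(\sum_k\psi_k^2)^{1/2}$ gives $\sum_\ell w_\ell^2=1$ and bounded $p$-derivatives. When $p$ is a direction plus a transverse coordinate, these are precisely the leaf parameters, so the weights do not depend on $z$. For finitely many systems, take the common refinement of the finitely many covers and products of the weights, renormalized in the same way.
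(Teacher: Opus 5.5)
Your proposal follows the paper's proof essentially step by step: small-disk Cauchy-transform fixed points bootstrapped by Schauder, patching of the holomorphic transitions $\mathsf F_a^{-1}\mathsf F_b$ via \cite[Theorem~1.3]{FP}, a Neumann perturbation around a frame at each closure point with parameter derivatives obtained by differentiating the fixed-point equation, and normalized product bumps with $\sum_\ell w_\ell^2=1$. One correction: as in \eqref{eq:finite-frame-neumann}, the perturbative ansatz must be $\mathsf F=\mathsf F_0(I+\mathsf W)$, with $V=\mathsf F_0^{-1}(\mathsf B-\mathsf B_0)\mathsf F_0$ and $\mathsf W=\mathsf C(\chi V(I+\mathsf W))$, because your version $(I+\mathsf W)\mathsf F_0$ leads to $\partial_{\bar z}\mathsf W=(\mathsf B-\mathsf B_0)(I+\mathsf W)+[\mathsf B_0,\mathsf W]$, whose commutator term is not small, and your stated fixed point with the conjugation $\mathsf F_0(\cdot)\mathsf F_0^{-1}$ does not solve $\partial_{\bar z}\mathsf F=\mathsf B\mathsf F$.
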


\begin{remark}
The uniform estimates are obtained from a finite
atlas of local frames; no globally defined frame
over the parameter space is required.
The coefficient label $\mathfrak a$ enters only
continuously and is never differentiated.
This distinction allows the construction to be
uniform over the compact coefficient class.
\end{remark}

The proof is given in
Subsection~\ref{sec:r2-proof-frames}.

\subsection{The local right inverse and residual estimates}
\label{sec:r2-right-inverse-results}

\begin{theorem}
\label{thm:finite-right-inverse}
For the full coefficient class \eqref{eq:finite-system-class} there exist
$h_*>0$ and $C_k<\infty$, depending only on
$k,d_{\rm sys},M_k,K_{\rm coef},K_{\rm src},\Omega_1,\Omega_0$ and the
fixed cutoff geometry, with the following property. For every member of
the class, every $\theta\in\Theta$, and every $0<h<h_*$, there is a linear
operator
\begin{equation}\label{eq:finite-right-inverse}
 \begin{gathered}
 \mathcal R_{h,\theta}:E_k\longrightarrow
                 H^{k+1}(\Omega_0;\mathbb C^{d_{\rm sys}}),\\
 L_{h,\theta}\mathcal R_{h,\theta}f=f
                         \quad\text{in }\mathcal D'(\Omega_1),\\
 h^{-1}\lVert\mathcal R_{h,\theta}f\rVert_{H^k(\Omega_0)}
   +\lVert\mathcal R_{h,\theta}f\rVert_{H^{k+1}(\Omega_0)}
       \leq C_k\lVert f\rVert_{H^k(\Omega_0)}.
 \end{gathered}
\end{equation}
Here $E_k$, $\Theta$, and $L_{h,\theta}$ are defined in
\eqref{eq:w2-source-space} and \eqref{eq:w2-phase-space}.
The identity and both estimates hold for every $f\in E_k$.
The constants and threshold are uniform over the coefficient
class and all $\theta\in\Theta$, including both phase signs.
\end{theorem}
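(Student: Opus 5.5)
We construct $\mathcal R_{h,\theta}$ from a global Fourier-side inverse for the model operator $\Delta-2h^{-1}\theta\cdot\nabla$ and absorb the lower-order terms by a Neumann series.

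\textbf{Model inverse.} After a rotation, $\theta=e_1+ie_2$. The symbol of $\Delta-2h^{-1}\theta\cdot\nabla$ is
\[
p_h(\xi)=-|\xi|^2-2ih^{-1}(\xi_1+i\xi_2)=-(|\xi|^2-2h^{-1}\xi_2)-2ih^{-1}\xi_1 .
\]
Its zero set is a circle of radius $h^{-1}$, with codimension two in $\mathbb R^3$. We use the Sylvester--Uhlmann / H\"ahner type estimate
\[
h^{-1}\|G_hf\|_{L^2_{-\delta'}}+\|\nabla G_hf\|_{L^2_{-\delta'}}\le C\|f\|_{L^2_{\delta'}}
\]
for $G_h f=\mathcal F^{-1}(p_h^{-1}\hat f)$, where $1/2<\delta'<1$ and the weighted spaces are $L^2_\delta$ with weight $\langle x\rangle^\delta$. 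Because all sources are supported in the fixed ball $K_{\rm src}$ and we only need the output on $\Omega_0$, the weights can be replaced by cutoffs. Concretely, fix $\chi\in C_c^\infty(\Omega_0)$ with $\chi=1$ near $\overline{\Omega_1}\supset K_{\rm src}$, and work with the truncated operator $\chi G_h\chi$.

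Derivatives commute with $G_h$. Hence $\|\chi G_h\chi f\|_{H^{k+1}}$ is controlled by the $L^2\to H^1$ bound applied to $\partial^\beta f$, plus commutator terms in which derivatives fall on $\chi$. Those terms are lower order and acceptable. This yields both $h^{-1}\|\cdot\|_{H^k}$ and $\|\cdot\|_{H^{k+1}}$ bounds uniformly in $\theta\in\Theta$. Uniformity in $\theta$ follows from rotation invariance of $\Delta$ and of the cutoff ball geometry, after taking the balls centered. Both phase signs are handled by the same argument, since $\theta\mapsto-\theta$ stays in $\Theta$.

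\textbf{Perturbation.} Write
\[
L_{h,\theta}=L^0_{h,\theta}+P_{h,\theta},\qquad P_{h,\theta}=\sum_j\widetilde A_j\partial_j+\widetilde B-h^{-1}\sum_j\theta_j\widetilde A_j .
\]
The term $\sum_j\widetilde A_j\partial_j+\widetilde B$ is harmless: it maps $H^{k+1}\to H^k$ and gains a factor $h$ against the model bound. The dangerous term is the zeroth-order $h^{-1}A_\theta=h^{-1}\sum_j\theta_j\widetilde A_j$. Composed with $G_h$, which has only $O(h)$ gain in $H^k$, it is $O(1)$ and not small, so a Neumann series does not close.

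This is the main obstacle. I would remove it by the matrix transport conjugation from \cite[Sections~1--2]{ER}. Using Theorem~\ref{thm:finite-frames}, with $r$ large, on the complex leaves of $\theta$, with parameters $p=(\theta,t_\perp)$, construct a $C^{k+2}$ frame $\mathsf F_\theta$ with
\[
2D_\theta\mathsf F+A_\theta\mathsf F=0
\]
near $K_{\rm coef}$, uniformly bounded with its inverse. This uses the $C^{k+14}$ coefficient budget and the uniform cover, glued by the weights $w_\ell$.

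Conjugation gives
\[
\mathsf F^{-1}L_{h,\theta}\mathsf F=\Delta-2h^{-1}D_\theta+(\text{first-order $h$-independent terms}).
\]
The difficulty is that $\mathsf F$ is only local in $z$. To handle this, cut off with $\widetilde\chi$, use the finite atlas, and set
\[
\mathcal R=\sum_\ell w_\ell\,\mathsf F_\ell\,\chi G_h\chi\,\mathsf F_\ell^{-1}(\cdots),
\]
then iterate. The cutoff and gluing errors contain $h^{-1}(D_\theta\chi)$ terms. These must be handled by arranging that the cutoffs are holomorphic along leaves on the source region, or by choosing $\mathsf F$ on a disk containing the whole projection of $\Omega_0$, so that $D_\theta\widetilde\chi=0$ where it matters. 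I expect exactly this step, keeping all $h^{-1}$ commutators supported away from the source, to be the delicate point.

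\textbf{Conclusion.} With the $h^{-1}$ term removed, the remaining perturbation $Q$ satisfies $\|\chi G_h\chi Q\|_{H^{k+1}\to H^{k+1}}\le Ch$. A Neumann series for $h<h_*$ then gives the right inverse $\mathcal R_{h,\theta}$ with the two stated bounds. The identity $L_{h,\theta}\mathcal R_{h,\theta}f=f$ holds in $\mathcal D'(\Omega_1)$ because $\chi=1$ there. All constants depend only on the frame bounds $C_r$, $M_k$ and the fixed geometry, which gives the claimed uniformity.
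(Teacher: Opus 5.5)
There is a genuine gap in your perturbation step. You treat $\sum_j\widetilde A_j\partial_j$ as harmless because it ``gains a factor $h$ against the model bound''. You then conclude that after conjugating by the fixed-direction frame $\mathsf F_\theta$ the remainder satisfies $\|\chi G_h\chi Q\|_{H^{k+1}\to H^{k+1}}\le Ch$. That is false.

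The model inverse gains $h$ only from $H^k$ to $H^k$; from $H^k$ to $H^{k+1}$ it is merely $O(1)$, and this is sharp. Take $\tau=h^{-1}$, $\xi_{\rm c}=i\tau\theta=\tau(\omega_0+i\nu)$ and $\xi'=\xi-(\xi\cdot\nu)\nu$, as in Appendix~\ref{app:w2-parametrix}. The characteristic set is the circle $\{\xi\cdot\nu=0,\ |\xi'+\tau\omega_0|=\tau\}$, on most of which $|\xi|\sim\tau$. Thus an $h$-independent first-order term composed with $G_h$ is exactly as critical as the term $h^{-1}A_\theta$ you isolated.

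Concretely, the full first-order symbol of $L_{h,\theta}$ is $i\sum_j\widetilde A_j(\xi_j+\xi_{{\rm c},j})$. At a point of the circle this equals $i\tau\sum_j(\omega_j+i\nu_j)\widetilde A_j$, where $\omega=(\xi'+\tau\omega_0)/|\xi'+\tau\omega_0|$ rotates about $\nu$ as $\xi$ runs over the circle. After your conjugation the first-order coefficients are $\widehat A_j=2\mathsf F^{-1}\partial_j\mathsf F+\mathsf F^{-1}\widetilde A_j\mathsf F$, with $\sum_j\theta_j\widehat A_j=0$. On the circle the residual symbol is therefore $i\tau\sum_j(\omega_j+i\nu_j)\widehat A_j$. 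It vanishes at $\omega=\omega_0$, i.e.\ near $\xi=0$, but is generically of size $\tau|\omega-\omega_0|$ elsewhere.

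Hence $G_hQ$ is $O(1)$, with a constant proportional to the coefficient size. The Neumann series does not close unless the coefficients are small, which the theorem does not assume. The commutators $[\Delta,w_\ell]$ from your transverse gluing have the same defect, since your weights are leaf-constant only for the single direction $\theta$.

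The missing idea is that the conjugating frame must depend on the frequency, not only on $\theta$. The conjugation has to be pseudodifferential and localized near the characteristic circle.

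The paper splits frequency space with $\chi_N^2+\chi_F^2=1$ at the anisotropic scales $|\xi\cdot\nu|\lesssim\sqrt\tau$ and $\bigl|\,|\xi'+\tau\omega_0|^2-\tau^2\bigr|\lesssim\tau^{3/2}$; see \eqref{eq:near-far-cutoffs}. Near the circle it uses the symbols $b_\ell,d_\ell$ of \eqref{eq:paired-symbols}. These are built from frames and leaf-constant weights for the $\xi$-dependent direction $\omega+i\nu$, which is why Theorem~\ref{thm:finite-frames} is applied over all oriented pairs. It then forms $Q_\tau=\sum_\ell\operatorname{Op}(b_\ell)E_{\xi_{\rm c}}\psi_3\operatorname{Op}(d_\ell)$.

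The transport cancellation \eqref{eq:appendix-u1-transport-remainder} reduces the first-order residual to size $\sqrt\tau$, and the $\tau^{-1}$ gain of the free inverse turns this into $\tau^{-1/2}$. On the far set the first-order term divided by $p_{\xi_{\rm c}}$ is $O(\tau^{-1/2})$, because $|p_{\xi_{\rm c}}|\gtrsim\tau^{3/2}$ there; see \eqref{eq:w2-far-symbol}. Only then are all five errors of Lemma~\ref{lem:w2-five-errors} small on $H^k$. A single Neumann series on $H^k$ (never on $H^{k+1}$), followed by the two-level bound of Lemma~\ref{lem:w2-Q-two-level}, then gives both estimates.

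A Carleman estimate with a convexified weight is another known way to absorb bounded first-order terms. The plain Faddeev-type inverse $G_h$ combined with a fixed-direction conjugation has no such extra gain.
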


\begin{remark}
The same correction must satisfy both estimates: the
$H^k$ bound controls terms multiplied by $h^{-1}$,
whereas the $H^{k+1}$ bound controls the second-order
composite operator in the physical lift.
The right-inverse identity is asserted only on $\Omega_1$;
no support preservation or differentiability of the exact
inverse with respect to the direction is asserted.
The operator estimate imposes no smallness condition on
the source norm.
\end{remark}

The proof in Subsection~\ref{sec:r2-proof-inverse} specifies
\begin{equation}\label{eq:uniform-neumann-threshold}
 \begin{gathered}
 N=k+6,\qquad r_{\rm par}=N+4=k+10,\qquad
 C_\Sigma=\sum_{j=1}^5 C_j,\\
 \tau_0=\max\{16,(4C_\Sigma)^2\},\qquad h_* =\tau_0^{-1}.
 \end{gathered}
\end{equation}
where $C_1,\ldots,C_5$ are the uniform constants of the five separately
proved operator errors in \eqref{eq:uniform-five-term-bounds}.
Any enlargement of the fixed geometric lower threshold can be included
in the maximum. These constants are fixed by the displayed coefficient bounds and geometry.

Multiplying the right inverse by the residual scale gives the
correction used in the CGO ansatz.

\begin{corollary}\label{cor:w2-residual}
For the inverse of Theorem~\ref{thm:finite-right-inverse}, define
\begin{equation}\label{eq:r2-residual-correction}
 r_h=-h\mathcal R_{h,\theta}F,\qquad F\in E_k,
\end{equation}
where $E_k$ is the source space \eqref{eq:w2-source-space}. Then
\begin{equation}\label{eq:single-inverse-two-remainders}
 \lVert r_h\rVert_{H^k(\Omega_0)}\leq C_k h^2\lVert F\rVert_{H^k},
 \qquad
 \lVert r_h\rVert_{H^{k+1}(\Omega_0)}\leq C_k h\lVert F\rVert_{H^k}.
\end{equation}
Define the leading transport operator by
\begin{equation}\label{eq:r2-leading-transport}
 \mathsf T_\theta=2D_\theta+\sum_j\theta_j\widetilde A_j.
\end{equation}
If
$\mathsf T_\theta a_0=0$ and $\mathsf T_\theta a_1=\widetilde{\mathcal M} a_0$
on a fixed neighborhood, and if
$F=\chi_0\widetilde{\mathcal M} a_1\in E_k$ with $\chi_0=1$ on a fixed working
subdomain $\Omega_{\rm CGO}\Subset\operatorname{int}K_{\rm src}$, then
\[
\widetilde{\mathcal M}
\bigl(e^{-\theta\cdot x/h}(a_0+ha_1+r_h)\bigr)=0
\quad\text{in }\Omega_{\rm CGO}.
\]
The threshold is independent of the size and auxiliary test parameters
of $F$.
\end{corollary}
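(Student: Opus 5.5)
The statement is Corollary~\ref{cor:w2-residual}. It should follow directly from Theorem~\ref{thm:finite-right-inverse} by scaling and from the conjugation formula \eqref{eq:w2-phase-space}.

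\emph{The two remainder bounds.} Since $F\in E_k$, the theorem applies to $F$. Set $r_h=-h\mathcal R_{h,\theta}F$. From $h^{-1}\|\mathcal R_{h,\theta}F\|_{H^k}\le C_k\|F\|_{H^k}$ we get $\|\mathcal R_{h,\theta}F\|_{H^k}\le C_kh\|F\|_{H^k}$, and multiplying by $h$ gives the $h^2$ bound. The $H^{k+1}$ estimate, multiplied by $h$, gives the $h$ bound. No further argument is needed here; both bounds come from the same operator, and this is why the theorem was stated with both estimates simultaneously.

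\emph{The CGO identity.} Write $a=a_0+ha_1+r_h$. By the conjugation formula, for smooth enough $w$,
\[
\widetilde{\mathcal M}\bigl(e^{-\theta\cdot x/h}w\bigr)=e^{-\theta\cdot x/h}L_{h,\theta}w,
\qquad L_{h,\theta}=\widetilde{\mathcal M}-h^{-1}\mathsf T_\theta .
\]
Expanding $L_{h,\theta}a$ gives
\[
L_{h,\theta}a=-h^{-1}\mathsf T_\theta a_0+\bigl(\widetilde{\mathcal M}a_0-\mathsf T_\theta a_1\bigr)+h\,\widetilde{\mathcal M}a_1+L_{h,\theta}r_h .
\]
On the neighborhood where the transport equations hold, the first two groups vanish. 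For the last term, on $\Omega_1$ we have $L_{h,\theta}r_h=-hL_{h,\theta}\mathcal R_{h,\theta}F=-hF$. On $\Omega_{\rm CGO}$, where $\chi_0=1$, this equals $-h\widetilde{\mathcal M}a_1$ and cancels the third term. Hence $L_{h,\theta}a=0$ on $\Omega_{\rm CGO}$, and multiplying by the nonvanishing exponential gives the claim.

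\emph{Points to check.}
\begin{itemize}
\item The "fixed neighborhood" of the transport equations must contain $\Omega_{\rm CGO}$, and $\Omega_{\rm CGO}\subset K_{\rm src}\subset\Omega_1$, so the right-inverse identity holds there.
\item The computation holds distributionally on $\Omega_{\rm CGO}$. This is legitimate because $r_h\in H^{k+1}$ with $k\ge0$, and because multiplying distributions by the smooth function $e^{\mp\theta\cdot x/h}$ commutes with the conjugation identity.
\item The sign convention (phase $e^{-\theta\cdot x/h}$ paired with $-h^{-1}\mathsf T_\theta$) must match \eqref{eq:w2-phase-space}. The theorem covers both phase signs, since replacing $\theta$ by $-\theta$ keeps it in $\Theta$ up to relabeling $e_1,e_2$.
\end{itemize}

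\emph{Uniformity of the threshold.} The threshold $h_*$ and the constants $C_k$ come from Theorem~\ref{thm:finite-right-inverse}. They depend only on the coefficient class and geometry, not on $F$, because the operator estimate is linear with no smallness assumption on $\|F\|$. Auxiliary test parameters of the amplitudes therefore enter only through $\|F\|_{H^k}$.

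I expect no serious obstacle. The only delicate point is keeping track of the domains on which each identity holds: transport neighborhood, $\Omega_{\rm CGO}$, $\Omega_1$, and $\Omega_0$.
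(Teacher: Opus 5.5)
Your proposal is correct and matches the paper's proof essentially line by line. You rescale the two right-inverse bounds by $h$, then use the conjugation identity and the transport equations to get $L_{h,\theta}(a_0+ha_1)=h\widetilde{\mathcal M}a_1$, which is cancelled on $\Omega_{\rm CGO}$ by $L_{h,\theta}r_h=-hF$. The threshold is independent of $F$ because the operator bound holds on all of $E_k$.
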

\begin{proof}
Since $r_h=-h\mathcal R_{h,\theta}F$, the two bounds in
\eqref{eq:finite-right-inverse} give
\[
 h^{-2}\|r_h\|_{H^k(\Omega_0)}
 +h^{-1}\|r_h\|_{H^{k+1}(\Omega_0)}
 \le C_k\|F\|_{H^k}.
\]
This proves \eqref{eq:single-inverse-two-remainders}.

To verify the equation, use the exact conjugation formula
\eqref{eq:w2-phase-space} and the two transport equations to obtain
\begin{equation}\label{eq:r2-calculation-002}
L_{h,\theta}(a_0+ha_1)=h\widetilde{\mathcal M} a_1.
\end{equation}
On the working subdomain $F=\widetilde{\mathcal M} a_1$, and
$L_{h,\theta}r_h=-hF$ by \eqref{eq:finite-right-inverse}. Therefore
\[
 L_{h,\theta}(a_0+ha_1+r_h)=h\widetilde{\mathcal M}a_1-hF=0
 \quad\text{on }\Omega_{\rm CGO}.
\]
Conjugating back gives the asserted solution. The threshold $h_*$
is fixed by Theorem~\ref{thm:finite-right-inverse} on the whole source
space $E_k$ and is independent of the norm or auxiliary parameters of $F$.
\end{proof}

The following corollary gives the regularity requirements for the frame
construction and the $k=1$ inverse in elasticity.

\begin{corollary}
\label{cor:w2-physical-budgets}
Fix the regularity orders
\begin{equation}\label{eq:r2-regularity-orders}
 m\ge4,\qquad r_{\rm fr}=2m+8,\qquad s=r_{\rm fr}+6=2m+14.
\end{equation}
Suppose that the
fixed-support reduced coefficients and the pulled-back transport
coefficients of sizes $2$, $3$ and $4$ are bounded in $C^{s-2}$ on
the fixed neighborhoods of \eqref{eq:finite-system-class} and
\eqref{eq:r2-frame-coefficient-closure}. Then one may choose their finite frame
atlas once, with total leaf--transverse--direction derivatives through
order $r_{\rm fr}+2$, before choosing a source or $h$.
For the four-component right inverse, take $k=1$ in
Theorem~\ref{thm:finite-right-inverse}.
For any family $F_\nu\in E_1$ with
$\|F_\nu\|_{H^1}\leq C_{\rm src} K_\nu$, $K_\nu>0$, its corrections satisfy
simultaneously, for either phase and every $0<h<h_*$,
\begin{equation}\label{eq:w2-arbitrary-amplitude-budget}
 \|r_{h,\nu}\|_{H^1(\Omega_0)}\leq CC_{\rm src}h^2K_\nu,
 \qquad
 \|r_{h,\nu}\|_{H^2(\Omega_0)}\leq CC_{\rm src}hK_\nu,
 \qquad r_{h,\nu}=-h\mathcal R_{h,\pm\theta}F_\nu.
\end{equation}
The atlas, $C$ and $h_*$ are independent of $\nu$ and $K_\nu$.
No differentiability of $F_\nu$ in its index or of the exact inverse
in the direction is required.
\end{corollary}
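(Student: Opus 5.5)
The corollary is a bookkeeping statement assembled from Theorems~\ref{thm:finite-frames} and \ref{thm:finite-right-inverse} and Corollary~\ref{cor:w2-residual}. I would verify the regularity orders and then apply these results with fixed constants. The plan has three steps: the frame budget, the right-inverse budget, and the uniformity in the source index $\nu$.

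\emph{Frame budget.} We need frames with mixed leaf--transverse--direction derivatives through order $r_{\rm fr}+2$. So I would apply Theorem~\ref{thm:finite-frames} with $r=r_{\rm fr}+2$, which requires the transport coefficients to form a compact family in $C^{r+1,\alpha_{\rm H}}$.
\begin{itemize}
\item A jointly bounded $C^{r+2}=C^{r_{\rm fr}+4}$ family has compact closure in $C^{r+1,\alpha_{\rm H}}$ by Arzel\`a--Ascoli. This is exactly the closure in \eqref{eq:r2-frame-coefficient-closure}.
\item The transport coefficients of sizes $2$, $3$, $4$ are the connection matrices $P$ and $\Gamma$ of \eqref{eq:w1-quotient-connection} and the contracted $\widetilde A_\theta$.
\item Pulled back to leaf coordinates and parametrized smoothly by the direction and transverse coordinate, they are polynomial in the spinor variables and in the reduced coefficients.
\item The reduced coefficients lie in $C^{s-2}$ by \eqref{eq:r2-reduced-coefficient-bound}, since they involve two derivatives of the Lam\'e parameters.
\item Hence $C^{s-2}=C^{r_{\rm fr}+4}$ bounds are available, and this is precisely why $s=r_{\rm fr}+6$.
\end{itemize}
The theorem then yields a finite atlas with weights and the constants $C_{r_{\rm fr}+2}$. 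The atlas depends only on the coefficient class and the geometry, so it is chosen before any source or $h$. Finitely many systems are handled by the common product refinement allowed in the theorem.

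\emph{Right-inverse budget.} Take $k=1$ in Theorem~\ref{thm:finite-right-inverse}. This requires $C^{15}$ bounds on the compactly supported coefficients $\widetilde A_j$, $\widetilde B$ in \eqref{eq:finite-system-class}.
\begin{itemize}
\item Here $s-2=2m+12\ge 20\ge15$ for $m\ge4$.
\item The support condition holds because the background conjugation by $\mathsf R_*$ in \eqref{eq:w1-background-coefficients} makes $\widetilde{\mathcal M}-\Delta I_4$ compactly supported in a fixed set.
\item Multiplication by the fixed smooth $\mathsf R_*^{\pm1}$ preserves $C^{s-2}$ bounds.
\end{itemize}
This gives $h_*$ and $C_1$ uniformly in $\theta\in\Theta$ and in both phase signs; the sign $-\theta$ is also in $\Theta$ up to relabeling $e_1\mapsto -e_1$, $e_2\mapsto -e_2$.

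\emph{Uniformity in $\nu$.} For each $F_\nu\in E_1$, Corollary~\ref{cor:w2-residual} with $k=1$ gives $\|r_{h,\nu}\|_{H^1}\le C_1h^2\|F_\nu\|_{H^1}$ and $\|r_{h,\nu}\|_{H^2}\le C_1h\|F_\nu\|_{H^1}$. Inserting $\|F_\nu\|_{H^1}\le C_{\rm src}K_\nu$ yields \eqref{eq:w2-arbitrary-amplitude-budget}. Since $\mathcal R_{h,\theta}$ is a single linear operator whose norm bound and threshold do not depend on $F$, no differentiability in $\nu$ or in the direction is needed.

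\emph{Main obstacle.} The only delicate point is confirming that the pulled-back transport coefficients really stay $C^{s-2}$-bounded jointly in $(z,p)$ when $p$ includes the direction. This involves the chart transition laws \eqref{eq:w1-chart-law} and the positive-rescaling factor in \eqref{eq:w1-normalization}, which is not holomorphic. I would handle it by working on the finite compact spinor charts, where $Q$, $n_\theta$, $\mathsf V_\theta$ and the rescalings are smooth with fixed bounds and the selected minor is bounded below by \eqref{eq:w1-minor-bound}. On these charts, Leibniz' rule transfers the $C^{s-2}$ bounds on $g_\mu$, $F_\mu$, $d$ to the mixed derivatives.
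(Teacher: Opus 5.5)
Your proposal is correct and follows the paper's proof essentially step for step. You apply Theorem~\ref{thm:finite-frames} with $r=r_{\rm fr}+2$ so that $r+2=s-2$, take $k=1$ in Theorem~\ref{thm:finite-right-inverse} using $15\le s-2=2m+12$, and then apply the single $\nu$-independent operator bound of Corollary~\ref{cor:w2-residual} to each $F_\nu$. Two small remarks: the size-$2$ system is the fast system \eqref{eq:w1-fast-system}, since both $P$ and $\Gamma$ are $3\times3$; and the joint $C^{s-2}$ bound you flag as the main obstacle is a hypothesis of this corollary, which the paper verifies only in the physical application via \eqref{eq:r2-reduced-coefficient-bound}.
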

\begin{proof}
First apply Theorem~\ref{thm:finite-frames} with
$r=r_{\rm fr}+2$. By \eqref{eq:r2-regularity-orders},
\[
 r+2=r_{\rm fr}+4=s-2,\qquad r+1=s-3.
\]
The assumed joint $C^{s-2}$ bounds therefore give the $C^{r+2}$
family required by that theorem, with compact coefficient closure in
$C^{s-3,\alpha_{\rm H}}$ as in
\eqref{eq:r2-frame-coefficient-closure}. Its finite-product assertion,
applied to the systems of sizes $2$, $3$ and $4$, gives one finite
atlas with the bounds \eqref{eq:finite-frame-uniform} through total
order $r_{\rm fr}+2$. The atlas is fixed by the coefficient prior and
the neighborhoods, before any source or $h$ is chosen.

For the four-component right inverse, take $k=1$ in
Theorem~\ref{thm:finite-right-inverse}. The coefficient requirement in
\eqref{eq:finite-system-class} is satisfied because
\[
 k+14=15\le s-2=2m+12,\qquad m\ge4.
\]
Consequently \eqref{eq:finite-right-inverse} holds for both phases
with one constant $C$ and one threshold $h_*$. For each $F_\nu\in E_1$,
set $r_{h,\nu}=-h\mathcal R_{h,\pm\theta}F_\nu$ as in
Corollary~\ref{cor:w2-residual}. The operator estimate gives
\[
 h^{-2}\|r_{h,\nu}\|_{H^1(\Omega_0)}
 +h^{-1}\|r_{h,\nu}\|_{H^2(\Omega_0)}
 \le C\|F_\nu\|_{H^1}
 \le CC_{\rm src}K_\nu,
\]
which implies \eqref{eq:w2-arbitrary-amplitude-budget}.
The constant and threshold belong to an operator bound on all of
$E_1$; hence they are independent of $\nu$ and $K_\nu$.
The argument applies separately to each source and phase and takes
no derivative in the source index or in the direction of the exact
inverse. In the physical application, the assumed coefficient bounds
follow from Proposition~\ref{prop:w1-exact-reduction}, specifically
\eqref{eq:r2-reduced-coefficient-bound}, with the stated loss of two
material derivatives.
\end{proof}

\section{Elastic CGO solutions and matrix-valued moment estimates}
\label{sec:w1-physical-moments}

Three physical dual polarizations and a fixed primal frame provide the nine
matrix moments. The frames are chosen independently of the polynomial
degree, localization width, and CGO parameter.

\subsection{Fixed domains and the analytic inputs}\label{sec:r2-physical-domains}

Use the regularity orders in \eqref{eq:r2-regularity-orders}.
The two real materials have a uniform $C^s$ bound and the strong-convexity
bounds of Section~\ref{sec:w1-reduction}. They satisfy \eqref{eq:m4-common-exterior} for the input domain $D$
and the fixed background of Subsection~\ref{sec:r2-common-exterior-setup}. Choose, before all tests,
\begin{equation}\label{eq:w1-fixed-domains}
 \overline B_R\cup\overline B_{R_b}\Subset\Omega_{\rm CGO}
 \Subset\operatorname{int}K_{\rm src}\Subset\Omega_1\Subset\Omega_0,
 \qquad K_{\rm coef}\Subset\operatorname{int}K_{\rm src}.
\end{equation}
Here $K_{\rm coef}$ contains the conjugated coefficient supports;
$K_{\rm src}$ is a closed ball, and the other working sets can be
fixed balls. Fix $\chi_0\in C_c^\infty(\operatorname{int}K_{\rm src})$
with $\chi_0=1$ near $\overline\Omega_{\rm CGO}$.
All amplitudes and their first corrections are defined on a fixed
neighborhood $U_{\chi_0}$ of $\operatorname{supp}\chi_0$.
Use a fixed disk of radius $L$ larger than the working-ball radii and
slightly larger source disks for every leafwise construction.
All transport frames and their equations are defined on a neighborhood
of the support of every leaf cutoff used with them. Choose the target,
cutoff-support, and frame-existence disks nested with fixed positive
gaps; their radii never change with a test.

\subsubsection{Uniform frames for the physical systems}\label{sec:r2-physical-frame-input}

For the $2$, $3$, and $4$ dimensional systems in
\eqref{eq:w1-fast-system}, \eqref{eq:w1-quotient-equations}, and
\eqref{eq:w1-leaf-inverse}, respectively, a uniform joint $C^{r_{\rm fr}+4}$ coefficient bound on the fixed
larger cylinders supplies local invertible frames and their inverses
with derivatives of total order at most $r_{\rm fr}+2$ bounded uniformly
on finitely many coefficient--direction--transverse neighborhoods.
The coefficient label is not differentiated. Apply
Theorem~\ref{thm:finite-frames} with frame order $r=r_{\rm fr}+2$
and input order $r+2=r_{\rm fr}+4$, at
\eqref{eq:w1-leaf-inverse}, \eqref{eq:w1-fast-system}, and
\eqref{eq:w1-fixed-frame-bound}. The construction \eqref{eq:finite-frame-neumann} also supplies
continuity in the material label, with values in the
stated total-order frame topology on fixed smaller cylinders. Its
coefficient closure and finite-cover argument are proved in
Appendix~\ref{app:w2-parametrix}; no differentiability in the material
label is used.

\subsubsection{The physical right inverse}\label{sec:r2-physical-inverse-input}

For $j=1,2$ and each normalized null direction $\omega$, a linear map
$\mathcal R^{(j)}_{h,\omega}:E_1\to H^2(\Omega_0;\mathbb C^4)$,
with $E_1$ defined by \eqref{eq:w2-source-space} at $k=1$ and
$d_{\rm sys}=4$, satisfies, for $0<h<h_*$,
\begin{equation}\label{eq:w1-I2-inverse}
 \begin{gathered}
 e^{\omega\cdot x/h}\widetilde{\mathcal M}_j e^{-\omega\cdot x/h}
           \mathcal R^{(j)}_{h,\omega}F=F\quad\hbox{on }\Omega_1,\\
 h^{-1}\|\mathcal R^{(j)}_{h,\omega}F\|_{H^1(\Omega_0)}
       +\|\mathcal R^{(j)}_{h,\omega}F\|_{H^2(\Omega_0)}
 \le C\|F\|_{H^1(\Omega_0)}.
 \end{gathered}
\end{equation}
The map in \eqref{eq:w1-I2-inverse} is the $k=1$, $d_{\rm sys}=4$
specialization of \eqref{eq:finite-right-inverse}. The same map gives both
estimates; $h_*,C$ are uniform over the class
and the directions. No support preservation or direction differentiability
of this exact inverse is assumed. The material-to-coefficient bound gives
$C^{s-2}$ coefficients, and $s-2=2m+12\ge20>15=1+14$, so the
$k=1$ specialization of Theorem~\ref{thm:finite-right-inverse}
applies. The high-order frame construction and this low-order inverse
have different regularity requirements,
summarized by Corollary~\ref{cor:w2-physical-budgets}. In the present
notation its source index is $(n,t_0,\varsigma)$ on the positive side
and the fixed physical column label on the negative side.

\subsection{Finite-order leaf inverses and the physical dual columns}\label{sec:r2-physical-polarizations}

The two principal physical dual columns span only a two-dimensional subspace
of the rank-three quotient. A supplementary column supplies the remaining
direction.

For compactly supported leaf data on the larger disk use
\begin{equation}\label{eq:w1-leaf-inverse}
 \mathcal C_\theta F(z,t_\perp)=\frac1{2\pi}
            \int\frac{F(w,t_\perp)}{z-w}\,dA(w),\qquad
 \mathcal V_\theta F=\tfrac12 \mathsf F_\theta\mathcal C_\theta
                 (\chi_{\rm leaf}\mathsf F_\theta^{-1}F),
\end{equation}
where $(2D_\theta+A_\theta)\mathsf F_\theta=0$ is the four-component frame from
Theorem~\ref{thm:finite-frames}, and $\chi_{\rm leaf}=1$ on the required target
disk. Then $D_\theta\mathcal C_\theta F=F$ and
$(2D_\theta+A_\theta)\mathcal V_\theta F=F$ there.
The factor $1/(2\pi)$ is forced by $D_\theta=2\partial_{\bar z}$.
For data extended by zero from the larger disk, spatial derivatives
commute with the convolution; transverse derivatives commute because
the integral is at fixed $t_\perp$. In fixed pulled-back coordinates
its kernel is independent of direction parameters. The integral of
$|z-w|^{-1}$ on the fixed disk is uniformly finite. Leibniz' rule,
including derivatives of the fixed cutoff, therefore bounds the
joint $C^a$ norm of this inverse by finitely many joint source and
frame norms. There is no claimed gain of a transverse derivative.
For the scalar primal lift of \eqref{eq:w1-primal-lift} one may take
\begin{equation}\label{eq:w1-scalar-lift-solution}
 \alpha_{\rm lift}=-\tfrac12\mathcal C_\theta
   \bigl(\chi_{\rm leaf}\ell_\theta
           (2D_\theta+A_\theta)\mathsf V_\theta X\bigr).
\end{equation}
The cutoff is one on a neighborhood of the target disk. Applying $2D_\theta$ to \eqref{eq:w1-scalar-lift-solution}, and using
$D_\theta\mathcal C_\theta F=F$ with $\chi_{\rm leaf}=1$ near the target disk,
we obtain the scalar equation in \eqref{eq:w1-primal-lift}. The same
fixed-support convolution bounds give the finite total-order parameter
estimates.
This scalar correction uses the finite-frame theorem for $X$ but no elliptic right
inverse and no new fourth-dimensional frame.

For a fixed normalized spinor representative, define
\begin{equation}\label{eq:r2-principal-polarization-data}
 a_\theta=\bar\theta/2\in\mathbb C^3,\qquad \theta\cdot a_\theta=1,
 \qquad \Phi=\begin{pmatrix}f_1&f_2\\R_1&R_2\end{pmatrix}.
\end{equation}
Choose the $2\times2$ matrix $\Phi$ so that each column solves
\begin{equation}\label{eq:w1-fast-system}
 D_\theta\binom fR=
 \begin{pmatrix}0&-\mu\kappa/2\\\alpha_\mu&0\end{pmatrix}\binom fR,
 \qquad \alpha_\mu=D_\theta^2(\mu^{-1})
                     =-\tfrac c2\theta^TF_\mu\theta.
\end{equation}
Existence and finite-order bounds for $\Phi^{\pm1}$ use
Theorem~\ref{thm:finite-frames}. The determinant has $D_\theta\det\Phi=0$;
multiplication of its first column by $(\det\Phi)^{-1}$ gives
$\det\Phi=1$ without changing the equation or the finite-order bounds.
For either column define
\begin{equation}\label{eq:w1-fast-amplitude}
 \begin{gathered}
 H_F=2D_\theta(c^{-1}Ra_\theta)-g_\mu c^{-1}R+F_\mu\theta f,
 \quad w_F=-\tfrac12\mathcal C_\theta(\chi_{\rm leaf}H_F),\\
 \boldsymbol z_0^F=(c^{-1}Ra_\theta+w_F,f)^T,\qquad
 \mathcal Y_F=(cf\theta,-R)^T.
 \end{gathered}
\end{equation}
The source satisfies $\theta\cdot H_F=0$: its first two terms contract
to $2c^{-1}D_\theta R$, which is canceled by the last term using
\eqref{eq:w1-fast-system}. Hence $\theta\cdot w_F=0$ and
$2D_\theta w_F=-H_F$. Both blocks of
$(2D_\theta+A_\theta)\boldsymbol z_0^F=0$ follow, and
\begin{equation}\label{eq:w1-fast-defects}
 J_\theta \boldsymbol z_0^F=\mathcal Y_F,\qquad
 (2D_\theta-A_\theta^T)\mathcal Y_F
 =\binom{2c\theta(D_\theta f+\mu\kappa R/2)}
              {-2(D_\theta R-\alpha_\mu f)}=0.
\end{equation}

On each compact spinor chart, choose spatially constant vectors satisfying
\begin{equation}\label{eq:r2-supplementary-polarization-data}
 \xi_q\in\mathbb C^2,\qquad q_\theta=Q\xi_q\in\mathbb C^3,\qquad
 \det(n_\theta,\xi_q)=-1,
\end{equation}
where $Q,n_\theta$ are defined in \eqref{eq:w1-spinor}.
For example, take $\xi_q=(1/s_0,0)^T$ where $s_0$ is bounded away from
zero, and $\xi_q=(0,1/s_1)^T$ on the other chart. All resulting
normalization denominators have fixed positive lower bounds on the
chosen subcharts. The supplementary amplitudes are
\begin{equation}\label{eq:w1-slow-amplitude}
 \begin{gathered}
 \boldsymbol z_0^{\rm sup}=(q_\theta,0)^T,\qquad J_\theta \boldsymbol z_0^{\rm sup}=0,\qquad
 \mathcal M\boldsymbol z_0^{\rm sup}=(V_\mu q_\theta,-e\,g_\mu\cdot q_\theta)^T,\\
 \boldsymbol z_1^{\rm sup}=\mathcal V_\theta(\mathcal M\boldsymbol z_0^{\rm sup})=(v_{\rm sup},f_{\rm sup})^T,\qquad
 R_{\rm col}=c\theta\cdot v_{\rm sup},\qquad
 (2D_\theta+A_\theta)\boldsymbol z_1^{\rm sup}=\mathcal M\boldsymbol z_0^{\rm sup}.
 \end{gathered}
\end{equation}
Here $\mathcal V_\theta$ is the fixed leaf inverse in
\eqref{eq:w1-leaf-inverse}.
The scalar equation and the contracted vector equation give separately
\begin{equation}\label{eq:w1-slow-two-equations}
 D_\theta f_{\rm sup}=-\mu\kappa R_{\rm col}/2-e\,g_\mu\cdot q_\theta/2,
 \qquad
 D_\theta R_{\rm col}=\alpha_\mu f_{\rm sup}+(c/2)\theta^TV_\mu q_\theta.
\end{equation}
The first nonzero physical lift defines the ambient dual column:
\begin{equation}\label{eq:w1-slow-canonical}
 \mathcal Y_{\rm sup}=\mathcal K \boldsymbol z_0^{\rm sup}-J_\theta \boldsymbol z_1^{\rm sup}
 =\binom{q_\theta-cf_{\rm sup}\theta}{R_{\rm col}+\tfrac32c\,g_\mu\cdot q_\theta}.
\end{equation}
The contraction identity \eqref{eq:w1-slow-contracted-coefficient}
is proved in Appendix~\ref{app:w1-algebra}.
Consequently the two blocks of its dual-transport defect are
\begin{equation}\label{eq:w1-slow-defects}
 \begin{aligned}
 [(2D_\theta-A_\theta^T)\mathcal Y_{\rm sup}]_{1:3}
 &=-2c\theta[D_\theta f_{\rm sup}+\mu\kappa R_{\rm col}/2
                            +e\,g_\mu\cdot q_\theta/2]=0,\\
 [(2D_\theta-A_\theta^T)\mathcal Y_{\rm sup}]_4
 &=2[D_\theta R_{\rm col}-\alpha_\mu f_{\rm sup}
                            -(c/2)\theta^TV_\mu q_\theta]=0.
 \end{aligned}
\end{equation}

Define the $4\times3$ ambient dual matrix and its quotient frame by
\begin{equation}\label{eq:r2-ambient-dual-matrix}
 \mathcal Y=(\mathcal Y_F^1,\mathcal Y_F^2,\mathcal Y_{\rm sup})
             =\mathsf U_\theta Y.
\end{equation}
Here the columns are defined in \eqref{eq:w1-fast-amplitude} and
\eqref{eq:w1-slow-canonical}, and $\mathsf U_\theta$ in
\eqref{eq:w1-spinor}. The $3\times3$ quotient frame is
\begin{equation}\label{eq:w1-physical-Y}
 Y=\begin{pmatrix}
 -2cf_1n_\theta&-2cf_2n_\theta&\xi_q+2cf_{\rm sup}n_\theta\\
 -R_1&-R_2&R_{\rm col}+\tfrac32c\,g_\mu\cdot q_\theta
 \end{pmatrix},\qquad
 \det Y=-2c\det\Phi\det(n_\theta,\xi_q)=2c.
\end{equation}
The top block has two rows. Expansion of the determinant in the basis
$(n_\theta,\xi_q)$ proves the formula; the third column's multiple of
$n_\theta$ and its bottom entry do not affect it. Thus the ambient
matrix is $4\times3$, whereas $Y$ and its determinant are $3\times3$.

\subsubsection{Transverse localization}\label{sec:r2-transverse-localization}

Choose $\chi(t_\perp)$ supported in the frame chart and equal to one on
an open interval containing every permitted test support on its smaller
subchart. For principal columns use $a_0^{F,i}=\chi \boldsymbol z_0^{F,i}$ and
$a_1^{F,i}=\mathcal V_\theta(\mathcal M a_0^{F,i})$. For the supplementary column,
\begin{equation}\label{eq:w1-transverse-correction}
 \begin{gathered}
 a_0^{\rm sup}=\chi \boldsymbol z_0^{\rm sup},\qquad a_1^{\rm sup}=\chi \boldsymbol z_1^{\rm sup}+w_\chi,\qquad
 w_\chi=\mathcal V_\theta([\mathcal M,\chi]\boldsymbol z_0^{\rm sup}),\\
 [\mathcal M,\chi]\boldsymbol z_0^{\rm sup}=
 \binom{\chi''q_\theta-\chi'g_\mu(\nu_{\rm leaf}\cdot q_\theta)}
       {d\chi'(\nu_{\rm leaf}\cdot q_\theta)}.
 \end{gathered}
\end{equation}
Here $\nu_{\rm leaf}$ is defined in \eqref{eq:w1-leaf-coordinates},
and $\mathcal V_\theta$ in \eqref{eq:w1-leaf-inverse}.
Because this inverse integrates on each leaf separately, $w_\chi$ and
its available derivatives are zero on the open interval where $\chi=1$.
Also $J_\theta a_0^{\rm sup}=0$ and its scalar component is zero throughout the
working domain, not merely on the test support.
The equations $(2D_\theta+A_\theta)a_1^\ell=\mathcal M a_0^\ell$ hold on all of
$U_{\chi_0}$. Extension by zero transversely is legitimate because the
cutoff support is compact in its chart. None of these assertions applies
to the elliptic remainder, whose transverse support is unrestricted.

The constructions of the primal and dual quotient frames,
their lifts, and the localized supplementary amplitudes
give the following uniform estimates.

\begin{proposition}\label{prop:w1-high-frames}
For each fixed $0<R_0<R$, the frames
$\mathsf F_\theta,\Phi,X_j,Y_j,Z_j$ and the
supplementary amplitudes constructed in
\eqref{eq:w1-leaf-inverse},
\eqref{eq:r2-principal-polarization-data},
\eqref{eq:w1-quotient-equations},
\eqref{eq:w1-physical-Y},
\eqref{eq:w1-primal-lift}, and
\eqref{eq:w1-slow-amplitude}--\eqref{eq:w1-transverse-correction}
can be chosen, together with their leaf and transverse
cutoffs, on a fixed finite product cover of the compact
material-pair parameter space and
$\mathbb{CP}^1\times[-R_0,R_0]$.
Each smaller chart is compactly contained in its
frame-existence chart.
These choices are independent of $h$, the polynomial
degree, and the transverse localization center and width.

On the corresponding fixed cylinders, containing the
entire closed observation disks and all residual supports,
\begin{equation}\label{eq:w1-fixed-frame-bound}
\max_{|\beta|+|\gamma|\le r_{\rm fr}}
\left\|
\partial_p^\beta\partial_x^\gamma
\bigl(
X_j,X_j^{-1},Y_j,Y_j^{-1},Z_j,
\boldsymbol z_0^{\rm sup},
\boldsymbol z_1^{\rm sup},w_\chi
\bigr)
\right\|_{C^0}
\le C_m.
\end{equation}
Here $p$ denotes local direction parameters, and
transverse differentiation is counted among the
spatial derivatives.
Equivalently, in the fixed pulled-back coordinates,
the estimate controls mixed derivatives of total
leaf--transverse--direction order at most $r_{\rm fr}$.
The displayed fields depend continuously on the
material label in this topology.
The cover and $C_m$ depend only on the fixed
a priori class and geometry.

For the two materials, the matrix
$W=Y_2^TX_1$ is invertible on the full closed
observation disks, with
$W^{-1}=X_1^{-1}Y_2^{-T}$, and satisfies
\begin{equation}\label{eq:w1-W-equation}
2\partial_{\bar z}W
=-Y_2^T(P_1-P_2)X_1.
\end{equation}
In particular, $W$ is holomorphic on every
common-exterior leaf annulus.
\end{proposition}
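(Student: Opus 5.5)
The argument has two layers. The first is uniform existence and bounds for all the listed frames, which reduces to Theorem~\ref{thm:finite-frames} together with the fixed-kernel Cauchy transform estimates. The second is the algebraic statement about $W=Y_2^TX_1$, which follows from the two quotient transport equations in \eqref{eq:w1-quotient-equations}.

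\textbf{Step 1: the ODE-type frames.} The frames $\mathsf F_\theta$ (four-dimensional), $\Phi$ (two-dimensional system \eqref{eq:w1-fast-system}) and $X_j$ (three-dimensional system $2\partial_{\bar z}X+PX=0$) solve leafwise $\partial_{\bar z}$-systems. Their coefficients are built from $A_\theta$, $\mu\kappa$, $\alpha_\mu$ and $P_j$. By \eqref{eq:r2-reduced-coefficient-bound}, these are $C^{s-2}=C^{r_{\rm fr}+4}$ jointly in leaf, transverse and spinor variables. Pulling back to fixed coordinates on compact spinor charts times $[-R_0,R_0]$, the parameter manifold $\mathfrak P$ is compact. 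Corollary~\ref{cor:w2-physical-budgets} then gives one common finite product atlas, with weights and frame/inverse bounds through total order $r_{\rm fr}+2\ge r_{\rm fr}$, fixed before $h$ or any test data. Continuity in the material label is inherited from the theorem.

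\textbf{Step 2: the remaining fields.} The lifts $Z_j$, the fields $w_F$ and $\boldsymbol z_1^{\rm sup}$, and the localization term $w_\chi$ are obtained by applying $\mathcal C_\theta$ or $\mathcal V_\theta$ to compactly supported leaf data. As explained after \eqref{eq:w1-leaf-inverse}, derivatives commute with the fixed-kernel convolution, and $|z-w|^{-1}$ is integrable on the fixed disk. Leibniz' rule therefore bounds their $C^{r_{\rm fr}}$ mixed norms by source and frame norms, and each application loses at most one to two derivatives of the coefficients, which fit within the margin $r_{\rm fr}+2$.

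\textbf{Step 3: the dual frame $Y_j$ and its inverse.} $Y_j$ is built algebraically from $\Phi$, $f_{\rm sup}$, $R_{\rm col}$ and $\xi_q$, and \eqref{eq:w1-physical-Y} gives $\det Y_j=2c\ge 2M^{-1/2}$. Hence $Y_j^{-1}$ is bounded via the adjugate. $X_j^{-1}$ is supplied by the frame theorem. The choices are independent of $n$, $t_0$, $\varsigma$ and $h$, because the transverse cutoff $\chi$ equals one on all admissible test supports.

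\textbf{Step 4: the identity for $W$.} Compute
\[
2\partial_{\bar z}W=(2\partial_{\bar z}Y_2)^TX_1+Y_2^T(2\partial_{\bar z}X_1)=(P_2^TY_2)^TX_1-Y_2^TP_1X_1=-Y_2^T(P_1-P_2)X_1.
\]
The dual equation $2\partial_{\bar z}Y_2=P_2^TY_2$ must be checked for the physical $Y$, i.e.\ that $\mathcal Y=\mathsf U_\theta Y$ solves $(2D_\theta-A_\theta^T)\mathcal Y=0$. This holds by \eqref{eq:w1-fast-defects} and \eqref{eq:w1-slow-defects}, together with $A_\theta^T\mathsf U_\theta=-\mathsf U_\theta\Gamma$ and injectivity of $\mathsf U_\theta$. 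Invertibility of $W$ and the formula $W^{-1}=X_1^{-1}Y_2^{-T}$ follow from Steps 1 and 3. On common-exterior annuli the two materials agree, so $P_1=P_2$ there and $W$ is holomorphic.

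\textbf{Main obstacle.} I expect the hardest part to be the uniformity bookkeeping across chart overlaps and the transverse cutoff. The finite product cover must serve all three systems simultaneously, with nested disks of fixed gaps. The supplementary-column normalization $\det(n_\theta,\xi_q)=-1$ must have lower bounds that are uniform on subcharts, which is ensured by the explicit minor bound \eqref{eq:w1-minor-bound}.
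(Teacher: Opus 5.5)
Your proposal is correct and follows essentially the same route as the paper's proof: the three leaf systems are handled by Theorem~\ref{thm:finite-frames} through Corollary~\ref{cor:w2-physical-budgets} with $r=r_{\rm fr}+2$, the lifts and supplementary amplitudes by the fixed-kernel Cauchy-transform and Leibniz bounds, $Y_j^{-1}$ by the adjugate with $\det Y_j=2c$, and \eqref{eq:w1-W-equation} by the product rule; your explicit check that $\mathcal Y=\mathsf U_\theta Y$ together with injectivity of $\mathsf U_\theta$ forces $2\partial_{\bar z}Y-P^TY=0$ is a useful addition. One small correction: the uniform normalization $\det(n_\theta,\xi_q)=-1$ comes from the explicit choice $\xi_q=(1/s_0,0)^T$ or $(0,1/s_1)^T$ with $|s_0|$ or $|s_1|$ bounded below on the subchart, whereas the minor bound \eqref{eq:w1-minor-bound} is what controls $\mathsf V_\theta$ and hence $Z_j$; also, the paper spells out material-label continuity via compactness of the $C^s$ class in $C^{s-1,\alpha_{\rm H}}$ and continuity of the two-derivative coefficient maps into $C^{r_{\rm fr}+3,\alpha_{\rm H}}$, which you only assert.
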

\begin{proof}
Put $r=r_{\rm fr}+2$. Proposition~\ref{prop:w1-exact-reduction}
and \eqref{eq:r2-reduced-coefficient-bound} give the derivative count
\[
 C^{r_{\rm fr}+6}\text{ materials}
 \longmapsto C^{r_{\rm fr}+4}=C^{r+2}
 \text{ transport coefficients}.
\]
Theorem~\ref{thm:finite-frames} and
Corollary~\ref{cor:w2-physical-budgets} therefore supply a common finite
atlas with full-disk frame and inverse bounds through total order
$r_{\rm fr}+2$.

The amplitudes in \eqref{eq:w1-primal-lift},
\eqref{eq:w1-fast-amplitude}, \eqref{eq:w1-slow-amplitude}, and
\eqref{eq:w1-transverse-correction} are obtained by products,
differentiation, and the fixed-support leaf inverses
\eqref{eq:w1-leaf-inverse}. In the fixed pulled-back coordinates,
the Cauchy kernel is parameter independent and has uniformly bounded
$L^1$ norm on the integration disks. Differentiation and Leibniz' rule
thus bound each integral by the corresponding source and frame norms.
The explicit formulas use at most two of the reserved frame derivatives,
and their material coefficients use at most two material derivatives.
All the displayed fields are therefore bounded through total order
$r_{\rm fr}$. For the inverse of the physical quotient frame, use
\eqref{eq:w1-physical-Y}:
\[
 Y^{-1}=\frac{\operatorname{adj}Y}{2c},\qquad
 \partial_jY^{-1}=-Y^{-1}(\partial_jY)Y^{-1}.
\]
The lower bounds in \eqref{eq:w1-denominators} first bound $c^{-1}$
and $Y^{-1}$. Differentiating $YY^{-1}=I$ then bounds each higher
spatial or parameter derivative in terms of already controlled
derivatives. This proves \eqref{eq:w1-fixed-frame-bound}.

To check the material-label assertion, a bounded $C^s$ material
family has compact closure in $C^{s-1,\alpha_{\rm H}}$ on the fixed
smaller neighborhoods. The coefficient formulas with two derivatives
define continuous maps into
\[
 C^{s-3,\alpha_{\rm H}}
 =C^{r_{\rm fr}+3,\alpha_{\rm H}}
 =C^{r+1,\alpha_{\rm H}}.
\]
This is the topology of \eqref{eq:r2-frame-coefficient-closure}.
The coefficient-label continuity in Theorem~\ref{thm:finite-frames}
therefore passes through the fixed integrals and algebraic operations
above. The normalization denominators stay uniformly separated from
zero. The theorem's finite-product refinement gives the asserted
common cover with fixed margins for both materials and all systems.

Finally, \eqref{eq:w1-quotient-equations} and the product rule yield
\[
 2\partial_{\bar z}(Y_2^TX_1)
 =(P_2^TY_2)^TX_1-Y_2^TP_1X_1
 =-Y_2^T(P_1-P_2)X_1.
\]
This is \eqref{eq:w1-W-equation}. Wherever the materials agree,
$P_1-P_2=0$, so $W$ is holomorphic there.
\end{proof}

\begin{remark}
The supplementary physical polarization makes the
dual quotient frame invertible, as shown by the
determinant identity \eqref{eq:w1-physical-Y}.
Since the frames are fixed independently of the
moment tests, all nine matrix moments in
Theorem~\ref{thm:w1-all-moments} are obtained,
on each chart, from the same frame product $W$,
with uniform bounds throughout the subsequent
Fourier and transport arguments.

The construction uses local normalized representatives
of the projective null directions and does not
assert a global trivialization over $\mathbb{CP}^1$.
Only continuous dependence on the material label
is required.
\end{remark}

\subsection{Exact physical solutions and Sobolev remainder estimates}\label{sec:r2-physical-solutions}

The finite-order amplitudes are corrected to exact reduced solutions and
then transformed into physical elastic solutions. Both Sobolev remainder
estimates enter the physical trace and dual-lift bounds.

For a positive-phase leading amplitude $a_0^+$, use
\begin{equation}\label{eq:w1-correction-signs}
 a_1^+=-\mathcal V_{1,\theta}(\mathcal M_1a_0^+),\qquad
 a_1^{-,\ell}=+\mathcal V_{2,\theta}(\mathcal M_2a_0^{-,\ell}),
 \qquad \ell=F1,F2,\mathrm{sup}.
\end{equation}
For the supplementary negative column take exactly the localized
$a_1^{\rm sup}$ in \eqref{eq:w1-transverse-correction}, which is the same linear
construction by expansion of $\mathcal M(\chi \boldsymbol z_0^{\rm sup})$.
Assume $\|a_0^+\|_{C^{12}}\le CK$; the negative amplitudes are fixed
and have the corresponding bound with $K=1$. The fixed leaf inverse
and Leibniz' rule give
\begin{equation}\label{eq:w1-first-correction-budget}
 \|a_1^+\|_{H^3(U_{\chi_0})}
 +\|\chi_0\mathsf R_*^{-1}\mathcal M_1a_1^+\|_{H^1(\Omega_0)}\le CK,
 \qquad
 \|a_1^{-,\ell}\|_{H^3}+
 \|\chi_0\mathsf R_*^{-1}\mathcal M_2a_1^{-,\ell}\|_{H^1}\le C.
\end{equation}
For clarity, derivatives of the source $\mathcal M a_0$ through order four cost
at most six derivatives of $a_0$, well within the $C^{12}$ bound.
The $H^1$ residual uses at most three derivatives of $a_1$.

Write
\begin{equation}\label{eq:r3-background-amplitude}
\widetilde a=\mathsf R_*^{-1}a,
\end{equation}
where $\mathsf R_*$ is defined in \eqref{eq:w1-background}. Apply the source operators of
Theorem~\ref{thm:finite-right-inverse} at $k=1$, and define
\begin{equation}\label{eq:w1-exact-remainders}
 \begin{aligned}
 r_{1,h}^+&=-\mathsf R_*\mathcal R^{(1)}_{h,-\theta}
            (h\chi_0\widetilde{\mathcal M}_1\widetilde a_1^+),\\
 r_{2,h}^{-,\ell}&=-\mathsf R_*\mathcal R^{(2)}_{h,\theta}
            (h\chi_0\widetilde{\mathcal M}_2\widetilde a_1^{-,\ell}).
 \end{aligned}
\end{equation}
Each remainder in \eqref{eq:w1-exact-remainders} satisfies both
Sobolev estimates:
\begin{equation}\label{eq:w1-H1-H2}
 \begin{aligned}
 \|r_{1,h}^+\|_{H^1(\Omega_{\rm CGO})}&\le Ch^2K,&
 \|r_{1,h}^+\|_{H^2(\Omega_{\rm CGO})}&\le ChK,\\
 \|r_{2,h}^{-,\ell}\|_{H^1(\Omega_{\rm CGO})}&\le Ch^2,&
 \|r_{2,h}^{-,\ell}\|_{H^2(\Omega_{\rm CGO})}&\le Ch.
 \end{aligned}
\end{equation}
Here the remainders are defined in \eqref{eq:w1-exact-remainders}.
The reduced fields
\begin{equation}\label{eq:w1-exact-fields}
 \boldsymbol z_{1,h}^+=e^{\varphi/h}(a_0^++ha_1^++r_{1,h}^+),\qquad
 \boldsymbol z_{2,h}^{-,\ell}=e^{-\varphi/h}
               (a_0^{-,\ell}+ha_1^{-,\ell}+r_{2,h}^{-,\ell})
\end{equation}
solve the corresponding reduced equations on $\Omega_{\rm CGO}$ by \eqref{eq:w1-phase-identities}: after the first
transport cancellation, the residual is $h\mathcal M_j a_1$, and the remainder
cancels it where $\chi_0=1$. It is not necessary that the equation hold
on all of $\Omega_0$.

The genuine physical displacements and their canonical lifts are
\begin{equation}\label{eq:w1-physical-fields}
 u_{1,h}=T_1\boldsymbol z_{1,h}^+,\qquad
 u_{2,h}^{F,i}=-hT_2\boldsymbol z_{2,h}^{-,Fi},\qquad
 u_{2,h}^{\rm sup}=T_2\boldsymbol z_{2,h}^{-,{\rm sup}},\qquad
 y_{2,h}^\ell=S_2^Tu_{2,h}^\ell.
\end{equation}
Each $u$ solves the homogeneous physical equation by the exact
factorization, not by prescribing reduced boundary observations.
With the zeroth-order $3\times4$ phase symbol
\begin{equation}\label{eq:w1-physical-phase-symbol}
 \mathsf t_{\mu,\theta}(v,f)=\mu^{-1}f\theta,\qquad
 e^{-\sigma_{\rm ph}\varphi/h}T_\mu e^{\sigma_{\rm ph}\varphi/h}
       =T_\mu+\sigma_{\rm ph} h^{-1}\mathsf t_{\mu,\theta},
\end{equation}
the supplementary displacement has the exact expansion
\begin{equation}\label{eq:w1-slow-displacement}
 e^{\varphi/h}u_{2,h}^{\rm sup}
 =T_2a_0^{\rm sup}-\mathsf t_{2,\theta}a_1^{\rm sup}+hT_2a_1^{\rm sup}
                  +T_2r_h^{\rm sup}-h^{-1}\mathsf t_{2,\theta}r_h^{\rm sup}.
\end{equation}
The absent $h^{-1}\mathsf t a_0^{\rm sup}$ term is zero on the whole working
domain. Since $T:H^2\to H^1$ and $\mathsf t:H^1\to H^1$, the last
three terms are $O(h)$ in phase-removed $H^1$ by
\eqref{eq:w1-H1-H2}. For a principal column the corresponding expansion
is $\mathsf t(a_0+ha_1+r)-hT(a_0+ha_1+r)$, bounded in $H^1$.
For the positive column it is $(T+h^{-1}\mathsf t)(a_0+ha_1+r)$,
with $H^1$ bound $Ch^{-1}K$. By the product rule for the exponential factors, the amplitude bounds in
\eqref{eq:w1-K-budget}, and the remainder estimates in \eqref{eq:w1-H1-H2}, we
obtain the physical trace bounds
\begin{equation}\label{eq:w1-physical-traces}
 \|u_{1,h}\|_{H^1(B_R)}\le Ch^{-2}e^{R/h}K,\qquad
 \|u_{2,h}^{F,i}\|_{H^1(B_R)}+\|u_{2,h}^{\rm sup}\|_{H^1(B_R)}
       \le Ch^{-1}e^{R/h}.
\end{equation}

With $\mathcal K$ from \eqref{eq:r2-composite-K} for material two,
the canonical lifts \eqref{eq:w1-physical-fields} satisfy
\begin{equation}\label{eq:w1-canonical-expansions}
 \begin{aligned}
 e^{\varphi/h}y_{2,h}^{F,i}
 &=J_\theta a_0+h(J_\theta a_1-\mathcal K a_0)+J_\theta r
                            -h^2\mathcal K a_1-h\mathcal K r,\\
 e^{\varphi/h}y_{2,h}^{\rm sup}
 &=\mathcal K a_0^{\rm sup}-J_\theta a_1^{\rm sup}+h\mathcal K a_1^{\rm sup}
                            -h^{-1}J_\theta r^{\rm sup}+\mathcal K r^{\rm sup}.
 \end{aligned}
\end{equation}
The mapping properties \eqref{eq:r2-composite-K} and
\eqref{eq:r2-lift-mappings}, together with \eqref{eq:w1-H1-H2}, give
\begin{equation}\label{eq:r2-canonical-error-estimates}
 \begin{aligned}
 \|h(J_\theta a_1-\mathcal K a_0)\|_2&\le Ch,&
 \|J_\theta r\|_2+\|h^2\mathcal K a_1\|_2+\|h\mathcal K r\|_2&\le Ch^2,\\
 \|h\mathcal K a_1^{\rm sup}\|_2&\le Ch,&
 \|h^{-1}J_\theta r^{\rm sup}\|_2+\|\mathcal K r^{\rm sup}\|_2&\le Ch.
 \end{aligned}
\end{equation}
The first row applies to either principal column and the second to the
supplementary column; all norms are in $L^2(B_R)$. Thus every term in
\eqref{eq:w1-canonical-expansions} is controlled by the same remainder.
The leading matrix on the whole integration ball is
\begin{equation}\label{eq:w1-localized-leading-matrix}
 \begin{gathered}
 \mathcal Y_\chi=(\chi\mathcal Y_F^1,\chi\mathcal Y_F^2,
                          \mathcal Y_{\chi,{\rm sup}}),\\
 \mathcal Y_{\chi,{\rm sup}}=\chi\mathcal Y_{\rm sup}-J_\theta w_\chi
       -\binom{0_{\mathbb C^3}}
                   {c\chi'(\nu_{\rm leaf}\cdot q_\theta)/(1-\kappa)}.
 \end{gathered}
\end{equation}
The last term is the commutator of $\mathcal K$ with the transverse
cutoff on $(q_\theta,0)^T$; its calculation is given in
\eqref{eq:w1-app-localized-lift}. All terms are bounded independently
of the positive test, and both correction terms vanish wherever
$\chi=1$. In particular,
\begin{equation}\label{eq:w1-canonical-L2}
 e^{\varphi/h}y_{2,h}=\mathcal Y_\chi+q_h,\qquad
 \|q_h\|_{L^2(B_R)}\le Ch,\qquad
 \mathcal Y_\chi=\mathsf U_\theta Y_2\ \hbox{where }\chi=1.
\end{equation}
The factor $h^{-1}$ is controlled by the $H^1$ estimate $O(h^2)$;
the other second-order term requires the simultaneous $H^2$ estimate
$O(h)$. No uniform $H^5$ remainder and no pointwise rank of the exact
canonical matrix is used. At fixed $h$, interior Laplacian regularity
upgrades the exact reduced fields from $H^2$ to $H^4$ near
$\overline B_R$, so Lemma~\ref{lem:w1-betti} applies. The upgrade in
Appendix~\ref{app:w1-algebra} is only an identity-justification step;
its constants may depend on $h$.

\subsection{All nine moments in fixed quotient frames}\label{sec:r2-physical-moments}

The matrix moments are estimated using $\delta_D$ in
\eqref{eq:w0-domain-error} and the previously fixed frames. The polynomial
degree and transverse localization parameters may vary after the frames have
been chosen.

On a fixed smaller transverse subchart let $t_0$ be any permitted
center and choose $0<\varsigma\le\varsigma_*$ such that the support
below lies in the open interval $\chi=1$. Fix
$\psi_\perp\in C_c^\infty((-1,1))$, $\psi_\perp\ge0$, $\int\psi_\perp=1$, and put
\begin{equation}\label{eq:w1-test-family}
 \begin{gathered}
 r(t)=\sqrt{R^2-t^2},\qquad
 b_{\varsigma,t_0}(t)=\varsigma^{-1}\psi_\perp((t-t_0)/\varsigma),\qquad
 g_n(z,t)=(z/r(t))^n,\quad n\in\mathbb N_0,\\
 a_0^+=b_{\varsigma,t_0} g_nZ_1,\qquad
 K_{n,\varsigma}=A^n(n+1)^{12}\varsigma^{-13}.
 \end{gathered}
\end{equation}
The admissible transverse range is compact in $(-R,R)$, so $r$ has a
positive lower bound there. Choose $A>1$ larger than the ratio of the
large source-disk radius to this lower bound, increasing it for fixed
geometric factors. Define $r$ on a fixed neighborhood of the compact
test supports, and extend $b_{\varsigma,t_0} g_n$ by zero transversely.
Because $D_\theta z=D_\theta t=0$, the leading amplitude solves
transport. Leibniz' rule through order $12$ gives, on the entire large
working cylinder,
\begin{equation}\label{eq:w1-K-budget}
 \|a_0^+\|_{C^{12}}\le CK_{n,\varsigma}.
\end{equation}
Each differentiation of a power costs at most a factor $n+1$; the
largest derivative of $b_{\varsigma,t_0}$ is of order $\varsigma^{-13}$.
Since $r_{\rm fr}\ge16$, the one chosen high-order frame supplies all
these bounds.

The weighted matrix integrals in \eqref{eq:w1-noisy-moment} are obtained
from elastic solutions through the DN pairing \eqref{eq:w5-physical-dn}.

\begin{theorem}
\label{thm:w1-all-moments}
Under the fixed $C^{2m+14}$ prior and geometry of this section, select the
$r_{\rm fr}=2m+8$ local family of
Proposition~\ref{prop:w1-high-frames} for each pair before any tests.
Assume the common-collar condition of Lemma~\ref{lem:w1-betti}.
The measurement domain is either $D=B_R$, or a fixed smooth
domain with $\overline D\Subset B_R$ such that the materials
agree on $B_R\setminus\overline D$.
There exist constants $A>1,C,C_0>0$ and $0<h_*\le1$, uniform over all pairs and
active charts, such that for every $n\ge0$, every permitted center and
width $0<\varsigma\le\varsigma_*$, and every $0<h<h_*$,
\begin{equation}\label{eq:w1-noisy-moment}
 \left\|\int_{B_R}b_{\varsigma,t_0} g_nY_2^T(P_1-P_2)X_1\,dx\right\|
 \le CK_{n,\varsigma}
             \left(h+\delta_Dh^{-2}e^{C_0/h}\right).
\end{equation}
Here $b_{\varsigma,t_0},g_n,K_{n,\varsigma}$ are defined in
\eqref{eq:w1-test-family}, $P_j$ in \eqref{eq:w1-quotient-connection},
and $X_1,Y_2$ are the frames of Proposition~\ref{prop:w1-high-frames}.
The discrepancy $\delta_D$ is defined in \eqref{eq:w0-domain-error}.
The inverse threshold is independent of $n,\varsigma$
and of the linear amplitude size.
All nine entries use the same $X_1,Y_2$ and physical displacements
\eqref{eq:w1-physical-fields}. Constants may depend on $m$, the full
$C^{2m+14}$ prior, the fixed geometry, cutoffs, and compact atlas, but
not on the chosen materials, $n,t_0,\varsigma,h$, or $\delta_D$.
\end{theorem}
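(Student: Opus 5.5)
The plan is to substitute the positive-phase CGO $\boldsymbol z_{1,h}^+$ (leading amplitude $a_0^+=b_{\varsigma,t_0}g_nZ_1$) and each of the three negative-phase physical dual fields $y_{2,h}^\ell$, $\ell=F1,F2,\mathrm{sup}$, into the Betti identity of Lemma~\ref{lem:w1-betti}. The two sides are then expanded separately.

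\emph{Right side.} By \eqref{eq:w1-betti} it equals $\langle(\Lambda_2^{B_R}-\Lambda_1^{B_R})\operatorname{Tr}u_{1,h},\operatorname{Tr}u_{2,h}^\ell\rangle$. Lemma~\ref{lem:w1-domain-comparison} gives $\delta_{B_R}\le C\delta_D$. The trace theorem and the bounds \eqref{eq:w1-physical-traces} then bound this side by
\[
C\delta_D\,h^{-3}e^{2R/h}K_{n,\varsigma}.
\]
For $\ell=F i$ the normalization in \eqref{eq:w1-physical-fields} carries an extra factor $h$. I will multiply the identity by the normalization needed so that the left side produces the moment at order $h^0$. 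The net power of $h$ in front of $\delta_D$ is at worst $h^{-2}$, and one absorbs $e^{2R/h}$ into $e^{C_0/h}$ with $C_0=2R$ (enlarged if needed). This yields the term $CK_{n,\varsigma}\delta_Dh^{-2}e^{C_0/h}$.

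\emph{Left side.} The exponentials $e^{\varphi/h}$ and $e^{-\varphi/h}$ cancel, leaving
\[
\int_{B_R}(e^{\varphi/h}y_{2,h})^T(\mathcal M_1-\mathcal M_2)(a_0^++ha_1^++r_{1,h}^+)\,e^{\varphi/h}e^{-\varphi/h}\ldots
\]
The conjugation formula \eqref{eq:w1-phase-identities} gives
\[
e^{-\varphi/h}(\mathcal M_1-\mathcal M_2)e^{\varphi/h}=(\mathcal M_1-\mathcal M_2)+h^{-1}A_{\Delta,\theta}.
\]
The leading term is therefore $h^{-1}\int y^TA_{\Delta,\theta}a_0^+$. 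After multiplying the identity by $h$, one uses \eqref{eq:w1-canonical-L2}: $e^{\varphi/h}y_{2,h}=\mathcal Y_\chi+q_h$ with $\|q_h\|_2\le Ch$, and $\mathcal Y_\chi=\mathsf U_\theta Y_2$ on the test support, since $b_{\varsigma,t_0}$ is supported where $\chi=1$. Then \eqref{eq:w1-primal-lift} gives $\mathsf U_\theta^TA_{\Delta,\theta}Z_1=2(P_1-P_2)X_1$, so the leading term is exactly twice the displayed moment.

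\emph{Error terms.} These are:
\begin{itemize}
\item $q_h$ paired with $A_{\Delta,\theta}a_0^+$, which is $O(hK)$;
\item $A_{\Delta,\theta}(ha_1^++r^+)$, which is $O(hK)$ by \eqref{eq:w1-first-correction-budget} and the $H^1$ bound $h^2K$ in \eqref{eq:w1-H1-H2};
\item $h(\mathcal M_1-\mathcal M_2)(a_0^++ha_1^++r^+)$, which uses the $H^2$ estimate $hK$ for $r^+$ and is $O(hK)$.
\end{itemize}
All constants are uniform by Theorem~\ref{thm:finite-right-inverse} and Proposition~\ref{prop:w1-high-frames}, and independent of $n,\varsigma$ since $K=K_{n,\varsigma}$ enters linearly via \eqref{eq:w1-K-budget}. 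Collecting the three columns $\ell$ gives all nine entries at once, because $\mathsf U_\theta^T$ applied against the three dual columns yields $Y_2^T$.

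\emph{Main obstacle.} The difficulty I expect is the bookkeeping of $h$-powers for the supplementary column. Its lift \eqref{eq:w1-canonical-expansions} contains $h^{-1}J_\theta r^{\rm sup}$ and an order-one leading term $\mathcal Ka_0-J_\theta a_1$, unlike the principal columns, which carry an extra $h$. I must verify, using \eqref{eq:r2-canonical-error-estimates} and the exact vanishing $J_\theta a_0^{\rm sup}=0$, that after the common normalization all errors are $O(h)$ in $L^2$. I also need to check that the localization corrections in \eqref{eq:w1-localized-leading-matrix} vanish on the test support.

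A second check is needed because the Betti identity requires $H^4$ regularity at fixed $h$. This is supplied by the interior regularity upgrade of Appendix~\ref{app:w1-algebra}, whose $h$-dependent constants do not enter the final estimate.
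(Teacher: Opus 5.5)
Your proposal is correct and follows the paper's proof essentially step for step. The steps coincide: the Betti identity applied to each of the nine pairs, the exact phase expansion with leading term $h^{-1}A_{\Delta,\theta}a_0^+$, pairing against $\mathcal Y_\chi+q_h$ from \eqref{eq:w1-canonical-L2}, the contraction $\mathsf U_\theta^TA_{\Delta,\theta}Z_1=2(P_1-P_2)X_1$ on the test support, and the boundary bound $C\,h\cdot h^{-3}e^{2R/h}K_{n,\varsigma}\delta_{B_R}$ combined with Lemma~\ref{lem:w1-domain-comparison}.

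The supplementary-column bookkeeping you flag as the main obstacle is already settled by \eqref{eq:r2-canonical-error-estimates}--\eqref{eq:w1-canonical-L2}; there the simultaneous $H^2$ bound is genuinely needed, for $h\mathcal K r$ and $\mathcal K r^{\rm sup}$. For $h(\mathcal M_1-\mathcal M_2)r_{1,h}^+$, by contrast, the $H^1$ bound suffices, because $\mathcal M_1-\mathcal M_2$ is first order.
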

\begin{proof}
Construct the positive columns with \eqref{eq:w1-K-budget}, and the
three fixed negative columns with the uniform amplitude bounds
corresponding to $K=1$. Apply
Theorems~\ref{thm:finite-frames} and \ref{thm:finite-right-inverse} as in
\eqref{eq:w1-correction-signs}--\eqref{eq:w1-exact-remainders}, so the
bounds \eqref{eq:w1-H1-H2} and \eqref{eq:w1-canonical-L2} hold
simultaneously with $K=K_{n,\varsigma}$.

Define the first-order $4\times4$ difference operator by
\begin{equation}\label{eq:r2-reduced-difference}
 \mathcal D=\mathcal M_1-\mathcal M_2,
\end{equation}
and use $A_{\Delta,\theta}$ from \eqref{eq:r2-contracted-differences}. The exact phase expansion is
\begin{equation}\label{eq:w1-volume-expansion}
 \begin{aligned}
 h e^{-\varphi/h}\mathcal D \boldsymbol z_{1,h}^+
 ={}&A_{\Delta,\theta} a_0^++hA_{\Delta,\theta} a_1^+
                 +h\mathcal D a_0^++A_{\Delta,\theta} r_{1,h}^+\\
   &+h^2\mathcal D a_1^++h\mathcal D r_{1,h}^+.
 \end{aligned}
\end{equation}
In $L^2(B_R)$, the five nonleading terms have bounds respectively
$ChK,ChK,Ch^2K,Ch^2K,Ch^3K$.
Pair the expansion with $\mathcal Y_\chi+q_h$ from
\eqref{eq:w1-canonical-L2}; the exponential factors cancel exactly.
If $E_h$ denotes the sum of the five nonleading terms in
\eqref{eq:w1-volume-expansion}, then $\|E_h\|_2\le ChK$ and
$\|A_{\Delta,\theta} a_0^+\|_2\le CK$. Hence the entire integrated
error is bounded by
\begin{equation}\label{eq:w1-bilinear-error-budget}
 \|q_h\|_2\|A_{\Delta,\theta} a_0^+\|_2
 +(\|\mathcal Y_\chi\|_2+\|q_h\|_2)\|E_h\|_2
 \le ChK.
\end{equation}
Only the positive side has size $K$. Thus the full error is linear in
$K$, including the product of the two remainder terms. No product
estimate in $L^\infty$ is needed. On the support of $b_{\varsigma,t_0}$ the exact quotient
contraction in \eqref{eq:w1-primal-lift} gives
\begin{equation}\label{eq:w1-nine-volume}
 \left\|h\int_{B_R}y_{2,h}^T\mathcal D \boldsymbol z_{1,h}^+\,dx
       -2\int_{B_R}b_{\varsigma,t_0} g_nY_2^T(P_1-P_2)X_1\,dx\right\|
       \le ChK.
\end{equation}
For $a,b\in\{1,2,3\}$ the $(a,b)$ entry uses the physical pair
$(u_{2,h}^{\ell(a)},u_{1,h}^{(b)})$, with the polarization indexing
\begin{equation}\label{eq:r3-polarization-indexing}
\ell(1)=F1,\qquad\ell(2)=F2,\qquad\ell(3)=\mathrm{sup}.
\end{equation}
This specifies the three-by-three array of nine pairings, without
replacing any row by an auxiliary solution.
The estimates are on the whole ball, including where the leading
positive amplitude vanishes and the exact remainder need not vanish.

The materials coincide on an open neighborhood of $\partial B_R$;
apply Lemma~\ref{lem:w1-betti} to each of these nine pairs:
\begin{equation}\label{eq:w1-nine-betti}
 \int_{B_R}(y_{2,h}^{\ell(a)})^T\mathcal D \boldsymbol z_{1,h}^{+,(b)}\,dx
 =\langle(\Lambda_2^{B_R}-\Lambda_1^{B_R})
                 \operatorname{Tr}u_{1,h}^{(b)},
                 \operatorname{Tr}u_{2,h}^{\ell(a)}\rangle.
\end{equation}
By \eqref{eq:w1-physical-traces} and the trace theorem, multiplication
by $h$ bounds each entry by
\begin{equation}\label{eq:w1-boundary-budget}
 C\delta_{B_R}h\,(h^{-2}K)(h^{-1})e^{2R/h}
       \le C\delta_Dh^{-2}e^{C_0/h}K,
\end{equation}
using Lemma~\ref{lem:w1-domain-comparison} only on its stated domain
and exterior-equality hypotheses. All norms of a $3\times3$ matrix
are equivalent with fixed constants. Combining \eqref{eq:w1-nine-volume} with \eqref{eq:w1-boundary-budget} and
dividing by two, we obtain \eqref{eq:w1-noisy-moment}.
The inverses in \eqref{eq:w1-leaf-inverse} and
\eqref{eq:finite-right-inverse} are linear on arbitrary sources; the only
smallness requirement is its operator threshold $h<h_*$. No argument
uses smallness of $hK$ or invertibility of the exact canonical field.
\end{proof}

\begin{remark}
For $\delta_D=0$, fix an admissible $(n,t_0,\varsigma)$ and let $h\to0$
in \eqref{eq:w1-noisy-moment} to obtain the exact zero moment.
For positive error the
inequality holds with simultaneous quantifiers; tests may later depend
on the error without changing the family or imposing $hK$ smallness.
For the nonmatching extensions in the original-domain problem, we apply
Theorem~\ref{thm:w1-all-moments} with $D=B_{R_e}$ and $\delta_D=\Delta_e$. The comparison to original
measurements then uses Proposition~\ref{prop:v61-enlargement-error}
with its exterior-mismatch term, as developed in
Section~\ref{sec:w5-extensions}.
\end{remark}

\section{Global solutions of the relative transport equations}
\label{sec:w3-transport}

The matrix moments control the negative boundary Fourier modes of
$W_a=Y_{2,a}^TX_{1,a}$. Invertible Cauchy extensions on the closed leaf
disks and quantitative comparison on overlaps then yield global
relative-transport endomorphisms $G$ and $H=G^{-T}$.

\subsection{The fixed input and the measurement domain}\label{sec:r2-transport-input}
\label{subsec:w3-input}
Use the quotient coordinates of Section~\ref{sec:w1-reduction}, the frames
of Proposition~\ref{prop:w1-high-frames}, and the moment estimates of
Theorem~\ref{thm:w1-all-moments}, with the regularity orders in
\eqref{eq:r2-regularity-orders} and the corresponding common-exterior
material prior. Corollary~\ref{cor:w2-physical-budgets} supplies the finite
atlas and residual estimates.

The measurement domain $D$ need not equal $\Omega$. The discrepancies
$\delta_D,\Delta_e$ are defined in \eqref{eq:w0-domain-error}, and $\eta_D$
in \eqref{eq:r2-common-error-definitions}. When the materials agree outside
$D$, Lemma~\ref{lem:w1-domain-comparison} bounds the working-sphere
discrepancy by $\delta_D$; its constant is absorbed in the moment estimate.
The enlarged-domain application uses $\Delta_e$, and the enlarged-domain map
requires no additional observation.

Choose fixed radii
\begin{equation}\label{eq:w3-radii}
 \overline D\subset B_{R_{\rm sup}},\qquad
 R_{\rm sup}<R_0<R_s<R,\qquad r_*=(R^2-R_0^2)^{1/2}>0.
\end{equation}
The two connections agree outside $\overline B_{R_{\rm sup}}$.
The common-exterior material input also says that the material pairs
themselves agree there. Define the logarithmic material ratio by
\begin{equation}\label{eq:r2-material-log-ratio}
 \rho=\log(\mu_1/\mu_2):B_R\longrightarrow\mathbb R.
\end{equation}
The common-exterior assumption gives $\rho=0$ on the fixed shell
$E$ in \eqref{eq:w4-shell-space}. Use the normalized coordinates
\eqref{eq:w1-leaf-coordinates} and radius \eqref{eq:w1-test-family},
writing $t=t_\perp$. Define the complete leaf disks by
\begin{equation}\label{eq:r2-leaf-disks}
 B_{\theta,t}=B_R\cap\{x:\nu_{\rm leaf}\cdot x=t\},\qquad |t|\le R_0.
\end{equation}
The frames on the disks \eqref{eq:r2-leaf-disks} are given on neighborhoods of the complete closed
disks and the annuli used in the proof of Lemma~\ref{lem:w3-negative}.

For each material pair, fix one finite family of $3\times3$ frames
$X_{j,a},Y_{j,a}$, $j=1,2$, on direction--transverse patches. The family
is held fixed for every degree $n$, width $\varsigma$, and parameter $h$.
After pullback to fixed cylinders, frames and square-frame inverses have
uniform derivatives of total order at most $r_{\rm fr}$. The frames satisfy \eqref{eq:w1-quotient-equations}, with
$\mathcal A_j,\Gamma_j,P_j$ defined in \eqref{eq:w1-quotient-connection}.
The transpose here is the ordinary bilinear transpose. Define the
frame products $W_a$ and the matrices $\mathfrak F_a$ by
\begin{equation}\label{eq:w3-WF}
 W_a=Y_{2,a}^TX_{1,a},\qquad
 \mathfrak F_a=Y_{2,a}^T(P_1-P_2)X_{1,a},\qquad
 2\partial_{\bar z}W_a=-\mathfrak F_a.
\end{equation}
In particular $\mathfrak F_a$ is supported in $\overline B_{R_{\rm sup}}$
and $W_a^{-1}=X_{1,a}^{-1}Y_{2,a}^{-T}$ exists on the whole closed disk.

Theorem~\ref{thm:w1-all-moments} supplies the following estimate for this
fixed frame family. For every $n\ge0$, every admissible center $t_0$
and width $0<\varsigma\le\varsigma_*$, and every $0<h<h_*$,
\begin{equation}\label{eq:w3-I4-input}
 \left\|\int b_{\varsigma,t_0}(t)
       \left(\frac{z}{r(t)}\right)^n\mathfrak F_a(x,\theta)\,dx\right\|
 \le C A^n(n+1)^{12}\varsigma^{-13}\eta_D,
 \qquad A>1.
\end{equation}
Here $b_{\varsigma,t_0}$ and the normalized profile are defined in
\eqref{eq:w1-test-family}; $\mathfrak F_a$ is defined in \eqref{eq:w3-WF}.
The constants $C,A$ and the upper bound $h_*$ are independent of
$n,t_0,\varsigma$ and the amplitude size; the dependence of the bound
on degree and width is explicit in \eqref{eq:w3-I4-input}.
The cutoffs have fixed parameter margins, and the frame family is held
fixed for every test.

For the global sum we also use a finite product cover of a compact
material-pair parameter space and
$\mathbb{CP}^1\times[-R_0,R_0]$, with the same local frame family available
on larger patches. Uniform bounds, a fixed finite cover with margins,
and continuity in the material label are input properties. No derivative
with respect to that label, and no parameter derivative of the exact
CGO right inverse, is required.
Here the required continuity means local availability of the same frame
construction on open coefficient neighborhoods, so that the fixed cover
and its material cutoffs can be chosen. It is supplied by
Theorem~\ref{thm:finite-frames} and the physical construction in
Proposition~\ref{prop:w1-high-frames};
no differentiability of the selected frame with respect to material is
used below. The matrices used here are $3\times3$ quotient
frames, not the $4\times3$ ambient lifts in the proof of Theorem~\ref{thm:w1-all-moments}.

\subsection{Moments, radii, and negative Fourier modes}\label{sec:r2-negative-modes}

Localized moment estimates and annular decay yield bounds for the negative
Fourier coefficients of the boundary traces of $W_a$; see \eqref{eq:w3-WF}.

For a sufficiently regular matrix-valued function $V$ on the unit circle, set
\begin{equation}\label{eq:r2-circle-projections}
 \begin{gathered}
 \widehat V(k)=\frac1{2\pi}\int_0^{2\pi}V(e^{i\phi})e^{-ik\phi}\,d\phi,\\
 P_-V=\sum_{k<0}\widehat V(k)e^{ik\phi},\qquad
 P_+V=\sum_{k\ge0}\widehat V(k)e^{ik\phi},\qquad V_-=P_-V.
 \end{gathered}
\end{equation}
The Cauchy integral in \eqref{eq:r2-cauchy-extension}, applied to $V$,
is holomorphic in the disk and has boundary trace $P_+V$.

The next lemma removes the transverse averaging from the moment
estimates and bounds the negative Fourier coefficients of the boundary
traces of $W_a$. It combines the estimates for low-degree moments with
annular decay for the high modes. The resulting boundary error is used
to construct the invertible Cauchy extensions $R_{C,a}$ and the local
solutions $G_a$.

\begin{lemma}\label{lem:w3-negative}
Under the hypotheses of Subsection~\ref{subsec:w3-input}, take
the negative Fourier part after pulling each leaf boundary back to
the unit circle, as in \eqref{eq:r2-circle-projections}. Define
\begin{equation}\label{eq:r2-leaf-moments}
 I_n(\theta,t)=\int_{B_{\theta,t}}g_n(z,t)\mathfrak F_a\,dA,
 \qquad \varrho=R_s/R<1.
\end{equation}
Here $B_{\theta,t},g_n,\mathfrak F_a$ are defined in
\eqref{eq:r2-leaf-disks}, \eqref{eq:w1-test-family}, and \eqref{eq:w3-WF}.
Choose a fixed $A_1>A$ and put
\begin{equation}\label{eq:w3-gamma}
 \gamma_*=
 \frac{\log(1/\varrho)}{\log A_1+\log(1/\varrho)}.
\end{equation}
There is a uniform $\eta_{\rm mom}>0$ such that, for
$0<\eta_D<\eta_{\rm mom}$,
\begin{equation}\label{eq:w3-negative-output}
 \|I_n(\theta,t)\|\le C A^n(n+1)^{12}\eta_D^{1/14},\qquad
 \|(W_{a,\partial})_-\|_{C^0}\le C_{\gamma'}\eta_D^{\gamma'/14}
 \quad(0<\gamma'<\gamma_*).
\end{equation}
Here $\eta_D$ is defined in \eqref{eq:r2-common-error-definitions}.
The constants depend only on the fixed frame bounds, atlas, cutoffs,
radii, and $\gamma'$.
\end{lemma}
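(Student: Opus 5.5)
The plan has two independent steps. First, remove the transverse average from the moment estimate \eqref{eq:w3-I4-input} to bound the single-leaf moments $I_n(\theta,t)$. Second, use the $\bar\partial$-equation \eqref{eq:w3-WF} and Stokes' formula to identify these moments with negative Fourier coefficients of the boundary trace of $W_a$. Those coefficients are then summed by splitting into low modes, controlled by the moments, and high modes, controlled by annular decay.

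\emph{Step 1 (de-averaging).} Write $dx=dA\,dt$ in the leaf coordinates \eqref{eq:w1-leaf-coordinates}, so that the integral in \eqref{eq:w3-I4-input} equals $\int b_{\varsigma,t_0}(t)I_n(\theta,t)\,dt$. The function $t\mapsto I_n(\theta,t)$ is Lipschitz on the admissible range. Differentiating $g_n=(z/r(t))^n$ in $t$ costs a factor $C(n+1)$, because $r\ge r_*>0$ there. The $t$-derivative of $\mathfrak F_a$ is controlled by the frame bounds \eqref{eq:w1-fixed-frame-bound}. The moving boundary of $B_{\theta,t}$ contributes nothing, since $\mathfrak F_a$ is supported in $\overline B_{R_{\rm sup}}$, well inside. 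Hence $|\partial_tI_n|\le CA^n(n+1)$ after enlarging $A$ by a fixed geometric factor. Since $b_{\varsigma,t_0}$ has unit mass and width $\varsigma$, this gives
\[
\|I_n(\theta,t_0)\|\le CA^n(n+1)^{12}\bigl(\varsigma^{-13}\eta_D+\varsigma\bigr).
\]
Taking $\varsigma=\eta_D^{1/14}$ is admissible once $\eta_D<\eta_{\rm mom}:=\varsigma_*^{14}$, and it yields the first bound in \eqref{eq:w3-negative-output}.

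\emph{Step 2 (moments as Fourier coefficients).} On the closed disk $B_{\theta,t}$, whose boundary is $|z|=r(t)$, Stokes' formula together with $2\partial_{\bar z}W_a=-\mathfrak F_a$ gives
\[
\oint z^kW_a\,dz=2i\int z^k\partial_{\bar z}W_a\,dA=-i\int z^k\mathfrak F_a\,dA .
\]
Pulling back by $z=r e^{i\phi}$, the left side equals $2\pi i\,r^{k+1}\widehat{W}_{a,\partial}(-(k+1))$. Hence
\[
\widehat{W}_{a,\partial}(-(k+1))=-\frac{I_k(\theta,t)}{2\pi r(t)},
\]
and therefore $|\widehat W_{a,\partial}(-(k+1))|\le CA^k(k+1)^{12}\eta_D^{1/14}$.

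For high modes, use that $W_a$ is holomorphic on the annulus between the support of $\mathfrak F_a$ and the leaf boundary. By \eqref{eq:w3-radii}, this region contains the annulus $R_s\,r(t)/R<|z|\le r(t)$ after pulling back. Shrinking the contour to the inner circle of that annulus, together with the uniform $C^0$ bound on $W_a$, gives $|\widehat W_{a,\partial}(-k)|\le C\varrho^{k}$, uniformly in $\theta$ and $t$.

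\emph{Step 3 (summation).} Bound $\|(W_{a,\partial})_-\|_{C^0}$ by the sum of the absolute values of its negative coefficients, splitting at a cutoff $N$:
\[
\|(W_{a,\partial})_-\|_{C^0}\le C\,N^{13}A^N\eta_D^{1/14}+C\varrho^N(1-\varrho)^{-1}.
\]
Since $A_1>A$, the factor $N^{13}A^N$ is absorbed into $C_{A_1}A_1^N$. Choose
\[
N\approx\frac{\log(\eta_D^{-1/14})}{\log A_1+\log(1/\varrho)},
\]
which balances the two terms. The result is a bound $C\eta_D^{\gamma_*/14}$ up to the integer rounding of $N$ and the polynomial losses, and these are absorbed by passing to any $\gamma'<\gamma_*$.

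The main obstacle is the uniformity of Step 2's annular decay. The radius $r(t)$ and the support radius both vary with $t$, and the ratio must stay at most $\varrho=R_s/R$ for all $|t|\le R_0$. This requires that the frames, and hence $W_a$, are defined and uniformly bounded up to and including the boundary annuli, which is supplied by Subsection~\ref{subsec:w3-input}. I would check the geometric inequality $\sqrt{R_{\rm sup}^2-t^2}\le(R_s/R)\sqrt{R^2-t^2}$ first. If it fails near $|t|=R_0$, I would instead use the annulus of moduli between $\sqrt{R_{\rm sup}^2-t^2}$ and $r(t)$ and bound the resulting ratio uniformly via \eqref{eq:w3-radii}, possibly adjusting $R_s$.
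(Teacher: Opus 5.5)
Your proposal is correct and follows essentially the same route as the paper: you de-average the moment bound through a transverse derivative estimate with $\varsigma=\eta_D^{1/14}$, identify the negative Fourier coefficients with $-I_{k-1}/(2\pi r)$ by Cauchy--Green, obtain $\varrho^k$ decay by Laurent contour deformation on an annulus outside the source, and split the Fourier sum where $A_1^K\eta_D^{1/14}$ balances $\varrho^K$. The geometric inequality you flagged as the main obstacle holds for all $|t|\le R_0$, since $(R_{\rm sup}^2-t^2)/(R^2-t^2)$ is decreasing in $t^2$ and hence at most $R_{\rm sup}^2/R^2<\varrho^2$; the paper uses the same cross-multiplication with inner radius $\sqrt{R_s^2-t^2}$ in place of your $\varrho\, r(t)$, so no adjustment of $R_s$ is needed.
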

\begin{proof}
Use $z=r(t)w$ to write
\begin{equation}\label{eq:r2-calculation-003}
I_n(\theta,t)=\int_{|w|<1}w^n r(t)^2
 \mathfrak F_a(t\nu_{\rm leaf}+r(t)(\Re w\,e_1+\Im w\,e_2),\theta)\,dA(w).
\end{equation}
The positive lower bound $r_*$ and the full-disk frame priors justify
one transverse derivative under the integral and give
$\|\partial_t I_n\|\le C A^n(n+1)^{12}$ on larger parameter patches.
The fixed source support also prevents a boundary support issue.
By the transverse derivative bound, the normalization $\int
b_{\varsigma,t_0}=1$, and \eqref{eq:w3-I4-input}, we obtain
\begin{equation}\label{eq:r2-calculation-004}
\|I_n(\theta,t_0)\|
 \le C A^n(n+1)^{12}(\varsigma+\eta_D\varsigma^{-13}).
\end{equation}
Take $\varsigma=\eta_D^{1/14}$. For a fixed legal width
$\varsigma_0>0$ smaller than all atlas margins, one may take
$\eta_{\rm mom}\le\min\{1,\varsigma_0^{14}\}$. This proves the first bound.

For a negative-mode index $k\ge1$, define
\begin{equation}\label{eq:r2-negative-fourier-coefficient}
c_{-k}=\frac1{2\pi}\int_0^{2\pi}W_a(re^{i\phi})e^{ik\phi}\,d\phi.
\end{equation}
Here $W_a$ and $r(t)$ are defined in \eqref{eq:w3-WF}
and \eqref{eq:w1-test-family}, respectively.
Counterclockwise orientation and Cauchy--Green give the exact identity
\begin{equation}\label{eq:w3-moment-normalization}
 \begin{split}
 2\pi i r^k c_{-k}
 &=\int_{|z|=r}z^{k-1}W_a(z)\,dz
   =2i\int_{|z|<r}z^{k-1}\partial_{\bar z}W_a\,dA\\
 &=-i r^{k-1}I_{k-1},\qquad
 c_{-k}=-\frac{I_{k-1}}{2\pi r}.
 \end{split}
\end{equation}
Thus degree $n$ controls mode $-(n+1)$; the sign and $r^{-1}$ factor
are not absorbed before this equality is established.

Set
\begin{equation}\label{eq:r3-intermediate-leaf-radius}
r_s(t)=(R_s^2-t^2)^{1/2},
\end{equation}
where $R_s$ is the fixed radius in \eqref{eq:w3-radii}. The fixed annulus
$r_s(t)\le |z|\le r(t)$ lies outside the source and has a
neighborhood of holomorphy for $W_a$. Laurent contour deformation
and the uniform inner-circle bound imply
\begin{equation}\label{eq:r2-calculation-006}
\|c_{-k}\|\le C(r_s(t)/r(t))^k\le C\varrho^k.
\end{equation}
The last inequality follows by cross-multiplying
$(R_s^2-t^2)/(R^2-t^2)\le R_s^2/R^2$.
Both inner and outer radii are uniformly nondegenerate.
The moment estimate and
$\sup_{k\ge1}k^{12}(A/A_1)^k<\infty$ now give
\begin{equation}\label{eq:r2-calculation-007}
\|c_{-k}\|\le C\min\{A_1^k\varepsilon,\varrho^k\},
 \qquad\varepsilon=\eta_D^{1/14}.
\end{equation}
Choose the Fourier splitting index by
\begin{equation}\label{eq:r3-fourier-splitting-index}
L=\log(A_1/\varrho),\qquad K=\lfloor\log(1/\varepsilon)/L\rfloor.
\end{equation} The sums for $k\le K$
and $k>K$ are bounded respectively by
$C\varepsilon A_1^K$ and $C\varrho^{K+1}$; both are at most
$C\varepsilon^{\gamma_*}$. The case $K=0$ satisfies the same bound.
Absolute convergence bounds the negative Fourier series in $C^0$.
For $\eta_D\le1$ this implies the stated weaker exponent $\gamma'$.
No small high-order norm of that series is inferred. All later
high-order bounds come from the full-boundary frame input.
\end{proof}

\subsection{An invertible Cauchy extension on the closed disk}\label{sec:r2-disk-inverse}

The Cauchy integral of the boundary trace of $W_a$ gives a holomorphic
matrix on the leaf disk. The small negative Fourier part and the continuous
invertible extension of the boundary trace ensure invertibility up to the
boundary, with the mixed derivative bounds needed for the local relative
transport. Related constructions for connection Laplacians appear in
\cite[Section~3.3]{Cekic2025}.

\begin{lemma}\label{lem:w3-cauchy}
Let $p$ vary in a fixed compact parameter chart and $N\ge2$.
Suppose $W_p^\sharp\in C^0(\overline{\mathbb D};GL(3,\mathbb C))$,
$\|(W_p^\sharp)^{-1}\|_{C^0}\le B$, and define its boundary trace by
\begin{equation}\label{eq:r3-circle-boundary-trace}
W_{\partial,p}=W_p^\sharp|_{\partial\mathbb D}.
\end{equation}
Assume
\begin{equation}\label{eq:w3-cauchy-input}
 \sup_{|\alpha|\le N}\|\partial_p^\alpha W_{\partial,p}\|_{C_\phi^{N+3}}
 \le M_N,\qquad
 \|(W_{\partial,p})_-\|_{C^0}\le\omega_{\rm F}.
\end{equation}
If $B\omega_{\rm F}<1/2$, then
\begin{equation}\label{eq:r2-cauchy-extension}
 R_{C,p}^\sharp(w)=\frac1{2\pi i}\int_{|\xi|=1}
                 \frac{W_{\partial,p}(\xi)}{\xi-w}\,d\xi
\end{equation}
extends continuously to the closed disk, is pointwise invertible there, and
\begin{equation}\label{eq:w3-cauchy-output}
 \|(R_C^\sharp)^{-1}\|_{C^0}\le2B,\qquad
 \|R_C^\sharp\|_{C_p^NC_w^N}
 +\|(R_C^\sharp)^{-1}\|_{C_p^NC_w^N}\le C(N,M_N,B).
\end{equation}
Here the mixed norm permits parameter and leaf derivatives separately
through order $N$; it is stronger than a total-order-$N$ bound.
\end{lemma}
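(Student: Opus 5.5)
The idea is to compare the Cauchy extension $R_C^\sharp$ with the given invertible extension $W^\sharp$ through the boundary splitting $W_\partial=P_+W_\partial+P_-W_\partial$. By the Sokhotski--Plemelj formula, \eqref{eq:r2-cauchy-extension} defines a holomorphic function on $\mathbb D$ with boundary trace $P_+W_{\partial,p}$. The input \eqref{eq:w3-cauchy-input} gives $W_{\partial,p}\in C^{N+3}_\phi$, so the Fourier coefficients decay like $|k|^{-N-3}$. Hence $P_\pm W_\partial$ are $C^{N+1}$ on the circle, with bounds by $M_N$. Since $R_C^\sharp(w)=\sum_{k\ge0}\widehat W(k)w^k$ converges absolutely on $\overline{\mathbb D}$, the extension is continuous up to the boundary, and it lies in $C^{N}$ of the closed disk with derivatives of order $\le N$ in $w$ bounded by $C\sum_k |k|^{N}|\widehat W(k)|\le CM_N$. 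Parameter derivatives commute with the Cauchy integral, because the kernel is independent of $p$ and the input controls $\partial_p^\alpha W_\partial$ in $C^{N+3}_\phi$ for $|\alpha|\le N$. This gives the mixed $C_p^NC_w^N$ bound for $R_C^\sharp$.

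The key step is pointwise invertibility on the closed disk. On the boundary,
\[
R_C^\sharp|_{\partial\mathbb D}=W_\partial-(W_\partial)_-=W_\partial\bigl(I-W_\partial^{-1}(W_\partial)_-\bigr).
\]
The hypotheses give $\|W_\partial^{-1}(W_\partial)_-\|\le B\omega_{\rm F}<1/2$, so a Neumann series shows the boundary trace is invertible with inverse bounded by $2B$. To pass to the interior, I would use $R_C^\sharp=W^\sharp-\mathcal E$, where $\mathcal E$ is the continuous extension of $(W_\partial)_-$ defined by $\mathcal E=W^\sharp-R_C^\sharp$. This alone does not control $\mathcal E$ inside the disk, so it is the main obstacle: $W^\sharp$ is merely continuous, not holomorphic.

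My first attempt at the obstacle is a winding/degree argument. The function $\det R_C^\sharp$ is holomorphic on $\mathbb D$ and continuous and nonvanishing on the circle. Its boundary winding number equals that of $\det W_\partial$, because the two are joined by the homotopy $\det\bigl(W_\partial(I-sW_\partial^{-1}(W_\partial)_-)\bigr)$, $s\in[0,1]$, which stays nonvanishing. The winding number of $\det W_\partial$ is zero, because $W^\sharp$ is a continuous $GL(3)$ extension over the disk. By the argument principle $\det R_C^\sharp$ has no zeros in $\mathbb D$, so $R_C^\sharp$ is pointwise invertible on $\overline{\mathbb D}$.

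For the quantitative bound $\|(R_C^\sharp)^{-1}\|_{C^0}\le2B$, I would apply the same reasoning to the holomorphic matrix $(R_C^\sharp)^{-1}$, whose entries are holomorphic on $\mathbb D$ and continuous on the closure. The maximum principle, applied to $|v^T(R_C^\sharp)^{-1}u|$ for unit vectors, bounds the interior sup of the operator norm by its boundary value $2B$. The derivative bounds for the inverse then follow by differentiating $R_C^\sharp(R_C^\sharp)^{-1}=I$ in $w$ and in $p$ separately. Each derivative of $(R_C^\sharp)^{-1}$ of mixed order up to $(N,N)$ is a polynomial in lower derivatives of $R_C^\sharp$ and in $(R_C^\sharp)^{-1}$, and induction gives a constant $C(N,M_N,B)$.
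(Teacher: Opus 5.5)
Your proposal is correct and follows the paper's proof essentially step for step. Fourier-coefficient decay from the $C_\phi^{N+3}$ input gives the mixed bounds. The factorization $W_\partial(I-W_\partial^{-1}(W_\partial)_-)$ with a Neumann series gives the boundary bound $2B$. The homotopy to $\det W_\partial$, whose winding number is zero, together with the argument principle excludes interior zeros. Finally, the maximum principle on matrix elements of $(R_C^\sharp)^{-1}$ and differentiation of $R_C^\sharp(R_C^\sharp)^{-1}=I$ finish the argument. The only detail the paper makes explicit that you leave implicit is applying the argument principle on circles $|w|=\rho<1$ close to one: a zero-free outer annulus and radial deformation preserve the winding number. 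This step is routine here.
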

\begin{proof}
For $k\ge0$, write the Fourier coefficients as
\[
 c_k(p)=\frac1{2\pi}\int_0^{2\pi}
              W_{\partial,p}(e^{i\phi})e^{-ik\phi}\,d\phi.
\]
Integration by parts $N+3$ times, using
\eqref{eq:w3-cauchy-input}, gives
\[
 \|\partial_p^\alpha c_k\|
 \le CM_N(1+k)^{-N-3},\qquad |\alpha|\le N,
\]
where $k=0$ is bounded directly. The Cauchy integral
\eqref{eq:r2-cauchy-extension} equals
$R_{C,p}^\sharp(w)=\sum_{k\ge0}c_k(p)w^k$ for $|w|<1$.
After $b\le N$ leaf derivatives, the series is bounded by
$CM_N\sum_{k\ge0}(1+k)^{b-N-3}<\infty$.
It therefore extends with the stated mixed derivatives to the
closed disk, and
$R_C^\sharp=W_\partial-(W_\partial)_-$ on its boundary.

Fix $p$ and set $E=(W_\partial)_-$. The boundary homotopy satisfies
\[
 \begin{aligned}
 M_t&=W_\partial-tE
     =W_\partial(I-tW_\partial^{-1}E),\\
 \|M_t^{-1}\|&\le\frac{B}{1-B\omega_{\rm F}}\le2B,
 \qquad 0\le t\le1.
 \end{aligned}
\]
Indeed $\|tW_\partial^{-1}E\|\le B\omega_{\rm F}<1/2$, so
the Neumann series applies uniformly in $t$.
The loop $\det W_\partial$ has winding number zero because it
extends to the nonvanishing function $\det W^\sharp$ on the disk.
Since $M_1=R_C^\sharp$ on the circle, homotopy invariance gives
\[
 \operatorname{wind}(\det R_C^\sharp|_{\partial\mathbb D},0)
 =\operatorname{wind}(\det W_\partial,0)=0.
\]
The argument principle now excludes zeros of $\det R_C^\sharp$ in
the disk; see \cite[Chapter~4, \S5.2, Theorem~18, pp.~152--153]{Ahlfors1979}.
It may be applied on circles $|w|=\rho<1$ sufficiently close to one:
continuity and boundary nonvanishing give a zero-free outer annulus,
and radial deformation preserves the winding number.

Thus $(R_C^\sharp)^{-1}$ is holomorphic inside and continuous on
the closed disk. Apply the scalar maximum principle to
$v^*(R_C^\sharp)^{-1}u$ for constant unit vectors $u,v$, and take
their supremum. The boundary bound above yields
\[
 \sup_{w\in\overline{\mathbb D}}\|(R_C^\sharp(w))^{-1}\|
 \le\sup_{|w|=1}\|(R_C^\sharp(w))^{-1}\|\le2B.
\]
The star is used here only for the matrix operator norm.
For any real leaf or parameter derivative, differentiation gives
\[
 \partial_j(R_C^\sharp)^{-1}
 =-(R_C^\sharp)^{-1}(\partial_jR_C^\sharp)(R_C^\sharp)^{-1}.
\]
Induction on the mixed derivative order, using the direct derivative
bounds and this zero-order inverse bound, proves
\eqref{eq:w3-cauchy-output}.
\end{proof}

\begin{remark}
Boundary invertibility alone would not exclude zeros of the
holomorphic determinant inside the disk. The existing nonvanishing
extension of $W_\partial$ fixes its winding number, which supplies that
interior conclusion. The pointwise invertibility of $R_{C,a}$ on the
closed disk permits $G_a$ in \eqref{eq:w3-local-G} to be defined up to
the leaf boundary.
\end{remark}

\subsection{Common coordinates and quantitative comparison}\label{sec:r2-chart-comparison}

Local transports constructed in different charts must first be expressed in
the same quotient coordinates before their boundary differences can be
compared.

We use the normalization law \eqref{eq:w1-normalization}, with
$S_a$ defined in \eqref{eq:r2-positive-rescaling}, and the spinor
transition law \eqref{eq:w1-chart-law}, with $D_{\rm tr}$ from
\eqref{eq:r2-spinor-charts}. For the primal transport, inversion and
transposition give the additional normalization formula
\begin{equation}\label{eq:w3-normalized-conversion}
 G_{\rm h}=S_a^{-T}G_{\rm n}S_a^T.
\end{equation}
The coordinate laws apply to a given endomorphism. The local solutions
of the relative transport equation constructed from different Cauchy
extensions are compared by Lemma~\ref{lem:w3-comparison}.
On compact overlaps all these changes and their required derivatives
are uniformly bounded. After removing the direction-scaling factor and applying the coordinate
transformations \eqref{eq:w3-normalized-conversion} and
\eqref{eq:w1-chart-law}, we express both local equations in the same
coordinates at the same physical point.

Once both equations are in these common coordinates, the matrix
$X_1^{-1}(G_a-G_b)X_2$ is holomorphic in the leaf coordinate.
The following maximum-principle estimate controls $G_a-G_b$
throughout the disk.

\begin{lemma}\label{lem:w3-comparison}
On a complete closed leaf disk let $G_a,G_b$ solve
$2\partial_{\bar z}G+P_1G-GP_2=0$ in the same coordinates.
Suppose invertible frames $X_1,X_2$ solve
$2\partial_{\bar z}X_j+P_jX_j=0$, with each frame and inverse
bounded by $M_0$. Then
\begin{equation}\label{eq:w3-comparison}
 \|G_a-G_b\|_{C^0(\overline B)}
 \le M_0^4\|G_a-G_b\|_{C^0(\partial B)}.
\end{equation}
\end{lemma}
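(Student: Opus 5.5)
The plan is to conjugate the difference by the frames so that it becomes a holomorphic matrix, apply the maximum principle to it, and then undo the conjugation at the cost of the frame bounds.

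\emph{Step 1: conjugate.} Set $\Delta=G_a-G_b$. Subtracting the two equations, $\Delta$ solves the same linear equation
\[
2\partial_{\bar z}\Delta+P_1\Delta-\Delta P_2=0 .
\]
Define $K=X_1^{-1}\Delta X_2$. The frame equation gives $2\partial_{\bar z}X_1^{-1}=-X_1^{-1}(2\partial_{\bar z}X_1)X_1^{-1}=X_1^{-1}P_1$. Applying the product rule then yields
\[
2\partial_{\bar z}K
=X_1^{-1}P_1\Delta X_2+X_1^{-1}(2\partial_{\bar z}\Delta)X_2-X_1^{-1}\Delta P_2X_2
=X_1^{-1}\bigl(P_1\Delta+2\partial_{\bar z}\Delta-\Delta P_2\bigr)X_2=0 .
\]
So $K$ is holomorphic in the leaf coordinate on the open disk. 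Since the frames, their inverses and the $G$'s are continuous up to the boundary, $K$ is continuous on the closed disk.

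\emph{Step 2: maximum principle.} Fix constant unit vectors $u,v$ and apply the scalar maximum modulus principle to $v^*Ku$, as in the proof of Lemma~\ref{lem:w3-cauchy}. Taking the supremum over $u,v$ gives
\[
\sup_{\overline B}\|K\|\le\sup_{\partial B}\|K\|\le M_0^2\|\Delta\|_{C^0(\partial B)} .
\]
The last inequality uses $\|X_1^{-1}\|,\|X_2\|\le M_0$ on the boundary.

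\emph{Step 3: undo the conjugation.} At an interior point, $\Delta=X_1KX_2^{-1}$, so $\|\Delta\|\le M_0^2\sup\|K\|\le M_0^4\|\Delta\|_{C^0(\partial B)}$. This is \eqref{eq:w3-comparison}.

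There is no real obstacle; the only care points are these. The computation of $\partial_{\bar z}X_1^{-1}$ needs $X_1$ to be $C^1$ up to the boundary; the finite-frame regularity of Theorem~\ref{thm:finite-frames} gives this. Continuity of $K$ on the closed disk is what makes the maximum principle legitimate. The bound uses the operator norm; any other matrix norm would change only the constant, and the statement's constant $M_0^4$ corresponds to the operator norm.
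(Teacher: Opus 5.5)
Your proposal is correct and follows the paper's own argument: the paper forms the same conjugated difference $V=X_1^{-1}(G_a-G_b)X_2$, observes that $\partial_{\bar z}V=0$, applies the maximum principle to constant-vector matrix elements, and then restores the two frame factors on each side to get $M_0^4$. One small aside: because every norm is a supremum of moduli of linear functionals, the maximum principle and the constant $M_0^4$ hold for any submultiplicative matrix norm, not only the operator norm.
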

\begin{proof}
Set
\begin{equation}\label{eq:r3-local-transport-comparison}
V=X_1^{-1}(G_a-G_b)X_2,
\end{equation}
where $X_j$ are the quotient frames in \eqref{eq:w1-quotient-equations}. The equations
\eqref{eq:w1-quotient-equations} imply
\begin{equation}\label{eq:r2-comparison-holomorphic}
 \partial_{\bar z}V=0,\qquad
 \|V\|_{C^0(\overline B)}\le\|V\|_{C^0(\partial B)}.
\end{equation}
The second inequality follows by applying the maximum principle to
constant-vector matrix elements. Multiplying by the common frame factors on the boundary and in the interior,
we obtain
\begin{equation}\label{eq:r2-comparison-norm-chain}
 \|G_a-G_b\|_{C^0(\overline B)}
 \le M_0^2\|V\|_{C^0(\overline B)}
 \le M_0^4\|G_a-G_b\|_{C^0(\partial B)},
\end{equation}
which is \eqref{eq:w3-comparison}.
\end{proof}

Define the local solution of the relative transport equation in its
initial quotient coordinates by
\begin{equation}\label{eq:w3-local-G}
 G_a=X_{1,a}R_{C,a}^{-1}Y_{2,a}^T,\qquad
 G_a^{-1}=Y_{2,a}^{-T}R_{C,a}X_{1,a}^{-1}.
\end{equation}
Here $R_{C,a}$ is the Cauchy extension \eqref{eq:r2-cauchy-extension}
pulled back from the unit disk to $B_{\theta,t}$, and $X_{j,a},Y_{j,a}$
are the frames of Proposition~\ref{prop:w1-high-frames}.
The frame equations \eqref{eq:w1-quotient-equations} and
$\partial_{\bar z}R_{C,a}=0$ give
$2\partial_{\bar z}G_a+P_1G_a-G_aP_2=0$.
On the leaf boundary $\partial B_{\theta,t}\subset\partial B_R$,
\begin{equation}\label{eq:w3-local-boundary}
 \begin{split}
 G_a-I&=X_{1,a}(R_{C,a}^{-1}-W_a^{-1})Y_{2,a}^T,\\
 R_{C,a}^{-1}-W_a^{-1}&=R_{C,a}^{-1}(W_a-R_{C,a})W_a^{-1}.
 \end{split}
\end{equation}
Consequently, after conversion to a common chart,
$\|G_a-I\|_{\partial B}\le C_b\omega_{\rm F}$ and
$\|G_a-G_b\|_{\overline B}\le C_{\rm ov}\omega_{\rm F}$.
The constants are fixed by the atlas, frame bounds and
Lemma~\ref{lem:w3-cauchy}. In particular, $G_a$ and $G_a^{-1}$
have a common zero-order bound $M_{\rm loc}$ independent of the
actual size of $\omega_{\rm F}$ below its Cauchy threshold.

\subsection{Projective weights, an invertible sum, and one threshold}\label{sec:r2-global-transport}

Leaf-constant weights combine the local transports without introducing
additional $\partial_{\bar z}$ terms. The overlap estimate supplies a
uniform invertibility threshold.

For a nonzero complex null vector, define
\begin{equation}\label{eq:r2-projective-leaf-projection}
\nu_{\rm leaf}([\theta])=\frac{\Re\theta\times\Im\theta}{|\Re\theta|^2},\qquad
 \pi(x,[\theta])=([\theta],\nu_{\rm leaf}([\theta])\cdot x).
\end{equation}
Nullity makes the two real vectors orthogonal with equal nonzero
length. Multiplication by $b\in\mathbb C^*$ multiplies both the
numerator and denominator by $|b|^2$; hence $\nu_{\rm leaf}$ is projectively
well defined and $D_\theta(\nu_{\rm leaf}\cdot x)=0$.

Refine the finite product cover to smaller sets with closures in the
original patches. To obtain this cover on projective directions, choose
a smooth normalized null representative on each larger compact spinor
chart and pull back the coefficient--oriented-direction frame atlas of
Theorem~\ref{thm:finite-frames} along that local section. Its continuity
and finite mixed bounds persist under this fixed smooth pullback.
Refine the resulting cover on the compact material-pair space times
$\mathbb{CP}^1\times[-R_0,R_0]$ once. Local constructions obtained
from different sections need not coincide: they are compared only
after conversion by \eqref{eq:w3-normalized-conversion} and
\eqref{eq:w1-chart-law}. Thus the following weights are
functions on projective directions themselves.
On the material factor choose continuous cutoffs
$\eta_a(\mathfrak a)$, and on its direction--transverse factor choose smooth
cutoffs $\psi_a(p)$, nonnegative and positive on the smaller sets.
They may be bounded by one. Compactness gives a positive lower
bound $s_*$ for $\sum_a\eta_a(\mathfrak a)^2\psi_a(p)^2$. Thus
\begin{equation}\label{eq:w3-weights}
 \chi_a(\mathfrak a,p)=\frac{\eta_a(\mathfrak a)^2\psi_a(p)^2}
                  {\sum_b\eta_b(\mathfrak a)^2\psi_b(p)^2},\quad
 \sum_a\chi_a=1,\quad \chi_a\ge0,\quad D_\theta(\chi_a\circ\pi)=0.
\end{equation}
Every required parameter derivative is bounded using the fixed
cutoffs and $s_*$. The material label is held fixed. This does not
reuse weights depending on a choice of oriented frame.

Fix $B\ge1$ bounding the full-disk inverse of $W_a$ from
\eqref{eq:w3-WF}, and set
\begin{equation}\label{eq:r2-cauchy-threshold}
 \omega_{\rm C}=(4B)^{-1}.
\end{equation}
For $\omega_{\rm F}<\omega_{\rm C}$, Lemmas~\ref{lem:w3-cauchy} and
\ref{lem:w3-comparison}, together with \eqref{eq:w3-local-boundary}, fix $M_{\rm loc}\ge1$, $C_b\ge1$, and $C_{\rm ov}\ge1$ in the
same compact quotient charts. Include all chart-norm comparison
constants in them. Put
\begin{equation}\label{eq:w3-common-threshold}
 \omega_{\rm G}=(2M_{\rm loc}C_{\rm ov})^{-1},\qquad
 \omega_{\rm tr}=\min\{1,\omega_{\rm C},\omega_{\rm G}\}.
\end{equation}
Let $C_H\ge1$ be the boundary constant constructed in the proof of
Proposition~\ref{prop:w3-gluing}. For transport and projection, set
\begin{equation}\label{eq:r2-transport-projection-threshold}
 \omega_* =\min\{\omega_{\rm tr},C_H^{-1}\}.
\end{equation} This last fixed restriction only ensures
that the boundary error parameter used in the projection is at most one.

On the fixed compact holomorphic spinor charts $K_a$, use the mixed norm
\begin{equation}\label{eq:r2-mixed-chart-norm}
 \|U\|_{C_x^qC_\zeta^r}
 =\max_a\max_{|\alpha|\le q,\,|\beta|\le r}
   \sup_{x\in\overline B_R,\,\zeta\in K_a}
       |\partial_x^\alpha\partial_{(\Re\zeta,\Im\zeta)}^\beta U_a(x,\zeta)|_{\rm F},
\end{equation}
where $U_a$ is the local matrix and $|\cdot|_{\rm F}$ is the Frobenius norm.
The spatial derivatives in \eqref{eq:r2-mixed-chart-norm} hold the
spinor representative fixed.

The overlap estimate makes the leaf-constant weighted sum invertible under a
fixed threshold, with mixed bounds for the subsequent direction projection.

\begin{proposition}\label{prop:w3-gluing}
Under the fixed input of Subsection~\ref{subsec:w3-input}, suppose
$\|(W_{a,\partial})_-\|_{C^0}\le\omega_{\rm F}<\omega_{\rm tr}$
on the same family. Then there are global invertible quotient
endomorphisms $G$ and $H=G^{-T}$ over
$\overline B_R\times\mathbb{CP}^1$ satisfying
\begin{equation}\label{eq:w3-global-output}
 \begin{split}
 &2D_\theta G+\mathcal A_1G-G\mathcal A_2=0,\qquad
 2D_\theta H+\Gamma_1H-H\Gamma_2=0,\\
 &\|G-I\|_{C^0(\partial B_R\times\mathbb{CP}^1)}
 +\|H-I\|_{C^0(\partial B_R\times\mathbb{CP}^1)}
 \le C_H\omega_{\rm F},\\
 &\|G\|_{C_x^mC_\zeta^2}+\|G^{-1}\|_{C_x^mC_\zeta^2}
 +\|H\|_{C_x^mC_\zeta^2}+\|H^{-1}\|_{C_x^mC_\zeta^2}\le C_m.
 \end{split}
\end{equation}
Here the connections are defined in \eqref{eq:w1-quotient-connection},
the threshold in \eqref{eq:w3-common-threshold}, and the norms in
\eqref{eq:r2-mixed-chart-norm}.
The norms use fixed compact holomorphic spinor charts and physical
$x$ derivatives. Every constant is uniform over the material class
and independent of $n,\varsigma,h,\delta_D$.
\end{proposition}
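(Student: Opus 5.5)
The plan is to glue the local solutions $G_a$ from \eqref{eq:w3-local-G} with the leaf-constant weights \eqref{eq:w3-weights}, after converting each to common coordinates. Fix a point $(x,[\theta])$ and a reference chart containing it. Each active $G_a$ is rewritten there using the normalization and spinor transition laws \eqref{eq:w3-normalized-conversion}, \eqref{eq:w1-chart-law} and \eqref{eq:w1-representative-law}. We then set
\[
G=\sum_a\chi_a(\mathfrak a,\pi(x,[\theta]))\,G_a .
\]
Since every $G_a$ solves $2D_\theta G_a+\mathcal A_1G_a-G_a\mathcal A_2=0$ (equivalently $2\partial_{\bar z}G_a+P_1G_a-G_aP_2=0$) and $D_\theta(\chi_a\circ\pi)=0$, linearity gives the same equation for $G$. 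The coordinate laws are representative invariant, so $G$ is a well-defined endomorphism section over $\overline B_R\times\mathbb{CP}^1$.

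The first step is invertibility. Fix one active index $b$ and write
\[
G=G_b\Bigl(I+\sum_a\chi_aG_b^{-1}(G_a-G_b)\Bigr).
\]
The hypothesis $\omega_{\rm F}<\omega_{\rm C}$ makes Lemma~\ref{lem:w3-cauchy} applicable. Together with \eqref{eq:w3-local-boundary} and Lemma~\ref{lem:w3-comparison}, this gives $\|G_b^{-1}\|\le M_{\rm loc}$ and $\|G_a-G_b\|\le C_{\rm ov}\omega_{\rm F}$ on the whole closed disk. Hence the perturbation has norm at most $M_{\rm loc}C_{\rm ov}\omega_{\rm F}<1/2$ because $\omega_{\rm F}<\omega_{\rm G}$. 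A Neumann series then yields $G^{-1}$ with $\|G^{-1}\|\le 2M_{\rm loc}$. Setting $H=G^{-T}$, transposing and inverting the $G$-equation, and using $\mathcal A_j=-\Gamma_j^T$ from \eqref{eq:w1-quotient-connection}, gives $2D_\theta H+\Gamma_1H-H\Gamma_2=0$.

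The second step is the boundary estimate. On $\partial B_R$ each $G_a-I$ is bounded by $C_b\omega_{\rm F}$, and $\sum\chi_a=1$, so $\|G-I\|\le C_b\omega_{\rm F}$. For $H$ we write $H-I=-G^{-T}(G-I)^T$, which is bounded by $2M_{\rm loc}C_b\omega_{\rm F}$. We take $C_H$ to be the sum of the two constants together with the chart-comparison constants.

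The main obstacle is the third step, the mixed bound $C_x^mC_\zeta^2$ on $G$, $G^{-1}$, $H$ and $H^{-1}$. Here $x$-derivatives do not act purely along leaves, because they change the transverse coordinate $t$ and the leaf disk. In the same way, $\zeta$-derivatives move the null direction and therefore the whole pulled-back disk and frames. The approach is to express $G_a=X_{1,a}R_{C,a}^{-1}Y_{2,a}^T$ in the pulled-back cylinder coordinates $(w,t,p)$, where $p$ collects the direction parameters. A chain rule then converts $\partial_x^\alpha\partial_\zeta^\beta$ into mixed $(w,t,p)$ derivatives of total order at most $m+2$.

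The frame factors and their inverses are controlled through order $r_{\rm fr}=2m+8$ by \eqref{eq:w1-fixed-frame-bound}. For the Cauchy factor we apply Lemma~\ref{lem:w3-cauchy} with $N=m+2$, treating both $(t,p)$ as parameters. This requires boundary-trace bounds of order $N+(N+3)=2m+7<r_{\rm fr}$, which the frames supply, and it is exactly the order budget described in Subsection~\ref{sec:m4-regularity-scope}. The weights $\chi_a$ carry uniform parameter derivatives because of the fixed cutoffs and the lower bound $s_*$. The conversion matrices $S_a$ and $D_{\rm tr}$ are smooth on compact overlaps.

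Derivatives of $G^{-1}$ are then bounded by induction, using $\partial G^{-1}=-G^{-1}(\partial G)G^{-1}$ and the zero-order bound $2M_{\rm loc}$. The bounds for $H$ and $H^{-1}$ follow by transposition. All constants depend only on the atlas, the frame bounds and the thresholds, and not on $n,\varsigma,h,\delta_D$.
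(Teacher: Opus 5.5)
Apart from one omission, your argument is the paper's proof. The shared steps are: gluing in common coordinates with leaf-constant weights; invertibility through a single active index with the overlap bound $M_{\rm loc}C_{\rm ov}\omega_{\rm F}<1/2$; the boundary estimate by averaging together with $H-I=G^{-T}(I-G^T)$; and the mixed bounds via pulled-back disks, Lemma~\ref{lem:w3-cauchy} with $N=m+2$, and the budget $2m+7<r_{\rm fr}$.

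\textbf{The gap.} The finite cover and the weights $\chi_a$ exist only over $\mathbb{CP}^1\times[-R_0,R_0]$ in the transverse variable $t=\nu_{\rm leaf}\cdot x$. Your sum $\sum_a\chi_aG_a$ is therefore undefined at points $(x,[\theta])\in\overline B_R\times\mathbb{CP}^1$ with $R_0<|\nu_{\rm leaf}\cdot x|\le R$. These include part of $\partial B_R\times\mathbb{CP}^1$, where you assert the boundary estimate.

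You cannot fix this by enlarging the atlas. As $|t|\to R$ the leaf disks shrink to points and $r(t)\to0$. Your chain rule through $w=z/r(t)$ uses $r(t)\ge r_*$, so its constants would blow up and the uniform $C_x^mC_\zeta^2$ bound would fail. Moreover, $\zeta$-derivatives at fixed $x$ move $t=\nu_{\rm leaf}(\zeta)\cdot x$ across $|t|=R_0$. You therefore need control on an open neighbourhood of that level, not just a definition there.

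\textbf{The missing idea} is an exact identity on leaves that miss the material support. If $|t|>R_{\rm sup}$, the leaf $B_{\theta,t}$ does not meet $\overline B_{R_{\rm sup}}$, so $P_1=P_2$ on it and $\mathfrak F_a\equiv0$. Then:
\begin{itemize}
\item $W_a$ is holomorphic on a neighbourhood of the closed disk, so $(W_{a,\partial})_-=0$;
\item the Cauchy integral reproduces $W_a$, so $R_{C,a}=W_a$;
\item hence $G_a=X_{1,a}W_a^{-1}Y_{2,a}^T=X_{1,a}X_{1,a}^{-1}Y_{2,a}^{-T}Y_{2,a}^T=I$ exactly.
\end{itemize}
Since $\sum_a\chi_a=1$, the weighted sum is identically $I$ on the open band $R_{\rm sup}<|t|<R_0$.

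Setting $G=I$ for $|t|\ge R_0$ then gives a section defined on all of $\overline B_R\times\mathbb{CP}^1$ and smooth across the band. Near tangent leaves the relative transport equation, invertibility, the boundary estimate and the mixed bounds all hold trivially, and no estimate on shrinking disks is needed. With this addition your proof is complete.
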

\begin{proof}
Express each $G_a$ in the same quotient chart before forming
\begin{equation}\label{eq:w3-global-sum}
 G(x,[\theta])=\sum_a\chi_a(\mathfrak a,\pi(x,[\theta]))G_a(x,[\theta]).
\end{equation}
Cutoff supports have margins, so each summand extends by zero off
its patch with all required derivatives. Scalar weights commute with
coordinate transitions; hence the sum is an endomorphism. The exact
relative equation is preserved by \eqref{eq:w3-weights} and linearity.
At any parameter choose an active index $a_0$. No differentiation of
this choice is made. The full-disk comparison yields
\begin{equation}\label{eq:w3-average-inverse}
 \begin{split}
 &\|G-G_{a_0}\|\le C_{\rm ov}\omega_{\rm F},\qquad
 \|G_{a_0}^{-1}(G-G_{a_0})\|<1/2,\\
 &G^{-1}=[I+G_{a_0}^{-1}(G-G_{a_0})]^{-1}G_{a_0}^{-1},
 \qquad \|G^{-1}\|\le2M_{\rm loc}.
 \end{split}
\end{equation}
It is this estimate, not convexity of invertible matrices, that proves
invertibility. The boundary bound for $G-I$ follows by averaging.
The identity $H-I=G^{-T}(I-G^T)$ gives the claimed boundary constant
$C_H$, which can be fixed using $2M_{\rm loc}$ before the actual
error is chosen.

If $R_{\rm sup}<|t|<R_0$, the whole leaf has $\mathfrak F_a=0$;
$W_a$ is holomorphic on the disk, $R_{C,a}=W_a$, and $G_a=I$.
Thus the sum joins exactly to $G=I$ for $|t|\ge R_0$ on an open
band. Near tangent leaves it is already the identity. No uniform
estimate on a shrinking disk is asserted or needed. This joining
band is a band of leaves; at nonzero noise it is not a claim that
$G=I$ at every point of the exterior spatial shell.

For the derivative estimates, set $N=m+2$ and use
\begin{equation}\label{eq:r2-leaf-pullback}
x_p(w)=t\nu_{\rm leaf}+r(t)(\Re w\,e_1+\Im w\,e_2).
\end{equation}
The boundary product has parameter derivatives through $N$ and
circle derivatives through $N+3$. Each mixed input therefore costs
at most
\begin{equation}\label{eq:w3-order-budget}
 N+(N+3)=2m+7<r_{\rm fr}=2m+8,\qquad r_{\rm fr}+6=2m+14.
\end{equation}
Leibniz distributes these orders among the two frames; it does not
require this many derivatives of each factor in addition. Lemma~\ref{lem:w3-cauchy} supplies the separate orders $N,N$ for $R_C$ and its
inverse. To obtain the same rectangular bound for $G_a^\sharp$,
the frame factors need at most $2N=2m+4$ derivatives, also available.

At a fixed physical point,
\begin{equation}\label{eq:r2-spatial-leaf-parameters}
p(x,\zeta)=([\theta(\zeta)],\nu_{\rm leaf}(\zeta)\cdot x),\qquad
 w(x,\zeta)=\frac{e_1(\zeta)\cdot x+i e_2(\zeta)\cdot x}
                  {r(\nu_{\rm leaf}(\zeta)\cdot x)}.
\end{equation}
The denominator is at least $r_*$. For $|\alpha|\le m$ and
$\beta_1+\beta_2\le2$, the real multivariable chain rule bounds
\begin{equation}\label{eq:w3-fixed-x-chain}
 \begin{split}
 &\|\partial_x^\alpha\partial_\zeta^{\beta_1}
       \partial_{\bar\zeta}^{\beta_2}
       [\chi_a(\mathfrak a,p(x,\zeta))G_a^\sharp(p(x,\zeta),w(x,\zeta))]\|_{C^0}\\
 &\quad\le C_{m,\rm geom}
 \max_{|\gamma|+j\le |\alpha|+\beta_1+\beta_2}
 \|\partial_p^\gamma\nabla_w^j(\chi_aG_a^\sharp)\|_{C^0}.
 \end{split}
\end{equation}
The order on the right is at most $m+2=N$. Coordinate conversions
and weights consume at most that order and have fixed smooth bounds.
The finite sum therefore has the asserted mixed regularity.
Repeatedly differentiating $GG^{-1}=I$ uses only derivatives within
the same rectangle and the zero-order bound
\eqref{eq:w3-average-inverse}; it controls $G^{-1}$, then $H,H^{-1}$.
Finally, differentiating $H=G^{-T}$ in the equation for $G$ and using
$\mathcal A_j=-\Gamma_j^T$ in the original matrix order, we obtain the
equation for $H$ in \eqref{eq:w3-global-output}.
\end{proof}

\begin{remark}
The global sum preserves the relative equation because its scalar
weights are constant along the leaves. Its invertibility follows
from the quantitative agreement of the local solutions.
\end{remark}

For a fixed $0<\gamma'<\gamma_*$, choose the Fourier constant
$C_{\rm F}\ge1$ in Lemma~\ref{lem:w3-negative}. One common smallness
choice for the physical input, transport, and projection is
\begin{equation}\label{eq:w3-eta-threshold}
 0<\eta_D<\eta_*:=\min\left\{\eta_{\rm mom},1,
              \left(\frac{\omega_*}{2C_{\rm F}}\right)^{14/\gamma'}\right\},
 \qquad 0<h<h_*.
\end{equation}
Define the Fourier error bound by
\begin{equation}\label{eq:r2-fourier-error}
 \omega_{\rm F}=C_{\rm F}\eta_D^{\gamma'/14}<\omega_*.
\end{equation}
Here $\eta_D$ and $\omega_*$ are defined in
\eqref{eq:r2-common-error-definitions} and
\eqref{eq:r2-transport-projection-threshold}.
The number $\gamma'$ and then all constants and thresholds are fixed
before varying the measured error or $h$. An additional rigidity
threshold is imposed only after the transport and projection estimates
have been proved; see \eqref{eq:w3-joint-threshold}.
At $\delta_D=0$ the same fixed family is used for all small $h$;
$\eta_D=h\downarrow0$. This observation supplies numerical error
bounds tending to zero and does not use a uniqueness theorem or
$\log0$.

\section{Holomorphic projection in the null direction}
\label{sec:w3-projection}

At each fixed physical point $x$, we project the global endomorphism
$H=G^{-T}$ onto endomorphisms holomorphic in the complex null direction.
The projected field $H_0=\Pi H$ has a finite-dimensional block form.
Direction differentiation and spatial interpolation bound $H-H_0$;
the first-order transport equation then gives the residual required
for material recovery. No invertibility of $H_0$ is required.

\subsection{A fixed nine-dimensional endomorphism space}\label{sec:r2-projection-space}

The fixed quotient bundle has a finite-dimensional space of holomorphic
endomorphisms. Its explicit block structure determines the projection used
for coefficient recovery.

Use homogeneous spinor coordinates $[s_0:s_1]$, with representatives
$(1,\zeta)$ and $(\upsilon,1)$ in the two affine charts. The dual quotient coordinates have the transition $D_{\rm tr}$
defined in \eqref{eq:r2-spinor-charts}. With the convention that
the tautological frames satisfy $e_0=\zeta e_\infty$, this gives
\begin{equation}\label{eq:w3-fixed-bundle}
 \begin{split}
 &E^*=\mathcal O(-1)^{\oplus2}\oplus\mathcal O,\qquad
 \mathcal E=\operatorname{End}(E^*),\\
 &\mathcal E=\mathcal O^{\oplus5}\oplus
             \mathcal O(-1)^{\oplus2}\oplus\mathcal O(1)^{\oplus2}.
 \end{split}
\end{equation}
This fixed bundle is independent of the material pair, $x$, and noise.

The transition law restricts the degrees of the holomorphic blocks.

\begin{lemma}\label{lem:w3-holomorphic-form}
For the bundle $\mathcal E$ in \eqref{eq:w3-fixed-bundle}, define
\begin{equation}\label{eq:r2-holomorphic-section-space}
 \mathcal H=H^0(\mathbb{CP}^1,\mathcal E).
\end{equation}
This space is exactly
\begin{equation}\label{eq:w3-nine-dimensional}
 \left\{\begin{pmatrix}A_H&0\\s_0\mathsf C_{H,0}+s_1\mathsf C_{H,1}&d_H\end{pmatrix}:
 A_H\in \mathbb C^{2\times2},\ d_H\in\mathbb C,\\
 \mathsf C_{H,0},\mathsf C_{H,1}\in\mathbb C^{1\times2}\right\},\qquad
 \dim_{\mathbb C}\mathcal H=9.
\end{equation}
Here the matrix is evaluated in the local spinor representative;
it is not a single trivialization over projective space. Neither
$A_H$ nor $d_H$ is assumed invertible.
\end{lemma}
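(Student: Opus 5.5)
The plan is to compute $H^0(\mathbb{CP}^1,\mathcal E)$ block by block from the splitting \eqref{eq:w3-fixed-bundle}, and then to check that the resulting local expressions have the stated form under the transition law \eqref{eq:w1-chart-law}.

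\emph{Step 1: the splitting of $\mathcal E$.} Write $E^*=\mathcal O(-1)^{\oplus2}\oplus\mathcal O$, with transition $D_{\rm tr}=\operatorname{diag}(\zeta I_2,1)$. An endomorphism transforms as $H^{(0)}=D_{\rm tr}^{-1}H^{(\infty)}D_{\rm tr}$. Decompose $H$ into blocks: $A_H$ ($2\times2$), $B_H$ ($2\times1$), $C_H$ ($1\times2$) and $d_H$ (scalar). The transition law then acts blockwise by
\[
A^{(0)}=A^{(\infty)},\quad
B^{(0)}=\zeta^{-1}B^{(\infty)},\quad
C^{(0)}=\zeta C^{(\infty)},\quad
d^{(0)}=d^{(\infty)}.
\]
Thus $A_H$ and $d_H$ are sections of trivial bundles, giving the summand $\mathcal O^{\oplus5}$. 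The two entries of $B_H$ are sections of one twisted line bundle, and the two entries of $C_H$ are sections of the opposite twist. This reproduces \eqref{eq:w3-fixed-bundle}.

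Care is needed in matching which of $\mathcal O(\pm1)$ is which, via the convention $e_0=\zeta e_\infty$. The cleanest check is directly on functions: a holomorphic function $f_0(\zeta)$ on $\mathbb C$ and a holomorphic function $f_\infty(\upsilon)$ on $\mathbb C$ with $f_0=\zeta^{k}f_\infty$.

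\emph{Step 2: entire-function analysis for each block.}
\begin{itemize}
\item For the blocks with $f_0=f_\infty$ (that is, $A_H$ and $d_H$), both sides are entire and bounded at infinity, so by Liouville they are constant. This gives $4+1$ parameters.
\item For $B_H$, $f_0(\zeta)=\zeta^{-1}f_\infty(1/\zeta)$. Expand $f_0=\sum_{n\ge0}a_n\zeta^n$ and $f_\infty=\sum_{n\ge0}b_n\upsilon^n$. Matching Laurent coefficients forces $a_n=b_{-n-1}=0$ for all $n$, so $B_H=0$.
\item For $C_H$, $f_0(\zeta)=\zeta f_\infty(1/\zeta)$ forces $f_0=a_0+a_1\zeta$, with $f_\infty=a_1+a_0\upsilon$. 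In homogeneous form this is exactly $s_0\mathsf C_{H,0}+s_1\mathsf C_{H,1}$ on the representative $(1,\zeta)$, and on $(\upsilon,1)$ it gives $\upsilon\mathsf C_{H,0}+\mathsf C_{H,1}$. These are consistent with each other. This gives $2\cdot2=4$ parameters.
\end{itemize}

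\emph{Step 3: converse and dimension.} Conversely, every matrix of the displayed form is a global holomorphic section: one checks the transition law directly using homogeneity of degree one in the bottom-left block. Counting $4+1+4=9$ gives $\dim_{\mathbb C}\mathcal H=9$, which also agrees with $h^0(\mathcal O^{5}\oplus\mathcal O(-1)^2\oplus\mathcal O(1)^2)=5+0+4$.

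\emph{Main obstacle.} The only delicate point is bookkeeping of the twist directions: which off-diagonal block acquires $\zeta$ versus $\zeta^{-1}$ under \eqref{eq:w1-chart-law}. An error here would place the linear block in the upper right instead. I would verify this with the explicit relation $H^{(0)}=D_{\rm tr}^{-1}H^{(\infty)}D_{\rm tr}$, together with the homogeneity interpretation under the representatives $(1,\zeta)$ and $(\upsilon,1)$, as in $Q_0=\zeta Q_\infty$.

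No invertibility of $A_H$ or $d_H$ enters anywhere in the argument.
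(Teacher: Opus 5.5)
Your proposal is correct and follows essentially the same route as the paper. Both arguments read the transition law $H^{(0)}=D_{\rm tr}^{-1}H^{(\infty)}D_{\rm tr}$ blockwise, apply Liouville to the diagonal blocks, show that the upper-right column vanishes and that the lower-left row is linear, and finish with the converse check and the count $4+1+2+2=9$. The only cosmetic difference is in the off-diagonal blocks: you match Laurent coefficients on $\mathbb C^*$, whereas the paper uses the $O(|\zeta|^{-1})$ and linear-growth bounds with Cauchy estimates, and the two arguments are equivalent.
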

\begin{proof}
For an arbitrary holomorphic endomorphism write its chart blocks
as $A_0,B_0,C_0^{\rm aff},d_0$ and
$A_\infty,B_\infty,C_\infty^{\rm aff},d_\infty$. The transition law
$H^{(0)}=D_{\rm tr}^{-1}H^{(\infty)}D_{\rm tr}$ reads
\begin{equation}\label{eq:r2-affine-block-transition}
\begin{aligned}
 A_0(\zeta)&=A_\infty(1/\zeta),&d_0(\zeta)&=d_\infty(1/\zeta),\\
 B_0(\zeta)&=\zeta^{-1}B_\infty(1/\zeta),&
 C_0^{\rm aff}(\zeta)&=\zeta C_\infty^{\rm aff}(1/\zeta).
 \end{aligned}
\end{equation}
The first and last diagonal blocks are entire and bounded at infinity,
hence constant. The entire upper-right column is $O(|\zeta|^{-1})$
and vanishes. The lower-left row is entire with at most linear growth.
Cauchy's estimates on expanding circles give
$C_0^{\rm aff}(\zeta)=\mathsf C_{H,0}+\zeta \mathsf C_{H,1}$ and
$C_\infty^{\rm aff}(\upsilon)=\upsilon \mathsf C_{H,0}+\mathsf C_{H,1}$.
These formulas are precisely the homogeneous row in
\eqref{eq:w3-nine-dimensional}. Conversely these matrices satisfy
the transitions and are holomorphic on both charts. Counting their
free coefficients gives $4+1+2+2=9$.
\end{proof}

Fix smooth Hermitian metrics on $\mathcal E$ and $\mathbb{CP}^1$
and let $\Pi$ be the $L^2$ orthogonal projection onto $\mathcal H$.
For a fixed orthonormal basis $\mathsf e_1,\ldots,\mathsf e_9$,
\begin{equation}\label{eq:w3-fixed-Pi}
 \Pi u=\sum_{a=1}^9
   \left(\int_{\mathbb{CP}^1}\langle u,\mathsf e_a\rangle\,dV\right)\mathsf e_a,
 \qquad \Pi I=I,
\end{equation}
with inner product linear in its first entry. This is a fixed linear
operator, not a projection depending on $x$ or on the material pair.

\begin{lemma}\label{lem:w3-projection-estimate}
Let $\Sigma=\mathbb{CP}^1$ and let $\mathcal E$ be the bundle
in \eqref{eq:w3-fixed-bundle}. For every integer $N\ge0$,
every $p>2$, and every $u\in W^{1,p}(\Sigma;\mathcal E)$,
one has
\begin{equation}\label{eq:w3-projection-bounds}
 \|\Pi u\|_{C^N}\le C_N\|u\|_{C^0},\qquad
 \|u-\Pi u\|_{W^{1,p}}\le C_p\|\bar\partial u\|_{L^p}.
\end{equation}
Here $\Pi$ is the projection in \eqref{eq:w3-fixed-Pi}.
For a family $u=u(x,\cdot)$, the projection commutes with
physical-space derivatives whenever these derivatives
exist in the indicated spaces.
\end{lemma}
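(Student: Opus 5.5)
The first bound in \eqref{eq:w3-projection-bounds} follows directly from the explicit formula \eqref{eq:w3-fixed-Pi}. The basis sections $\mathsf e_1,\ldots,\mathsf e_9$ are fixed holomorphic sections of $\mathcal E$, hence smooth, with $\|\mathsf e_a\|_{C^N}\le C_N$ in the fixed charts of Lemma~\ref{lem:w3-holomorphic-form}. Each coefficient satisfies
\[
\Bigl|\int_{\Sigma}\langle u,\mathsf e_a\rangle\,dV\Bigr|\le \operatorname{vol}(\Sigma)\,\|u\|_{C^0}\,\|\mathsf e_a\|_{C^0}.
\]
Summing the nine terms gives $\|\Pi u\|_{C^N}\le C_N\|u\|_{C^0}$. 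The commutation statement uses the same formula. For a family $u(x,\cdot)$, $\Pi$ integrates only in the direction variable against $x$-independent smooth sections. A physical derivative $\partial_x^\alpha$ therefore passes under the integral by differentiation under the integral sign, via dominated convergence or the mean value theorem, whenever $\partial_x^\alpha u$ exists in $C^0$ (or $L^p$) uniformly in the direction. Thus $\partial_x^\alpha\Pi u=\Pi\partial_x^\alpha u$.

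For the second bound I would regard $\bar\partial$ as the Dolbeault operator of the holomorphic bundle $\mathcal E$,
\[
\bar\partial:W^{1,p}(\Sigma;\mathcal E)\to L^p(\Sigma;\mathcal E\otimes\Lambda^{0,1}).
\]
In the two spinor charts the transition \eqref{eq:r2-affine-block-transition} is holomorphic, so $\bar\partial$ acts componentwise as $\partial_{\bar\zeta}$ on $\mathbb C^{3\times3}$-valued functions. The first step is a G\aa rding-type estimate
\[
\|v\|_{W^{1,p}}\le C\bigl(\|\bar\partial v\|_{L^p}+\|v\|_{L^p}\bigr).
\]
To prove it, take a fixed partition of unity subordinate to the two charts and write $\phi v=\mathcal C_\zeta(\partial_{\bar\zeta}(\phi v))$ with the Cauchy transform. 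The Calder\'on--Zygmund bound for the Beurling transform gives $L^p\to W^{1,p}$ boundedness, and the commutator term $(\partial_{\bar\zeta}\phi)v$ is lower order. The second step identifies the kernel of $\bar\partial$ on $W^{1,p}$: a weakly holomorphic $W^{1,p}$ section is holomorphic by Weyl's lemma, so the kernel is exactly $\mathcal H$ from Lemma~\ref{lem:w3-holomorphic-form}.

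The third step removes the $L^p$ term on the $L^2$-orthogonal complement of $\mathcal H$, and I expect this to be the main point. The pairing is meaningful because $W^{1,p}\subset C^0$ for $p>2$. I would argue by contradiction. Suppose there are $v_k\perp\mathcal H$ with $\|v_k\|_{W^{1,p}}=1$ and $\|\bar\partial v_k\|_{L^p}\to0$. Rellich--Kondrachov, which gives a compact embedding into $C^0$ on the compact surface, yields a subsequence $v_k\to v$ in $C^0$. Then $\bar\partial v=0$ weakly, so $v\in\mathcal H$, and $v\perp\mathcal H$ by passing to the limit, hence $v=0$. The G\aa rding estimate then forces $\|v_k\|_{W^{1,p}}\to0$, a contradiction.

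Finally, apply this estimate to $v=u-\Pi u$. This $v$ is orthogonal to $\mathcal H$ because $\Pi$ is the orthogonal projection. Since $\Pi u\in\mathcal H$ is holomorphic, $\bar\partial v=\bar\partial u$, which gives $\|u-\Pi u\|_{W^{1,p}}\le C_p\|\bar\partial u\|_{L^p}$. The constants depend only on the fixed metrics, the charts and $p$; they are independent of the material pair and of $x$, since $\mathcal E$ is the fixed bundle \eqref{eq:w3-fixed-bundle}.
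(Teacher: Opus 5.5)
Your proposal is correct and follows the paper's proof essentially step for step: the first bound and the commutation property come from the finite-rank formula \eqref{eq:w3-fixed-Pi}, followed by the Dolbeault G\aa rding estimate, the identification of the kernel with $\mathcal H$, a compactness--contradiction argument on $\{\Pi v=0\}$, and application to $u-\Pi u$. The differences are cosmetic: the paper cites \cite[Lemma~C.2.1]{McDuffSalamon2004} for the G\aa rding estimate rather than sketching it via the Cauchy and Beurling transforms, and it extracts the limit using weak $W^{1,p}$ and strong $L^p$ convergence rather than compactness into $C^0$.
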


\begin{proof}
Set $\Sigma=\mathbb{CP}^1$ and
$\mathcal H=H^0(\Sigma,\mathcal E)$.
By the finite-rank representation
\eqref{eq:w3-fixed-Pi}, the smoothness of
the fixed basis $\{\mathsf e_a\}_{a=1}^9$,
and H\"older's inequality,
\[
\begin{aligned}
\|\Pi u\|_{C^N}
&\le C_N\sum_{a=1}^9
\left|
\int_\Sigma
\langle u,\mathsf e_a\rangle\,dV
\right|
\le C_N\|u\|_{C^0},\\
\|\Pi v\|_{L^p}
&\le C_p\|v\|_{L^p},
\qquad v\in L^p(\Sigma;\mathcal E).
\end{aligned}
\]
Since $\Pi$ is independent of the physical
variable $x$, it also satisfies
\[
\partial_x^\gamma(\Pi u)
=\Pi(\partial_x^\gamma u)
\]
whenever the indicated derivatives exist.

The elliptic estimate for the Dolbeault operator
on the fixed holomorphic bundle $\mathcal E$
over $\Sigma$
\cite[Lemma~C.2.1]{McDuffSalamon2004}
gives
\begin{equation}\label{eq:w3-CR-elliptic}
\|v\|_{W^{1,p}}
\le C_p\bigl(
\|\bar\partial v\|_{L^p}
+\|v\|_{L^p}
\bigr).
\end{equation}
Initially valid for smooth sections, this estimate
extends to $W^{1,p}(\Sigma;\mathcal E)$ by density
and the continuity of $\bar\partial:W^{1,p}\to L^p$.

We claim that
\[
\|v\|_{W^{1,p}}
\le C_p\|\bar\partial v\|_{L^p},
\qquad
v\in W^{1,p}(\Sigma;\mathcal E),
\quad \Pi v=0.
\]
Otherwise, there exists a sequence
$v_k\in W^{1,p}(\Sigma;\mathcal E)$ such that
\[
\Pi v_k=0,\qquad
\|v_k\|_{W^{1,p}}=1,\qquad
\|\bar\partial v_k\|_{L^p}\longrightarrow0.
\]
By the compact embedding
$W^{1,p}(\Sigma;\mathcal E)\hookrightarrow L^p(\Sigma;\mathcal E)$,
after passing to a subsequence,
\[
v_k\rightharpoonup v
\quad\text{in }W^{1,p},
\qquad
v_k\longrightarrow v
\quad\text{in }L^p.
\]
Consequently,
\[
\bar\partial v=0,\qquad \Pi v=0.
\]
The first identity holds distributionally.
Elliptic regularity for $\bar\partial$ implies
$v\in\mathcal H$, and hence
\[
v=\Pi v=0.
\]
It follows from \eqref{eq:w3-CR-elliptic} that
\[
1=\|v_k\|_{W^{1,p}}
\le C_p\bigl(
\|\bar\partial v_k\|_{L^p}
+\|v_k\|_{L^p}
\bigr)\longrightarrow0,
\]
a contradiction.

Finally, for $w=u-\Pi u$ we have
\[
\Pi w=0,\qquad
\bar\partial w=\bar\partial u,
\]
because $\Pi u\in\mathcal H$.
Therefore,
\[
\|u-\Pi u\|_{W^{1,p}}
=\|w\|_{W^{1,p}}
\le C_p\|\bar\partial w\|_{L^p}
=C_p\|\bar\partial u\|_{L^p}.
\]
This proves \eqref{eq:w3-projection-bounds}.
\end{proof}

\subsection{Direction differentiation at a fixed physical point}\label{sec:r2-direction-defect}

Direction differentiation is taken with the physical point $x$ fixed. The
boundary deviation of $H$ from the identity controls its difference from the
holomorphic projection $H_0=\Pi H$.

The formulas \eqref{eq:w1-spinor} and
\eqref{eq:w1-quotient-connection} are polynomial in the homogeneous
spinor representative. Their material coefficients are independent
of direction. Therefore, in the holomorphic charts
\eqref{eq:r2-spinor-charts},
\begin{equation}\label{eq:w3-holomorphic-coefficients}
 \partial_{\bar\zeta}\theta=0,\qquad
 \partial_{\bar\zeta}\Gamma_j=0.
\end{equation}
These derivatives hold physical $x$ fixed. Define the relative transport
operator and the projected field by
\begin{equation}\label{eq:r2-transport-projection-definitions}
 \mathcal T(U)=2D_\theta U+\Gamma_1U-U\Gamma_2,\qquad H_0=\Pi H,
\end{equation}
where $\Pi$ is defined in \eqref{eq:w3-fixed-Pi}.

We propagate the boundary direction defect through the leaf disks
and use spatial interpolation to estimate $H-H_0$.

\begin{proposition}\label{prop:w3-defect}
Let $m>3$ be an integer. Suppose a jointly $C^2$ endomorphism $H$
on $\overline B_R\times\mathbb{CP}^1$ satisfies
\begin{equation}\label{eq:w3-H-exact}
 \mathcal T(H)=0
\end{equation}
with $\mathcal T$ defined in \eqref{eq:r2-transport-projection-definitions},
and the fixed-physical-point bounds
\begin{equation}\label{eq:w3-separated-bounds}
 \|H\|_{C_x^mC_\zeta^0}+\|H\|_{C_x^0C_\zeta^2}\le M_H,\qquad
 \|H-I\|_{C^0(\partial B_R\times\mathbb{CP}^1)}\le\omega\le1.
\end{equation}
On each leaf used to propagate a nonzero defect suppose the dual
frames $Y_1,Y_2$ and their inverses exist and have uniform bounds
on its whole closed disk.
Then $H_0$ from \eqref{eq:r2-transport-projection-definitions} is a
holomorphic endomorphism in direction and
\begin{equation}\label{eq:w3-defect-output}
 \begin{split}
 &\|\bar\partial H\|_{C_x^0C_\zeta^0}
       +\|H-H_0\|_{C_x^0C_\zeta^0}\le C\omega^{1/2},\\
 &\|H-H_0\|_{C_x^\ell C_\zeta^0}
       \le C_{m,\ell}\omega^{(m-\ell)/(2m)}\quad(0\le\ell<m),\\
 &\|H_0-I\|_{C^0(\partial B_R\times\mathbb{CP}^1)}\le C\omega,
       \qquad \|H_0\|_{C_x^mC_\zeta^0}\le C M_H.
 \end{split}
\end{equation}
The constants depend only on the fixed atlas, bundle, ball, frame
bounds and $M_H$. No invertibility of $H_0$ is assumed or concluded.
\end{proposition}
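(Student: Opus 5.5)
The plan is to differentiate the transport equation $\mathcal T(H)=0$ in the antiholomorphic direction variable, at fixed physical $x$. This shows that $K:=\partial_{\bar\zeta}H$, computed in each holomorphic spinor chart, satisfies the same relative transport equation. By \eqref{eq:w3-holomorphic-coefficients} we have $\partial_{\bar\zeta}\theta=0$ and $\partial_{\bar\zeta}\Gamma_j=0$, so $\partial_{\bar\zeta}$ commutes with the coefficients. The only extra term comes from $D_\theta$ acting at fixed $x$, and $D_\theta=\theta\cdot\nabla_x$ has holomorphic coefficients. Hence $2D_\theta K+\Gamma_1K-K\Gamma_2=0$ on each leaf. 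The chart transition \eqref{eq:w1-chart-dbar} shows that these local expressions glue to a section $\bar\partial H$ of $\mathcal E\otimes\overline{K}_{\mathbb{CP}^1}$.

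On a leaf disk we conjugate, setting $V=Y_1^{-T}KY_2^{T}$, or the analogous product, so that $\partial_{\bar z}V=0$. The maximum principle then gives $\|K\|_{C^0(\overline B_{\theta,t})}\le C\|K\|_{C^0(\partial B_{\theta,t})}$, as in Lemma~\ref{lem:w3-comparison}, using the uniform dual frame bounds. On leaves that avoid the source, $H=I$ and $K=0$ trivially.

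The key step is the boundary bound for $K$. On $\partial B_R$ we know $\|H-I\|_{C^0}\le\omega$, and $\|H\|_{C_x^0C_\zeta^2}\le M_H$. Since $\partial_{\bar\zeta}I=0$, we interpolate the direction derivative of $H-I$ along the boundary sphere at fixed $x$. The elementary one-variable bound $\|f'\|_\infty\le C\|f\|_\infty^{1/2}\|f''\|_\infty^{1/2}$, applied on fixed compact spinor charts (shrinking to smaller subcharts with margin), gives $\|\partial_{\bar\zeta}H\|_{C^0(\partial B_R)}\le C\omega^{1/2}$.

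The main obstacle is that a leaf through a fixed $(x,[\theta])$ changes as $\zeta$ varies. The boundary of the leaf therefore moves, but it remains on $\partial B_R$ throughout, where $x$ is a boundary point. The fixed-$x$ derivative on the boundary is exactly what was bounded, so the conjugated holomorphic argument transfers the $C^0$ bound. This yields $\|\bar\partial H\|_{C_x^0C_\zeta^0}\le C\omega^{1/2}$.

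Next we apply Lemma~\ref{lem:w3-projection-estimate} pointwise in $x$. With $p$ large and the Sobolev embedding $W^{1,p}\subset C^0$ on $\mathbb{CP}^1$,
\[
\|H(x,\cdot)-\Pi H(x,\cdot)\|_{C^0}\le C\|\bar\partial H(x,\cdot)\|_{L^p}\le C\omega^{1/2}.
\]
$H_0$ is holomorphic because $\Pi$ maps into $\mathcal H$. Since $\Pi$ commutes with $\partial_x$, the first bound in \eqref{eq:w3-projection-bounds} gives $\|H_0\|_{C_x^mC_\zeta^0}\le C\|H\|_{C_x^mC_\zeta^0}\le CM_H$. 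Using $\Pi I=I$, on $\partial B_R$ we get $\|H_0-I\|\le C\|H-I\|_{C^0}\le C\omega$, because $\Pi$ acts only in $\zeta$ at a fixed boundary point.

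Finally, $H-H_0$ has $C_x^m$ norm bounded by $CM_H$ and $C_x^0$ norm bounded by $C\omega^{1/2}$. Gagliardo--Nirenberg interpolation in $x$ on the ball, uniform in $\zeta$, then gives
\[
\|H-H_0\|_{C_x^\ell C_\zeta^0}\le C_{m,\ell}\,\omega^{(1-\ell/m)/2}=C_{m,\ell}\,\omega^{(m-\ell)/(2m)}.
\]
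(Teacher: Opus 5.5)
Your proposal is correct and follows essentially the same route as the paper: a second-order difference-quotient bound for $\partial_{\bar\zeta}H$ at fixed $x\in\partial B_R$, differentiation of $\mathcal T(H)=0$ at fixed $x$, conjugation by the dual frames with the leafwise maximum principle, then Lemma~\ref{lem:w3-projection-estimate} pointwise in $x$ followed by spatial interpolation. Two small corrections. First, since $2D_\theta Y_j=-\Gamma_jY_j$, the holomorphic conjugate is $Y_1^{-1}(\partial_{\bar\zeta}H)Y_2$, not $Y_1^{-T}(\partial_{\bar\zeta}H)Y_2^{T}$. Second, your one-variable interpolation on a bounded chart needs the extra term $C\|f\|_\infty$, which is harmless because $\omega\le1$.
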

\begin{proof}
Take compact direction charts with fixed larger charts around them.
For fixed $x\in\partial B_R$, a real first difference with step
$a$ and its second-order Taylor remainder yield
\begin{equation}\label{eq:r2-direction-difference-quotient}
\|\partial_{\bar\zeta}(H-I)\|_{C^0}
 \le C(\omega/a+M_Ha).
\end{equation}
For small positive $\omega$, use $a$ comparable to
$\sqrt{\omega/M_H}$ within the fixed chart margin. For the remaining
$\omega\le1$, enlarge a constant depending on that margin and $M_H$.
If $\omega=0$, the boundary section is identically $I$ in direction
and its direction derivative is zero. Thus
define the direction defect by
\begin{equation}\label{eq:r2-direction-defect-field}
 V=\partial_{\bar\zeta}H.
\end{equation}
Its boundary norm is at most $C\omega^{1/2}$.

Differentiate \eqref{eq:w3-H-exact} at fixed physical $x$.
By \eqref{eq:w3-holomorphic-coefficients}, no derivative of
$\theta$ or $\Gamma_j$ appears, and
\begin{equation}\label{eq:r2-direction-defect-transport}
2D_\theta V+\Gamma_1V-V\Gamma_2=0.
\end{equation}
This is not differentiation at fixed moving leaf coordinates.
The transition formula \eqref{eq:w1-chart-dbar} applies to this derivative.
Consequently these coefficients are the components of a well-defined
bundle-valued $(0,1)$-form. By the dual-frame equation \eqref{eq:w1-quotient-equations},
direct multiplication yields
\begin{equation}\label{eq:r2-direction-defect-holomorphy}
D_\theta(Y_1^{-1}VY_2)=0.
\end{equation}
The holomorphic representative of $\theta$ is a nonzero complex
multiple of the normalized direction of the same leaf. Hence this is
a leaf Cauchy--Riemann equation. The scalar maximum principle applied
to constant-vector matrix elements, and the four frame factors, give
$\|V\|_{\overline B_{\theta,t}}\le C\|V\|_{\partial B_{\theta,t}}$.
The boundary circle lies on $\partial B_R$, so the first defect
bound follows. Where the construction has $H=I$ on an open
neighborhood, the defect is already zero.

For each $x$, Lemma~\ref{lem:w3-projection-estimate} and the fixed
embedding $W^{1,p}(\mathbb{CP}^1)\subset C^0$, $p>2$, imply
$\|H-H_0\|_{C_x^0C_\zeta^0}\le C\omega^{1/2}$.
Since $\Pi$ commutes with $\partial_x$, the independent spatial
prior gives $\|H-H_0\|_{C_x^mC_\zeta^0}\le C M_H$.
On the fixed ball, a simultaneous bounded extension in $C^0$ and
$C^m$, smoothing, and optimization of the smoothing scale give
\begin{equation}\label{eq:r2-fixed-domain-interpolation}
\|f\|_{C^\ell}\le C\left(
     \|f\|_{C^0}^{1-\ell/m}\|f\|_{C^m}^{\ell/m}
     +\|f\|_{C^0}\right),\qquad0\le\ell<m.
\end{equation}
One may obtain the extension in boundary charts by a fixed finite
linear combination of reflected values, matching derivatives up to
$m$, followed by fixed cutoffs. Its coefficients and charts are
independent of $f$; no extra derivative of $f$ is used. Apply this
inequality to each local matrix entry uniformly in direction. This
proves the spatial exponent in \eqref{eq:w3-defect-output} without
ever estimating $\partial_x^m\bar\partial_\zeta H$.
Finally $H_0-I=\Pi(H-I)$ on the sphere gives the boundary estimate
without a square-root loss.
\end{proof}

\begin{remark}
For the construction in Proposition~\ref{prop:w3-gluing},
the full-disk frame bounds are required only for $|t|\le R_0$;
on the remaining open region $H=I$.
Taking three spatial derivatives of $H-H_0$ gives a positive
error power when $m>3$ and a small $C_x^2$ transport residual,
as quantified in Subsection~\ref{sec:r2-projection-rigidity}.
\end{remark}

\subsection{The projection-to-rigidity estimate}\label{sec:r2-projection-rigidity}

The projection estimate is converted into a transport residual in the
$C_x^2$ norm required by material rigidity.

Use $\beta_m$ from \eqref{eq:r2-common-error-definitions} and apply
Proposition~\ref{prop:w3-defect} to the endomorphism $H=G^{-T}$ constructed in
Proposition~\ref{prop:w3-gluing}, with
$\omega=C_H\omega_{\rm F}\le1$ and a fixed high-order bound $M_H$.
For all spatial matrix norms use the maximum supremum
Frobenius norm over the same compact spinor charts and the displayed
spatial multi-indices, as in Theorem~\ref{thm:w4-rigidity}.
Fix $C_\Pi\ge1$ bounding $\Pi:C_\zeta^0\to C_\zeta^0$ in these
norms. If $C_{m,3}$ denotes the corresponding constant in
\eqref{eq:w3-defect-output}, one may choose
$C_P=C_{m,3}C_H^{\beta_m}$. Then
\begin{equation}\label{eq:w3-C3-output}
 \|H-H_0\|_{C_x^3C_\zeta^0}\le C_P\omega_{\rm F}^{\beta_m},\qquad
 \|H_0-I\|_{C^0(\partial B_R\times\mathbb{CP}^1)}
                  \le C_\Pi C_H\omega_{\rm F}.
\end{equation}
The projected object is a holomorphic endomorphism with the block form
of Lemma~\ref{lem:w3-holomorphic-form}. Projection alone is not used
to claim that it is an automorphism.

Define the projection error by
\begin{equation}\label{eq:r2-projection-error}
 E_0=H_0-H,
\end{equation}
where $H_0$ is defined in \eqref{eq:r2-transport-projection-definitions}.
By \eqref{eq:w3-H-exact},
\begin{equation}\label{eq:r2-projected-transport-residual}
\mathcal T(H_0)=2\theta\cdot\nabla_x E_0+\Gamma_1E_0-E_0\Gamma_2.
\end{equation}
For every $|\alpha|\le2$, differentiation gives
\begin{equation}\label{eq:w3-residual-Leibniz}
 \begin{split}
 \partial_x^\alpha\mathcal T(H_0)
 &=2\theta\cdot\nabla_x\partial_x^\alpha E_0\\
 &\quad+\sum_{\sigma\le\alpha}\binom\alpha\sigma
   [ (\partial_x^\sigma\Gamma_1)
                  (\partial_x^{\alpha-\sigma}E_0)
     -(\partial_x^\sigma E_0)
                  (\partial_x^{\alpha-\sigma}\Gamma_2)].
 \end{split}
\end{equation}
Thus only three spatial derivatives of $E_0$ and two of the connections
are needed. Set
\begin{equation}\label{eq:r2-transport-norm-constants}
\Theta_1=\max_{\text{charts},\zeta}\sum_{i=1}^3|\theta_i(\zeta)|,
 \qquad B_{\Gamma,j}\ge\|\Gamma_j\|_{C_x^2C_\zeta^0},\qquad
 C_T=2\Theta_1+4(B_{\Gamma,1}+B_{\Gamma,2}).
\end{equation}
Choose $B_{\Gamma,j}$ uniformly over the full material class. The factor $4$
is $\max_{|\alpha|\le2}\sum_{\sigma\le\alpha}\binom\alpha\sigma$;
submultiplicativity of the Frobenius norm in
\eqref{eq:w3-residual-Leibniz} therefore gives
$\|\mathcal T(E_0)\|_{C_x^2C_\zeta^0}
 \le C_T\|E_0\|_{C_x^3C_\zeta^0}$.
In particular all constants can be fixed before the error by
\begin{equation}\label{eq:w3-link-constants}
 \begin{split}
 B_H&=C_\Pi M_H,\qquad
 C_{\rm link}=\max\{1,C_TC_P+C_\Pi C_H\}.
 \end{split}
\end{equation}
Here $\|H_0\|_{C_x^3C_\zeta^0}\le B_H$, since $m\ge4$ and
$\Pi$ commutes with the spatial derivatives. These explicit choices give
\begin{equation}\label{eq:w3-rigidity-output}
 \|\mathcal T(H_0)\|_{C_x^2C_\zeta^0}
 +\|H_0-I\|_{C^0(\partial B_R\times\mathbb{CP}^1)}
 \le C_{\rm link}\omega_{\rm F}^{\beta_m}
 \le C_{\rm link} C_{\rm F}^{\beta_m}\eta_D^{\gamma'(m-3)/(28m)}.
\end{equation}
Here $\omega_{\rm F}\le1$ was used to absorb its linear contribution.
With the rigidity threshold for this fixed $B_H$, one sufficient
simultaneous choice is
\begin{equation}\label{eq:w3-joint-threshold}
 \begin{split}
 \omega_{\rm all}&=\min\left\{1,\omega_{\rm C},\omega_{\rm G},C_H^{-1},
       \left(\frac{\varepsilon_{\rm rig,0}}{2C_{\rm link}}\right)^{1/\beta_m}
                         \right\},\\
 \eta_{\rm all}&=\min\left\{\eta_{\rm mom},1,
                \left(\frac{\omega_{\rm all}}{2C_{\rm F}}\right)^{14/\gamma'}
                        \right\},\qquad 0<\eta_D<\eta_{\rm all},\quad0<h<h_*.
 \end{split}
\end{equation}
Then $\omega_{\rm F}<\omega_{\rm all}$ and the sum of the residual
and sphere errors is less than $\varepsilon_{\rm rig,0}/2$.
Theorem~\ref{thm:w4-rigidity} therefore applies. The shell condition is
imposed on the materials; the projected endomorphism $H_0$ is normalized
by the sphere estimate in \eqref{eq:w3-C3-output}.

\section{Quantitative material rigidity and common-exterior stability}
\label{sec:material-rigidity}
The holomorphic projection reduces coefficient recovery to a finite
system of block equations. We first establish the scalar coercive
estimate used in those equations, then prove material rigidity and
complete the common-exterior stability argument.

\subsection{Uniform coercivity for the trace-free Hessian}
\label{sec:w4-hessian}

A fixed shell normalization gives a coercive estimate for the trace-free
Hessian with bounded first-order drift. This estimate controls the
logarithmic shear-modulus ratio in the block recovery argument.

Fix $0<R_0<R$ and set
\begin{equation}\label{eq:w4-shell-space}
 \begin{gathered}
 E=\{x\in\mathbb R^3:R_0<|x|<R\},\\
 \mathcal V_{R_0}=\{v\in H^2(B_R;\mathbb R):v=0\text{ a.e. on }E\}.
 \end{gathered}
\end{equation}
The shell and both radii are fixed before varying coefficients or errors.
For a real vector field $\beta\in W^{1,\infty}(B_R;\mathbb R^3)$ define
\begin{equation}\label{eq:w4-Pbeta}
 \mathcal P_\beta v
 =\operatorname{tf}(\nabla^2v)
  -\operatorname{tf}(\beta\otimes\nabla v+\nabla v\otimes\beta),
 \qquad \operatorname{tf}K=K-\tfrac13(\operatorname{tr}K)I_3.
\end{equation}
The tensor convention is \eqref{eq:r2-tensor-conventions}; matrix
norms are Frobenius norms. Theorem~\ref{thm:w4-hessian} is independent
of the transport construction.

The shell normalization removes the homogeneous kernel, giving coercivity
uniformly over the bounded drift class.

\begin{theorem}\label{thm:w4-hessian}
For every finite $B_\beta$ there is a constant
$C=C(R,R_0,B_\beta)$ such that, for every real
$\beta\in W^{1,\infty}(B_R;\mathbb R^3)$ with
$\|\beta\|_{W^{1,\infty}}\le B_\beta$ and every
$v\in\mathcal V_{R_0}$,
\begin{equation}\label{eq:w4-hessian-estimate}
 \|v\|_{H^2(B_R)}+\|v\|_{L^\infty(B_R)}
 \le C\|\mathcal P_\beta v\|_{L^2(B_R)}.
\end{equation}
Here $\mathcal V_{R_0}$ and $\mathcal P_\beta$ are defined in
\eqref{eq:w4-shell-space} and \eqref{eq:w4-Pbeta}.
\end{theorem}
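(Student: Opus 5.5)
The plan is a compactness--uniqueness argument layered on an a priori estimate. The last part of the statement needs care, since the constant must be uniform over the whole drift class.

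\emph{Step 1: the unperturbed problem.} For $\beta=0$ and $v\in\mathcal V_{R_0}$, extend $v$ by zero to $\mathbb R^3$; then $v\in H^2_c(B_{R_0+})$. On $\mathbb R^3$ one has the Fourier identity
\[
\|\operatorname{tf}\nabla^2 v\|_{L^2}^2=\|\nabla^2v\|_{L^2}^2-\tfrac13\|\Delta v\|_{L^2}^2=\tfrac23\|\nabla^2 v\|_{L^2}^2,
\]
because $\|\nabla^2v\|^2=\|\Delta v\|^2$ for compactly supported $v$. The zero trace on the shell together with Poincar\'e's inequality (applied twice) then gives $\|v\|_{H^2}\le C\|\operatorname{tf}\nabla^2 v\|_{L^2}$. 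The $L^\infty$ term follows from the Sobolev embedding $H^2(B_R)\subset C^0$ in three dimensions, so it suffices to bound the $H^2$ norm.

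\emph{Step 2: a G\aa rding-type inequality with drift.} Write $\operatorname{tf}\nabla^2v=\mathcal P_\beta v+\operatorname{tf}(\beta\otimes\nabla v+\nabla v\otimes\beta)$. Step 1 gives
\[
\|v\|_{H^2}\le C\bigl(\|\mathcal P_\beta v\|_{L^2}+B_\beta\|\nabla v\|_{L^2}\bigr).
\]
The lower-order term $\|\nabla v\|_{L^2}$ must now be removed.

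\emph{Step 3: uniqueness.} I need to show that $\mathcal P_\beta v=0$ with $v\in\mathcal V_{R_0}$ forces $v=0$. Taking the divergence of the trace-free Hessian equation yields $\tfrac23\nabla\Delta v=$ (first- and second-order terms in $v$ with $W^{1,\infty}$ coefficients). Combined with the full system, this is an overdetermined elliptic system: the symbol $\xi\mapsto\operatorname{tf}(\xi\otimes\xi)$ is injective for real $\xi\ne0$.

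The plan is to prolong. Introduce $w=(v,\nabla v,\Delta v)$ and note that $\mathcal P_\beta v=0$ expresses every second derivative of $v$ through $\Delta v$ and lower-order terms. The divergence identity then expresses $\nabla\Delta v$ through $w$ and $\nabla\beta\cdot\nabla v$, which needs only $\beta\in W^{1,\infty}$. Altogether this gives a closed first-order system $\nabla w=\mathcal B(x)w$ with $L^\infty$ coefficients.

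On the shell $w=0$. Since $B_R$ is connected and $w$ is $W^{1,\infty}$ along segments, Gr\"onwall's inequality along paths from the shell gives $w\equiv0$. There is a subtlety: at $H^2$ regularity $\Delta v$ is only in $L^2$, so this first needs a bootstrap (interior regularity for the overdetermined elliptic system) or a weak form of the ODE argument.

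\emph{Step 4: compactness and uniformity.} Suppose the estimate fails uniformly. Then there are $\beta_k$ with $\|\beta_k\|_{W^{1,\infty}}\le B_\beta$ and $v_k\in\mathcal V_{R_0}$ with $\|v_k\|_{H^2}=1$ and $\|\mathcal P_{\beta_k}v_k\|_{L^2}\to0$. By Arzel\`a--Ascoli, $\beta_k\to\beta$ uniformly with $\|\beta\|_{W^{1,\infty}}\le B_\beta$ (weak-$*$ limit). By compactness of the embedding, $v_k\rightharpoonup v$ weakly in $H^2$ and strongly in $H^1$, with $v\in\mathcal V_{R_0}$. Then $\mathcal P_\beta v=0$, so $v=0$ by Step 3. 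Step 2 now gives $1\le C(\|\mathcal P_{\beta_k}v_k\|_{L^2}+B_\beta\|\nabla v_k\|_{L^2})\to0$, a contradiction.

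I expect the main obstacle to be Step 3. Specifically, it is justifying the prolonged first-order system at the limited regularity $\beta\in W^{1,\infty}$, $v\in H^2$ (in particular that $\Delta v\in W^{1,2}$ and that the Gr\"onwall/unique-continuation step works with merely bounded coefficients). My first attempt is to derive the $\nabla\Delta v$ identity distributionally and then bootstrap via the injective-symbol ellipticity, reaching $v\in W^{3,p}$ and hence $w\in W^{1,p}$ locally.
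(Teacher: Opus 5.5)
Your proposal is correct and is essentially the paper's proof. It has the same three ingredients: the Plancherel identity for the trace-free Hessian of the zero extension plus a G\aa rding inequality; uniqueness for $\mathcal P_\beta v=0$ by prolonging to a closed first-order system with $L^\infty$ coefficients that vanishes on the shell; and a compactness--contradiction argument over the drift class via Arzel\`a--Ascoli and Rellich.

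The regularity issue you flag is resolved more simply than by elliptic bootstrapping for the overdetermined system. The distributional identity $\tfrac23\partial_j\Delta v=\sum_i\partial_i[\operatorname{tf}(\beta\otimes\nabla v+\nabla v\otimes\beta)]_{ij}$ already has right side in $L^2$, so $\Delta v\in H^1$. The closed system then lifts $U=(\nabla v,\Delta v)$ to $W^{1,6}$ and then to $W^{1,\infty}$. For the propagation, rather than applying Gr\"onwall along arbitrary paths (where an a.e.\ identity need not hold), the paper uses that $\log(a+|U|)$ is uniformly Lipschitz on the convex ball; Gr\"onwall along almost every chord in a fixed direction also works.
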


\begin{proof}
\emph{The principal estimate.}
A function in $\mathcal V_{R_0}$ vanishes in an open collar of
$\partial B_R$. Its zero extension $\widetilde v$ therefore belongs to
$H^2(\mathbb R^3)$, has support in $\overline B_{R_0}$, and has no
interface distribution at $\partial B_R$. For example, multiply by a
smooth cutoff equal to one on $\overline B_{R_0}$ and compactly
supported in $B_R$ before extending. For real $\xi$,
\begin{equation}\label{eq:r2-calculation-018}
\big|\xi\otimes\xi-\tfrac13|\xi|^2I_3\big|^2
 =\tfrac23|\xi|^4.
\end{equation}
Plancherel consequently gives the exact identity
\begin{equation}\label{eq:w4-principal-identity}
 \|\operatorname{tf}(\nabla^2\widetilde v)\|_2^2
 =\tfrac23\|\nabla^2\widetilde v\|_2^2
 =\tfrac23\|\Delta\widetilde v\|_2^2.
\end{equation}
The tensor term in \eqref{eq:w4-Pbeta} has norm at most
$2B_\beta|\nabla v|$. Also, for every $\eta>0$, the Fourier
inequality $|\xi|^2\le\eta^2|\xi|^4+(4\eta^2)^{-1}$ yields
\begin{equation}\label{eq:r2-calculation-019}
\|\nabla v\|_2\le\eta\|\nabla^2v\|_2+(2\eta)^{-1}\|v\|_2.
\end{equation}
Choose $\eta$ using the fixed number $B_\beta$, not by restricting
$B_\beta$. Absorption in \eqref{eq:w4-principal-identity} gives
\begin{equation}\label{eq:w4-garding}
 \|v\|_{H^2(B_R)}
 \le C(R,B_\beta)
       \big(\|\mathcal P_\beta v\|_{L^2(B_R)}+\|v\|_{L^2(B_R)}\big).
\end{equation}

\emph{The homogeneous weak kernel.}
Suppose $v\in\mathcal V_{R_0}$ and $\mathcal P_\beta v=0$, with
$\mathcal V_{R_0}$ and $\mathcal P_\beta$ defined in
\eqref{eq:w4-shell-space} and \eqref{eq:w4-Pbeta}. Set
\begin{equation}\label{eq:r2-hessian-kernel-fields}
 r=\nabla v\in H^1(B_R;\mathbb R^3),\qquad
 w=\Delta v\in L^2(B_R).
\end{equation}
Define
\begin{equation}\label{eq:w4-T}
 T_{ij}^{\ k}=\beta_i\delta_{jk}+\beta_j\delta_{ik}
                         -\tfrac23\delta_{ij}\beta_k.
\end{equation}
Repeated component indices run from one to three. Restoring the trace in the weak equation, we obtain, in the sense of
distributions,
\begin{equation}\label{eq:w4-r-system}
 \partial_jr_i=\tfrac13\delta_{ij}w+T_{ij}^{\ k}r_k.
\end{equation}
Apply $\partial_i$ and sum in $i$. Distributional derivatives of $v$
commute, and the product rule is valid for $T\in W^{1,\infty}$ and
$r\in H^1$. Thus
\begin{equation}\label{eq:w4-first-w}
 \tfrac23\partial_jw
 =\sum_{i,k}\big[(\partial_iT_{ij}^{\ k})r_k
                          +T_{ij}^{\ k}\partial_i r_k\big].
\end{equation}
The right side already lies in $L^2$; this proves $w\in H^1$
\emph{before} substituting \eqref{eq:w4-r-system} again. The substitution
then gives
\begin{equation}\label{eq:w4-w-system}
 \partial_jw=\tfrac53\beta_jw+B_{j\ell}r_\ell,
 \quad
 B_{j\ell}=\tfrac32\left(
       \sum_i\partial_iT_{ij}^{\ \ell}
       +\sum_{i,k}T_{ij}^{\ k}T_{ki}^{\ \ell}\right).
\end{equation}
For clarity, all contractions used here are
\begin{equation}\label{eq:w4-contractions}
 \begin{aligned}
 \sum_iT_{ij}^{\ i}&=\tfrac{10}{3}\beta_j,\\
 \sum_i\partial_iT_{ij}^{\ \ell}
   &=(\operatorname{div}\beta)\delta_{j\ell}
       +\partial_\ell\beta_j-\tfrac23\partial_j\beta_\ell,\\
 \sum_{i,k}T_{ij}^{\ k}T_{ki}^{\ \ell}
   &=\tfrac13|\beta|^2\delta_{j\ell}+\tfrac19\beta_j\beta_\ell.
 \end{aligned}
\end{equation}
The last identity follows by inserting \eqref{eq:w4-T}: the trace
$\sum_iT_{ii}^{\ \ell}$ vanishes and the remaining two sums combine
into $\frac13\sum_i\beta_iT_{ij}^{\ \ell}$.
Consequently,
\begin{equation}\label{eq:r2-hessian-first-order-field}
 U=(r_1,r_2,r_3,w)^T\in H^1(B_R;\mathbb R^4),
\end{equation}
and it satisfies
\begin{equation}\label{eq:w4-closed-system}
 \begin{gathered}
 \partial_jU=\mathcal C_jU\quad\text{a.e.},\qquad
 \|\mathcal C_j\|_\infty\le C(1+B_\beta+B_\beta^2),\\
 \mathcal C_j=
 \begin{pmatrix}
 (T_{ij}^{\ k})_{i,k=1}^3 &(\delta_{ij}/3)_{i=1}^3\\
 (B_{jk})_{k=1}^3 &5\beta_j/3
 \end{pmatrix}.
 \end{gathered}
\end{equation}
Equivalently, the only lower-row coefficient not already explicit is
\begin{equation}\label{eq:w4-B-expanded}
 B_{j\ell}=\frac32\left[
 (\operatorname{div}\beta)\delta_{j\ell}
 +\partial_\ell\beta_j-\frac23\partial_j\beta_\ell
 +\frac13|\beta|^2\delta_{j\ell}+\frac19\beta_j\beta_\ell\right].
\end{equation}
Only first weak derivatives of $\beta$ occur; they need not be continuous.

On the three-dimensional ball, $H^1\hookrightarrow L^6$.
Equation \eqref{eq:w4-closed-system} first implies
$U\in W^{1,6}$ and then, by $W^{1,6}\hookrightarrow C^0\cap L^\infty$,
$U\in W^{1,\infty}$. The Euclidean vector norm and the bounded coefficient matrices give
$|\nabla U|\le C_*|U|$ a.e., where
\begin{equation}\label{eq:r2-kernel-log-transform}
 C_*=(\textstyle\sum_j\|\mathcal C_j\|_\infty^2)^{1/2},\qquad
 q_a=\log(a+|U|),\quad a>0.
\end{equation}
The constant is fixed by the drift class. Apply the Sobolev chain rule to $q_a$. Its Lipschitz representative satisfies
\begin{equation}\label{eq:r2-calculation-020}
|\nabla q_a|\le C(1+B_\beta+B_\beta^2)\quad\text{a.e.},
\end{equation}
with the same constant for all $a$. Convexity of $B_R$ therefore gives
\begin{equation}\label{eq:w4-log-propagation}
 a+|U(x)|\le
 \exp\!\big(C(1+B_\beta+B_\beta^2)|x-x_0|\big)(a+|U(x_0)|).
\end{equation}
Since $v=0$ on the open shell, its weak derivatives $r,w$ vanish
there a.e., and the continuous representative of $U$ vanishes at every
point of $E$. Choose $x_0\in E$ in \eqref{eq:w4-log-propagation} and
let $a\downarrow0$. Then $U=0$, and $v=0$ follows from
$\nabla v=0$ and the shell normalization. This argument does not
restrict an $L^\infty$ coefficient matrix to an arbitrary path.

\emph{Uniformity over the Lipschitz closure.}
If the $H^2$ estimate in \eqref{eq:w4-hessian-estimate} failed uniformly,
there would exist $\beta_n$ and $v_n\in\mathcal V_{R_0}$ such that
\begin{equation}\label{eq:r2-calculation-021}
\|\beta_n\|_{W^{1,\infty}}\le B_\beta,\qquad
 \|v_n\|_{H^2}=1,\qquad
 \|\mathcal P_{\beta_n}v_n\|_2\longrightarrow0.
\end{equation}
The Lipschitz representatives on $\overline B_R$ are equibounded and
equicontinuous. Arzel\`a--Ascoli, weak-star compactness, and Rellich
compactness give, after passage to a subsequence,
\begin{equation}\label{eq:r2-calculation-022}
\begin{gathered}
 \beta_n\to\beta\text{ uniformly},\qquad
 \beta\in W^{1,\infty},\quad\|\beta\|_{W^{1,\infty}}\le B_\beta,\\
 v_n\rightharpoonup v\text{ in }H^2,\qquad v_n\to v\text{ in }H^1.
 \end{gathered}
\end{equation}
In particular $\beta_n\otimes\nabla v_n\to\beta\otimes\nabla v$
in $L^2$. More explicitly, the operator difference obeys
\begin{equation}\label{eq:w4-closure-residual}
 \| (\mathcal P_{\beta_n}-\mathcal P_\beta)v_n\|_2
 \le 2\|\beta_n-\beta\|_\infty\|\nabla v_n\|_2
 \longrightarrow0.
\end{equation}
Thus $\mathcal P_\beta v_n\to0$ strongly in $L^2$, while its
principal part converges weakly to that of $\mathcal P_\beta v$.
Passing to the limit against smooth compactly supported matrix fields, we
obtain $\mathcal P_\beta v=0$ in distributions. Strong $L^2$ convergence also gives
$\|v\|_{L^2(E)}=0$. The weak-kernel argument therefore applies to
this same Lipschitz limit and gives $v=0$. Notice that weak-star
convergence of $\nabla\beta_n$ suffices to retain the drift bound;
no convergence of the derived matrices $B_{j\ell}$ is needed to
pass to the second-order equation.
The uniform estimate \eqref{eq:w4-garding}, applied to $v_n$, now
contradicts $\|v_n\|_{H^2}=1$. Finally,
$H^2(B_R)\hookrightarrow L^\infty(B_R)$ proves the stated estimate.
\end{proof}

\begin{remark}
The estimate holds for every finite bound $B_\beta$; no smallness
condition is imposed on the drift.
The finite-type system \eqref{eq:w4-closed-system} is used only for a
\emph{homogeneous} weak kernel. For an inhomogeneous equation
$\mathcal P_\beta v=F\in L^2$, the proof of
\eqref{eq:w4-hessian-estimate} never differentiates $F$.
It does not assert $L^\infty$ estimates for $\nabla v$ or $\Delta v$.
\end{remark}

\subsection{Quantitative recovery from the block equations}\label{sec:r2-block-recovery}
\label{sec:w4-blocks}

The transport residual of the holomorphic endomorphism $H_0$ controls the
Lam\'e parameters through the connection blocks. The scalar and matrix
blocks are estimated in order before applying the trace-free Hessian
estimate.

Use the tensor convention \eqref{eq:r2-tensor-conventions} and the
mixed norms \eqref{eq:r2-mixed-chart-norm}, on the same compact
holomorphic charts as Proposition~\ref{prop:w3-gluing}. All spatial
derivatives hold the spinor representative fixed.

\subsubsection{Material priors and denominator bounds}\label{sec:r2-rigidity-priors}

Fix the ball and shell in \eqref{eq:w4-shell-space}. Let both real
material pairs satisfy, for $j=1,2$,
\begin{equation}\label{eq:w4-material-priors}
 \|\lambda_j\|_{C^4(\overline B_R)}+\|\mu_j\|_{C^4(\overline B_R)}
 \le M_4,\qquad \mu_j\ge c_0>0,\qquad 3\lambda_j+2\mu_j\ge c_0,
\end{equation}
and suppose the pairs agree on $E$ from \eqref{eq:w4-shell-space}.
For each material use $\kappa_j,c_j,d_j,g_{\mu,j}$ from
\eqref{eq:w1-material-variables}, $F_{\mu_j}$ from
\eqref{eq:w1-reduced-coefficients}, and $\rho$ from
\eqref{eq:r2-material-log-ratio}.
Thus $F_{\mu_j}^T=F_{\mu_j}$, $\rho$ is real, and $\rho=0$ on $E$.
Continuity supplies the same material equalities on $\partial B_R$;
no identity for the transport throughout $E$ is assumed.
Define fixed positive numbers
\begin{equation}\label{eq:r2-rigidity-denominator-constants}
\kappa_-:=\tfrac14+\frac{c_0}{12M_4},\qquad
 \nu_-:=\frac{c_0}{3M_4}.
\end{equation}
The following nonoptimal bounds make every material division explicit:
\begin{equation}\label{eq:w4-denominators}
 \begin{gathered}
 \lambda_j+2\mu_j\ge 5c_0/3,\qquad
 \lambda_j+\mu_j\ge 2c_0/3,\qquad
 \lambda_j+2\mu_j\le 3M_4,\\
 \kappa_j-\tfrac14
    =\frac{3\lambda_j+2\mu_j}{4(\lambda_j+2\mu_j)}
       \ge\frac{c_0}{12M_4},\qquad
 1-\kappa_j=\frac{\mu_j}{\lambda_j+2\mu_j}\ge\nu_-,\\
 d_j\ge\kappa_-\sqrt{c_0},\qquad
 M_4^{-1/2}\le c_j\le c_0^{-1/2},\qquad
 2c_1(\kappa_1^{-1}-1)\ge 2\nu_-M_4^{-1/2}.
 \end{gathered}
\end{equation}
In particular $\kappa_-\le\kappa_j\le1-\nu_-$.
The chain and product rules give fixed finite $C^4$ bounds for the
scalar material functions and their reciprocals whenever used;
$F_{\mu_j}$ is uniformly $C^2$.

\subsubsection{Connection blocks and holomorphic endomorphisms}\label{sec:r2-rigidity-blocks}

Use $\theta,Q,n_\theta,\mathfrak r_a$ from \eqref{eq:w1-spinor}
and $D_\theta$ from \eqref{eq:r2-phase-parameters}. The rank calculation
following \eqref{eq:w1-spinor} gives $\ker Q^T=\mathbb C\theta$.
For the connection \eqref{eq:w1-quotient-connection}, introduce the block notation
\begin{equation}\label{eq:w4-connection}
 \Gamma_j=\begin{pmatrix}\mathsf L_j&\mathsf N_j\\\mathsf P_j&0\end{pmatrix},
 \qquad
 \mathsf L_j=n_\theta\mathfrak r_{g_{\mu,j}},\quad
 \mathsf N_j=2d_jn_\theta,\quad \mathsf P_j=-\theta^TF_{\mu_j}Q.
\end{equation}
These three blocks have sizes $2\times2$, $2\times1$, and $1\times2$,
respectively. The reduced $4\times4$ operator $\mathcal M_j$ and the leaf
connection $P_j$ are not renamed as any of these blocks.

Let $H_0(x,\cdot)$ be a holomorphic endomorphism of the bundle
\eqref{eq:w3-fixed-bundle}. By Lemma~\ref{lem:w3-holomorphic-form},
its block form is \eqref{eq:w3-nine-dimensional}. Abbreviate its lower row by
\begin{equation}\label{eq:w4-Hform}
 H_0=\begin{pmatrix}A_H&0\\ \mathsf C_H&d_H\end{pmatrix},\qquad
 \mathsf C_H=s_0\mathsf C_{H,0}+s_1\mathsf C_{H,1}.
\end{equation}
Here $A_H,d_H,\mathsf C_{H,0},\mathsf C_{H,1}$ depend only on $x$ and may be complex.
Their $C_x^3$ bounds follow from fixed chart evaluations of $H_0$.
Theorem~\ref{thm:w4-rigidity} does not require $H_0$ to be invertible.

The following algebraic identity resolves the null contractions; scalar
matrices form the undetected subspace.

\begin{lemma}\label{lem:w4-six-directions}
Set
\begin{equation}\label{eq:r3-six-direction-constant}
a=(1-\mathrm i)/2.
\end{equation}
Choose the following six normalized spinors:
\begin{equation}\label{eq:w4-six-spinors}
 \begin{gathered}
 (1,0),\quad (0,\mathrm i),\quad
 (2^{-1/2},\mathrm i2^{-1/2}),\\
 (2^{-1/2},-\mathrm i2^{-1/2}),\quad (a,-a),\quad (a,a).
 \end{gathered}
\end{equation}
Their vectors $\theta_\ell$ are, in order,
$e_1\pm\mathrm i e_2$, $e_1\pm\mathrm i e_3$, and
$e_2\pm\mathrm i e_3$. Let $Q_\ell$ be their matrices in
\eqref{eq:w1-spinor}. For every $K\in \mathbb C^{3\times3}$,
\begin{equation}\label{eq:w4-six-identity}
 \sum_{\ell=1}^6|Q_\ell^TK\theta_\ell|^2
 =\tfrac32|\operatorname{tf}(\operatorname{sym}K)|^2
       +\tfrac12|\operatorname{skew}K|^2.
\end{equation}
Here $\operatorname{tf}$ is defined in \eqref{eq:w4-Pbeta}, and
\begin{equation}\label{eq:r2-symmetric-skew-parts}
 \operatorname{sym}K=(K+K^T)/2,\qquad
 \operatorname{skew}K=(K-K^T)/2.
\end{equation}
In particular,
\begin{equation}\label{eq:w4-six-bound}
 |\operatorname{tf}K|\le
 \sqrt2\left(\sum_{\ell=1}^6|Q_\ell^TK\theta_\ell|^2\right)^{1/2}.
\end{equation}
The kernel of this linear map is exactly $\mathbb C I_3$.
\end{lemma}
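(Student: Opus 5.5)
The identity \eqref{eq:w4-six-identity} is a finite algebraic computation, and I would organize it by linearity and invariance rather than by brute expansion. First I would compute, for each of the six spinors in \eqref{eq:w4-six-spinors}, the vector $\theta_\ell$ and the $3\times2$ matrix $Q_\ell$ from \eqref{eq:w1-spinor}. For example, $(s_0,s_1)=(1,0)$ gives $\theta=(1,\mathrm i,0)^T$ and $Q=\tfrac12\begin{pmatrix}0&-1\\0&-\mathrm i\\1&0\end{pmatrix}$. The remaining five follow the same pattern, with the factor $a=(1-\mathrm i)/2$ arranged so that $a^2=-\mathrm i/2$ and the resulting $\theta_\ell$ are exactly $e_2\pm\mathrm i e_3$. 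In each case I would check $\theta_\ell^TQ_\ell=0$ and record that the columns of $Q_\ell$ span, up to a factor $1/2$, the complex $2$-plane $\theta_\ell^\perp$ with respect to the bilinear form. I would also record $Q_\ell^*Q_\ell$, which I expect to be a multiple of a $2\times 2$ Hermitian matrix of simple form.

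Next I would split $K=\tfrac13(\operatorname{tr}K)I_3+\operatorname{tf}(\operatorname{sym}K)+\operatorname{skew}K$.

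\emph{Identity component.} Here $Q_\ell^TI_3\theta_\ell=Q_\ell^T\theta_\ell=0$, so the scalar part contributes nothing. This already shows that $\mathbb C I_3$ lies in the kernel.

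\emph{Symmetric trace-free component.} The quadratic form $K\mapsto\sum_\ell|Q_\ell^TK\theta_\ell|^2$ is a Hermitian form in $K$. Both sides of \eqref{eq:w4-six-identity} vanish on $\mathbb C I_3$, so it suffices to verify the identity on a basis of the eight-dimensional complement $\operatorname{tf}(\operatorname{sym})\oplus\operatorname{skew}$. The cross terms between these two pieces must also be shown to vanish. For this I would exploit the symmetry of the six directions. The set $\{e_a\pm\mathrm i e_b\}$ is invariant under the coordinate permutations and sign changes, up to the projective and spinor rescalings governed by \eqref{eq:w1-representative-law}. Under such a rescaling $Q_\ell$ is multiplied by a unimodular $\sqrt b$, so $|Q_\ell^TK\theta_\ell|$ is unchanged whenever $|b|=1$. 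Hence the Hermitian form is invariant under the signed permutation group. That group acts irreducibly enough to force the form to be $\alpha|\operatorname{tf}\operatorname{sym}K|^2+\beta|\operatorname{skew}K|^2$, modulo possible off-diagonal traceless-diagonal versus off-diagonal mixing.

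To fix the constants, I would evaluate two test matrices directly: $K=e_1e_2^T+e_2e_1^T$, giving $\alpha=3/2$, and $K=e_1e_2^T-e_2e_1^T$, giving $\beta=1/2$. I would then check one diagonal trace-free matrix, $\operatorname{diag}(1,-1,0)$, to confirm that the diagonal block carries the same constant $\alpha$. This check is needed because signed permutations alone may not relate the diagonal and off-diagonal symmetric pieces.

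\emph{Main obstacle.} I expect the hardest step to be justifying that there are no cross terms between the diagonal and off-diagonal pieces. It is also delicate to track the unimodular spinor rescalings correctly. If the invariance argument becomes messy, I would fall back to computing the Gram matrix of the form on an explicit basis of eight matrices, a finite check.

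Finally, \eqref{eq:w4-six-bound} follows from the identity. The relation $\operatorname{tf}K=\operatorname{tf}(\operatorname{sym}K)+\operatorname{skew}K$ is an orthogonal sum, so $|\operatorname{tf}K|^2\le 2\bigl(\tfrac32|\operatorname{tf}\operatorname{sym}K|^2+\tfrac12|\operatorname{skew}K|^2\bigr)$. The kernel statement follows because the right side of \eqref{eq:w4-six-identity} vanishes exactly when $\operatorname{skew}K=0$ and $\operatorname{tf}\operatorname{sym}K=0$, that is, when $K\in\mathbb C I_3$.
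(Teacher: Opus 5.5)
Your route (invariance under signed permutations, then Schur's lemma, then three test matrices) differs from the paper's and can be completed. Your test values are right: $\operatorname{diag}(1,-1,0)$, $e_1e_2^T+e_2e_1^T$ and $e_1e_2^T-e_2e_1^T$ give left-hand sides $3,3,1$ against squared norms $2,2,2$.

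The gap is the invariance step, which as written is wrong. A signed permutation $O$ does map $\theta_\ell$ to $b_\ell\theta_{\ell'}$ with $|b_\ell|=1$. It does not, however, map $Q_\ell$ to $\sqrt{b_\ell}\,Q_{\ell'}$. The law \eqref{eq:w1-representative-law} only covers rescaling the representative of a fixed direction; it says nothing about how $Q$ moves under a rotation of $\mathbb R^3$. For the swap $e_1\leftrightarrow e_2$ one gets $O\theta_1=\mathrm i\theta_2$, but
\[
OQ_1=Q_2\begin{pmatrix}0&1\\ \mathrm i&0\end{pmatrix}.
\]
What you need is $OQ_\ell=Q_{\ell'}U_\ell$ with $U_\ell$ unitary. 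Then
$|Q_\ell^TO^TKO\theta_\ell|=|U_\ell^TQ_{\ell'}^TK\theta_{\ell'}|=|Q_{\ell'}^TK\theta_{\ell'}|$.

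This is true, but you must prove it. Write $2Q(s)t=\Theta(s,\epsilon t)$, where $\Theta$ is the symmetric bilinear map with $\Theta(s,s)=\theta(s)$ and $\epsilon t=(-t_1,t_0)^T$. A lift $V\in SU(2)$ of $O\in SO(3)$ then gives $OQ(s)=Q(Vs)\,\epsilon^{-1}V\epsilon$, and $Vs_\ell=\pm\sqrt{b_\ell}\,s_{\ell'}$. Orientation-reversing elements are harmless, since $-I_3$ acts trivially on $K\mapsto O^TKO$. Alternatively, check the claim directly on a transposition and a $3$-cycle.

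Once invariance is in place, your ``main obstacle'' disappears. Under the signed permutation group, the complement of $\mathbb C I_3$ splits into three pieces:
\begin{itemize}
\item the traceless diagonal matrices (dimension $2$);
\item the symmetric off-diagonal matrices (dimension $3$);
\item the skew matrices (dimension $3$).
\end{itemize}
The last two are distinguished by the character of a transposition, $+1$ versus $-1$. All three are irreducible and pairwise inequivalent, so Schur's lemma makes the form block-scalar with no cross terms. More elementarily, invariance under $K\mapsto DKD$ with $D=\operatorname{diag}(\pm1,\pm1,\pm1)$ splits the form into a diagonal part and parts depending only on the pairs $(K_{ij},K_{ji})$. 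The transposition of $i,j$ then removes the cross term between $K_{ij}+K_{ji}$ and $K_{ij}-K_{ji}$.

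The paper needs none of this. It treats each pair of directions $e_i\pm\mathrm ie_j$ together, with remaining index $k$, and obtains by direct substitution
\[
\sum_{\pm}|Q_{ij,\pm}^TK(e_i\pm\mathrm ie_j)|^2=\tfrac12\bigl(|K_{ii}-K_{jj}|^2+|K_{ij}+K_{ji}|^2+|K_{ki}|^2+|K_{kj}|^2\bigr).
\]
It then sums over the three pairs and uses
$\sum_{i<j}|K_{ii}-K_{jj}|^2=3\sum_i|K_{ii}-\tfrac13\operatorname{tr}K|^2$ and $|K_{ij}|^2+|K_{ji}|^2=\tfrac12(|K_{ij}+K_{ji}|^2+|K_{ij}-K_{ji}|^2)$.

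Because the result is already a sum of squared moduli of entry combinations, cross terms never arise and no test matrices are needed. Your approach explains conceptually why the answer must be block-scalar in $\operatorname{tf}(\operatorname{sym}K)$ and $\operatorname{skew}K$. The price is the equivariance of $Q$, which is more work than the paper's three paired computations.

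Your deductions of \eqref{eq:w4-six-bound} (via orthogonality of $\operatorname{tf}(\operatorname{sym}K)$ and $\operatorname{skew}K$) and of the kernel from the identity are correct.
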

\begin{proof}
For $i<j$ and the remaining index $k$, substitution of the two
spinors with $\theta=e_i\pm\mathrm i e_j$ gives
\begin{equation}\label{eq:r2-calculation-024}
\sum_{\pm}|Q_{ij,\pm}^TK(e_i\pm\mathrm i e_j)|^2
 =\tfrac12\big(|K_{ii}-K_{jj}|^2+|K_{ij}+K_{ji}|^2
                          +|K_{ki}|^2+|K_{kj}|^2\big).
\end{equation}
Sum over the three pairs. The diagonal contribution is
$\sum_{i<j}|K_{ii}-K_{jj}|^2
 =3\sum_i|K_{ii}-\operatorname{tr}K/3|^2$.
The other terms split into the symmetric and skew-symmetric
parts of the off-diagonal entries, which proves
\eqref{eq:w4-six-identity}. These two parts are orthogonal for the
Hermitian Frobenius norm, although their definition uses ordinary
transpose. \end{proof}

We use the four transport blocks to derive the material equations
and then apply the scalar coercive estimate to recover the coefficients.

\begin{theorem}\label{thm:w4-rigidity}
Assume \eqref{eq:w4-material-priors}, agreement of the two materials on
$E$, and the fixed compact direction charts used in \eqref{eq:r2-mixed-chart-norm}. Let $H_0$ be as in
\eqref{eq:w4-Hform}, with $\|H_0\|_{C_x^3C_\zeta^0}\le B_H$.
Use the transport operator $\mathcal T$ defined in
\eqref{eq:r2-transport-projection-definitions}.
There are fixed $C<\infty$ and $\varepsilon_{\rm rig,0}>0$,
depending only on $R,R_0,M_4,c_0,B_H$ and the chosen charts, such that
\begin{equation}\label{eq:w4-rigidity-input}
 \|\mathcal T(H_0)\|_{C_x^2C_\zeta^0}
 +\|H_0-I\|_{C^0(\partial B_R\times\mathbb{CP}^1)}
 \le\varepsilon_{\rm rig}<\varepsilon_{\rm rig,0}
\end{equation}
implies
\begin{equation}\label{eq:w4-rigidity-output}
 \begin{split}
 &\|\kappa_1-\kappa_2\|_{C^2(\overline B_R)}
  +\|\rho\|_{H^2(B_R)}
  +\|\lambda_1-\lambda_2\|_{L^\infty(B_R)}
  +\|\mu_1-\mu_2\|_{L^\infty(B_R)}\\
 &\hspace{45mm}\le C\varepsilon_{\rm rig}.
 \end{split}
\end{equation}
Here $\kappa_j$ and $\rho$ are defined in
\eqref{eq:w1-material-variables} and \eqref{eq:r2-material-log-ratio};
$\varepsilon_{\rm rig}$ is the error bound in \eqref{eq:w4-rigidity-input}.
\end{theorem}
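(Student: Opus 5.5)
We would expand the residual $\mathcal T(H_0)=2D_\theta H_0+\Gamma_1H_0-H_0\Gamma_2$ in the block form \eqref{eq:w4-Hform} and \eqref{eq:w4-connection}, and read off four block equations. Since $A_H,d_H,\mathsf C_{H,0},\mathsf C_{H,1}$ depend only on $x$, the term $D_\theta H_0$ is linear in $\theta$. The connection blocks are polynomial in the spinor $(s_0,s_1)$ of fixed low degree. Each block of $\mathcal T(H_0)$ is therefore a homogeneous polynomial in $(s_0,s_1)$ with $x$-dependent coefficients. Evaluating on the fixed compact charts and using equivalence of norms on finite-dimensional polynomial spaces, we obtain that every coefficient is bounded in $C_x^2$ by $C\varepsilon_{\rm rig}$. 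The boundary bound $\|H_0-I\|\le\varepsilon_{\rm rig}$ gives, in the same way, $|A_H-I|+|d_H-1|+|\mathsf C_{H,0}|+|\mathsf C_{H,1}|\le C\varepsilon_{\rm rig}$ on $\partial B_R$.

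We would then treat the blocks in order.

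\emph{Upper-right block.} Since $(H_0)_{12}=0$, this block reads $\mathsf N_1d_H-A_H\mathsf N_2=2(d_1d_Hn_\theta-d_2A_Hn_\theta)$. For all $s$ this forces $A_H=a\,I_2$ with $d_1d_H=d_2a$, up to $O(\varepsilon_{\rm rig})$ in $C^2$.

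\emph{Upper-left block.} This contains $2D_\theta A_H$ together with $n_\theta\mathfrak r_{g_{\mu,1}}A_H-A_Hn_\theta\mathfrak r_{g_{\mu,2}}$ and the term $\mathsf N_1\mathsf C_H$. The quadratic-in-$s$ parts split, and give $\nabla a$ and $\mathsf C_H$ in terms of $a\,\nabla\rho$.

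\emph{Lower-right block.} This gives a first-order equation $2D_\theta d_H+\mathsf P_1\cdot(\text{stuff})=\dots$, linking $\nabla d_H$ with $\mathsf C_H$ and the $\mathsf N$ blocks.

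\emph{Lower-left block.} This gives $2D_\theta\mathsf C_H+\mathsf C_H\mathsf L_2+\mathsf P_1A_H-d_H\mathsf P_2\approx0$. Once the lower-order unknowns are substituted, its quartic-in-$s$ part is exactly $Q^TK\theta$ for a symmetric matrix $K$. That matrix combines $a\,c_1F_{\mu_1}-d_Hc_2F_{\mu_2}$ with second derivatives coming from $D_\theta\mathsf C_H$.

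Next we would normalize. Writing $a=e^{\phi}$ would be unsafe, since $H_0$ need not be invertible. Instead we would show directly that $a$ and $d_H$ are close to constants fixed by boundary data. Propagating from the boundary, where $a\approx1$, gives $a$ with $\|a-1\|_{C^2}\le C\varepsilon_{\rm rig}$ up to terms involving $\rho$, and $\rho$ itself is small on $E$. Using \eqref{eq:w4-six-bound} from Lemma~\ref{lem:w4-six-directions}, only the trace-free part of $K$ is controlled. The formula \eqref{eq:w1-reduced-coefficients} shows that $\operatorname{tf}(c_1F_{\mu_1}-c_2F_{\mu_2})$, after dividing by $c$ and using $\nabla^2\ell_{\mu_1}-\nabla^2\ell_{\mu_2}=\nabla^2\rho$, equals $\operatorname{tf}$ of $\nabla^2\rho$ minus a symmetric drift term $\beta\otimes\nabla\rho+\nabla\rho\otimes\beta$. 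Here $\beta$ is built from $g_{\mu,1}$ and $g_{\mu,2}$ and is bounded in $W^{1,\infty}$ by $M_4$, plus contributions of $a-1$ and $\mathsf C_H$. This yields $\|\mathcal P_\beta\rho\|_{L^2}\le C\varepsilon_{\rm rig}+C\varepsilon_{\rm rig}\|\rho\|_{H^2}$. Since $\rho\in\mathcal V_{R_0}$ by agreement on $E$, Theorem~\ref{thm:w4-hessian} gives $\|\rho\|_{H^2}+\|\rho\|_\infty\le C\varepsilon_{\rm rig}$ once $\varepsilon_{\rm rig,0}$ is small enough to absorb the last term.

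With $\rho$ controlled, the relation $d_1d_H\approx d_2a$ combined with the lower-right equation determines $\kappa_1-\kappa_2$. We would use the lower bound $2c_1(\kappa_1^{-1}-1)\ge2\nu_-M_4^{-1/2}$ from \eqref{eq:w4-denominators}, which appears as the coefficient of the $\kappa$ discrepancy. This gives $\|\kappa_1-\kappa_2\|_{C^2}\le C\varepsilon_{\rm rig}$. Finally, $\mu_1-\mu_2=\mu_2(e^\rho-1)$ and $\lambda=\mu(2\kappa-1)/(1-\kappa)$, together with the denominator bounds, give the $L^\infty$ estimates for $\lambda_1-\lambda_2$ and $\mu_1-\mu_2$.

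The main obstacle is the bookkeeping in the lower-left block: isolating the trace-free Hessian of $\rho$ with a drift $\beta$ that stays bounded independently of the error. The unknowns $a-1$ and $\mathsf C_H$ must be shown to enter only at order $\varepsilon_{\rm rig}$ plus multiples of $\nabla\rho$, which can then be absorbed into the drift. The lack of invertibility of $H_0$ means we cannot divide by $a$. The first thing to try is to prove $|a-1|\le C\varepsilon_{\rm rig}$ via a Gronwall argument along rays from $\partial B_R$, using the upper-left equation $\nabla a=a\,(\text{linear in }\nabla\rho)+O(\varepsilon_{\rm rig})$, and to close this jointly with the Hessian estimate through a continuity/absorption argument in $\varepsilon_{\rm rig}$.
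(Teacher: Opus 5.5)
Your outline has the right ingredients: the block equations, the six-direction lemma, $\rho\in\mathcal V_{R_0}$, Theorem~\ref{thm:w4-hessian}, and at the end $\mu_1-\mu_2=\mu_2(e^\rho-1)$ together with the $\lambda$ formula. The gap is in the normalization step, which you yourself flag as the crux, and the bootstrap you propose there would not close. You assert that the upper-left block gives $\nabla a=a\,(\text{linear in }\nabla\rho)+O(\varepsilon_{\rm rig})$. You then plan to show that $a$ and $d_H$ are close to constants, and to combine a Gronwall argument with the Hessian estimate by continuity or absorption. If that coupling were present, integrating from $\partial B_R$ would only give $a-1=O(\varepsilon_{\rm rig})+O(\|\rho\|)$ with an order-one constant. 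Moreover, $d_H$ cannot be shown close to a constant at that stage: what the lower-right block controls is $d_He^{\rho/2}$, so the available information is $d_H\approx e^{-\rho/2}$. Setting $d_H\approx1$ in the lower-left block leaves in $\operatorname{tf}(F_{\mu_1}-d_HF_{\mu_2})$ a zeroth-order term of the form $(e^{-\rho/2}-1)\operatorname{tf}F_{\mu_2}$. Its coefficient is not small, and it is not covered by $\mathcal P_\beta$. So your bound $\|\mathcal P_\beta\rho\|_{L^2}\le C\varepsilon_{\rm rig}+C\varepsilon_{\rm rig}\|\rho\|_{H^2}$ does not follow: the errors are $O(\|\rho\|)$, not $O(\varepsilon_{\rm rig}\|\rho\|)$, and cannot be absorbed. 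A continuity argument is not available either. The materials and $H_0$ are fixed and $\varepsilon_{\rm rig}$ is only an upper bound, so there is no family along which to continue.

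The missing observation is that the equations decouple exactly, so no bootstrap is needed.
\begin{itemize}
\item \emph{Upper-left block.} With $a_H=d_1d_H/d_2$ and $A_H=a_HI_2+\mathcal E_A$ from your upper-right step, the block reads $2(D_\theta a_H)I_2+n_\theta[a_H\mathfrak r_{\nabla\rho}+2d_1\mathsf C_H]=\mathcal Q_{\rm ul}$, where $\mathcal Q_{\rm ul}$ is bounded in $C^1_x$ by $C\varepsilon_{\rm rig}$. All $\rho$-dependence sits in a rank-one term with left factor $n_\theta$. The row $\ell=(s_0,s_1)$ satisfies $\ell n_\theta=0$, so for $v$ with $\ell v=1$ one gets exactly $2D_\theta a_H=\ell\mathcal Q_{\rm ul}v$. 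The null directions span $\mathbb C^3$, and on $\partial B_R$ one has $d_1=d_2$ and $d_H=1+O(\varepsilon_{\rm rig})$. Hence $\|a_H-1\|_{C^2}\le C\varepsilon_{\rm rig}$ unconditionally. The rank-one part gives $\mathsf C_H=a_Hb_\rho^TQ+O_1(\varepsilon_{\rm rig})$ with $b_\rho=\nabla\rho/d_1$.
\item \emph{Lower-right block.} Insert this expression for $\mathsf C_H$ and use $Qn_\theta=-\theta/2$. The block becomes $2D_\theta d_H+d_HD_\theta\rho=O_1(\varepsilon_{\rm rig})$. The integrating factor $b_H=d_He^{\rho/2}$, which needs only the a priori bounds on $\rho$, gives $\|b_H-1\|_{C^2}\le C\varepsilon_{\rm rig}$, that is, $d_H=e^{-\rho/2}+O_2(\varepsilon_{\rm rig})$.
\item \emph{Recovery of $\kappa$.} The exact identity $a_H=(\kappa_1/\kappa_2)b_H$ then yields the $C^2$ bound on $\kappa_1-\kappa_2$ before anything is known about $\rho$. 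The threshold $\varepsilon_{\rm rig,0}$ only keeps $|b_H|\ge1/2$; it is not used for absorption.
\item \emph{Lower-left block.} Only now is this block used. With the above normalizations and $e^{-\rho/2}c_2=c_1$, the six-direction lemma gives exactly $\mathcal P_\beta\rho=E_\rho$ with $\|E_\rho\|_{C^0}\le C\varepsilon_{\rm rig}$. Here $\beta$ is built from $\kappa_1$, $\nabla\log\kappa_1$ and $g_{\mu,1}+g_{\mu,2}$ alone, so Theorem~\ref{thm:w4-hessian} applies directly.
\end{itemize}

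Finally, you misplace the role of $2c_1(\kappa_1^{-1}-1)$. It is the coefficient of $\nabla^2\rho$ in $K_\rho=2\nabla b_\rho-F_{\mu_1}+e^{-\rho/2}F_{\mu_2}-g_{\mu,2}b_\rho^T$. It arises because the Hessian from $2D_\theta\mathsf C_H$, with weight $2c_1/\kappa_1$, does not cancel the one from the $F$-difference, with weight $2c_1$, since $\kappa_1<1$. Its lower bound makes the Hessian equation nondegenerate; it is not a coefficient of the $\kappa$ discrepancy.
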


\begin{proof}
Write the four blocks of $\mathcal T(H_0)$ as
$R_{\rm ul},R_{\rm ur},R_{\rm ll},R_{\rm lr}$. Ordinary matrix
multiplication gives
\begin{equation}\label{eq:w4-four-blocks}
 \begin{aligned}
 2D_\theta A_H+\mathsf L_1A_H+\mathsf N_1\mathsf C_H-A_H\mathsf L_2&=R_{\rm ul},\\
 \mathsf N_1d_H-A_H\mathsf N_2&=R_{\rm ur},\\
 2D_\theta\mathsf C_H+\mathsf P_1A_H-\mathsf C_H\mathsf L_2-d_H\mathsf P_2&=R_{\rm ll},\\
 2D_\theta d_H-\mathsf C_H\mathsf N_2&=R_{\rm lr}.
 \end{aligned}
\end{equation}
Write $O_q(\varepsilon_{\rm rig})$ only for a specified error
bounded in $C_x^qC_\zeta^0$ by a fixed multiple of
$\varepsilon_{\rm rig}$. Under spinor rescaling by $b\ne0$,
$Q,n_\theta,\mathsf C_H$ have degree one, $\theta$ degree two, and $A_H,d_H$ degree
zero. The four equations have degrees two, one, three, and two.
Thus fixed representative changes at the finitely many directions
below alter error constants only, not their spatial derivatives.

\emph{1. Upper right: no spatial derivative is lost.}
Set
\begin{equation}\label{eq:w4-a-EA}
 a_H=\frac{d_1d_H}{d_2},\qquad \mathcal E_A=A_H-a_HI_2.
\end{equation}
The upper-right equation is
$-2d_2\mathcal E_A n_\theta=R_{\rm ur}$.
The spinors $(1,0)$ and $(0,1)$ give $n_\theta=(0,1)^T$ and
$n_\theta=(-1,0)^T$, so they determine both columns of $\mathcal E_A$.
Let $e_1,e_2$ be the coordinate columns of $\mathbb C^2$. Evaluation at the
spinors $(1,0)$ and $(0,1)$ gives
\begin{equation}\label{eq:w4-column-extraction}
 \mathcal E_A e_1=\frac{R_{\rm ur}(0,1)}{2d_2},\qquad
 \mathcal E_A e_2=-\frac{R_{\rm ur}(1,0)}{2d_2}.
\end{equation}
The material bounds for $d_2^{-1}$ and its first two derivatives give
\begin{equation}\label{eq:w4-EA-estimate}
 \|\mathcal E_A\|_{C_x^2}\le C\varepsilon_{\rm rig}.
\end{equation}

\emph{2. Upper left: one derivative is spent, then the scalar is normalized.}
Put
\begin{equation}\label{eq:r2-upper-left-error-source}
\mathcal Q_{\rm ul}=R_{\rm ul}-2D_\theta\mathcal E_A
                 -\mathsf L_1\mathcal E_A+\mathcal E_A\mathsf L_2.
\end{equation}
Then $\mathcal Q_{\rm ul}=O_1(\varepsilon_{\rm rig})$, and
\begin{equation}\label{eq:w4-a-equation}
 2(D_\theta a_H)I_2
     +n_\theta\big[a_H\mathfrak r_{\nabla\rho}+2d_1\mathsf C_H\big]
 =\mathcal Q_{\rm ul}.
\end{equation}
For a fixed nonzero spinor define
\begin{equation}\label{eq:r2-spinor-row-and-column}
 \ell=(s_0,s_1),\qquad
 v=(\overline{s_0},\overline{s_1})^T/(|s_0|^2+|s_1|^2).
\end{equation}
Then $\ell n_\theta=0$ and $\ell v=1$, whence
$2D_\theta a_H=\ell\mathcal Q_{\rm ul}v$.
The six directions in Lemma~\ref{lem:w4-six-directions} span
$\mathbb C^3$: sums and differences of each plus/minus pair recover
the coordinate derivatives. Apply this observation also after one
spatial derivative. It gives
$\|\nabla a_H\|_{C^1}\le C\varepsilon_{\rm rig}$.
At $\partial B_R$, $d_1=d_2$ and $d_H=1+O_0(\varepsilon_{\rm rig})$,
so $a_H=1+O_0(\varepsilon_{\rm rig})$ there.
Integration along segments in the closed ball, of length at most $2R$,
therefore gives
\begin{equation}\label{eq:w4-a-bound}
 \|a_H-1\|_{C^2}\le C\varepsilon_{\rm rig}.
\end{equation}
Only the zero-order boundary value, not a boundary derivative, was used.
Define the real vector field
\begin{equation}\label{eq:w4-brho}
 b_\rho=\frac{\nabla\rho}{d_1}.
\end{equation}
Since $\mathfrak r_{\nabla\rho}=-2(\nabla\rho)^TQ$, extraction of the
remaining outer product in \eqref{eq:w4-a-equation} gives
\begin{equation}\label{eq:w4-EC}
 \mathsf C_H=a_H b_\rho^TQ+\mathcal E_C,\qquad
 \mathcal E_C=O_1(\varepsilon_{\rm rig}).
\end{equation}
For this extraction use $n_\theta^*/|n_\theta|^2$ and then divide by $2d_1$.
The star appears only in this bounded auxiliary left inverse; the
transport identities still use ordinary transpose. All spinor
normalizing denominators are bounded below on the fixed charts.

\emph{3. Lower right: control the integrating factor and recover $\kappa$.}
Use $Qn_\theta=-\theta/2$ and the exact scalar identity
$a_Hd_2/d_1=d_H$ in the lower-right equation to obtain
\begin{equation}\label{eq:w4-dH-equation}
 2D_\theta d_H+d_HD_\theta\rho
       =R_{\rm lr}+2d_2\mathcal E_C n_\theta
       =O_1(\varepsilon_{\rm rig}).
\end{equation}
Define
\begin{equation}\label{eq:r2-normalized-scalar-block}
 b_H=d_He^{\rho/2},
\end{equation}
where $\rho$ is given by \eqref{eq:r2-material-log-ratio}.
Multiplication by $e^{\rho/2}/2$ gives
$D_\theta b_H=O_1(\varepsilon_{\rm rig})$.
The material prior already bounds $\rho$ and its needed derivatives;
smallness of $\rho$ is not being used. Since the chosen null directions span $\mathbb C^3$, the bounds for $D_\theta
b_H$ control $\nabla b_H$ in $C^1$. Together with $b_H=1+O_0(\varepsilon_{\rm
rig})$ on the sphere, these bounds give a fixed constant $C_b\ge1$ such that
\begin{equation}\label{eq:w4-bH-bound}
 \|b_H-1\|_{C^2}\le C_b\varepsilon_{\rm rig},\qquad
 \|d_H-e^{-\rho/2}\|_{C^2}\le C\varepsilon_{\rm rig}.
\end{equation}
Choose, once and for all,
\begin{equation}\label{eq:w4-rigidity-threshold}
 \varepsilon_{\rm rig,0}:=\min\{1,(2C_b)^{-1}\}.
\end{equation}
Then $|b_H|\ge1/2$, including when $H_0$ has complex entries.
Only now is the new denominator $b_H$ introduced. Its first and
second reciprocal derivatives are bounded, as follows from
$\partial b_H^{-1}=-b_H^{-2}\partial b_H$ and
$\partial_i\partial_jb_H^{-1}
 =2b_H^{-3}\partial_i b_H\partial_j b_H
       -b_H^{-2}\partial_i\partial_j b_H$.
The exact material identities
\begin{equation}\label{eq:w4-kappa-recovery}
 a_H=\frac{\kappa_1}{\kappa_2}b_H,\qquad
 \kappa_1-\kappa_2=\frac{\kappa_2}{b_H}(a_H-b_H)
\end{equation}
now imply
$\|\kappa_1-\kappa_2\|_{C^2}\le C\varepsilon_{\rm rig}$.

\emph{4. Lower left: a second derivative is spent, producing only a value error.}
Equations~\eqref{eq:w4-EA-estimate}, \eqref{eq:w4-EC},
\eqref{eq:w4-a-bound}, and \eqref{eq:w4-bH-bound} yield
\begin{equation}\label{eq:r2-normalized-block-errors}
A_H=I_2+\mathcal E_A^0,\qquad
 \mathsf C_H=b_\rho^TQ+\mathcal E_C^0,\qquad
 d_H=e^{-\rho/2}+\mathcal E_d,
\end{equation}
The fields $b_\rho$ and $\rho$ are defined in
\eqref{eq:w4-brho} and \eqref{eq:r2-material-log-ratio}.
Here $\mathcal E_A^0,\mathcal E_d=O_2(\varepsilon_{\rm rig})$ and
$\mathcal E_C^0=O_1(\varepsilon_{\rm rig})$.
Indeed $\mathcal E_C^0=\mathcal E_C+(a_H-1)b_\rho^TQ$ and the
$C^1$ norm of $b_\rho$ was already bounded by the material prior.
Define the real, direction-independent matrix
\begin{equation}\label{eq:w4-Krho}
 K_\rho=2\nabla b_\rho-F_{\mu_1}
               +e^{-\rho/2}F_{\mu_2}-g_{\mu,2}b_\rho^T.
\end{equation}
The lower-left equation gives exactly
\begin{equation}\label{eq:w4-LL}
 \begin{aligned}
 (Q^TK_\rho\theta)^T&=R_{\rm ll}-\mathcal E_{\rm ll},\\
 \mathcal E_{\rm ll}
 &=2D_\theta\mathcal E_C^0+\mathsf P_1\mathcal E_A^0
           -\mathcal E_C^0\mathsf L_2-\mathcal E_d\mathsf P_2
       =O_0(\varepsilon_{\rm rig}).
 \end{aligned}
\end{equation}
The transpose convention gives
\begin{equation}\label{eq:r2-lower-left-transpose-identities}
D_\theta(b_\rho^TQ)=\theta^T(\nabla b_\rho)^TQ,\qquad
 -(b_\rho^TQ)n_\theta\mathfrak r_{g_{\mu,2}}
       =-(b_\rho\cdot\theta)g_{\mu,2}^TQ.
\end{equation}
After transposing the second identity in
\eqref{eq:r2-lower-left-transpose-identities}, we obtain the order
$g_{\mu,2}b_\rho^T$ in \eqref{eq:w4-Krho}. Symmetry of both $F_{\mu_j}$
handles the other two terms. Lemma~\ref{lem:w4-six-directions}
therefore implies
\begin{equation}\label{eq:w4-tfK-bound}
 \|\operatorname{tf}K_\rho\|_{C^0}
       \le C\varepsilon_{\rm rig}.
\end{equation}

For the material expansion define
\begin{equation}\label{eq:r2-material-gradient-combinations}
 r=\nabla\rho,\qquad k_1=\nabla\log\kappa_1,\qquad
 g_\Sigma=g_{\mu,1}+g_{\mu,2}.
\end{equation}
Here $\rho$, $\kappa_j$, and $g_{\mu,j}$ are defined in
\eqref{eq:r2-material-log-ratio} and \eqref{eq:w1-material-variables}.
Since $b_\rho=(c_1/\kappa_1)r$, the gradient convention gives
\begin{equation}\label{eq:w4-grad-brho}
 2\nabla b_\rho=\frac{2c_1}{\kappa_1}\nabla^2\rho
       -\frac{c_1}{\kappa_1}r\otimes g_{\mu,1}
       -\frac{2c_1}{\kappa_1}r\otimes k_1.
\end{equation}
Next $e^{-\rho/2}c_2=c_1$ and
$g_{\mu,1}-g_{\mu,2}=r$ imply
\begin{equation}\label{eq:w4-F-difference}
 \begin{split}
 -F_{\mu_1}+e^{-\rho/2}F_{\mu_2}
 &=2c_1\big[-\nabla^2\rho
       +g_{\mu,1}\otimes g_{\mu,1}-g_{\mu,2}\otimes g_{\mu,2}
       +\Xi_\rho I_3\big],\\
 \Xi_\rho&=\Delta\rho-g_\Sigma\cdot r.
 \end{split}
\end{equation}
Both identities
\begin{equation}\label{eq:r2-calculation-028}
\begin{aligned}
 g_{\mu,1}\otimes g_{\mu,1}-g_{\mu,2}\otimes g_{\mu,2}
   &=\tfrac12(g_\Sigma\otimes r+r\otimes g_\Sigma),\\
 r\otimes g_{\mu,1}+g_{\mu,2}\otimes r
   &=\tfrac12(g_\Sigma\otimes r+r\otimes g_\Sigma)
 \end{aligned}
\end{equation}
follow by writing $g_{\mu,1}=g_{\mu,2}+r$.
Together with $-g_{\mu,2}b_\rho^T
 =-(c_1/\kappa_1)g_{\mu,2}\otimes r$, they give the exact expansion
\begin{equation}\label{eq:w4-K-expansion}
 \begin{split}
 K_\rho/c_1={}&2(\kappa_1^{-1}-1)\nabla^2\rho
   +\left(1-\frac1{2\kappa_1}\right)
          (g_\Sigma\otimes r+r\otimes g_\Sigma)\\
 &-\frac2{\kappa_1}r\otimes k_1+2\Xi_\rho I_3.
 \end{split}
\end{equation}
No $\kappa_2$ appears in this identity. Define
\begin{equation}\label{eq:w4-beta-source}
 \beta=\frac{2k_1-(2\kappa_1-1)g_\Sigma}{4(1-\kappa_1)},\qquad
 E_\rho=\frac{\operatorname{tf}(\operatorname{sym}K_\rho)}
                 {2c_1(\kappa_1^{-1}-1)}.
\end{equation}
The tensors $\operatorname{tf}$ and $\operatorname{sym}$ are defined in
\eqref{eq:w4-Pbeta} and \eqref{eq:r2-symmetric-skew-parts};
$k_1,g_\Sigma$ are defined in \eqref{eq:r2-material-gradient-combinations}.
We apply the symmetric trace-free projection to \eqref{eq:w4-K-expansion}.
With
\begin{equation}\label{eq:r2-symmetric-gradient-tensors}
 S_g=g_\Sigma\otimes r+r\otimes g_\Sigma,\qquad
 S_k=k_1\otimes r+r\otimes k_1,
\end{equation}
the first-order terms after division by $2(\kappa_1^{-1}-1)$ are
\begin{equation}\label{eq:r2-calculation-029}
\frac{2\kappa_1-1}{4(1-\kappa_1)}S_g
       -\frac{1}{2(1-\kappa_1)}S_k
   =-(\beta\otimes r+r\otimes\beta).
\end{equation}
The scalar term $2\Xi_\rho I_3$ vanishes under the trace-free
projection. This proves, without any discarded error,
\begin{equation}\label{eq:w4-Pbeta-rho}
 \mathcal P_\beta\rho=E_\rho,\qquad
 \|E_\rho\|_{C^0}\le C\varepsilon_{\rm rig}.
\end{equation}
The source is real, because it is defined from the real material tensor,
even if the auxiliary residual is complex.

Here is an explicit way to fix the drift class before using coercivity.
Let $K_a,G_a$ ($a=0,1$) bound
$\|\nabla^a k_1\|_\infty$, $\|\nabla^a g_\Sigma\|_\infty$, respectively,
and let $L_1$ bound $\|\nabla\kappa_1\|_\infty$, all over the entire
material class \eqref{eq:w4-material-priors}. They are finite functions
of $M_4,c_0$ by the chain rule. With the convention
$\|\beta\|_{W^{1,\infty}}=\|\beta\|_\infty+\|\nabla\beta\|_\infty$,
a permissible fixed bound is
\begin{equation}\label{eq:w4-beta-bound}
 B_\beta=
 \frac{2K_0+G_0+2K_1+2L_1G_0+G_1}{4\nu_-}
 +\frac{L_1(2K_0+G_0)}{4\nu_-^2}.
\end{equation}
Indeed $|2\kappa_1-1|\le1$ and differentiating the numerator and
$(1-\kappa_1)^{-1}$ proves this bound. In particular $\beta$ is real
and uniformly $W^{1,\infty}$ before any small estimate for $\rho$ is known.

\emph{5. Apply the scalar estimate to the undifferentiated source.}
Since $\rho\in\mathcal V_{R_0}$, Theorem~\ref{thm:w4-hessian} gives
\begin{equation}\label{eq:w4-final-coercive-step}
 \|\rho\|_{H^2}+\|\rho\|_\infty
 \le C\|E_\rho\|_2
 \le C|B_R|^{1/2}\|E_\rho\|_{C^0}
 \le C\varepsilon_{\rm rig}.
\end{equation}
No derivative of $E_\rho$ occurs.
The a priori bound $|\rho|\le\log(M_4/c_0)$ and
$\mu_1-\mu_2=\mu_2(e^\rho-1)$ give
$\|\mu_1-\mu_2\|_\infty\le C\varepsilon_{\rm rig}$.
Finally, the recovery formula for $\lambda$ in \eqref{eq:w1-denominators}
is Lipschitz on
$[c_0,M_4]\times[\kappa_-,1-\nu_-]$: its partial derivatives have
absolute values at most $\nu_-^{-1}$ and $M_4\nu_-^{-2}$.
Combine this fact with \eqref{eq:w4-kappa-recovery} to conclude
\eqref{eq:w4-rigidity-output}.
\end{proof}

\begin{remark}
The estimate is linear in the transport residual and boundary
normalization error, so this step causes no further loss of
their power. Smallness is required only for these errors,
not for the material gradients or $\rho$.
Neither invertibility of $H_0$ nor equality of
$\kappa_1$ and $\kappa_2$ is assumed.
\end{remark}

The bounds \eqref{eq:w4-EA-estimate}, \eqref{eq:w4-EC},
\eqref{eq:w4-a-bound}, and \eqref{eq:w4-tfK-bound} use at most
two spatial derivatives of the transport residual. The $C^4$ material
prior \eqref{eq:w4-material-priors} supplies the required $C_x^2$
connection bounds, while \eqref{eq:w3-residual-Leibniz} uses three
spatial derivatives of the projection error. Finally,
Theorem~\ref{thm:w4-hessian} controls $\rho$ without differentiating
its $L^2$ source.

Applied to the projected transport, the rigidity estimate bounds the
coefficient difference in terms of the Fourier boundary error.

\begin{corollary}
\label{cor:transport-material}
Assume the material hypotheses of Theorem~\ref{thm:w4-rigidity}. Let $H$ be
the global invertible endomorphism constructed in
Proposition~\ref{prop:w3-gluing}; it satisfies the second equation in
\eqref{eq:w3-global-output}.
For $m\ge4$, use $\beta_m$ from \eqref{eq:r2-common-error-definitions}
and $H_0$ from \eqref{eq:r2-transport-projection-definitions}.
With the fixed constants and thresholds of
\eqref{eq:w3-link-constants} and \eqref{eq:w3-joint-threshold},
\begin{equation}\label{eq:w4-I5-inputs}
 \|\lambda_1-\lambda_2\|_{L^\infty(B_R)}+
 \|\mu_1-\mu_2\|_{L^\infty(B_R)}
 \le C\omega_{\rm F}^{\beta_m}.
\end{equation}
\end{corollary}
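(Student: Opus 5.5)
The estimate is a chaining of results already established, so the plan is to feed the output of Proposition~\ref{prop:w3-gluing} through Proposition~\ref{prop:w3-defect} and the projection-to-rigidity link into Theorem~\ref{thm:w4-rigidity}. I would start from the global invertible endomorphism $H=G^{-T}$ of Proposition~\ref{prop:w3-gluing}, which is available because the fixed threshold \eqref{eq:w3-joint-threshold} gives $\omega_{\rm F}<\omega_{\rm all}\le\omega_{\rm tr}$. That proposition supplies three facts: the exact equation $\mathcal T(H)=0$, the boundary bound $\|H-I\|_{C^0(\partial B_R\times\mathbb{CP}^1)}\le C_H\omega_{\rm F}$, and the mixed bound $\|H\|_{C_x^mC_\zeta^2}\le C_m$. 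These hypotheses feed Proposition~\ref{prop:w3-defect}.

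\emph{Step 1: verify the hypotheses of Proposition~\ref{prop:w3-defect}.} Set $\omega=C_H\omega_{\rm F}$. Since $\omega_{\rm F}<C_H^{-1}$, we have $\omega\le1$. Take $M_H$ to be the fixed constant $C_m$ from \eqref{eq:w3-global-output}. The full-disk dual frame bounds for $|t|\le R_0$ come from Proposition~\ref{prop:w1-high-frames}, and $H=I$ on the remaining band of leaves.

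\emph{Step 2: project and estimate.} With $\ell=3$ and $m\ge4$, the defect estimate gives \eqref{eq:w3-C3-output}:
\[
\|H-H_0\|_{C_x^3C_\zeta^0}\le C_P\omega_{\rm F}^{\beta_m},\qquad
\|H_0-I\|_{C^0(\partial B_R\times\mathbb{CP}^1)}\le C_\Pi C_H\omega_{\rm F}.
\]
Here $H_0=\Pi H$ is holomorphic in the direction, with the block form of Lemma~\ref{lem:w3-holomorphic-form}, and $\|H_0\|_{C_x^3C_\zeta^0}\le B_H$. Next I would use the identity \eqref{eq:r2-projected-transport-residual} together with the Leibniz bound \eqref{eq:w3-residual-Leibniz}, which gives $\|\mathcal T(E_0)\|_{C^2_xC^0_\zeta}\le C_T\|E_0\|_{C^3_xC^0_\zeta}$. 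Using $\omega_{\rm F}\le1$ to absorb the linear term, this yields \eqref{eq:w3-rigidity-output}:
\[
\|\mathcal T(H_0)\|_{C_x^2C_\zeta^0}+\|H_0-I\|_{C^0(\partial B_R\times\mathbb{CP}^1)}\le C_{\rm link}\,\omega_{\rm F}^{\beta_m}.
\]

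\emph{Step 3: apply rigidity.} The choice of $\omega_{\rm all}$ in \eqref{eq:w3-joint-threshold} makes $\varepsilon_{\rm rig}:=C_{\rm link}\omega_{\rm F}^{\beta_m}<\varepsilon_{\rm rig,0}$. The $C^4$ prior \eqref{eq:w4-material-priors} follows from the $C^{2m+14}$ class, and agreement on the shell $E$ follows from the common-exterior assumption, since $R_{\rm sup}<R_0$. Theorem~\ref{thm:w4-rigidity} then gives the $L^\infty$ bound $C\varepsilon_{\rm rig}=C\omega_{\rm F}^{\beta_m}$, which is \eqref{eq:w4-I5-inputs}.

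The main point needing care is the compatibility of constants. The rigidity threshold $\varepsilon_{\rm rig,0}$ depends on $B_H$, which must be fixed first from $M_H$ and hence from the uniform class bound of Proposition~\ref{prop:w3-gluing}, before any error is chosen. I would also check that the charts and norms in Theorem~\ref{thm:w4-rigidity} are the same compact spinor charts used in \eqref{eq:r2-mixed-chart-norm}, so that no hidden change-of-chart constant depends on the error. Once this ordering of constants is verified, every step is a direct citation.
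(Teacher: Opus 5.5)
Your proposal is correct and follows the paper's own proof essentially step for step. The paper likewise uses the block form from Lemma~\ref{lem:w3-holomorphic-form}, the bound $\|H_0\|_{C_x^3C_\zeta^0}\le B_H$, the choice $\varepsilon_{\rm rig}=C_{\rm link}\omega_{\rm F}^{\beta_m}$ together with \eqref{eq:w3-rigidity-output} and \eqref{eq:w3-joint-threshold}, and then Theorem~\ref{thm:w4-rigidity}; the only differences are that the verification in your Steps~1--2 is done in Subsection~\ref{sec:r2-projection-rigidity} rather than inside the proof, and that \eqref{eq:w3-separated-bounds} bounds a sum of two norms, so you should take $M_H=2C_m$ rather than $C_m$.
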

\begin{proof}
By Lemma~\ref{lem:w3-holomorphic-form}, the field $H_0=\Pi H$
has the block form \eqref{eq:w4-Hform}. The projection bound in
\eqref{eq:w3-defect-output}, with $m\ge4$, gives
\[
 \|H_0\|_{C_x^3C_\zeta^0}\le C_\Pi M_H=B_H.
\]
Thus the projected field satisfies the algebraic and a priori
hypotheses of Theorem~\ref{thm:w4-rigidity}.

Choose its error bound to be
$\varepsilon_{\rm rig}=C_{\rm link}\omega_{\rm F}^{\beta_m}$.
Equations~\eqref{eq:w3-rigidity-output} and
\eqref{eq:w3-joint-threshold} give
\[
 \begin{aligned}
 &\|\mathcal T(H_0)\|_{C_x^2C_\zeta^0}
 +\|H_0-I\|_{C^0(\partial B_R\times\mathbb{CP}^1)}
 \le\varepsilon_{\rm rig},\\
 &\varepsilon_{\rm rig}<\varepsilon_{\rm rig,0}/2.
 \end{aligned}
\]
Apply Theorem~\ref{thm:w4-rigidity} and retain the two material terms
in \eqref{eq:w4-rigidity-output}. Their sum is at most
$CC_{\rm link}\omega_{\rm F}^{\beta_m}$, which is
\eqref{eq:w4-I5-inputs} after adjusting the fixed constant.
If $\omega_{\rm F}=0$, both errors vanish and the same theorem
applies with $\varepsilon_{\rm rig}=0$.
\end{proof}

\subsection{Common-exterior geometry and quantitative estimates}\label{sec:r2-common-exterior-setup}

On a fixed common-exterior class, the moment, transport, projection, and
rigidity bounds yield a single coefficient estimate. The positive-error and
exact-data cases are treated separately.

Fix $m\ge4$, $s=2m+14$, and a bounded smooth measurement domain $D$.
Choose fixed concentric balls with
\begin{equation}\label{eq:w0-common-geometry}
 \overline D\Subset B_{R_{\rm sup}},\qquad
 R_{\rm sup}<R_0<R_s<R,\qquad E=\{x:R_0<|x|<R\}.
\end{equation}
Fix the background $(\lambda_{\rm bg},\mu_{\rm bg})$ of
\eqref{eq:m4-common-exterior}, constant beyond a fixed ball.
Each admissible material is a real $C^s$ pair on $\R^3$ satisfying
\eqref{eq:m4-common-exterior}, with fixed $C^s$ bounds and fixed positive
lower bounds for $\mu$ and
$3\lambda+2\mu$. All working balls, leaf cylinders, support margins,
and background bounds are fixed before the noise is chosen. In
particular the two materials agree on $E$ and in a neighborhood of the
working sphere $\partial B_R$. These are assumptions of the auxiliary
common-exterior problem; the original-domain problem supplies them only
after the extension step in Section~\ref{sec:w5-extensions}. The high-order physical frame is chosen
for each material pair before varying degree, width, or $h$.

Use the coefficient distance $\mathfrak d_D$ and exponent $\beta_m$
from \eqref{eq:r2-common-error-definitions}. Define the remaining exponents by
\begin{equation}\label{eq:w0-chain-powers}
 b_{\rm F}=\frac{\gamma'}{14},
 \qquad a_{\rm chain}=b_{\rm F}\beta_m,
 \qquad 0<\gamma'<\gamma_*<1.
\end{equation}
The number $\gamma_*$ is the fixed exponent in
\eqref{eq:w3-gamma}. For each fixed $\gamma'$, Theorem~\ref{thm:w1-all-moments},
Lemma~\ref{lem:w3-negative}, and Corollary~\ref{cor:transport-material} give
\begin{equation}\label{eq:w0-CE1}
 \begin{gathered}
 \varsigma=\eta_D^{1/14},\qquad
 \omega_{\rm F}\le C_{\rm F}\eta_D^{b_{\rm F}},\\
 \mathfrak d_D\le C_6\varepsilon_{\rm rig}
                  \le C_6 C_{\rm R}\eta_D^{a_{\rm chain}},\qquad
 C_{\rm R}=C_{\rm link}C_{\rm F}^{\beta_m}.
 \end{gathered}
\end{equation}
Here $\eta_D$ is defined in \eqref{eq:r2-common-error-definitions},
$\varepsilon_{\rm rig}$ satisfies \eqref{eq:w4-rigidity-input}, and
\eqref{eq:w3-rigidity-output} supplies its bound. All constants are fixed by the class,
geometry and $\gamma'$; enlarge $C_0$ so that $C_0\ge1$.
Choose $\eta_\dagger>0$ smaller than the joint threshold in
\eqref{eq:w3-joint-threshold} and the width threshold
$\varsigma_*^{14}$:
\begin{equation}\label{eq:w0-eta-threshold}
 \eta_\dagger=\tfrac12\min\{1,\eta_{\rm all},\varsigma_*^{14}\}.
\end{equation}
Then \eqref{eq:w0-CE1} holds for $0<h<h_*$ and
$0<\eta_D<\eta_\dagger$. The common $h_*$ is also smaller than the thresholds in
Theorems~\ref{thm:w1-all-moments} and \ref{thm:finite-right-inverse}.

Under the fixed thresholds above, the common-exterior estimates yield the
following logarithmic bound and its exact-data conclusion.

\begin{proposition}
\label{prop:w0-common-assembly}
Fix the common-exterior class and geometry of
Subsection~\ref{sec:r2-common-exterior-setup}.
For every $0<\alpha<\gamma_*\beta_m/14$, there exist
$C_{\rm com},\delta_{{\rm com},0}>0$, depending only on
this fixed class, its geometry, and $\alpha$, such that
\begin{equation}\label{eq:w0-common-conclusion}
 \mathfrak d_D\le C_{\rm com}|\log\delta_D|^{-\alpha},
 \qquad 0<\delta_D<\delta_{{\rm com},0}.
\end{equation}
Here $\mathfrak d_D$ and $\delta_D$ are defined in
\eqref{eq:r2-common-error-definitions} and \eqref{eq:w0-domain-error},
and $\gamma_*$ in \eqref{eq:w3-gamma}.
At $\delta_D=0$, the two pairs coincide on $D$.
\end{proposition}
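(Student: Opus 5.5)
The proof amounts to optimizing \eqref{eq:w0-CE1} in the CGO parameter $h$, and then treating $\delta_D=0$ by letting $h\downarrow0$. Fix $\alpha$ with $0<\alpha<\gamma_*\beta_m/14$. Choose $\gamma'\in(0,\gamma_*)$ with $\alpha<a_{\rm chain}=\gamma'\beta_m/14$; this is possible since $a_{\rm chain}\uparrow\gamma_*\beta_m/14$ as $\gamma'\uparrow\gamma_*$. Fix all constants $C_0,C_6,C_{\rm R},h_*,\eta_\dagger$ for this $\gamma'$. These depend only on the class, the geometry and $\alpha$.

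\emph{Positive error.} For $0<\delta_D<1$, choose
\[
h=\frac{2C_0}{|\log\delta_D|}.
\]
Then $e^{C_0/h}=\delta_D^{-1/2}$, so
\[
\eta_D=h+\delta_D h^{-2}e^{C_0/h}=h+\tfrac14C_0^{-2}\delta_D^{1/2}|\log\delta_D|^2\le C|\log\delta_D|^{-1},
\]
because $\delta_D^{1/2}|\log\delta_D|^3$ is bounded. Choose $\delta_{{\rm com},0}$ small enough that $h<h_*$ and $\eta_D<\eta_\dagger$. This is explicit, since both $h$ and the bound on $\eta_D$ tend to $0$ with $\delta_D$. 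Then \eqref{eq:w0-CE1} gives
\[
\mathfrak d_D\le C_6C_{\rm R}\,C^{a_{\rm chain}}|\log\delta_D|^{-a_{\rm chain}}\le C_{\rm com}|\log\delta_D|^{-\alpha},
\]
using $|\log\delta_D|\ge1$ and $\alpha<a_{\rm chain}$.

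There is one subtlety. The estimate \eqref{eq:w0-CE1} bounds $\mathfrak d_D$ on $B_R$ via Corollary~\ref{cor:transport-material}, and $D\subset B_R$, so the $L^\infty(D)$ norm is dominated. I must also check that the test width $\varsigma=\eta_D^{1/14}$ is legal, which holds because $\eta_D<\varsigma_*^{14}$. This is built into $\eta_\dagger$.

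\emph{Exact data.} If $\delta_D=0$, then $\eta_D=h$ for every $0<h<h_*$. The same fixed frame family is used for all $h$, as noted after \eqref{eq:r2-fourier-error}. Hence for $h<\min\{h_*,\eta_\dagger\}$ we get $\mathfrak d_D\le C_6C_{\rm R}h^{a_{\rm chain}}$. The left side does not depend on $h$, so letting $h\downarrow0$ gives $\mathfrak d_D=0$. The coefficients are continuous, so the two pairs coincide on $D$.

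\emph{Main obstacle.} The main obstacle is bookkeeping rather than analysis. I must confirm that all thresholds and constants in the chain moment $\to$ Fourier $\to$ gluing $\to$ projection $\to$ rigidity were fixed before $h$ and $\delta_D$. In particular, $h_*$ must be independent of $K_{n,\varsigma}$ (Theorem~\ref{thm:w1-all-moments}), so that choosing $\varsigma$ and $h$ depending on $\delta_D$ is permitted. I would cite those uniformity statements explicitly, rather than reprove them.
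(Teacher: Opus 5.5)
Your proposal is correct and follows essentially the same route as the paper. You fix $\gamma'<\gamma_*$ with $a_{\rm chain}>\alpha$ and take $h=2C_0/|\log\delta_D|$, so that $\eta_D=h+\tfrac14C_0^{-2}\delta_D^{1/2}|\log\delta_D|^2\le C|\log\delta_D|^{-1}$; you then impose a threshold guaranteeing $h<h_*$ and $\eta_D<\eta_\dagger$, and for $\delta_D=0$ you let $h\downarrow0$ in $\mathfrak d_D\le C_6C_{\rm R}h^{a_{\rm chain}}$. The paper differs only in making these choices explicit: $\gamma'=\tfrac12(\gamma_*+14\alpha/\beta_m)$, $C_\eta=2C_0+54/C_0^2$ via $L^3e^{-L/2}\le216$, and $\delta_{{\rm com},0}=e^{-L_0}$ with $L_0=\max\{1,4C_0/h_*,2C_\eta/\eta_\dagger\}$.
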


\begin{proof}
First choose
\begin{equation}\label{eq:w0-gamma-prime-choice}
 \gamma'=\tfrac12\left(\gamma_*+\frac{14\alpha}{\beta_m}\right).
\end{equation}
Then $0<\gamma'<\gamma_*$ and $a_{\rm chain}>\alpha$. All constants
in \eqref{eq:w0-CE1}, including those depending on this fixed $\gamma'$, are
fixed before choosing the noise.

For positive noise put $L=|\log\delta_D|$ and
\begin{equation}\label{eq:w0-positive-parameters}
 h=\frac{2C_0}{L},\qquad
 \eta_D=\frac{2C_0}{L}+\frac{L^2e^{-L/2}}{4C_0^2},\qquad
 \varsigma=\eta_D^{1/14}.
\end{equation}
The elementary bound $L^3e^{-L/2}\le216$ for $L>0$ gives
\begin{equation}\label{eq:w0-eta-log-bound}
 \eta_D\le\frac{C_\eta}{L},\qquad
 C_\eta=2C_0+\frac{54}{C_0^2}.
\end{equation}
One explicit sufficient choice is
\begin{equation}\label{eq:w0-common-threshold}
 L_0=\max\left\{1,\frac{4C_0}{h_*},
                  \frac{2C_\eta}{\eta_\dagger}\right\},
 \qquad\delta_{{\rm com},0}=e^{-L_0}.
\end{equation}
For $0<\delta_D<\delta_{{\rm com},0}$, this gives $h<h_*/2$ and
$\eta_D<\eta_\dagger/2$. Using Theorem~\ref{thm:w4-rigidity} and
\eqref{eq:w0-CE1}, we obtain
\begin{equation}\label{eq:r2-calculation-030}
\mathfrak d_D\le C_6C_{\rm R}C_\eta^{a_{\rm chain}}L^{-a_{\rm chain}}
 \le C_{\rm com}L^{-\alpha}.
\end{equation}
The threshold contains neither the polynomial degree nor the linear
amplitude size.

For $\delta_D=0$, use every
$0<h<\min\{h_*,\eta_\dagger\}$ and set
$\eta_D=h$, $\varsigma=h^{1/14}$. The same estimates give
\begin{equation}\label{eq:w0-exact-common-bound}
 \mathfrak d_D\le C_6C_{\rm R}h^{a_{\rm chain}}.
\end{equation}
The material pair and the left side are fixed, while the constants are
uniform and $a_{\rm chain}>0$. Letting $h\downarrow0$ in \eqref{eq:w0-exact-common-bound}, we obtain
$\mathfrak d_D=0$.
This argument uses no limit of the transports and never evaluates
$\log0$.
\end{proof}

\begin{remark}
At zero original-domain error, the paired extension construction
gives a family within one fixed class. This proposition applies
separately at each positive tolerance, with constants independent
of the tolerance.
\end{remark}

\section{Sharp stability of boundary derivatives}\label{sec:v63-boundary-jets}

Boundary stability follows from localized elastic Dirichlet energies through
a finite Taylor expansion, coefficient extraction, and recovery of
successive normal derivatives. Boundary-localized perturbations establish
sharpness and the top-order obstruction under strict prior slack.

\subsection{Local hypotheses and the measurement norm}

Fix an integer $s\ge2$ and the material class $\mathscr C^s(\Omega;M,c_0)$
of \eqref{eq:w0-interior-class}. All boundary estimates use only this class,
not prescribed exterior coefficients. Theorem~\ref{thm:w0-target-A} uses
$s=22$, while Proposition~\ref{prop:w0-interior-assembly} retains the general
orders of \eqref{eq:r2-regularity-orders}; the present section allows every
integer $s\ge2$. Constants depend on the
fixed class, the domain, the boundary atlas \eqref{eq:w5-explicit-normal-chart},
and the tests \eqref{eq:v63-two-tests}.
There is no uniform modulus of continuity assumption for the derivatives
of order $s$.

We use the physical variational forms and the uniform Dirichlet estimate
\eqref{eq:w5-physical-form}--\eqref{eq:w5-dirichlet-bound} from
Section~\ref{sec:physical-preliminaries}. The abbreviation $\mathfrak a$ is defined in \eqref{eq:r2-energy-abbreviation}.

Use $J_q$ from \eqref{eq:w0-boundary-jets}; all its derivatives are
Cartesian traces from the interior.
For the outward normal $\nu$ fix the tubular map and its normal derivative convention
\begin{equation}\label{eq:w5-normal-rays}
 \Phi(p,t)=p+t\nu(p),\qquad
 \partial_\nu^a f(p)=\left.\partial_t^a(f\circ\Phi)(p,t)\right|_{t=0^-}.
\end{equation}
The derivative follows a straight normal segment; no unspecified extension
of the normal vector field is used.
\subsection{Fixed boundary coordinates and annular tests}\label{sec:r2-boundary-tests}

The boundary tests use fixed normal coordinates and annular tangential
frequencies, avoiding the zero-frequency singularity while retaining uniform
trace norms.

We use only \eqref{eq:w0-interior-class}. The atlas and its
cutoffs are fixed before either candidate and before the scale is chosen.
The geometric $C^{s+1}$ bounds needed for the coordinate tensor are fixed
bounds on the domain, not additional material derivatives.

For each $p\in\partial\Omega$ choose the boundary normal chart
$\Psi_p$ in \eqref{eq:w5-explicit-normal-chart}, with $z_3\ge0$ inward. The sizes, inverse bounds and every required
finite geometric derivative are uniform after subdividing a finite atlas.
More precisely, choose a surface parametrization $P_p$ and set
\begin{equation}\label{eq:w5-explicit-normal-chart}
 \Psi_p(z',z_3)=P_p(z')-z_3\nu(P_p(z')),\qquad
 P_p(0)=p,\qquad R_p=D\Psi_p(0)\in O(3).
\end{equation}
Tangential coordinates are linearly adjusted at their center to obtain
this orthogonality. Throughout the chart, not merely at its center,
\begin{equation}\label{eq:w5-mixed-normal-trace}
 \left.\partial_{z'}^\gamma\partial_{z_3}^a(f\circ\Psi_p)\right|_{z_3=0}
 =(-1)^a\partial_{z'}^\gamma
       [\partial_\nu^a f\circ P_p],\qquad |\gamma|+a\le s.
\end{equation}
The equality follows by holding $P_p(z')$ and its normal fixed while
performing the $a$ normal derivatives, and then differentiating in $z'$. Rotate the physical displacement by the \emph{constant}
matrix $R_p$, not by a moving frame. The physical Hermitian energy in the
chart then has the form
\begin{equation}\label{eq:v63-pulled-energy}
 \begin{aligned}
 \mathfrak a(v,w)&=\int \mathbb A_p(z)\nabla_z v:
                               \overline{\nabla_z w}\,\mathrm{d} z,\\
 \mathbb A_p(0)\nabla v:\overline{\nabla w}
 &=\lambda(p)\operatorname{div} v\,\overline{\operatorname{div} w}+2\mu(p)\varepsilon(v):\overline{\varepsilon(w)}.
 \end{aligned}
\end{equation}
Here $\mathbb A_p$ includes the Jacobian and the inverse coordinate matrices. The transformed tensor $\mathbb A_p$ is uniformly bounded in $C^s$. Away from $z=0$ it need not be
an isotropic tensor in these coordinates. There are no lower-order terms,
because the component rotation is constant.

For boundary profiles use the unitary two-dimensional Fourier transform
\begin{equation}\label{eq:r2-boundary-Fourier-transform}
 \widehat\phi(\xi)=(2\pi)^{-1}\int_{\mathbb R^2}
                    e^{-iy'\cdot\xi}\phi(y')\,dy'.
\end{equation} Choose once
and for all a nonzero scalar $\psi_\partial\in C_c^\infty(\{1<|\xi|<2\})$, normalized
by the following conventions:
\begin{equation}\label{eq:r2-boundary-fourier-directions}
 \begin{gathered}
 \int_{\mathbb R^2}|\psi_\partial(\xi)|^2\,d\xi=1,\qquad
 \varpi=|\xi|,\quad \xi\in\mathbb R^2\setminus\{0\},\\
 \widehat\xi=(\xi_1,\xi_2,0)/\varpi,\qquad
 \widehat\xi^\perp=(-\xi_2,\xi_1,0)/\varpi.
 \end{gathered}
\end{equation}
The two fixed Schwartz boundary tests are
\begin{equation}\label{eq:v63-two-tests}
 \widehat\phi^S(\xi)=\psi_\partial(\xi)\widehat{\xi}^\perp,
 \qquad \widehat\phi^N(\xi)=\psi_\partial(\xi)e_3.
\end{equation}
Their Fourier supports avoid zero. Superscripts $S,N$ refer to these shear and normal boundary tests. Let $\chi_p$ be a fixed real cutoff in the
chart, equal to one on a smaller half-ball. For either test $\phi$ prescribe
\begin{equation}\label{eq:v63-physical-tests}
 f_{p,\epsilon}^{\phi}(\Psi_p(z',0))
   =\epsilon^{-1/2}\chi_p(z',0)R_p\phi(z'/\epsilon),
 \qquad f_{p,\epsilon}^{\phi}=0\ \text{off the chart}.
\end{equation}
The scale $\epsilon>0$ in this section is a boundary dilation scale, not
the CGO parameter $h$. Although $\phi$ is not compactly supported, the
physical trace is. Multiplication by the fixed cutoff and chart norm
equivalence give
\begin{equation}\label{eq:v63-trace-norm}
 \sup_{p,\,0<\epsilon<\epsilon_*}
    \lVert f_{p,\epsilon}^{\phi}\rVert _{H^{1/2}(\partial\Omega)}\le C_\phi.
\end{equation}
Indeed the squared flat norm before cutoff is
$\int(\epsilon^2+|\xi|^2)^{1/2}|\widehat\phi(\xi)|^2\mathrm{d}\xi$.
The trace normalization in \eqref{eq:v63-trace-norm} is the one used
in the DN pairing \eqref{eq:w5-physical-dn}.

\subsection{Frozen physical impedance and its inverse}\label{sec:r2-boundary-impedance}
The frozen elastic impedance gives the order-zero boundary response, whose
dependence on the two Lam\'e parameters can be inverted. For classical
boundary determination, see \cite{LinNakamura2017}. Let $(\lambda_0,\mu_0)$
be frozen boundary values. In the ordered basis
$(\widehat\xi,\widehat\xi^\perp,e_3)$, with $e_3$ inward, the physical
traction uses $-e_3$. The corresponding $3\times3$ boundary impedance
multiplier, distinct from any interior amplitude, is
\begin{equation}\label{eq:v62-impedance}
 \mathsf Z_{\partial}(\xi)=\varpi
 \begin{pmatrix}a_Z&0&-i b_Z\\0&\mu_0&0\\i b_Z&0&a_Z\end{pmatrix},
 \quad a_Z=\frac{2\mu_0(\lambda_0+2\mu_0)}{\lambda_0+3\mu_0},
 \quad b_Z=\frac{2\mu_0^2}{\lambda_0+3\mu_0}.
\end{equation}
Indeed the profile in \eqref{eq:v63-Poisson} has boundary derivative
$\varpi(b-a)$ and boundary divergence
$\varpi(1-g_0)(i a_\parallel-a_3)$.
The three traction components are respectively
$-\mu_0(\partial_tu_\parallel+i\varpi u_3)$,
$-\mu_0\partial_tu_\perp$, and
$-\lambda_0\operatorname{div}u-2\mu_0\partial_tu_3$.
Substituting these boundary derivatives into the three traction components, we
obtain \eqref{eq:v62-impedance}. In particular the sign of each off-diagonal
entry is tied to the outward traction, not the inward coordinate.
The algebraic inverse needed at order zero is
\begin{equation}\label{eq:v62-impedance-inversion}
 \lambda_0=\mu_0\frac{3a_Z-4\mu_0}{2\mu_0-a_Z},
 \qquad 2\mu_0-a_Z=\frac{2\mu_0^2}{\lambda_0+3\mu_0}>0.
\end{equation}
The class bounds keep this denominator uniformly away from zero, so this
inverse is uniformly Lipschitz on the admissible value range.

\subsection{Decaying half-space corrections without low frequencies}\label{sec:r2-halfspace-profiles}

Half-space corrections retain annular tangential frequency support and admit
weighted decay estimates uniform in the material class.

Freeze the energy form \eqref{eq:v63-pulled-energy} at the boundary point $p$:
\begin{equation}\label{eq:r2-frozen-halfspace-operator}
 \begin{gathered}
 (\lambda_0,\mu_0)=(\lambda(p),\mu(p)),\\
 \mathfrak a_0(V,W)=\int_{\mathbb R^3_+}\mathbb A_p(0)\nabla V:
                  \overline{\nabla W}\,dy,\\
 L_0^+=-\mu_0\Delta-(\lambda_0+\mu_0)\nabla\operatorname{div}.
 \end{gathered}
\end{equation}
Here $L_0^+$ is the positive $3\times3$ second-order elastic operator
associated with $\mathfrak a_0$. For tangential frequency $\xi$ and
real normal frequency $\xi_3$, its full-space inverse symbol is
\begin{equation}\label{eq:v63-full-space-inverse}
 \begin{gathered}
 \bigl[\mu_0(\varpi^2+\xi_3^2)I_3+
       (\lambda_0+\mu_0)\xi_{\mathrm{full}}\xi_{\mathrm{full}}^T\bigr]^{-1}\\
  =\frac{I_3}{\mu_0(\varpi^2+\xi_3^2)}
  -\frac{\lambda_0+\mu_0}{\mu_0(\lambda_0+2\mu_0)}
       \frac{\xi_{\mathrm{full}}\xi_{\mathrm{full}}^T}{(\varpi^2+\xi_3^2)^2},\\
 \xi_{\mathrm{full}}=(\xi_1,\xi_2,\xi_3)^T.
 \end{gathered}
\end{equation}
The material denominators in \eqref{eq:v63-full-space-inverse} and
\eqref{eq:v63-Poisson} are uniformly separated from zero by \eqref{eq:w0-interior-class}.

Restricting the tangential Fourier support to a fixed annulus avoids the
low-frequency singularity and preserves exponential normal decay through the
correction equations.

\begin{lemma}\label{lem:v63-profile}
Use the finite-energy spaces
\begin{equation}\label{eq:r2-halfspace-energy-spaces}
 \begin{aligned}
 \dot H^1(\mathbb R^3_+;\mathbb C^3)
  &=\{V\in L^6(\mathbb R^3_+;\mathbb C^3):\nabla V\in L^2\},\\
 \dot H^1_0(\mathbb R^3_+;\mathbb C^3)
  &=\{V\in\dot H^1(\mathbb R^3_+;\mathbb C^3):\operatorname{Tr}V=0\}.
 \end{aligned}
\end{equation}
Consider profiles with tangential Fourier transforms given by finite sums
\begin{equation}\label{eq:v63-profile-class}
 \widehat V(\xi,t)=e^{-\varpi t}\sum_{n=0}^{N_{\rm pol}}t^nv_n(\xi),\qquad t\ge0,
 \qquad v_n\in C_c^\infty(\{1<\varpi<2\};\mathbb{C}^3).
\end{equation}
The class \eqref{eq:v63-profile-class} is preserved by spatial
differentiation and multiplication by
polynomials in $(y',t)$. If $F$ belongs to this class, the zero-trace
finite-energy solution of $L_0^+V=F$ also belongs to it. The constant
half-space solution with any annular smooth boundary Fourier data belongs
to the same class. For every fixed pair of integers $B,N\ge0$, the
seminorms
\begin{equation}\label{eq:v63-weighted-profiles}
 \sum_{|\alpha|\le B}
    \lVert (1+|y|)^N\partial^\alpha V\rVert _{L^2(\mathbb{R}^3_+)}
\end{equation}
are bounded by finitely many seminorms of the input coefficients, uniformly
on the frozen material range. The solution maps for $L_0^+$ in \eqref{eq:r2-frozen-halfspace-operator}
depend smoothly and locally Lipschitz-continuously, in the seminorms
\eqref{eq:v63-weighted-profiles}, on the frozen material values
and the finitely many input coefficients.
\end{lemma}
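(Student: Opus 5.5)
The plan is to work entirely on the tangential Fourier side, with $t=y_3\ge0$ the normal variable. For each fixed $\xi$ in the annulus $1<\varpi<2$, the operator $L_0^+$ becomes a constant-coefficient $3\times3$ ODE system in $t$:
\[
\mu_0(\varpi^2-\partial_t^2)\widehat V+(\lambda_0+\mu_0)\,\xi_{\rm full}(\xi,-i\partial_t)\,\xi_{\rm full}(\xi,-i\partial_t)^T\widehat V=\widehat F .
\]
Its characteristic polynomial is $\det[\mu_0(\varpi^2+\tau^2)I_3+(\lambda_0+\mu_0)\xi_{\rm full}\xi_{\rm full}^T]=\mu_0^2(\lambda_0+2\mu_0)(\varpi^2+\tau^2)^3$, read off from \eqref{eq:v63-full-space-inverse}. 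Hence the only exponential modes are $e^{\pm\varpi t}$, each with polynomial factors of degree at most two. The bounded homogeneous solutions are therefore $e^{-\varpi t}(c_0+c_1t+c_2t^2)$, with $c_1,c_2$ determined linearly by $c_0$ through the symbol.

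The first step is the closure of the class \eqref{eq:v63-profile-class} under the basic operations.
\begin{itemize}
\item Tangential differentiation multiplies $v_n$ by $i\xi_j$, which keeps the coefficients smooth and supported in the annulus.
\item $\partial_t$ maps $e^{-\varpi t}t^n$ into the span of $e^{-\varpi t}t^{n}$ and $e^{-\varpi t}t^{n-1}$, with coefficients polynomial in $\varpi$. Since $\varpi$ is smooth on the annulus, these coefficients are harmless.
\item Multiplication by $t$ raises the degree.
\item Multiplication by $y_j'$ becomes $i\partial_{\xi_j}$ on the Fourier side. Because $\partial_{\xi_j}e^{-\varpi t}=-t(\xi_j/\varpi)e^{-\varpi t}$, the result again lies in the class, with annular compact support preserved.
\end{itemize}

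The second step constructs solutions. For the Dirichlet problem with annular data $\widehat g(\xi)$, I will take $c_0=\widehat g$ in the bounded homogeneous family above, which produces a profile in the class. For the source problem $L_0^+V=F$, the key point is resonance. The forcing $e^{-\varpi t}t^n v_n$ sits at a root $\tau=i\varpi$ of multiplicity three, so I will look for a particular solution of the form $e^{-\varpi t}\sum_k t^k w_k$ with degree up to $N_{\rm pol}+3$. I will solve the resulting triangular linear system for the $w_k$ by descending induction on $k$.

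Solvability of that system is where uniqueness enters: I expect to argue it by uniqueness of the zero-trace finite-energy solution. Coercivity of $\mathfrak a_0$ on $\dot H^1_0$, via Korn together with the strict convexity of $(\lambda_0,\mu_0)$, gives that uniqueness. Then I subtract a Dirichlet correction to restore zero trace.

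I expect the main obstacle to be exactly here: showing the resonant ansatz is solvable with coefficients that are smooth in $\xi$, uniformly on the frozen material range. The route I will try first avoids solving the triangular system by hand. Instead I will use variation of parameters with the explicit Green matrix of the ODE, which is a finite combination of $e^{-\varpi|t-s|}$ and $e^{-\varpi(t+s)}$ times polynomials in $t,s$ with rational coefficients in $(\lambda_0,\mu_0,\xi)$. Integrating each term against $e^{-\varpi s}s^n$ evaluates exactly to expressions of the form $e^{-\varpi t}\times$ polynomial. The denominators that appear are powers of $\varpi$, $\mu_0$, $\lambda_0+2\mu_0$ and $\lambda_0+3\mu_0$, and these are uniformly separated from zero on the annulus and by \eqref{eq:w0-interior-class}.

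For the weighted seminorms \eqref{eq:v63-weighted-profiles}, I will use the closure property from the first step. The weight $(1+|y|)^{2N}$ is dominated by a polynomial in $(y',t)$, so each term $\|y^\beta\partial^\alpha V\|_{L^2}$ is the $L^2$ norm of another profile in the class. By Plancherel, such a norm is bounded by $\sum_n\|v_n\|_{L^2}\,\|t^ne^{-\varpi t}\|_{L^2_t}$, and $\varpi\ge1$ makes this uniform. Each coefficient arising this way is a finite combination of $\xi$-derivatives of the input coefficients multiplied by the rational functions above.

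Smooth and locally Lipschitz dependence on $(\lambda_0,\mu_0)$ and on the input coefficients then follows directly. The solution coefficients are bilinear in the inputs and in rational functions of the frozen values, and those rational functions have denominators uniformly separated from zero.
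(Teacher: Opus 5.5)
Your proposal is correct and is essentially the paper's argument: tangential Fourier reduction, an exponential--polynomial Green kernel in $t$ whose integrals against $t^ne^{-\varpi t}$ over $[0,t]$ and $[t,\infty)$ return $e^{-\varpi t}$ times a polynomial, a decaying homogeneous correction for the trace, Korn uniqueness, and Plancherel (tangential weights as $\xi$-derivatives) for the weighted seminorms, with smooth dependence coming from uniformly nonvanishing denominators. Two steps you assert without proof are made explicit in the paper: it inverts \eqref{eq:v63-full-space-inverse} in $\xi_3$ to get the full-line kernel, and it writes the trace-parametrized decaying solution as the explicit degree-one Poisson formula \eqref{eq:v63-Poisson}, so in fact $c_2=0$; keep your Green-kernel route rather than the resonant ansatz, because uniqueness alone gives injectivity, not solvability, of that triangular system.
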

\begin{proof}
Multiplication by $y'$ acts by differentiation in tangential Fourier
variables. Differentiating $e^{-\varpi t}$ only creates further powers of $t$;
it neither introduces small frequencies nor leaves the fixed compact
annulus. Normal multiplication and differentiation are immediate.

For the inverse, take the inverse Fourier transform of
\eqref{eq:v63-full-space-inverse} in $\xi_3$. The scalar kernels
$(\varpi^2+\xi_3^2)^{-1}$ and $(\varpi^2+\xi_3^2)^{-2}$ are constant multiples of
$e^{-\varpi|t|}/\varpi$ and $(1+\varpi|t|)e^{-\varpi|t|}/\varpi^3$. The factors $\xi_3$ and
$\xi_3^2$ produce their first and second derivatives. Thus the matrix
Green kernel on the full line is, on each side of zero, a polynomial of
degree at most one times $e^{-\varpi|t|}$, with smooth coefficients on the
annulus and the frozen material range. Extend $F$ by zero to $t<0$ and
convolve this kernel with $F$. For $t>0$, split the convolution at $t$.
The integrals over $[0,t]$ and $[t,\infty)$ are elementary integrals of
polynomials against exponentials; their sum is $e^{-\varpi t}$ times a
polynomial in $t$. Its coefficients remain smooth and compactly supported
in $\xi$. The full-line convolution solution may have nonzero boundary trace.

For an arbitrary boundary vector $a=a(\xi)$ the decaying homogeneous
solution is
\begin{equation}\label{eq:v63-Poisson}
 \begin{gathered}
 \widehat{\mathcal P_0 a}(\xi,t)
       =e^{-\varpi t}\bigl(a+\varpi t\,b(a)\bigr),\\
 b(a)=g_0\bigl[-(a_\parallel+i a_3)\widehat{\xi}
                       +(-i a_\parallel+a_3)e_3\bigr],\\
 a_\parallel=\widehat{\xi}\cdot a,\qquad
 g_0=\frac{\lambda_0+\mu_0}{\lambda_0+3\mu_0},\qquad
 d_0=1-g_0=\frac{2\mu_0}{\lambda_0+3\mu_0}.
 \end{gathered}
\end{equation}
Set
\begin{equation}\label{eq:r2-poisson-profile-identities}
 \begin{gathered}
 c_a=i a_\parallel-a_3,\qquad b=g_0c_a(i\widehat\xi-e_3),\qquad
 i b_\parallel-b_3=0,\\
 \widehat{\operatorname{div}\mathcal P_0a}
       =\varpi d_0c_a e^{-\varpi t},\qquad
 \widehat{\Delta\mathcal P_0a}=-2\varpi^2b e^{-\varpi t}.
 \end{gathered}
\end{equation}
Substitution in \eqref{eq:r2-frozen-halfspace-operator} gives
\begin{equation}\label{eq:r2-poisson-equation-verification}
 \widehat{L_0^+\mathcal P_0a}
 =\varpi^2c_a e^{-\varpi t}
       [2\mu_0g_0-(\lambda_0+\mu_0)d_0](i\widehat\xi-e_3)=0.
\end{equation}
We subtract the Poisson solution \eqref{eq:v63-Poisson} with boundary data
equal to the trace of the convolution solution. The resulting field has zero
trace and remains in the class \eqref{eq:v63-profile-class}. Uniqueness in the zero-trace
finite-energy space follows from strong convexity and Korn's inequality;
one may extend a zero-trace field by zero and apply the full-space
Fourier Korn inequality.

Plancherel turns tangential weights into frequency derivatives. All such
derivatives are supported in the fixed annulus and produce only finitely
many additional powers of $t$. Normal weighted integrals of polynomials
times $e^{-\varpi t}$ are uniformly finite because $\varpi\ge1$. This proves
\eqref{eq:v63-weighted-profiles}. The convolution formulas have only
uniformly nonvanishing material denominators and positive powers of $\varpi$
in denominators. Differentiation in any finite collection of material or
source parameters is consequently legitimate under the integrals and has
the same bounds. Differentiation under the integral therefore establishes the asserted Lipschitz dependence of the half-space profile operator.
\end{proof}

Lemma~\ref{lem:v63-profile} uses arbitrary \emph{fixed} spatial weights of a profile built
from finitely many derivative values. It does not require arbitrarily many
spatial derivatives of the original material. The distinction between profile polynomial degree and coefficient regularity controls the remote cutoff tails without strengthening the $C^s$ prior.

\subsection{Uniform energy expansion for the exact Dirichlet solution}\label{sec:r2-boundary-energy}

The transplanted half-space profiles are compared with the exact Dirichlet
solution to obtain a boundary-energy expansion with a uniform order-$s$
remainder.

Define the homogeneous Taylor tensors and their forms by
\begin{equation}\label{eq:v63-Taylor-tensors}
 \mathbb A_{p,r}(y)
   =\sum_{|\alpha|=r}\frac{\partial^\alpha\mathbb A_p(0)}{\alpha!}y^\alpha,
 \qquad
 \mathfrak a_r(V,W)=\int_{\mathbb{R}^3_+}\mathbb A_{p,r}(y)\nabla V:
                                      \overline{\nabla W}\,\mathrm{d} y.
\end{equation}
For $r=0$, this is the frozen form \eqref{eq:r2-frozen-halfspace-operator}.
On the fixed chart, Taylor's integral remainder gives the uniform bound
\begin{equation}\label{eq:v63-Taylor-remainder}
 \left|\mathbb A_p(\epsilon y)
       -\sum_{r=0}^{s-1}\epsilon^r\mathbb A_{p,r}(y)\right|
       \le C\epsilon^s|y|^s.
\end{equation}
Only the bounded order-$s$ derivatives are used; their modulus of
continuity need not be uniform across the material class.

Let $\widehat V_0(\xi,t)$ be the right-hand side of
\eqref{eq:v63-Poisson} with $a=\widehat\phi(\xi)$. For $1\le r\le s-1$ define the zero-trace corrections
by
\begin{equation}\label{eq:v63-recursion}
 \mathfrak a_0(V_r,W)=-\sum_{\ell=1}^{r}\mathfrak a_\ell(V_{r-\ell},W),
 \qquad \operatorname{Tr} V_r=0,
 \qquad W\in\dot H^1_0(\mathbb{R}^3_+).
\end{equation}
Equivalently, the positive operator satisfies
$L_0^+V_r=\operatorname{div}\bigl(\sum_{\ell=1}^r
\mathbb A_{p,\ell}\nabla V_{r-\ell}\bigr)$ in the half-space; this fixes the
source sign. It is a divergence source with polynomial coefficients times
previous profiles. Lemma~\ref{lem:v63-profile} constructs $V_r$ successively.
Every $V_r$ has the weighted bounds \eqref{eq:v63-weighted-profiles},
uniformly in $p$ and the material, and depends smoothly on the tensor derivatives
through order $r$.

\begin{theorem}
\label{thm:v63-energy-expansion}
Let \eqref{eq:w0-interior-class} hold, and let $\phi$ be either of the fixed
tests in \eqref{eq:v63-two-tests}. If $u_{p,\epsilon}^{\phi}$ is the actual
elastic Dirichlet solution with trace \eqref{eq:v63-physical-tests}, put
\begin{equation}\label{eq:v63-measured-energy}
 \mathcal E_{p,\phi}(\epsilon)
   =\big\langle\Lambda_{\lambda,\mu}f_{p,\epsilon}^{\phi},
                              \overline{f_{p,\epsilon}^{\phi}}\big\rangle.
\end{equation}
There are uniform $C,\epsilon_*>0$ such that
\begin{equation}\label{eq:v63-energy-expansion}
 \left|\mathcal E_{p,\phi}(\epsilon)
       -\sum_{r=0}^{s-1}\epsilon^r E_{p,r}^{\phi}\right|
      \le C\epsilon^s,
 \qquad
 E_{p,r}^{\phi}=\sum_{\ell+b+c=r}\mathfrak a_\ell(V_b,V_c),
 \qquad 0<\epsilon<\epsilon_*.
\end{equation}
Here $\mathfrak a_\ell$ is defined in \eqref{eq:v63-Taylor-tensors},
and $V_b,V_c$ are the profiles defined by
\eqref{eq:v63-Poisson} and \eqref{eq:v63-recursion}.
The coefficients are smooth functions of the finite material and
geometric derivatives of total order at most $r$. Their first derivatives with
respect to those derivative values are uniformly bounded on the indicated
bounded, strongly convex parameter range.
\end{theorem}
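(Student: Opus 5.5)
The plan is to compare the exact Dirichlet solution $u_{p,\epsilon}^\phi$ with an explicit approximate solution built from the rescaled half-space profiles, and then read off the energy. In the chart $\Psi_p$, with the constant rotation $R_p$, set
\[
U_\epsilon(z)=\epsilon^{-1/2}\chi_p(z)R_p\sum_{r=0}^{s-1}\epsilon^rV_r(z/\epsilon),
\]
where the $V_r$ come from \eqref{eq:v63-Poisson} and \eqref{eq:v63-recursion} via Lemma~\ref{lem:v63-profile}, and $U_\epsilon$ is extended by zero off the chart. Since every $V_r$ with $r\ge1$ has zero trace and $V_0$ has trace $\phi$, the field $U_\epsilon$ has trace exactly $f_{p,\epsilon}^\phi$. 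Hence $w=u_{p,\epsilon}^\phi-U_\epsilon\in H^1_0$. By \eqref{eq:w5-physical-dn} and the symmetry of $a$, the energy splits as
\[
\mathcal E_{p,\phi}(\epsilon)=\mathfrak a(U_\epsilon,U_\epsilon)+2\Re\,\mathfrak a(U_\epsilon,w)+\mathfrak a(w,w),
\qquad
\mathfrak a(w,w)=-\mathfrak a(U_\epsilon,w).
\]
The second identity holds because $u$ is a weak solution and $w$ has zero trace. It therefore suffices to prove two estimates: the residual functional $W\mapsto\mathfrak a(U_\epsilon,W)$ on $H^1_0$ has dual norm $O(\epsilon^{s/2})$ or better, and $\mathfrak a(U_\epsilon,U_\epsilon)=\sum_{r<s}\epsilon^rE_r+O(\epsilon^s)$. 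Coercivity \eqref{eq:r2-energy-coercivity} then gives $\|w\|_{H^1}\le C\|\text{residual}\|$, so the cross and quadratic terms contribute $O(\text{residual}^2)$ once we substitute $\mathfrak a(w,w)=-\mathfrak a(U_\epsilon,w)$, since $\mathcal E=\mathfrak a(U,U)-\mathfrak a(w,w)$.

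For the model energy, change variables $z=\epsilon y$. The scale factor $\epsilon^{-1}$ from the two amplitudes combines with $\epsilon^{3}$ from the volume and $\epsilon^{-2}$ from the two gradients, and these cancel exactly. Inserting the Taylor expansion \eqref{eq:v63-Taylor-remainder} of $\mathbb A_p(\epsilon y)$ gives $\sum_{\ell+b+c<s}\epsilon^{\ell+b+c}\mathfrak a_\ell(V_b,V_c)$ plus several error terms. The first is the Taylor remainder, bounded by $C\epsilon^s\int|y|^s|\nabla V_b||\nabla V_c|$. The weighted bounds \eqref{eq:v63-weighted-profiles} with $N\ge s$ make this finite uniformly in $p$ and the material. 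The remaining errors are the terms with $\ell+b+c\ge s$, which are directly $O(\epsilon^s)$, and the cutoff terms.

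The cutoff terms are where the profiles' lack of compact support matters. In the original variables, $\chi_p$ differs from one only where $|y|\gtrsim\epsilon^{-1}$. There the weights $(1+|y|)^N$ with $N$ large give a gain of $\epsilon^{N}$ for arbitrary fixed $N$, using only finitely many profile seminorms. As the remark after Lemma~\ref{lem:v63-profile} stresses, this gain requires no extra material regularity.

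The residual is the expected main obstacle. Testing against $W\in H^1_0$ and rescaling with $\widetilde W(y)=\epsilon^{1/2}W(\epsilon y)$, whose homogeneous $\dot H^1$ norm equals that of $W$, yields
\[
\sum_{r}\epsilon^r\sum_\ell\epsilon^\ell\mathfrak a_\ell(V_{r-\ell},\widetilde W)
\]
together with the Taylor-remainder and cutoff errors. The recursion \eqref{eq:v63-recursion} together with the equation $\mathfrak a_0(V_0,\cdot)=0$ on $\dot H^1_0$ make all orders below $s$ cancel exactly. What survives is $\epsilon^s$ times $\|(1+|y|)^s\nabla V\|_{L^2}\|\nabla\widetilde W\|_{L^2}$, so the dual norm is $O(\epsilon^s)$, better than needed. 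This step requires that finite-energy half-space test functions be admissible in \eqref{eq:v63-recursion}, which holds since rescaled $H^1_0$ functions extended by zero lie in $\dot H^1_0(\mathbb R^3_+)$. It also requires that the $H^1$ norm of $w$ be controlled by the homogeneous norm on the bounded domain, which Poincar\'e supplies.

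Finally, the smoothness and Lipschitz dependence of $E_{p,r}^\phi$ on the derivative values of order at most $r$ follows from the smooth dependence asserted in Lemma~\ref{lem:v63-profile}. The tensors $\mathbb A_{p,\ell}$ with $\ell\le r$ involve material derivatives of order at most $\ell$ and fixed geometric derivatives. Note that $\mathfrak a_\ell$ and $V_b$ in $E_r$ use derivatives of order at most $r$ in total.
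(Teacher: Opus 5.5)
Your proposal is correct and follows the paper's proof essentially step for step. Both arguments use the transplanted truncated profile sum as an exact lifting of the Dirichlet data, an $O(\epsilon^s)$ bound for the residual functional in the dual of $H^1_0$, coercivity for the zero-trace error $z=u_{p,\epsilon}^{\phi}-U_{p,\epsilon}$, the orthogonality identity $\mathcal E_{p,\phi}(\epsilon)=\mathfrak a(U_{p,\epsilon},U_{p,\epsilon})-\mathfrak a(z,z)$, and the Taylor-plus-tail expansion of the model energy; the paper packages the residual and energy estimates in Proposition~\ref{prop:boundary-residual}.

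One small precision: the zero extension of the rescaled test $\widetilde W$ alone is not in $\dot H^1_0(\mathbb R^3_+)$, because $W$ need not vanish on the interior edge of the chart. The admissible test in \eqref{eq:v63-recursion} is therefore $\chi_p(\epsilon y)\widetilde W(y)$, and moving the cutoff onto the test produces a commutator term with undifferentiated $\widetilde W$. The paper bounds that term by $L^{6/5}$--$L^6$ H\"older and Sobolev; your arbitrary-weight tail gain would also absorb it.
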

\begin{proof}
Set
\begin{equation}\label{eq:r2-truncated-boundary-profile}
 V_\epsilon^{[s-1]}=\sum_{r=0}^{s-1}\epsilon^rV_r,
\end{equation}
where the profiles are defined by \eqref{eq:v63-Poisson} and
\eqref{eq:v63-recursion}. Define the physical approximation by
\begin{equation}\label{eq:v63-transplant}
 U_{p,\epsilon}(\Psi_p(z))
 =\epsilon^{-1/2}R_p\chi_p(z)V^{[s-1]}_\epsilon(z/\epsilon),
 \qquad U_{p,\epsilon}=0\text{ outside the chart}.
\end{equation}
All positive-order correctors have zero trace, so this is an exact
lifting of the prescribed Dirichlet data \eqref{eq:v63-physical-tests}. Lemma~\ref{lem:v64-profile-budget}
gives uniform weighted bounds for all the profiles. The four-term
calculation in Proposition~\ref{prop:boundary-residual}, which uses only
the recursion and these weighted bounds, proves
\begin{equation}\label{eq:v63-residual-bound}
 |\mathfrak a(U_{p,\epsilon},w)|\le C\epsilon^s\|w\|_{H^1(\Omega)},
       \qquad w\in H^1_0(\Omega;\mathbb C^3).
\end{equation}
By \eqref{eq:r2-energy-coercivity}, the zero-trace error
\begin{equation}\label{eq:r2-boundary-solution-error-field}
 z=u_{p,\epsilon}^{\phi}-U_{p,\epsilon}\in H^1_0(\Omega;\mathbb C^3)
\end{equation}
satisfies
\begin{equation}\label{eq:v63-actual-solution-error}
 \|z\|_{H^1(\Omega)}\le C\epsilon^s.
\end{equation}
Since $\mathfrak a(u_{p,\epsilon}^{\phi},z)=0$, Hermitian symmetry gives the exact identity
\begin{equation}\label{eq:v63-energy-orthogonality}
 \mathcal E_{p,\phi}(\epsilon)
 =\mathfrak a(U_{p,\epsilon},U_{p,\epsilon})-\mathfrak a(z,z).
\end{equation}
The last term is $O(\epsilon^{2s})$.
Applying \eqref{eq:v64-approximate-energy} to the first term
proves the expansion in \eqref{eq:v63-energy-expansion}.

For each fixed order, the profile recursion uses finitely many tensor
jets, sums and products, and the constant Green operator from
Lemma~\ref{lem:v63-profile}. Its frequency coefficients are rational smooth
functions of the frozen moduli and polynomial functions of the remaining
jets; all material denominators have fixed positive lower bounds.
Weighted integrals and their parameter derivatives are dominated by the
same weighted profile bounds. The coefficients and their first jet
parameter derivatives are therefore uniformly bounded. The argument
uses order-$s$ material derivatives only in the Taylor remainder.
\end{proof}

\begin{remark}
The expansion concerns energies of the exact physical Dirichlet
solutions, so its coefficients can be estimated using the measured DN
norm. Its uniform order-$s$ remainder is also what permits the scale
choice used to recover boundary derivatives of orders below $s$, without a common
modulus of continuity for the top material derivatives.
\end{remark}

\subsection{Vandermonde extraction at fixed scale ratios}\label{sec:r2-energy-extraction}
\begin{lemma}
\label{lem:v63-extraction}
For $0\le q\le s-1$ choose the real weights $w_{q\ell}$, $1\le\ell\le s$,
uniquely determined by
\begin{equation}\label{eq:v63-Vandermonde}
 \sum_{\ell=1}^s w_{q\ell}\ell^r=\begin{cases}1,&r=q,\\0,&r\ne q,
                       \end{cases}
 \qquad 0\le r\le s-1.
\end{equation}
For two materials in \eqref{eq:w0-interior-class}, use $\delta$ from
\eqref{eq:w0-measured-error}. For either fixed
test and all $0<\epsilon<\epsilon_*/s$,
\begin{equation}\label{eq:v63-coefficient-noise}
 |E_{1,p,q}^{\phi}-E_{2,p,q}^{\phi}|
      \le C_s\bigl(\delta\epsilon^{-q}+\epsilon^{s-q}\bigr).
\end{equation}
Here $E_{j,p,q}^{\phi}$ is the coefficient in
\eqref{eq:v63-energy-expansion} for material $j$, and $\epsilon_*$ is the
uniform scale threshold of Theorem~\ref{thm:v63-energy-expansion}.
In particular, for $\delta$ sufficiently small and $0\le q<s$,
\begin{equation}\label{eq:v63-coefficient-Holder}
 \sup_p|E_{1,p,q}^{\phi}-E_{2,p,q}^{\phi}|
                 \le C_s\delta^{(s-q)/s}.
\end{equation}
The weights depend only on $s$, and the constants are uniform
over the fixed admissible coefficient class.
\end{lemma}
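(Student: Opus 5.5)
We evaluate the energy expansion of Theorem~\ref{thm:v63-energy-expansion} at the $s$ scales $\ell\epsilon$, $\ell=1,\dots,s$, and eliminate all coefficients except the $q$-th with the Vandermonde weights \eqref{eq:v63-Vandermonde}. Since $\ell\epsilon<\epsilon_*$ for $\epsilon<\epsilon_*/s$, the expansion holds at every scale with the same uniform constant. For material $j$ we then have
\[
\mathcal E_{j,p,\phi}(\ell\epsilon)=\sum_{r=0}^{s-1}\ell^r\epsilon^rE^{\phi}_{j,p,r}+O(\epsilon^s),
\]
and hence
\[
\epsilon^qE^{\phi}_{j,p,q}=\sum_{\ell=1}^s w_{q\ell}\,\mathcal E_{j,p,\phi}(\ell\epsilon)+O(\epsilon^s).
\]
The weights exist and are unique because the Vandermonde matrix $(\ell^r)$ with distinct nodes $1,\dots,s$ is invertible. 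They depend only on $s$, so the implied constant is $C\sum_\ell|w_{q\ell}|$, still uniform over the class.

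Next we subtract the two materials. Each energy difference is a DN pairing:
\[
\mathcal E_{1,p,\phi}(\ell\epsilon)-\mathcal E_{2,p,\phi}(\ell\epsilon)=\langle(\Lambda_1-\Lambda_2)f,\overline f\rangle,\qquad f=f^{\phi}_{p,\ell\epsilon}.
\]
The test trace depends only on the fixed chart and test, not on the material, so the same $f$ appears for both materials. By \eqref{eq:w0-measured-error} and the uniform trace bound \eqref{eq:v63-trace-norm}, each difference is at most $\delta C_\phi^2$. This gives
\[
\epsilon^q|E_{1,p,q}^\phi-E_{2,p,q}^\phi|\le C(\delta+\epsilon^s),
\]
which after dividing by $\epsilon^q$ is \eqref{eq:v63-coefficient-noise}, uniformly in $p$.

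For \eqref{eq:v63-coefficient-Holder} we balance the two terms by taking $\epsilon=c\,\delta^{1/s}$ with $c<\epsilon_*/s$ fixed. This scale is admissible once $\delta<(\epsilon_*/(sc))^s$, and then both terms are of order $\delta^{(s-q)/s}$. Taking the supremum over $p$ is legitimate because every constant was uniform in $p$ and in the material.

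The main point to check is that the uniformity of $C$ and $\epsilon_*$ in Theorem~\ref{thm:v63-energy-expansion} really covers both materials and all $p$ simultaneously. It also matters that the tests, and therefore the trace norms, are material independent. The reason is that the difference is taken at the level of the measured energies, never at the level of profiles. Everything else is finite linear algebra.
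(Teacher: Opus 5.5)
Your proposal is correct and follows the paper's proof essentially step for step. It uses the Vandermonde elimination at the scales $\ell\epsilon$, the bound $|\mathcal E_{1,p,\phi}(\ell\epsilon)-\mathcal E_{2,p,\phi}(\ell\epsilon)|\le C\delta$ from the material-independent test traces and the DN operator norm, and the balancing choice $\epsilon\sim\delta^{1/s}$ (the only cosmetic slip is that the remainder constant should be $C\sum_\ell|w_{q\ell}|\ell^s$ rather than $C\sum_\ell|w_{q\ell}|$, which still depends only on $s$).
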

\begin{proof}
The nodes $1,\ldots,s$ are distinct, so the Vandermonde matrix in
\eqref{eq:v63-Vandermonde} is invertible. Apply
\eqref{eq:v63-energy-expansion} at $\ell\epsilon$ and form
\begin{equation}\label{eq:r2-calculation-031}
\epsilon^{-q}\sum_{\ell=1}^s
                   w_{q\ell}\mathcal E_{p,\phi}(\ell\epsilon).
\end{equation}
The moment identities \eqref{eq:v63-Vandermonde} cancel every
polynomial coefficient except the one of order $q$. Thus, for
material $j=1,2$,
\[
 \left|\epsilon^{-q}\sum_{\ell=1}^s
  w_{q\ell}\mathcal E_{j,p,\phi}(\ell\epsilon)
       -E_{j,p,q}^\phi\right|\le C_s\epsilon^{s-q}.
\]

The trace bound \eqref{eq:v63-trace-norm} and the measured DN norm
\eqref{eq:w0-measured-error} give
\begin{equation}\label{eq:r2-calculation-032}
|\mathcal E_{1,p,\phi}(\ell\epsilon)
       -\mathcal E_{2,p,\phi}(\ell\epsilon)|\le C\delta.
\end{equation}
Subtracting the two extraction identities and using
\eqref{eq:r2-calculation-032} therefore yields
\[
 \begin{aligned}
 |E_{1,p,q}^\phi-E_{2,p,q}^\phi|
 &\le C\delta\epsilon^{-q}\sum_{\ell=1}^s|w_{q\ell}|
       +C_s\epsilon^{s-q}\\
 &\le C_s(\delta\epsilon^{-q}+\epsilon^{s-q}),
 \end{aligned}
\]
which proves \eqref{eq:v63-coefficient-noise}.

For $\delta>0$, take $\epsilon=\delta^{1/s}$ and choose a uniform
noise threshold ensuring $s\epsilon<\epsilon_*$. Both terms then
have order $\delta^{(s-q)/s}$, proving
\eqref{eq:v63-coefficient-Holder}. For $q=0$, letting
$\epsilon\downarrow0$ at fixed $\delta$ already gives the Lipschitz
bound $|E_{1,p,0}^\phi-E_{2,p,0}^\phi|\le C_s\delta$.
If $\delta=0$, keep the nodes and weights fixed and let their common
scale $\epsilon\downarrow0$. Since $s-q>0$,
\eqref{eq:v63-coefficient-noise} gives equality of the two coefficients.
Only the measured DN maps and the expansion have been used.
\end{proof}

\subsection{A triangular matrix for the new normal derivative}\label{sec:r2-normal-recovery}

At each step of the normal-derivative induction, two extracted energy
coefficients determine the new pair of Lam\'e derivatives through a
uniformly invertible $2\times2$ matrix.

Let
\begin{equation}\label{eq:r2-normal-jet-vector}
 j_q^+(p)=\binom{\lambda_q^+(p)}{\mu_q^+(p)},\qquad
 \lambda_q^+(p)=\partial_{z_3}^q(\lambda\circ\Psi_p)(0),\quad
 \mu_q^+(p)=\partial_{z_3}^q(\mu\circ\Psi_p)(0).
\end{equation}
The inward normal derivative differs from the outward derivative in
\eqref{eq:w5-normal-rays} by $(-1)^q$ without changing any
stability norm. For $q\ge1$, define the finite jet list
\begin{equation}\label{eq:r2-lower-normal-jets}
 \mathcal K_{p,q}^{<}
 =\bigl(\partial_z^\alpha(f\circ\Psi_p)(0):
       f\in\{\lambda,\mu\},\ |\alpha|\le q,\ \alpha_3<q\bigr).
\end{equation} The collection $\mathcal K_{p,q}^{<}$ includes boundary values and
all lower normal derivatives and their required tangential derivatives.
The derivatives of the coordinate geometry are fixed and implicit in this notation.

The extracted coefficients still have to be related to the material derivatives. At each normal order, the two fixed polarizations produce a triangular system with a leading matrix acting on the two new Lam\'e normal derivatives.

\begin{proposition}
\label{prop:v63-triangular}
For every $1\le q<s$ the two coefficients in
\eqref{eq:v63-energy-expansion} have the form
\begin{equation}\label{eq:v63-coefficient-triangular}
 \binom{E_{p,q}^S}{E_{p,q}^N}
       =\mathsf M_q(\lambda(p),\mu(p))j_q^+(p)
          +\mathcal F_{p,q}(\mathcal K_{p,q}^{<}),
\end{equation}
where $j_q^+$ is defined in \eqref{eq:r2-normal-jet-vector},
$\mathcal K_{p,q}^{<}$ in \eqref{eq:r2-lower-normal-jets}, and
\begin{equation}\label{eq:r2-polarized-energy-coefficients}
 E_{p,q}^S=E_{p,q}^{\phi^S},\qquad E_{p,q}^N=E_{p,q}^{\phi^N}.
\end{equation}
The coefficients on the right are from \eqref{eq:v63-energy-expansion},
with the tests \eqref{eq:v63-two-tests}.
The function $\mathcal F_{p,q}$ is uniformly Lipschitz on the finite-order derivative range.
Put
\begin{equation}\label{eq:v63-moment-definitions}
 \begin{gathered}
 I_q=\int_{\mathbb{R}^2}|\psi_\partial(\xi)|^2|\xi|^{1-q}\,\mathrm{d}\xi>0,\\
 B_q=3d_0^2+4d_0g_0(q+1)+2g_0^2(q+1)(q+2).
 \end{gathered}
\end{equation}
Here $g_0,d_0$ are defined in \eqref{eq:v63-Poisson}, and
$\psi_\partial$ is the fixed annular test profile in \eqref{eq:v63-two-tests}.
Then
\begin{equation}\label{eq:v63-moment-matrix}
 \mathsf M_q=I_q
 \begin{pmatrix}
   0&2^{-q}\\
   2^{-q-1}d_0^2&2^{-q-1}B_q
 \end{pmatrix},
 \qquad
 \det\mathsf M_q=-2^{-2q-1}I_q^2d_0^2\ne0.
\end{equation}
The matrices and their inverses are uniformly bounded for each of the
finitely many orders $q<s$ under \eqref{eq:w0-interior-class}.
\end{proposition}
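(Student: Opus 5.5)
We isolate the dependence of $E_{p,q}^\phi$ on the new normal jet $j_q^+(p)$. Everything else is absorbed into a Lipschitz remainder $\mathcal F_{p,q}$.

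By \eqref{eq:v63-energy-expansion}, $E_{p,q}^\phi=\sum_{\ell+b+c=q}\mathfrak a_\ell(V_b,V_c)$. Each profile $V_b$, $b\ge1$, is built from the Taylor tensors $\mathbb A_{p,r}$ with $r\le b$ through the recursion \eqref{eq:v63-recursion}. A term can contain a derivative of order $q$ of the material only if it involves the tensor $\mathbb A_{p,q}$, and then only through its pure normal part $\partial_{z_3}^q$. Mixed derivatives with $\alpha_3<q$, and all derivatives of the geometry, belong to $\mathcal K^<_{p,q}$ or to fixed data. There are three places where $\mathbb A_{p,q}$ appears: the term $\mathfrak a_q(V_0,V_0)$, and the two terms $\mathfrak a_0(V_q,V_0)$ and $\mathfrak a_0(V_0,V_q)$, where $V_q$ contains $\mathbb A_{p,q}$ through the source $\operatorname{div}(\mathbb A_{p,q}\nabla V_0)$.

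Since $V_q$ has zero trace and $V_0$ is the Poisson solution, $\mathfrak a_0(V_q,V_0)=0$: integrating by parts, $L_0^+V_0=0$ and the boundary term vanishes. The same holds for $\mathfrak a_0(V_0,V_q)$, by Hermitian symmetry of $\mathfrak a_0$. Any other occurrence of $\mathbb A_{p,q}$ would force $\ell+b+c>q$. Hence the only contribution linear in $j_q^+$ is
\[
\int_{\mathbb R^3_+}\frac{t^q}{q!}\,\partial_{z_3}^q\mathbb A_p(0)\nabla V_0:\overline{\nabla V_0}\,dy .
\]
The geometric factors in $\partial_{z_3}^q\mathbb A_p(0)$ that do not multiply a $q$-th material derivative go into $\mathcal F_{p,q}$. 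The material part at $z=0$ is isotropic, so this term equals
\[
\frac{1}{q!}\int t^q\bigl(\lambda_q^+|\operatorname{div}V_0|^2+2\mu_q^+|\varepsilon(V_0)|^2\bigr).
\]
Lipschitz continuity of $\mathcal F_{p,q}$ follows from the smooth dependence stated in Theorem~\ref{thm:v63-energy-expansion} and Lemma~\ref{lem:v63-profile}.

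Next we evaluate this integral with Plancherel in $y'$, using the explicit profile \eqref{eq:v63-Poisson} and \eqref{eq:r2-poisson-profile-identities}.

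For the shear test $\phi^S$ we have $a=\psi_\partial\widehat\xi^\perp$, so $a_\parallel=a_3=0$. Then $c_a=0$, $b=0$, $\operatorname{div}V_0=0$, and $V_0=e^{-\varpi t}a$. This gives $2|\varepsilon|^2=\varpi^2|\psi_\partial|^2e^{-2\varpi t}$, and
\[
\int_0^\infty t^qe^{-2\varpi t}\,dt=q!\,(2\varpi)^{-q-1}.
\]
The shear row is therefore $(0,\;2^{-q-1}I_q\cdot 2\cdot\tfrac12\cdots)$, which should reproduce the entry $I_q2^{-q}$ of \eqref{eq:v63-moment-matrix} once the factor $2$ is tracked.

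For the normal test $\phi^N$ we have $a=\psi_\partial e_3$, so $c_a=-\psi_\partial$. Then $\operatorname{div}V_0=-\varpi d_0\psi_\partial e^{-\varpi t}$, which gives the $\lambda$ entry $2^{-q-1}I_qd_0^2$. The strain has entries of the form $e^{-\varpi t}(\alpha+\beta\varpi t)$, and the time integrals $\int t^{q+j}e^{-2\varpi t}$ with $j=0,1,2$ produce the factors $1$, $(q+1)$ and $(q+1)(q+2)$. Summing the squared entries of $\varepsilon(V_0)$ should give $B_q$.

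The determinant is then immediate, because the matrix is triangular after a column swap. Uniform invertibility follows from $d_0\ge c>0$, since $\lambda_0+3\mu_0$ is bounded above, and from $I_q>0$.

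The main obstacle is the bookkeeping in this last computation. Getting the exact coefficients of $B_q$ and the powers of $2$ requires care with the symmetrized gradient of $V_0$ and the $\widehat\xi$-components. A second point of care is confirming that no other term carries the $q$-th normal derivative: in particular $\mathfrak a_\ell(V_b,V_c)$ with $b+c\ge1$ only uses lower-order tensors, and the cancellation of $\mathfrak a_0(V_q,V_0)$ relies on $V_0$ solving the frozen equation weakly against zero-trace functions. For the computation itself I would first verify the $q=0$ case against the impedance \eqref{eq:v62-impedance} as a consistency check.
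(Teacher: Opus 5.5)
Your structural reduction is the same as the paper's. The only terms of $E^\phi_{p,q}$ containing $V_q$ are $\mathfrak a_0(V_q,V_0)$ and $\mathfrak a_0(V_0,V_q)$, and these vanish because $V_q$ has zero trace and $V_0$ solves the frozen homogeneous problem. Every other term uses tensor jets of order $<q$. Since $D\Psi_p(0)=R_p$ is orthogonal, the part of the pure normal Taylor term with all $q$ derivatives on the material is the isotropic form with moduli $(\lambda_q^+,\mu_q^+)$. That part is fine.

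The gap is the final evaluation, which is exactly what produces the displayed $\mathsf M_q$ and $B_q$. You assert the entries rather than compute them, and the one density you do write down is wrong. For $\phi^S$, take $\varpi=1$ and $\widehat\xi=e_1$, so that $\widehat V_0^S=\psi_\partial e^{-t}e_2$. The strain then has two kinds of nonzero entries:
\begin{itemize}
\item $\varepsilon_{12}=\varepsilon_{21}=\tfrac{i}{2}\psi_\partial e^{-t}$, from the tangential Fourier derivative;
\item $\varepsilon_{23}=\varepsilon_{32}=-\tfrac12\psi_\partial e^{-t}$, from the normal derivative.
\end{itemize}
Hence $2|\varepsilon(\widehat V_0^S)|^2=2\varpi^2|\psi_\partial|^2e^{-2\varpi t}$, not $\varpi^2|\psi_\partial|^2e^{-2\varpi t}$. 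Your value would give the $\mu$-weight $2^{-q-1}I_q$. The correct value gives $\frac1{q!}\int 2\varpi^2|\psi_\partial|^2\,q!\,(2\varpi)^{-q-1}\,d\xi=2^{-q}I_q$. The $q=0$ check you propose would have exposed this, since it must reproduce $E^S_{p,0}=\mu(p)I_0$, i.e.\ the shear entry $\varpi\mu_0$ of the impedance.

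For $\phi^N$ you likewise need the actual strain entries, not just their form. At $\varpi=1$, $\widehat\xi=e_1$, the profile is $\psi_\partial e^{-t}[-ig_0t\,e_1+(1+g_0t)e_3]$. Up to the factor $\psi_\partial$, its strain entries are
\begin{itemize}
\item $\varepsilon_{11}=g_0te^{-t}$;
\item $\varepsilon_{33}=-(d_0+g_0t)e^{-t}$;
\item $\varepsilon_{13}=\varepsilon_{31}=\tfrac{i}{2}(d_0+2g_0t)e^{-t}$.
\end{itemize}
Hence $2|\varepsilon|^2=\varpi^2|\psi_\partial|^2e^{-2\varpi t}[3d_0^2+8d_0g_0\varpi t+8g_0^2(\varpi t)^2]$. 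For $j=0,1,2$ use $\frac1{q!}\int_0^\infty \varpi^jt^{q+j}e^{-2\varpi t}\,dt=\frac{(q+j)!}{q!}\,2^{-j}(2\varpi)^{-q-1}$. Weighted by $t^q/q!$, the bracket then integrates to $(2\varpi)^{-q-1}B_q$, and $\int\varpi^2|\psi_\partial|^2(2\varpi)^{-q-1}B_q\,d\xi=2^{-q-1}B_qI_q$.

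Note that $\det\mathsf M_q$ and the uniform inverse bound involve only the zero entry and the two off-diagonal entries. So the later induction is insensitive to $B_q$, and even to your factor-of-two slip. But the proposition asserts the exact matrix, so these two computations are part of what has to be proved.
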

\begin{proof}
In $E_{p,q}^{\phi}=\sum_{\ell+b+c=q}\mathfrak a_\ell(V_b,V_c)$, the only terms
containing $V_q$ are $\mathfrak a_0(V_q,V_0)$ and $\mathfrak a_0(V_0,V_q)$. Both vanish: $V_q$
has zero trace and $V_0$ solves the homogeneous constant problem. Hence
\begin{equation}\label{eq:v63-highest-linear}
 E_{p,q}^{\phi}=\mathfrak a_q(V_0,V_0)
                    +\text{a smooth function of tensor derivatives of order }<q.
\end{equation}
The pure-normal part of the order-$q$ Taylor tensor in
$\mathfrak a_q(V_0,V_0)$ contributes exactly
\begin{equation}\label{eq:v63-weighted-energy-moments}
 \frac{\lambda_q^+}{q!}\int_{\mathbb{R}^3_+}t^q|\operatorname{div} V_0|^2\,\mathrm{d} y
 +\frac{2\mu_q^+}{q!}\int_{\mathbb{R}^3_+}t^q
                                      |\varepsilon(V_0)|^2\,\mathrm{d} y.
\end{equation}
Indeed the coordinate Jacobian is orthogonal at the base point. In the
$q$th tensor derivative, derivatives falling on coordinate factors
multiply lower-order material-parameter derivatives. All other order-$q$ material
monomials have smaller normal order and belong to $\mathcal K_{p,q}^{<}$.
For clarity, the tensor in Appendix~\ref{app:boundary-residual} is a sum
of fixed smooth coordinate factors times $\lambda\circ\Psi_p$ or
$\mu\circ\Psi_p$. After applying $\partial_z^\alpha$, $|\alpha|=q$, to the transformed tensor,
we separate the terms according to whether a derivative falls on a coordinate
factor. The terms with all $q$ derivatives on a material coefficient are
linear in its order-$q$ coordinate derivatives. Every term with a differentiated
coordinate factor uses a material derivative of order at most $q-1$.
Among the terms with all $q$ derivatives on a material component,
$\alpha=(0,0,q)$ gives exactly
\eqref{eq:v63-weighted-energy-moments}; every $\alpha$ with
$\alpha_3<q$ is in $\mathcal K_{p,q}^{<}$ by
\eqref{eq:w5-mixed-normal-trace}. Also $V_r$ for $r<q$ depends only
on tensor derivatives through order $r$. Thus the remainder in
\eqref{eq:v63-highest-linear} contains no order-$q$ pure normal
material derivative, even nonlinearly. This proves the structural
decomposition including all mixed and geometric terms.
Uniform Lipschitz dependence follows from
Theorem~\ref{thm:v63-energy-expansion} and
Lemma~\ref{lem:v63-profile} on the bounded finite-order derivative set.

For $\phi^S$, \eqref{eq:v63-Poisson} gives
$\widehat V_0^S=\psi_\partial e^{-\varpi t}\widehat{\xi}^\perp$. It is divergence free, and at each
frequency
$2|\varepsilon(\widehat V_0^S)|^2=2\varpi^2|\psi_\partial|^2e^{-2\varpi t}$, and the strain
includes the tangential Fourier derivatives. Since
$\int_0^\infty t^qe^{-2\varpi t}\mathrm{d} t/q!=(2\varpi)^{-q-1}$, its two weights are
$0$ and $2^{-q}I_q$.

For $\phi^N$ one has
\begin{equation}\label{eq:v63-normal-profile}
 \widehat V_0^N(\xi,t)
   =\psi_\partial e^{-\varpi t}\bigl[e_3+g_0\varpi t(-i\widehat{\xi}+e_3)\bigr],
 \qquad
 \widehat{\operatorname{div} V_0^N}=-d_0\varpi\psi_\partial e^{-\varpi t}.
\end{equation}
At $\varpi=1$, $\widehat{\xi}=e_1$, the three nonzero strain types are
\begin{equation}\label{eq:r2-calculation-033}
\varepsilon_{11}=g_0t e^{-t},\qquad
 \varepsilon_{33}=-(d_0+g_0t)e^{-t},\qquad
 \varepsilon_{13}=\varepsilon_{31}
                    =\tfrac{i}{2}(d_0+2g_0t)e^{-t}.
\end{equation}
Restoring the frequency $\varpi$ and the profile $\psi_\partial$ in
\eqref{eq:r2-calculation-033}, we obtain
\begin{equation}\label{eq:v63-normal-strain-density}
 2|\varepsilon(\widehat V_0^N)|^2
   =\varpi^2|\psi_\partial|^2e^{-2\varpi t}
          [3d_0^2+8d_0g_0\varpi t+8g_0^2(\varpi t)^2].
\end{equation}
Integrate as in the shear case. The resulting weights are
$2^{-q-1}d_0^2I_q$ and $2^{-q-1}B_qI_q$, proving
\eqref{eq:v63-moment-matrix}. Its determinant is the displayed nonzero
quantity; $d_0$ has a uniform positive lower bound and $I_q$ is a fixed
positive number.
\end{proof}

\begin{remark}
At each order the shear test isolates the new normal derivative
of $\mu$, and the normal test then determines that of $\lambda$.
The remainder contains only lower normal orders and their tangential
derivatives. This triangular structure makes an induction possible
without assuming that any boundary derivatives agree beforehand.
\end{remark}

\subsection{The full induction with tangential derivatives}\label{sec:r2-boundary-induction}

The triangular recovery system and tangential interpolation yield estimates
for all Cartesian boundary derivatives below order $s$, including the
exact-data case.

For two candidates put
\begin{equation}\label{eq:v63-normal-discrepancies}
 n_a=\lVert \partial_\nu^a(\lambda_1-\lambda_2)\rVert _{C^0(\partial\Omega)}
        +\lVert \partial_\nu^a(\mu_1-\mu_2)\rVert _{C^0(\partial\Omega)},
 \qquad 0\le a\le s-1.
\end{equation}
Normal derivatives mean derivatives along the fixed straight normal
segments, as in \eqref{eq:w5-normal-rays}. This definition avoids
unspecified extensions of the normal vector field. Cartesian boundary
derivative discrepancies $J_q$ retain their definition in
\eqref{eq:w0-boundary-jets}, truncated at order $q$.

Proposition~\ref{prop:v63-triangular} and the tangential interpolation
estimate \eqref{eq:v63-tangential-interpolation} yield the following
boundary result, independently of the global CGO construction.

\begin{theorem}
\label{thm:v63-boundary-jet-stability}
For the fixed class $\mathscr C^s(\Omega;M,c_0)$ in
\eqref{eq:w0-interior-class}, there
are $C,\delta_0>0$ such that every two members with $0<\delta<\delta_0$
satisfy, simultaneously for every integer $0\le q<s$,
\begin{equation}\label{eq:v63-jet-stability}
 J_q\le C\delta^{(s-q)/s}.
\end{equation}
Here $J_q$ and $\delta$ are defined in
\eqref{eq:w0-boundary-jets} and \eqref{eq:w0-measured-error}.
The constant depends only on the fixed class and geometry. If $\delta=0$,
all these boundary derivatives through order $s-1$ are equal.
\end{theorem}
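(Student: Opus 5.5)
The plan is an induction on the normal order $a$, with the pure-normal discrepancies $n_a$ of \eqref{eq:v63-normal-discrepancies} as the induction quantities. Tangential interpolation then converts normal-derivative control into control of all Cartesian derivatives. The target intermediate bound is
\[
 n_a\le C\delta^{(s-a)/s},\qquad 0\le a\le s-1,
\]
together with the corresponding bound for mixed derivatives $\partial_{z'}^\gamma\partial_{z_3}^a$ with $|\gamma|+a\le q$ in the fixed normal charts \eqref{eq:w5-explicit-normal-chart}. By \eqref{eq:w5-mixed-normal-trace}, the chart Jacobian, and its uniformly bounded inverse, those mixed quantities are equivalent to $J_q$. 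Throughout, we take $\delta<\delta_0$ so that $\epsilon=\delta^{1/s}$ is admissible in Lemma~\ref{lem:v63-extraction}.

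\emph{Base step $a=0$.} Lemma~\ref{lem:v63-extraction} with $q=0$ gives the Lipschitz bound $|E^\phi_{1,p,0}-E^\phi_{2,p,0}|\le C\delta$ for both tests. The order-zero coefficient $E_{p,0}^\phi=\mathfrak a_0(V_0,V_0)$ is an explicit expression in $(\lambda(p),\mu(p))$:
\begin{itemize}
\item the shear test returns a multiple of $\mu_0 I_{-1}$;
\item the normal test returns a combination involving $a_Z$ from \eqref{eq:v62-impedance}.
\end{itemize}
These can be inverted uniformly Lipschitz via \eqref{eq:v62-impedance-inversion}, or equivalently via the $q=0$ analogue of Proposition~\ref{prop:v63-triangular}. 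This gives $n_0\le C\delta$.

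\emph{Induction step $a=q\ge1$.} Suppose $n_b\le C\delta^{(s-b)/s}$ for $b<q$, and that the mixed derivatives of normal order $b<q$ and total order at most $q$ are controlled by $C\delta^{(s-q)/s}$. Subtracting the two instances of \eqref{eq:v63-coefficient-triangular} gives
\[
 \mathsf M_q(\lambda_1(p),\mu_1(p))\,(j^+_{1,q}-j^+_{2,q})
 =\Delta E_{p,q}
 -[\mathsf M_q(\cdot_1)-\mathsf M_q(\cdot_2)]\,j^+_{2,q}
 -[\mathcal F_{p,q}(\mathcal K^<_{1})-\mathcal F_{p,q}(\mathcal K^<_{2})].
\]
Each term on the right is bounded as follows:
\begin{itemize}
\item the first by $C\delta^{(s-q)/s}$, from \eqref{eq:v63-coefficient-Holder};
\item the second by $Cn_0\le C\delta$, using the $C^s$ bound on $j^+_{2,q}$;
\item the third by the Lipschitz property of $\mathcal F_{p,q}$ together with the inductive control of $\mathcal K^<_{p,q}$.
\end{itemize}
The uniform invertibility of $\mathsf M_q$ from \eqref{eq:v63-moment-matrix} then yields $n_q\le C\delta^{(s-q)/s}$.

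\emph{Main obstacle: the mixed jets in $\mathcal K^<_{p,q}$.} These include tangential derivatives $\partial_{z'}^\gamma$ of lower normal derivatives $\partial_\nu^b(\lambda_1-\lambda_2)$ with $b+|\gamma|\le q$. The inductive bound only controls $\partial_\nu^b$ in $C^0(\partial\Omega)$, at rate $\delta^{(s-b)/s}$. To close this, I would apply the tangential interpolation estimate \eqref{eq:v63-tangential-interpolation} on the compact surface $\partial\Omega$ to the function $f_b=\partial_\nu^b(\lambda_1-\lambda_2)\circ P_p$ (and the same for $\mu$). The a priori bound gives $\|f_b\|_{C^{s-b}}\le CM$, since straight-normal differentiation in the fixed tubular map costs only bounded geometric factors. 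Gagliardo--Nirenberg interpolation then gives
\[
 \|\partial^\gamma_{z'}f_b\|_{C^0}\le C\|f_b\|_{C^0}^{1-|\gamma|/(s-b)}M^{|\gamma|/(s-b)}
 \le C\delta^{(s-b-|\gamma|)/s}\le C\delta^{(s-q)/s},
\]
because $(s-b)(1-|\gamma|/(s-b))=s-b-|\gamma|$. These exponents match exactly, which is why the order-$q$ step loses no power.

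\emph{Conclusion.} Carrying the induction through $q=s-1$ and converting back to Cartesian traces yields \eqref{eq:v63-jet-stability} simultaneously for all $q$. For $\delta=0$, run the same induction using the exact-equality case of Lemma~\ref{lem:v63-extraction}. Each extracted coefficient then agrees, so every right-hand side vanishes. Interpolation is not needed here: equality of $f_b$ on the whole of $\partial\Omega$ forces equality of its tangential derivatives, so all boundary derivatives through order $s-1$ coincide.
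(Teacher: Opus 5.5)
Your proposal is correct and follows the paper's proof essentially step for step. The paper's argument has the same sequence: the Lipschitz order-zero anchor from Lemma~\ref{lem:v63-extraction} and the impedance inversion; tangential interpolation of the lower normal traces, with the exponent identity $\frac{s-b}{s}\bigl(1-\frac{|\gamma|}{s-b}\bigr)=\frac{s-b-|\gamma|}{s}$ so that no power is lost; inversion of the triangular system using $\mathsf M_q$ of the first material and treating the matrix difference as $O(n_0)$; and the chart-to-Cartesian conversion, with the $\delta=0$ case handled by exact coefficient equality. Two harmless slips: the shear coefficient is $\mu(p)I_0$ rather than a multiple of $I_{-1}$, and the interpolation inequality carries an additional $+C\|f_b\|_{C^0}$ term, which is absorbed for small $\delta$.
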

\begin{proof}
\textbf{Step 1: the order-zero anchor.}
For the two annular tests the zeroth coefficients are
\begin{equation}\label{eq:r2-calculation-034}
E_{p,0}^S=\mu(p)I_0,\qquad
 E_{p,0}^N=a_Z(p)I_0.
\end{equation}
Here $I_0$ and $a_Z$ are defined in \eqref{eq:v63-moment-definitions}
and \eqref{eq:v62-impedance}. Use Lemma~\ref{lem:v63-extraction} at $q=0$ and the uniformly Lipschitz
inverse \eqref{eq:v62-impedance-inversion}. It follows that $n_0\le C\delta$.
This is a direct Lipschitz boundary-value estimate in the DN operator norm of \eqref{eq:w0-measured-error}.

\textbf{Step 2: estimate mixed derivatives of lower normal order.}
Suppose $n_a\le C\delta^{(s-a)/s}$ for $0\le a<q$. The $a$th normal
boundary derivative is uniformly $C^{s-a}$ in surface coordinates. Fixed
surface-chart interpolation, using cutoffs in slightly larger charts,
gives for $0\le b\le q-a$
\begin{equation}\label{eq:v63-tangential-interpolation}
 \lVert \partial_{y'}^b\partial_\nu^a(f_1-f_2)\rVert _{C^0}
  \le C n_a^{\,1-b/(s-a)}
  \le C\delta^{(s-q)/s},\qquad f=\lambda,\mu.
\end{equation}
The usual additional $Cn_a$ term is absorbed for $n_a\le1$. The bound
uses only $s-a$ tangential derivatives, already furnished by the $C^s$
material prior. In particular the exponents are not replaced at every
stage by one fixed worst-case exponent: the identity
\begin{equation}\label{eq:r2-calculation-035}
\frac{s-a}{s}\left(1-\frac{q-a}{s-a}\right)=\frac{s-q}{s}
\end{equation}
is retained. By \eqref{eq:w5-mixed-normal-trace}, a coordinate derivative of lower normal order
$\partial_{z'}^\gamma\partial_{z_3}^a(f\circ\Psi_p)$ has $a<q$ and
$|\gamma|+a\le q$. It is a tangential derivative of the already
recovered $a$th normal boundary function. Hence every difference in
$\mathcal K_{p,q}^{<}$ is $O(\delta^{(s-q)/s})$, uniformly in $p$.
This is the order of the induction: lower normal functions, their
tangential derivatives through total order $q$, then the new normal
function of order $q$.

Here is an elementary justification of the interpolation inequality used
in Step 2. On a larger fixed surface chart, multiply the relevant trace
by a cutoff equal to one on the smaller chart. Let $F$ be the resulting compactly supported function, and set
\begin{equation}\label{eq:r3-tangential-interpolation-parameters}
N=s-a,\qquad A=\|F\|_\infty.
\end{equation}
Its $C^N$ norm is uniformly bounded. Choose a smooth compactly supported, possibly
signed convolution kernel $K$ with integral one and moments zero through
order $N-1$. Such a kernel is obtained from a fixed bump by a finite
linear combination of its distinct dilates: a Vandermonde system sets
each moment order to zero. Define
\begin{equation}\label{eq:r3-tangential-kernel}
K_\eta=\eta^{-2}K(\cdot/\eta).
\end{equation}
Taylor's formula applied to $\partial^\beta F$, $|\beta|=b<N$, gives
\begin{equation}\label{eq:w5-interpolation-proof}
 \|\partial^\beta(F-K_\eta*F)\|_\infty\le C\eta^{N-b}\|F\|_{C^N},
 \qquad
 \|\partial^\beta K_\eta*F\|_\infty\le C\eta^{-b}A.
\end{equation}
For small $A>0$ take $\eta=A^{1/N}$; for larger $A$ enlarge the fixed
constant. If $A=0$, $F=0$. This proves
$\|\partial^\beta F\|_\infty\le C(A^{1-b/N}+A)$.
Only $N$ derivatives, not $N+1$, were used. In the induction $b\le q-a<N$.

\textbf{Step 3: recover the new normal derivative.}
Subtract \eqref{eq:v63-coefficient-triangular} for the two materials, using
$\mathsf M_q$ of the first material on the left. The difference of the
two matrices is $O(n_0)$ because it depends smoothly only on the boundary
values. The second material's $q$th normal derivative is uniformly bounded.
Consequently Lemma~\ref{lem:v63-extraction}, Step 2 and the uniform inverse
of \eqref{eq:v63-moment-matrix} give
\begin{equation}\label{eq:v63-normal-induction}
 \begin{split}
 n_q&\le C\left[\delta^{(s-q)/s}+n_0
      +\sum_{a=0}^{q-1}n_a^{(s-q)/(s-a)}\right]\\
     &\le C\delta^{(s-q)/s}.
 \end{split}
\end{equation}
Construct the finitely many induction constants before choosing the
noise level, and let $C_*$ bound them. Then one may shrink a single
threshold to ensure both $s\delta^{1/s}<\epsilon_*$ and
$C_*\delta^{1/s}\le1$. This makes every interpolation absorption
valid for all $q<s$ in the same class. No new class or threshold is
chosen after $q$ or a material pair is fixed.

\textbf{Step 4: return to full Cartesian boundary derivatives and handle zero noise.}
For each $a\le q$ apply the same interpolation up to tangential order
$q-a$. For the coordinate conversion, define
\begin{equation}\label{eq:r3-inverse-chart-jacobian}
B(z)=D\Psi_p(z)^{-1},
\end{equation}
where $\Psi_p$ is given by \eqref{eq:w5-explicit-normal-chart}.
For a Cartesian multi-index $\alpha$ with $1\le|\alpha|\le q$,
repeatedly applying $\partial_{x_i}=\sum_bB_{bi}(z)\partial_{z_b}$ gives
\begin{equation}\label{eq:w5-cartesian-conversion}
 (\partial_x^\alpha f)\circ\Psi_p
   =\sum_{1\le|\beta|\le|\alpha|}
        c_{\alpha\beta,p}(z)\partial_z^\beta(f\circ\Psi_p),
 \qquad \|c_{\alpha\beta,p}\|_\infty\le C_{q,\Omega}.
\end{equation}
The coefficients are fixed functions of the coordinate geometry.
At $z_3=0$ each term on the right is, by
\eqref{eq:w5-mixed-normal-trace}, a tangential derivative of a normal
boundary function with total order at most $q$. Every one has just been
bounded; the order-zero term is the anchor. Summing the estimates over the finite atlas and over all $|\alpha|\le q$, we
obtain \eqref{eq:v63-jet-stability}. If $\delta=0$, coefficient extraction follows
by letting $\epsilon\downarrow0$. We repeat the induction from order zero and obtain equality of all the normal
derivatives under consideration. Their surface
functions agree, hence so do their existing tangential derivatives; thus
$J_q=0$ for $q<s$.

\end{proof}

\begin{remark}
The order-dependent interpolation estimates preserve the
exponent $(s-q)/s$ through the induction.
The estimate for $J_{s-1}$ supplies the boundary control
used in Theorem~\ref{thm:v62-compensated-extension}.
Sharpness requires the additional strict-slack hypothesis
of Proposition~\ref{prop:w5-boundary-sharpness}.
\end{remark}

Boundary-localized perturbations with bounded $C^s$ norm give the following
obstruction to improving the exponents.

\begin{proposition}
\label{prop:w5-boundary-sharpness}
Fix the class $\mathscr C^s(\Omega;M,c_0)$ in
\eqref{eq:w0-interior-class}. Suppose there are real
constants $(\lambda_b,\mu_b)$ and a number $\sigma_b>0$ such that
\begin{equation}\label{eq:w5-explicit-prior-slack}
 \begin{gathered}
 \mu_b\ge c_0+\sigma_b,\qquad
 3\lambda_b+2\mu_b\ge c_0+\sigma_b,\\
 N_b:=\|\lambda_b\|_{C^s(\overline\Omega)}+
       \|\mu_b\|_{C^s(\overline\Omega)}\le M-\sigma_b.
 \end{gathered}
\end{equation}
Choose one boundary patch, one point $p$ in it and the profiles
\eqref{eq:r2-sharpness-normal-profile}. There are fixed $a_0,a_{\rm amp}>0$ such that for each integer
$q\in\{0,\ldots,s\}$ the entire family
$(\lambda_{a,q},\mu_b)$, $0<a<a_0$, belongs to this fixed class. Compared
with $(\lambda_b,\mu_b)$, its DN discrepancy and Cartesian
jet discrepancy satisfy
\begin{equation}\label{eq:w5-quantified-sharpness}
 0<\delta_{a,q}\le C_qa_{\rm amp}a^s,\qquad
 J_q\ge c_qa_{\rm amp}a^{s-q}.
\end{equation}
The constants and the amplitude are independent of $a$. Thus for $q<s$
no estimate $J_q\le C\delta^b$ with $b>(s-q)/s$ can hold uniformly on
this fixed class. For $q=s$ no uniform modulus tending to zero with
$\delta$ can bound $J_s$ on this class.
\end{proposition}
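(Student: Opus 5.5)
The plan is to perturb only $\lambda$ near one boundary point by a boundary-localized profile of width $a$ and amplitude $a_{\rm amp}a^s$, while keeping $\mu=\mu_b$. Then $\delta$ is estimated by the forward Lipschitz bound \eqref{eq:w5-forward-lipschitz}, and the jet discrepancy is computed directly.

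\emph{Construction.} Fix $p$ in a boundary chart $\Psi_p$ from \eqref{eq:w5-explicit-normal-chart}. Choose $\eta\in C_c^\infty(\R^2)$ with $\eta(0)=1$, and $\psi_q\in C_c^\infty(\R)$ with $\psi_q^{(q)}(0)=1$, or more generally $\psi_q^{(q)}(0)\ne0$. This fixes the profiles \eqref{eq:r2-sharpness-normal-profile}. Set
\[
\lambda_{a,q}(\Psi_p(z))=\lambda_b+a_{\rm amp}a^s\,\eta(z'/a)\psi_q(z_3/a),
\]
extended by $\lambda_b$ off the chart. For small $a$ the support lies in a fixed small neighborhood of $p$.

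\emph{Membership in the class.} Each $z$-derivative of order $k\le s$ of the perturbation costs $a^{-k}$. Composing with the fixed smooth chart inverse therefore gives a $C^s(\overline\Omega)$ norm at most $C_{\rm geo}a_{\rm amp}$, uniformly for $a<a_0$, and its $C^0$ norm is $O(a_{\rm amp}a^s)$. Choose $a_{\rm amp}$ so that $C_{\rm geo}a_{\rm amp}\le\sigma_b$ and $3a_{\rm amp}\|\eta\psi_q\|_\infty\le\sigma_b$. Then \eqref{eq:w5-explicit-prior-slack} keeps $\mu\ge c_0$, $3\lambda+2\mu\ge c_0$, and the total $C^s$ norm at most $M$. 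This is exactly where the strict slack is used.

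\emph{Upper bound on $\delta$.} By \eqref{eq:w5-forward-lipschitz} on $D=\Omega$, $\delta_{a,q}\le C\|\lambda_{a,q}-\lambda_b\|_\infty\le Ca_{\rm amp}a^s$.

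\emph{Positivity of $\delta$.} This is the main point needing care. I would argue via Theorem~\ref{thm:v63-boundary-jet-stability} in its $\delta=0$ form: if $\delta_{a,q}=0$, all boundary jets through order $s-1$ would agree. Choose $\psi_q$ to also satisfy $\psi_q(0)\ne0$, harmless by adding a small multiple of a bump with value one at $0$ and fixing it. Then the boundary values differ at $p$, so $J_0>0$, a contradiction. This works for all $q$, including $q=s$, since only order $0$ is used.

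\emph{Lower bound on $J_q$.} Evaluate at $p$ along the inward normal. Since $\partial_{z_3}^q$ at $z=0$ of $f\circ\Psi_p$ is $\pm\partial_\nu^q f(p)$ by \eqref{eq:w5-mixed-normal-trace}, and the normal derivative is a fixed unit-coefficient Cartesian combination $\sum_{|\alpha|=q}c_\alpha\partial^\alpha$ at $p$ with $|c_\alpha|\le C$ (plus lower-order terms), we get
\[
\sum_{|\alpha|\le q}|\partial^\alpha(\lambda_{a,q}-\lambda_b)(p)|\ge c\,|\partial_\nu^q(\lambda_{a,q}-\lambda_b)(p)| - C\sum_{k<q}a_{\rm amp}a^{s-k}.
\]
The leading term equals $a_{\rm amp}a^{s-q}|\psi_q^{(q)}(0)|$. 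The straight-normal-segment definition makes the lower-order terms vanish up to geometry-dependent combinations of lower derivatives, which are $O(a^{s-q+1})$. Hence $J_q\ge c_qa_{\rm amp}a^{s-q}$ for $a<a_0$.

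\emph{Consequences.} If $J_q\le C\delta^b$ with $b>(s-q)/s$, then $a^{s-q}\lesssim a^{sb}$, which fails as $a\to0$. The same argument rules out any $\omega(t)=o(t^{(s-q)/s})$. For $q=s$, $J_s\ge c_sa_{\rm amp}>0$ while $\delta_{a,s}\to0$, so no modulus tending to zero can bound $J_s$.
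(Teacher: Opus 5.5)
Your proof is correct. The construction, class membership, forward bound for $\delta_{a,q}$, jet lower bound and consequences agree with the paper up to inessential choices. The genuine difference is how you prove $\delta_{a,q}>0$.

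\emph{The paper's route.} It takes a sign-definite perturbation $\lambda_{a,q}-\lambda_b=a_{\rm amp}a^s\chi(y')\vartheta_q(t/a)\ge0$, with a fixed tangential cutoff and $\vartheta_q(\tau)=\chi_{\rm n}(\tau)(-\tau)^q/q!$. It then uses an elementary energy identity. For the affine trace $f(x)=x$ one has $u_b=x$, and writing $u_a=u_b+w$ with $w\in H_0^1$ gives \eqref{eq:w5-sharpness-positive-data}. Its two nonnegative terms cannot both vanish: $w=0$ would leave $9\int_\Omega(\lambda_{a,q}-\lambda_b)\,dx>0$.

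\emph{Your route.} You invoke the exact-data case of Theorem~\ref{thm:v63-boundary-jet-stability}. This is legitimate, since that theorem is proved independently and both pairs lie in the class, and it lets you use sign-changing profiles. But it routes a one-line energy argument through the entire boundary-energy expansion. It is also what forces your modification $\psi_q(0)\neq0$. With the paper's profile at $q=s$, every boundary jet through order $s-1$ vanishes identically on $\partial\Omega$, so that theorem would detect nothing.

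\emph{Trade-off.} The paper's choice keeps the sharpness result independent of any inverse theorem and makes the convexity lower bounds automatic. The price is requiring $\lambda_{a,q}\ge\lambda_b$.

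\emph{Minor points.}
\begin{itemize}
\item The tangential rescaling $\eta(z'/a)$ is harmless but unnecessary.
\item There are no lower-order terms to absorb in your $J_q$ bound. Along the straight normal segment, $\partial_\nu^q f(p)=(\nu(p)\cdot\nabla)^q f(p)$ exactly, so $|\partial_\nu^q f(p)|\le 3^{q/2}\sum_{|\alpha|=q}|\partial^\alpha f(p)|$ directly.
\item Take $a_{\rm amp}$ and $a_0$ as minima over the finitely many $q$, so they are uniform in $q$ as the statement requires.
\end{itemize}
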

\begin{proof}
Use an outward tubular chart $(y',t)$ on a fixed patch, with $t\le0$
inside. Fix $0\le\chi\le1$ supported compactly in the surface patch
and equal to one near $p$. Choose $\chi_{\rm n}\in C_c^\infty((-1,1))$,
$0\le\chi_{\rm n}\le1$, equal to one near zero, and put
\begin{equation}\label{eq:r2-sharpness-normal-profile}
\vartheta_q(\tau)=\chi_{\rm n}(\tau)\frac{(-\tau)^q}{q!},
 \qquad q=0,\ldots,s.
\end{equation}
The cutoff $\chi_{\rm n}$ is fixed independently of $a$;
these profiles enter the perturbations \eqref{eq:v63-sharp-family}.
These are fixed profiles, nonnegative for $\tau\le0$, positive on a
nonempty interior interval, and equal to $(-\tau)^q/q!$ near zero.
Choose $a_0\le1$ so small that $-a_0<t\le0$ remains within the tubular
chart, and define
\begin{equation}\label{eq:v63-sharp-family}
 \lambda_{a,q}-\lambda_b
       =a_{\rm amp} a^s\chi(y')\vartheta_q(t/a),\qquad
 \mu_{a,q}=\mu_b,\qquad 0<a<a_0,
\end{equation}
with zero perturbation off this collar patch. The compact support of
$\chi$ and $\chi_{\rm n}$ makes this a smooth function on $\overline\Omega$.

We verify the a priori bounds uniformly for these families. In chart
coordinates a derivative with $b$ normal derivatives and tangential
multi-index $\gamma$ is
\begin{equation}\label{eq:r2-calculation-037}
a_{\rm amp} a^{s-b}(\partial_{y'}^\gamma\chi)
                         \vartheta_q^{(b)}(t/a),
 \qquad b+|\gamma|\le s.
\end{equation}
The fixed coordinate chain rule expresses every Cartesian derivative
through order $s$ as a finite sum of these terms with fixed bounded
coefficients. Since $a\le1$ and $b\le s$, there is a fixed $K_s\ge1$,
the maximum over the finitely many profiles $q=0,\ldots,s$, such that
\begin{equation}\label{eq:w5-sharpness-prior-budget}
 \|\lambda_{a,q}-\lambda_b\|_{C^s(\overline\Omega)}\le K_sa_{\rm amp}.
\end{equation}
Fix $0<a_{\rm amp}\le\sigma_b/(2K_s)$ before taking any $a\downarrow0$.
The triangle inequality and \eqref{eq:w5-explicit-prior-slack} give
$\|\lambda_{a,q}\|_{C^s}+\|\mu_b\|_{C^s}\le M-\sigma_b/2$.
Moreover $\lambda_{a,q}-\lambda_b\ge0$ and $\mu_{a,q}=\mu_b$, so both
strong-convexity inequalities retain at least the original lower bound
$c_0+\sigma_b$. The amplitude therefore works for the whole family in
the one prescribed class; it is not selected separately for each $a$.

At $p$, the straight-normal convention \eqref{eq:w5-normal-rays} gives
\begin{equation}\label{eq:r2-calculation-038}
\left|\partial_\nu^q(\lambda_{a,q}-\lambda_b)(p)\right|
      =a_{\rm amp}a^{s-q}.
\end{equation}
Expansion of $(\nu(p)\cdot\nabla)^q$ in Cartesian derivatives shows
$|\partial_\nu^q f(p)|\le3^{q/2}\sum_{|\alpha|=q}
|\partial^\alpha f(p)|$. Hence the asserted lower bound for $J_q$ holds
with $c_q=3^{-q/2}$. The same argument includes $q=0$.
The undifferentiated perturbation is at most
$a_{\rm amp}a^s\|\vartheta_q\|_\infty$. The physical forward estimate
\eqref{eq:w5-forward-lipschitz} therefore gives the upper bound for
$\delta_{a,q}$.

Strict positivity of that discrepancy does not require inverse
uniqueness. Prescribe the real affine trace $f(x)=x$ on every boundary
component. The constant-medium solution is $u_b(x)=x$. Write
$u_a=u_b+w$ with $w\in H_0^1(\Omega;\mathbb R^3)$, and denote the
baseline Hermitian form by $\mathfrak a_b$. Its weak equation gives
$\mathfrak a_b(u_b,w)=0$. Consequently the exact energy identity is
\begin{equation}\label{eq:w5-sharpness-positive-data}
 \langle(\Lambda_{a,q}-\Lambda_b)f,f\rangle
   =\mathfrak a_b(w,w)
       +\int_\Omega(\lambda_{a,q}-\lambda_b)
                           |\operatorname{div}u_a|^2\,\mathrm dx>0.
\end{equation}
Both terms are nonnegative. If their sum were zero, coercivity on
$H_0^1$ would force $w=0$; the second term would then equal
$9\int_\Omega(\lambda_{a,q}-\lambda_b)>0$, a contradiction. Since the
trace $f$ has finite $H^{1/2}$ norm, its strictly positive pairing implies
$\delta_{a,q}>0$.

For $q<s$, an improved power would give
$c_qa_{\rm amp}a^{s-q}\le C(C_qa_{\rm amp}a^s)^b$ for all small $a$.
This is impossible when $sb>s-q$. For $q=s$, the positive data errors
tend to zero while $J_s\ge c_sa_{\rm amp}>0$, excluding every modulus
that tends to zero. This proposition concerns boundary sharpness; it
makes no claim about optimality of the interior logarithmic exponent.
\end{proof}

A strict inequality in each of the three prior conditions for a
constant background is equivalent to the existence of a
$\sigma_b>0$ in \eqref{eq:w5-explicit-prior-slack}.

For smooth materials with equal data, the finite-order boundary
result can instead be applied separately at every derivative order.
This yields the following boundary consequence without a uniform
estimate over all orders.

\begin{corollary}\label{cor:w5-smooth-exact-jets}
If both materials are smooth up to the boundary, are strongly convex,
and have identical full DN maps, all their Cartesian boundary
derivatives agree, on every boundary component.
\end{corollary}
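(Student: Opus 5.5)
The plan is to apply Theorem~\ref{thm:v63-boundary-jet-stability} once for each finite order. Fix an arbitrary integer $q\ge0$ and set $s=q+1\ge2$, enlarging $s$ to $2$ if $q=0$. The two materials are smooth up to the boundary, so they are $C^s(\overline\Omega)$. Because $\overline\Omega$ is compact and the strong-convexity inequalities hold pointwise there, the continuous functions $\mu_j$ and $3\lambda_j+2\mu_j$ attain positive minima. We therefore choose
\[
c_0=\min_{j=1,2}\min_{\overline\Omega}\min\{\mu_j,\,3\lambda_j+2\mu_j\}>0,
\qquad
M=1+\max_{j}\bigl(\|\lambda_j\|_{C^s(\overline\Omega)}+\|\mu_j\|_{C^s(\overline\Omega)}\bigr),
\]
and both pairs then lie in $\mathscr C^s(\Omega;M,c_0)$. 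These constants depend on $q$, but nothing requires uniformity across orders; this is exactly the distinction the remark before the corollary makes.

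Equal full DN maps give $\delta=0$ in \eqref{eq:w0-measured-error}. The exact-data clause of Theorem~\ref{thm:v63-boundary-jet-stability} then yields $J_{s-1}=0$, that is, $J_q=0$. By \eqref{eq:w0-boundary-jets}, every Cartesian boundary derivative $\partial^a(\lambda_1-\lambda_2)$ and $\partial^a(\mu_1-\mu_2)$ with $|a|\le q$ vanishes on $\partial\Omega$. Since $q$ was arbitrary, all Cartesian boundary derivatives agree.

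The statement says ``on every boundary component.'' The norms in $J_q$ are taken over all of $\partial\Omega$. The construction behind the theorem (the finite atlas of normal charts \eqref{eq:w5-explicit-normal-chart} and the localized tests \eqref{eq:v63-physical-tests}) covers every boundary point, whatever component it lies on. So no separate argument is needed for each component.

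The only step that needs care is confirming that the exact-data conclusion of Theorem~\ref{thm:v63-boundary-jet-stability} really holds at $\delta=0$, without a positive-noise threshold. Its proof handles this directly: one sends the extraction scale $\epsilon\downarrow0$ in \eqref{eq:v63-coefficient-noise} and repeats the induction, so there is no obstacle. Everything else is the bookkeeping choice of $(M,c_0)$ for each order.
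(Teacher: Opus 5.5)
Your proposal is correct and follows the paper's proof essentially verbatim: fix $q$, pick an integer $s>q$ with $s\ge2$, and use smoothness and compactness of $\overline\Omega$ to place both pairs in some $\mathscr C^s(\Omega;M,c_0)$ with constants that may depend on $s$. Then apply the exact-data clause of Theorem~\ref{thm:v63-boundary-jet-stability}, with $J_q$ measured over all of $\partial\Omega$ so that every boundary component is covered. One small wording point is that for $q=0$ the theorem's conclusion $J_1=0$ gives $J_0=0$ only because $J_0\le J_1$ (or because the theorem asserts $J_q=0$ for every $q<s$), not because $J_{s-1}$ and $J_q$ are literally the same quantity.
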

\begin{proof}
Fix $q\ge0$ and choose an integer $s>q$, $s\ge2$.
Smoothness on $\overline\Omega$ and strong convexity give finite
$M_s$ and $c_0>0$ such that both materials belong to
$\mathscr C^s(\Omega;M_s,c_0)$ in \eqref{eq:w0-interior-class}.
Equality of the DN maps means $\delta=0$. The exact-data conclusion
of Theorem~\ref{thm:v63-boundary-jet-stability} therefore gives $J_q=0$,
and hence
\[
 \partial^a(\lambda_1-\lambda_2)|_{\partial\Omega}=0,\qquad
 \partial^a(\mu_1-\mu_2)|_{\partial\Omega}=0,\quad |a|\le q.
\]
The boundary norm defining $J_q$ covers every boundary component.
Since $q$ was arbitrary, all finite-order boundary derivatives agree.
The bound $M_s$ may depend on $s$; no uniform estimate as
$q\to\infty$ is needed.
\end{proof}

At one fixed point and scale the coefficient extraction uses $s$ scales
and two polarizations. The point $p$ still ranges over the entire
measurement boundary and the scale is tied to the noise level. This is
an estimate from the full DN operator, not a uniqueness or stability
theorem for a fixed finite set of globally prescribed displacements.

\subsection{The top-order endpoint and extension}
\label{sec:v62-endpoint}
The case $q=s$ of Proposition~\ref{prop:w5-boundary-sharpness}
shows that a bounded $C^s$ class with strict prior slack has no uniform
vanishing modulus for the top-order boundary jet. The extension in
Section~\ref{sec:w5-extensions} therefore uses only $J_{s-1}$, together
with the uniform $C^s$ bound and a positive tolerance. Its construction
preserves the original coefficients without requiring their order-$s$
boundary derivatives to match.

\section{Uniform extensions and DN comparison on an enlarged domain}
\label{sec:w5-extensions}

Uniform paired extensions reduce the original-domain problem to a
common-exterior class; the enlargement estimate then controls the DN
discrepancy on the fixed larger domain.

\subsection{A fixed strongly convex initial extension}\label{sec:r2-initial-extension}
The initial extension to a fixed constant background depends only on the
original admissible class \eqref{eq:w0-interior-class}, independently of
boundary determination.

\begin{lemma}\label{lem:w5-fixed-initial-extension}
There are a fixed ball $B_{R_e}\supset\!\supset\overline\Omega$, a fixed
constant pair $(\lambda_*,\mu_*)$, a fixed number $\eta_e>0$ and $M_0$,
depending only on $s,\Omega,M,c_0$, such that every admissible material has
an extension $(\lambda^0,\mu^0)\in C^s(\mathbb R^3;\mathbb R)^2$ satisfying
\begin{equation}\label{eq:w5-initial-extension}
\begin{gathered}
 (\lambda^0,\mu^0)=(\lambda,\mu)\text{ on }\overline\Omega,\qquad
 \|\lambda^0\|_{C^s}+\|\mu^0\|_{C^s}\le M_0,\\
 \mu^0\ge c_0/2,\qquad 3\lambda^0+2\mu^0\ge c_0/2,\\
 (\lambda^0,\mu^0)=(\lambda_*,\mu_*)
       \text{ on }\mathbb R^3\setminus B_{R_e-\eta_e}.
\end{gathered}
\end{equation}
\end{lemma}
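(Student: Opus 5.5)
The plan is to build the extension linearly from a fixed Whitney/Seeley-type $C^s$ extension operator and then glue to a constant background through a fixed partition. Fix a bounded linear extension operator $\mathcal E:C^s(\overline\Omega)\to C^s_c(\mathbb R^3)$. For a smooth bounded domain this is obtained by reflection in finitely many boundary charts with Seeley coefficients, followed by fixed cutoffs. Its norm depends only on $s$ and $\Omega$, and its output is supported in a fixed neighborhood of $\overline\Omega$. The difficulty is that $\mathcal E\lambda,\mathcal E\mu$ need not be strongly convex away from $\overline\Omega$. We therefore interpolate to a constant pair inside a thin collar where continuity keeps the values close to their boundary values.

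Concretely, let $d(x)=\operatorname{dist}(x,\overline\Omega)$ and choose a cutoff $\chi_\eta\in C^\infty(\mathbb R^3)$ with $\chi_\eta=1$ on $\{d\le\eta/2\}$, $\chi_\eta=0$ on $\{d\ge\eta\}$, and $\|\chi_\eta\|_{C^s}\le C_s\eta^{-s}$. Fix the constant background
\[
(\lambda_*,\mu_*)=(0,c_0),
\]
which is strictly strongly convex. Define
\[
\lambda^0=\chi_\eta\,\mathcal E\lambda+(1-\chi_\eta)\lambda_*,\qquad
\mu^0=\chi_\eta\,\mathcal E\mu+(1-\chi_\eta)\mu_*.
\]
These agree with $(\lambda,\mu)$ on $\overline\Omega$ and equal the background outside $\{d<\eta\}$. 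Choosing $R_e$ with $\{d\le\eta\}\subset B_{R_e-\eta_e}$ then gives the exterior condition. The $C^s$ bound $M_0\le C(s,\Omega)(1+\eta^{-s})(M+c_0)$ follows from Leibniz, once $\eta$ is fixed.

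The main step is strong convexity in the collar $\{0<d<\eta\}$. Both $\mu^0$ and $3\lambda^0+2\mu^0$ are convex combinations, with weights $\chi_\eta$ and $1-\chi_\eta$, of the corresponding quantity for $\mathcal E(\lambda,\mu)$ and for the background. The background satisfies $\mu_*=c_0$ and $3\lambda_*+2\mu_*=2c_0\ge c_0$. For the extension, at a collar point $x$ take its nearest point $x'\in\overline\Omega$. The extended functions are $C^1$ with norm at most $C_\Omega M$, so
\[
|\mathcal E\mu(x)-\mu(x')|\le C_\Omega M\eta,
\]
and similarly for $3\lambda+2\mu$. Hence $\mathcal E\mu\ge c_0-C_\Omega M\eta$ and $3\mathcal E\lambda+2\mathcal E\mu\ge c_0-5C_\Omega M\eta$ in the collar. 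Choose $\eta=c_0/(10C_\Omega M)$; both quantities are then at least $c_0/2$, and so are the convex combinations. Everything depends only on $s,\Omega,M,c_0$.

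The only point needing care is the uniform Lipschitz bound across $\partial\Omega$. The mean-value estimate is applied to the $C^s$ function $\mathcal E\lambda$ on the segment from $x'$ to $x$. For $\eta$ below the tubular radius this is the normal segment, which lies in the collar, so the estimate is valid. This fixes $\eta$, hence $\eta_e$ and $R_e$, before any material is chosen, which gives the uniformity required in the lemma.
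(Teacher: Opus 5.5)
Your proposal is correct and follows essentially the paper's route: a Seeley-type reflection extension stays within $c_0/2$ of the boundary values on a thin exterior collar by a mean-value (Lipschitz) estimate, and is convexly blended through a fixed cutoff into a constant strongly convex background, with the collar width (hence $R_e,\eta_e,M_0$) fixed by $s,\Omega,M,c_0$ alone. The differences are cosmetic: the paper writes the reflection explicitly along normal rays, estimates along interior normal segments, and uses the background $(c_0,c_0)$ instead of your $(0,c_0)$; also, since $\mathcal E\mu\in C^1(\mathbb R^3)$ globally, your concern about the segment staying inside the collar is unnecessary.
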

\begin{proof}
Use the tubular map \eqref{eq:w5-normal-rays} for $|t|<a$, with a
fixed $a>0$. Its inverse and derivatives through the orders used in
\eqref{eq:w5-reflection}--\eqref{eq:w5-initial-blend} have fixed bounds
on every boundary component.
The disjoint fixed tubular collars are chosen before a material is
selected; the remaining exterior, including bounded complementary
components, will be assigned the same background. For a scalar $f\in C^s(\overline\Omega)$
and $0<t<a/(s+1)$ define a preliminary reflection by
\begin{equation}\label{eq:w5-reflection}
 F_f(p,t)=\sum_{\ell=1}^{s+1}c_\ell f(\Phi(p,-\ell t)),
 \qquad
 c_\ell=(-1)^{\ell-1}\ell\binom{s+2}{\ell+1}.
\end{equation}
The coefficients are the Lagrange evaluation weights at $1$ for the
nodes $-1,\ldots,-(s+1)$. Consequently
\begin{equation}\label{eq:w5-reflection-moments}
 \sum_{\ell=1}^{s+1}c_\ell(-\ell)^b=1,\qquad 0\le b\le s.
\end{equation}
One can verify the weight formula directly by evaluating
$\prod_{\substack{1\le\ell'\le s+1\\\ell'\ne\ell}}(x+\ell')/(\ell'-\ell)$ at $x=1$.
For any surface multi-index $\gamma$ and normal order $b$ with
$|\gamma|+b\le s$, differentiation of \eqref{eq:w5-reflection} gives the
interior trace $\partial_p^\gamma\partial_t^b(f\circ\Phi)(p,0)$ by
\eqref{eq:w5-reflection-moments}. Each such derivative is continuous up
to $t=0$. The interior function $f$ and the exterior function $F_f$ have matching traces
of all derivatives through order $s$, by \eqref{eq:w5-reflection-moments}.
They therefore define a $C^s$ function across the boundary. The chain rule on fixed charts gives
its norm bound $C_{s,\Omega}\|f\|_{C^s}$.

The reflection weights need not be nonnegative, so positivity is not
inferred from their signs. Instead, their sum is one and the mean-value
formula along the interior normal segments gives
\begin{equation}\label{eq:w5-reflection-closeness}
 |F_f(p,t)-f(p)|
 \le t\|\nabla f\|_\infty
           \sum_{\ell=1}^{s+1}|c_\ell|\ell.
\end{equation}
Apply this inequality to $f=\mu$ and $f=3\lambda+2\mu$.
Choose a fixed $b_0<a/(s+1)$, also $b_0\le1$, so that the right-hand side
is at most $c_0/2$ for all the materials when $0<t<b_0$.
Both elastic inequalities then hold with lower bound $c_0/2$ for the
preliminary reflected pair on this collar.

Fix $(\lambda_*,\mu_*)=(c_0,c_0)$ and a smooth scalar function $\vartheta$
with values in $[0,1]$, equal to one for $t\le b_0/3$ and zero for
$t\ge2b_0/3$. On the exterior collar put
\begin{equation}\label{eq:w5-initial-blend}
 (\lambda^0,\mu^0)(\Phi(p,t))
 =\vartheta(t)(F_\lambda,F_\mu)(p,t)
    +(1-\vartheta(t))(\lambda_*,\mu_*).
\end{equation}
Use the original material inside and the constant pair everywhere else.
Near $t=0$ this is the reflected extension, so all derivatives through
order $s$ still match. Near the far edge of the collar it is constant.
Convexity of the two linear elastic inequalities preserves the lower
bound $c_0/2$. All cutoff derivatives have fixed bounds, so the $C^s$
norm is bounded by a fixed $M_0$. Finally choose $R_e,\eta_e$ so that the
closure of the fixed collar lies strictly inside $B_{R_e-\eta_e}$.
These choices were made before considering either a discrepancy or a
noise level. This proves the lemma.
\end{proof}

Applied separately to two materials, this construction gives
uniform extensions but does not control their exterior
difference by the boundary discrepancy. We obtain that
control in the following paired construction.

\subsection{All derivative orders in a compensated collar}\label{sec:r2-paired-extension}

A compensated collar preserves the full $C^s$ bounds of the extensions while
reducing their exterior discrepancy using boundary derivatives below order
$s$.

Before selecting a pair or a noise level, fix the extension in
Lemma~\ref{lem:w5-fixed-initial-extension}, its background, $B_{R_e}$,
$\eta_e$, $M_0$, and the cutoff used in \eqref{eq:r2-extension-cutoff}. Fix also one number $M_e$ large enough for
the finite bounds in \eqref{eq:v62-compensated-derivatives} after
replacing $e_\partial r_{\rm ext}^{-s}$ by $1$ and $r_{\rm ext}^{s-n},r_{\rm ext}^{s-b}$ by $1$.
This defines $M_e$ from $s,\Omega,M,c_0$ alone;
Theorem~\ref{thm:v62-compensated-extension} verifies the uniform bound.
In particular $M_e$ is fixed before any later common-exterior exponent
$\gamma_{*,\mathrm{ext}}$ is chosen.
For two materials set
\begin{equation}\label{eq:w5-extension-input}
 e_\partial=J_{s-1},\qquad \epsilon_{\rm tol}>0,\qquad
 r_{\rm ext}=(e_\partial+\epsilon_{\rm tol})^{1/s}.
\end{equation}
Here $J_{s-1}$ is defined in \eqref{eq:w0-boundary-jets}.
The symbols $\epsilon_{\rm tol}$ and $r_{\rm ext}$ denote a free positive tolerance
and an exterior collar width; neither is a CGO parameter or the boundary
test scale $\epsilon$.

The positive collar width permits exact boundary matching while suppressing
the exterior difference, with derivative bounds uniform as the collar
shrinks.

\begin{theorem}\label{thm:v62-compensated-extension}
There exist fixed constants $M_e,C,t_{\mathrm{ext},0}>0$ and the ball and background
from Lemma~\ref{lem:w5-fixed-initial-extension} such that, if
\begin{equation}\label{eq:w5-extension-smallness}
 0<e_\partial+\epsilon_{\rm tol}<t_{\mathrm{ext},0},
\end{equation}
the materials have extensions
$(\widetilde\lambda_j,\widetilde\mu_j)\in C^s(\mathbb R^3;\mathbb R)^2$,
$j=1,2$, with exact original values on $\overline\Omega$, common fixed
background outside $B_{R_e-\eta_e}$, $C^s$ norm at most $M_e$, and
strong-convexity lower bounds $c_0/2$. They satisfy
\begin{equation}\label{eq:v62-compensated-error}
 \epsilon_{\rm ext}:=
 \|\widetilde\lambda_1-\widetilde\lambda_2\|_{L^\infty(\mathbb R^3\setminus\overline\Omega)}
 +\|\widetilde\mu_1-\widetilde\mu_2\|_{L^\infty(\mathbb R^3\setminus\overline\Omega)}
 \le C(e_\partial+\epsilon_{\rm tol}).
\end{equation}
The input discrepancy $e_\partial$, tolerance $\epsilon_{\rm tol}$,
and width $r_{\rm ext}$ are defined in \eqref{eq:w5-extension-input}.
All constants and the outer support geometry are independent of the two
materials, $e_\partial$ and $\epsilon_{\rm tol}$ within the stated class.
\end{theorem}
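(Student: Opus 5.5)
Start from the fixed initial extensions $(\lambda_j^0,\mu_j^0)$ of Lemma~\ref{lem:w5-fixed-initial-extension}, built in the fixed outward tubular collar by the reflection \eqref{eq:w5-reflection} and blended to the common background $(\lambda_*,\mu_*)$. They already agree with the originals on $\overline\Omega$, are uniformly $C^s$, are strongly convex with bound $c_0/2$, and share the background outside $B_{R_e-\eta_e}$. The only defect is that their difference in the exterior collar is merely $O(1)$. The plan is to interpolate, inside a thin exterior collar of width $r_{\rm ext}=(e_\partial+\epsilon_{\rm tol})^{1/s}$, from each material's own boundary Taylor jet to a \emph{common} target field. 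Concretely, take a cutoff $\chi_{\rm ext}(t/r_{\rm ext})$ equal to one for $0\le t\le r_{\rm ext}/3$ and zero for $t\ge 2r_{\rm ext}/3$, with $t>0$ exterior. For $f\in\{\lambda,\mu\}$ set
\[
\widetilde f_j=\chi_{\rm ext}(t/r_{\rm ext})\,F_{f_j}+\bigl(1-\chi_{\rm ext}(t/r_{\rm ext})\bigr)\,f_1^0 ,
\]
so both extensions are the material-one initial extension beyond the thin collar. Consequently $\widetilde f_1=f_1^0$ exactly and only $\widetilde f_2$ is modified. Beyond the thin collar the exterior discrepancy is therefore identically zero, and both extensions share the background outside $B_{R_e-\eta_e}$.

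The key estimates are made inside the collar $0<t<r_{\rm ext}$. Exact boundary matching through order $s$ holds because $\chi_{\rm ext}=1$ near $t=0$ and $F_{f_2}$ is the reflection of Lemma~\ref{lem:w5-fixed-initial-extension}. For the $C^s$ bound, write the difference as $\widetilde f_2-f_1^0=\chi_{\rm ext}\,(F_{f_2}-F_{f_1})$ on $0<t<r_{\rm ext}$, where $f_1^0=F_{f_1}$ there because $r_{\rm ext}<b_0/3$ for small input. Derivatives of the cutoff cost $r_{\rm ext}^{-n}$. The factor $F_{f_2}-F_{f_1}$ has boundary jets of order $b<s$ bounded by $Ce_\partial$, since $J_{s-1}=e_\partial$ and the reflection weights are fixed, and its order-$s$ derivatives are bounded by $2C M$. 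Taylor expansion in $t$ then gives, for $b\le s$ and $0<t<r_{\rm ext}$,
\[
|\partial^b(F_{f_2}-F_{f_1})|\le C\Bigl(\sum_{k<s-b}e_\partial\, t^{k}+t^{s-b}\Bigr)\le C\bigl(e_\partial+r_{\rm ext}^{s-b}\bigr).
\]
Leibniz then bounds the $n$th derivative of the product by $C\sum_{b\le n} r_{\rm ext}^{-(n-b)}(e_\partial+r_{\rm ext}^{s-b})\le C(e_\partial r_{\rm ext}^{-s}+1)\le C$, using $e_\partial\le r_{\rm ext}^s$. This is precisely the finite quantity \eqref{eq:v62-compensated-derivatives} that fixes $M_e$ independently of the pair and of the tolerance. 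The $L^\infty$ discrepancy is the $b=0$ case, namely $C(e_\partial+r_{\rm ext}^s)\le C(e_\partial+\epsilon_{\rm tol})$, which gives \eqref{eq:v62-compensated-error}. The tolerance $\epsilon_{\rm tol}>0$ is needed only to keep $r_{\rm ext}>0$ when $e_\partial=0$.

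I expect strong convexity to be the main obstacle, because the blend is not a convex combination of two strongly convex pairs once the reflection of material two is replaced. It is, however, a convex combination of $(F_{\lambda_2},F_{\mu_2})$ and $(\lambda_1^0,\mu_1^0)$ on the collar, and both pairs satisfy $\mu\ge c_0/2$ and $3\lambda+2\mu\ge c_0/2$ for $t<b_0$ by the argument of Lemma~\ref{lem:w5-fixed-initial-extension}. Since these conditions are linear, they are preserved under such combinations. To keep room for the closeness estimates, I would instead run the initial lemma with the lower bound $3c_0/4$, shrinking $b_0$ accordingly. Finally I would choose $t_{{\rm ext},0}$ so that $r_{\rm ext}<b_0/3$. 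The constant $t_{{\rm ext},0}$, like $M_e$ and $C$, then depends only on $s,\Omega,M,c_0$.
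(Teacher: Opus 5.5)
Your proposal is correct and is essentially the paper's proof. On the thin collar your $\widetilde f_2=\chi F_{f_2}+(1-\chi)f_1^0$ coincides with the paper's $f_1^0+\chi_{r_{\rm ext}}(f_2^0-f_1^0)$, with the tubular coordinate $t$ in place of $d_\partial$. The Taylor bound along fixed normal segments, the Leibniz estimate using $e_\partial r_{\rm ext}^{-s}\le 1$, and the convex-combination argument for strong convexity are exactly those used in the paper.

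The only superfluous step is rerunning Lemma~\ref{lem:w5-fixed-initial-extension} with lower bound $3c_0/4$. Since $\widetilde f_2$ is an exact pointwise convex combination of two pairs that already satisfy the bound $c_0/2$, no extra room is needed.
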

\begin{proof}
Use Lemma~\ref{lem:w5-fixed-initial-extension} for each member, with the
same fixed construction. Define, for either scalar component,
\begin{equation}\label{eq:r2-initial-extension-difference}
 D_f=f_2^0-f_1^0,\qquad f\in\{\lambda,\mu\}.
\end{equation}
This is a $C^s$ scalar function on $\mathbb R^3$ with
norm at most $2M_0$, and its Cartesian boundary derivatives through
order $s-1$ have norm bounded by $e_\partial$.

For a multi-index $\alpha$, $n=|\alpha|\le s-1$, apply the one-dimensional
Taylor formula to $t\mapsto\partial^\alpha D_f(p+t\nu(p))$ through
order $s-1-n$. The vector $\nu(p)$ is held fixed in this differentiation.
Every Taylor coefficient is a linear combination of Cartesian boundary
derivatives of total order at most $s-1$. The integral remainder uses
only derivatives of total order $s$. For $0<t<b_0\le1$ this proves
\begin{equation}\label{eq:v62-low-jet-Taylor}
\begin{aligned}
 |\partial^\alpha D_f(p+t\nu(p))|
 &\le C e_\partial+C M_0t^{s-|\alpha|},&& |\alpha|\le s-1,\\
 |\partial^\alpha D_f|&\le2M_0,&& |\alpha|=s.
\end{aligned}
\end{equation}
No derivative of order $s+1$ occurs in this argument.

Fix a smooth function $\chi:\mathbb R\to[0,1]$ equal to one on
$(-\infty,1/3]$ and zero on $[2/3,\infty)$. Set
\begin{equation}\label{eq:r2-extension-cutoff}
 d_\partial(x)=\operatorname{dist}(x,\partial\Omega),\qquad
 \chi_{r_{\rm ext}}(x)=\chi(d_\partial(x)/r_{\rm ext})
       \quad\text{in the exterior collar},
\end{equation}
and extend $\chi_{r_{\rm ext}}$ by zero beyond that collar. Choose
$t_{\mathrm{ext},0}\le\min\{1,(b_0/4)^s\}$ so that $0<r_{\rm ext}<b_0/4$.
The distance function is smooth here with fixed derivatives. For $b=|\beta|\ge1$, the chain rule is a finite sum of terms
\begin{equation}\label{eq:r2-calculation-039}
r_{\rm ext}^{-j}\chi^{(j)}(d_\partial/r_{\rm ext})
       \prod_{i=1}^{j}\partial^{\gamma_i}d_\partial,
 \qquad 1\le j\le b,\quad |\gamma_i|\ge1,\quad
       \sum_i\gamma_i=\beta.
\end{equation}
All derivatives of $d_\partial$ are fixed and bounded. Since $r_{\rm ext}\le1$, each
$r_{\rm ext}^{-j}\le r_{\rm ext}^{-b}$. For $b=0$ use $0\le\chi_{r_{\rm ext}}\le1$. Thus every
multi-index satisfies
\begin{equation}\label{eq:w5-cutoff-orders}
 |\partial^\beta\chi_{r_{\rm ext}}|\le C_\beta r_{\rm ext}^{-|\beta|},
 \qquad 0\le|\beta|\le s,
 \qquad\operatorname{supp}\chi_{r_{\rm ext}}\subset\{d_\partial<2r_{\rm ext}/3\}.
\end{equation}
For the order-zero bound use $0\le\chi_{r_{\rm ext}}\le1$.
Keep the first extension and, outside $\Omega$, define
\begin{equation}\label{eq:v62-compensated-definition}
 \widetilde f_1=f_1^0,\qquad
 \widetilde f_2=f_1^0+\chi_{r_{\rm ext}}D_f,
 \qquad f=\lambda,\mu.
\end{equation}
Inside $\Omega$ keep each original member unchanged. Since $r_{\rm ext}>0$ and
$\chi_{r_{\rm ext}}=1$ on a neighborhood of the boundary, the second extension
there is exactly its initial extension. This proves $C^s$ matching,
including the top derivative, without assuming that the two top jets
agree. The second pair is a convex combination of the two initial pairs,
so it has the same lower strong-convexity bounds.

For the derivative estimates, let $|\alpha|=n\le s$ and
$\beta\le\alpha$ componentwise, and write $b=|\beta|$.
For $b\ge1$, $n-b\le s-1$ and the support is in $d\le r_{\rm ext}$, so
\begin{equation}\label{eq:v62-compensated-derivatives}
\begin{aligned}
 |\partial^\beta\chi_{r_{\rm ext}}\,\partial^{\alpha-\beta}D_f|
 &\le C r_{\rm ext}^{-b}(e_\partial+M_0r_{\rm ext}^{s-n+b})\\
 &=C\{(e_\partial r_{\rm ext}^{-s})r_{\rm ext}^{s-b}+M_0r_{\rm ext}^{s-n}\}
 \le C(1+M_0),\\
 e_\partial r_{\rm ext}^{-s}&=\frac{e_\partial}{e_\partial+\epsilon_{\rm tol}}\le1.
\end{aligned}
\end{equation}

Both exponents $s-b$ and $s-n$ are nonnegative. The exact rule is
$\partial^\alpha(\chi_{r_{\rm ext}}D_f)=\sum_{\beta\le\alpha}
\binom\alpha\beta\partial^\beta\chi_{r_{\rm ext}}\partial^{\alpha-\beta}D_f$.
Its binomial factors and number of terms depend only on $s$.
The sum of these estimates gives the preselected $C^s$ bound $M_e$, uniformly
in $r_{\rm ext}$. The undifferentiated-cutoff term uses the initial $C^s$ norm and
does not require a small top jet.

Finally, the order-zero line in \eqref{eq:v62-low-jet-Taylor} gives
$|\chi_{r_{\rm ext}}D_f|\le C(e_\partial+M_0r_{\rm ext}^s)
\le C'(e_\partial+\epsilon_{\rm tol})$.
This proves \eqref{eq:v62-compensated-error}. The background and the
fixed outer collar from the initial extension have not moved.
\end{proof}

\begin{remark}
The extensions belong to one fixed $C^s$ class, independently
of the boundary discrepancy and the positive tolerance.
This uniformity permits the subsequent common-exterior
estimates to be applied with fixed constants.
When $e_\partial=0$, the construction remains valid for every
sufficiently small positive $\epsilon_{\rm tol}$; it does not
require a zero-width collar or convergence of the extensions
as the tolerance tends to zero.
\end{remark}

\subsection{Noise transfer on one fixed enlarged class}\label{sec:r2-extension-noise}
Theorem~\ref{thm:v63-boundary-jet-stability} and
Proposition~\ref{prop:v61-enlargement-error} give a bound
for the enlarged-domain error from the measured original-domain error.
The order of choices is important: first fix the original class, the
boundary atlas, the initial extension and the compensated bound $M_e$;
then fix any constants of a common-exterior theorem on that entire
enlarged class; only afterwards choose the noise and positive tolerance.

\begin{corollary}\label{cor:w5-noise-transfer}
For two members of $\mathscr C^s(\Omega;M,c_0)$ in
\eqref{eq:w0-interior-class}, let $C_J$ be the
class-uniform constant of Theorem~\ref{thm:v63-boundary-jet-stability}
at order $s-1$. For $0<\delta<\min\{1,\delta_0\}$, that theorem gives
\begin{equation}\label{eq:w5-I7-input}
 e_\partial=J_{s-1}\le C_J\delta^{1/s}
\end{equation}
with the original-domain DN discrepancy $\delta$. There is a class-uniform threshold
$\delta_{\mathrm{tr},0}\le\min\{1,\delta_0\}$ such that, for
$0<\delta<\delta_{\mathrm{tr},0}$, one may choose
\begin{equation}\label{eq:w5-tolerance-choice}
 \epsilon_{\rm tol}=\delta+e_\partial>0,\qquad
 r_{\rm ext}^s=e_\partial+\epsilon_{\rm tol}=\delta+2e_\partial.
\end{equation}
Then the extensions of Theorem~\ref{thm:v62-compensated-extension}
belong to one fixed enlarged-domain class
and satisfy
\begin{equation}\label{eq:v63-outer-noise}
 \Delta_e\le C(\delta+e_\partial)\le C'\delta^{1/s}.
\end{equation}
Here $\delta$, $e_\partial$, and $\Delta_e$ are defined in
\eqref{eq:w0-measured-error}, \eqref{eq:w5-extension-input}, and
\eqref{eq:w0-domain-error}, respectively.
If $\delta=0$, then $e_\partial=0$, and every
$0<\epsilon_{\rm tol}<t_{\mathrm{ext},0}$ gives extensions
in the same fixed class satisfying
$\Delta_e\le C\epsilon_{\rm tol}$.
\end{corollary}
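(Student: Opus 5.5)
The argument chains three results already in place: Theorem~\ref{thm:v63-boundary-jet-stability}, Theorem~\ref{thm:v62-compensated-extension}, and Proposition~\ref{prop:v61-enlargement-error}.

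\textbf{Step 1: the threshold and the tolerance.} For $0<\delta<\min\{1,\delta_0\}$, Theorem~\ref{thm:v63-boundary-jet-stability} at $q=s-1$ gives \eqref{eq:w5-I7-input}, namely $e_\partial\le C_J\delta^{1/s}$. With the choice \eqref{eq:w5-tolerance-choice},
\[
e_\partial+\epsilon_{\rm tol}=\delta+2e_\partial\le(1+2C_J)\delta^{1/s},
\]
using $\delta\le\delta^{1/s}$ for $\delta\le1$. Set
\[
\delta_{\mathrm{tr},0}=\min\bigl\{1,\delta_0,\bigl(t_{\mathrm{ext},0}/(2+2C_J)\bigr)^s\bigr\}.
\]
This is class-uniform, because $C_J$ and $t_{\mathrm{ext},0}$ depend only on $s,\Omega,M,c_0$. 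For $0<\delta<\delta_{\mathrm{tr},0}$ the smallness condition \eqref{eq:w5-extension-smallness} holds, and $\epsilon_{\rm tol}\ge\delta>0$ is strictly positive.

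\textbf{Step 2: the paired extensions.} Theorem~\ref{thm:v62-compensated-extension} now supplies the paired extensions. They agree with the originals on $\overline\Omega$, share the common background outside $B_{R_e-\eta_e}$, have $C^s$ norm at most $M_e$, and have strong-convexity bounds $c_0/2$. All of these bounds are fixed before $\delta$ and $\epsilon_{\rm tol}$ are chosen, so every extension lies in one enlarged class. By \eqref{eq:v62-compensated-error}, their exterior mismatch on $B_{R_e}\setminus\overline\Omega$ satisfies
\[
\epsilon_{\rm ext}^B\le\epsilon_{\rm ext}\le C(e_\partial+\epsilon_{\rm tol})=C(\delta+2e_\partial).
\]

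\textbf{Step 3: transfer to the enlarged domain.} Apply Proposition~\ref{prop:v61-enlargement-error} on $B_{R_e}$. Its constants depend only on the fixed domains, the upper bound $M_e$, and the lower bound $c_0/2$. Because the extensions coincide with the originals on $\Omega$, the inner DN discrepancy is exactly $\delta$, and hence
\[
\Delta_e\le C(\delta+\epsilon_{\rm ext}^B)\le C(\delta+e_\partial)\le C'\delta^{1/s},
\]
again using $\delta\le\delta^{1/s}$.

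\textbf{Step 4: the exact-data case $\delta=0$.} Theorem~\ref{thm:v63-boundary-jet-stability} gives $J_{s-1}=0$, so $e_\partial=0$. For any $0<\epsilon_{\rm tol}<t_{\mathrm{ext},0}$, condition \eqref{eq:w5-extension-smallness} holds and Theorem~\ref{thm:v62-compensated-extension} applies, giving $\epsilon_{\rm ext}\le C\epsilon_{\rm tol}$. Proposition~\ref{prop:v61-enlargement-error} with $\delta=0$ then yields $\Delta_e\le C\epsilon_{\rm tol}$.

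\textbf{Expected difficulty.} The only delicate point is the order of quantifiers. The threshold $\delta_{\mathrm{tr},0}$ and every constant must be fixed from the class data alone, before the pair and $\delta$ are chosen. This holds because $M_e$ and $t_{\mathrm{ext},0}$ are preselected in Theorem~\ref{thm:v62-compensated-extension}, and the Proposition~\ref{prop:v61-enlargement-error} constant depends only on those bounds.
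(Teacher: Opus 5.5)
Your proposal is correct and follows the paper's proof step for step. You use the order-$(s-1)$ boundary-jet bound, choose a threshold forcing $(1+2C_J)\delta^{1/s}<t_{\mathrm{ext},0}$, apply the compensated extension and then the enlargement comparison, and handle the exact-data case with $e_\partial=0$ and an arbitrary positive tolerance. Your threshold $(t_{\mathrm{ext},0}/(2+2C_J))^s$ differs only cosmetically from the paper's $(t_{\mathrm{ext},0}/(2(1+2C_J)))^s$, and your observations that $\epsilon^B_{\rm ext}\le\epsilon_{\rm ext}$ and that the inner discrepancy is exactly $\delta$ make explicit two points the paper leaves implicit.
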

\begin{proof}
For positive noise, \eqref{eq:w5-I7-input} and $0<\delta<1$ give
\[
 0<e_\partial+\epsilon_{\rm tol}
   =\delta+2e_\partial
   \le(1+2C_J)\delta^{1/s}.
\]
Choose
\[
 0<\delta_{\mathrm{tr},0}<\min\left\{1,\delta_0,
       \left(\frac{t_{\mathrm{ext},0}}{2(1+2C_J)}\right)^s\right\}.
\]
Then \eqref{eq:w5-extension-smallness} holds for
$0<\delta<\delta_{\mathrm{tr},0}$. Apply
Theorem~\ref{thm:v62-compensated-extension}, followed by
Proposition~\ref{prop:v61-enlargement-error}, to obtain
\[
 \begin{aligned}
 \epsilon_{\rm ext}&\le C_E(\delta+2e_\partial),\\
 \Delta_e&\le C_A(\delta+\epsilon_{\rm ext})
       \le C_A\{\delta+C_E(\delta+2e_\partial)\}
       \le C'\delta^{1/s}.
 \end{aligned}
\]
Here $C_E$ is the extension constant and $C_A$ is the enlargement
constant in the cited results. This also gives
$\Delta_e\le C(\delta+e_\partial)$ and proves
\eqref{eq:v63-outer-noise}.

If $\delta=0$, the exact-data conclusion of
Theorem~\ref{thm:v63-boundary-jet-stability} gives $e_\partial=0$.
For every $0<\epsilon_{\rm tol}<t_{\mathrm{ext},0}$,
Theorem~\ref{thm:v62-compensated-extension} and
Proposition~\ref{prop:v61-enlargement-error} give
\[
 \epsilon_{\rm ext}\le C_E\epsilon_{\rm tol},\qquad
 \Delta_e\le C_AC_E\epsilon_{\rm tol}.
\]
The ball, background, $C^s$ bound $M_e$, and strong-convexity bound
are those fixed in Theorem~\ref{thm:v62-compensated-extension}.
They are independent of the positive tolerance, as required.
\end{proof}

The enlarged DN maps are proof quantities, not additional measurements.
If a larger working ball $B_R$ is required by a common-exterior
CGO/Betti argument, fix it with $B_{R_e}\Subset B_R$ before choosing
noise. Both extensions already equal the same background near and
outside $\partial B_{R_e}$; extending them by that background changes
no class bound. Proposition~\ref{prop:v61-enlargement-error} on
$B_{R_e}\subset B_R$ gives
$\|\Lambda_1^{B_R}-\Lambda_2^{B_R}\|\le C\Delta_e$.
Thus $B_{R_e}$ remains the DN comparison domain and $B_R$ the larger
working domain. Choose any later exponent $\gamma_{*,\mathrm{ext}}$
for this fixed class before selecting the noise level.

\section{Original-domain stability and exact-data uniqueness}
\label{sec:w0-assembly}
Boundary recovery and paired extensions transfer
Proposition~\ref{prop:w0-common-assembly} to the original domain. The
enlarged prior class is fixed before the measurement error or tolerance is
chosen.

\subsection{Boundary control and the fixed enlarged class}\label{sec:r2-original-assembly}
Fix an integer $m\ge4$, put $s=2m+14$, and fix
$\mathscr C^s(\Omega;M,c_0)$ as in \eqref{eq:w0-interior-class}.
This section proves the general regularity-dependent estimate;
Theorem~\ref{thm:w0-target-A} is its specialization to $m=4$.
Theorem~\ref{thm:v63-boundary-jet-stability},
Theorem~\ref{thm:v62-compensated-extension}, and
Proposition~\ref{prop:v61-enlargement-error} give the following
original-domain estimates independently of the common-exterior theorem.

By Theorem~\ref{thm:v63-boundary-jet-stability} with $q=s-1$,
\eqref{eq:w5-I7-input} holds for fixed $C_J,\delta_{B,0}>0$, and
$e_\partial=0$ for $\delta=0$. Here $e_\partial$ is defined in
\eqref{eq:w5-extension-input}.

Theorem~\ref{thm:v62-compensated-extension} supplies fixed $B_{R_e}$, $M_e$, strong-convexity bounds, a common
constant exterior, and $C_E,t_{{\rm ext},0}>0$ such that, whenever
\begin{equation}\label{eq:w0-extension-range}
 \epsilon_{\rm tol}>0,\qquad
 0<e_\partial+\epsilon_{\rm tol}<t_{{\rm ext},0},
\end{equation}
the original coefficients have exact interior $C^s$ extensions in this
fixed enlarged class, with
\begin{equation}\label{eq:w0-extension-error}
 \epsilon_{\rm ext}\le C_E(e_\partial+\epsilon_{\rm tol}).
\end{equation}
Here $\epsilon_{\rm ext}$ is defined in \eqref{eq:v62-compensated-error}
and $r_{\rm ext}$ in \eqref{eq:w5-extension-input}.
Only the collar width changes. Theorem~\ref{thm:v62-compensated-extension}
fixes the enlarged norm and convexity bounds and the common background.
The moment and annulus constants for that class, hence
$\gamma_{*,\mathrm{ext}}$, are fixed before choosing the tolerance.

Proposition~\ref{prop:v61-enlargement-error} gives, without exterior agreement
on $\partial\Omega$,
\begin{equation}\label{eq:w0-enlargement}
 \Delta_e\le C_A(\delta+\epsilon_{\rm ext}).
\end{equation}
Here $\Delta_e$ is defined in \eqref{eq:w0-domain-error}.
This comparison follows from the elastic Alessandrini identity
\eqref{eq:w5-physical-alessandrini}. The reduced
Betti identity with a common sphere-neighborhood condition is used in
the separate common-exterior construction, on its larger working sphere.

The extension bound \eqref{eq:w0-extension-error} and DN comparison
\eqref{eq:w0-enlargement} allow application of
Proposition~\ref{prop:w0-common-assembly} with uniform constants.

\begin{proposition}
\label{prop:w0-interior-assembly}
Fix an integer $m\ge4$, put $s=2m+14$, and let
$\mathscr C^s(\Omega;M,c_0)$ be defined by \eqref{eq:w0-interior-class}.
There exists $\gamma_{*,\mathrm{ext}}\in(0,1)$, depending only on
$\Omega,s,M,c_0$, such that, for every
\begin{equation}\label{eq:m4-general-exponent}
 0<\alpha<\frac{\gamma_{*,\mathrm{ext}}(m-3)}{28m},
\end{equation}
there are constants $C_\alpha,\delta_{A,0}>0$, depending only on
$\Omega,s,M,c_0,\alpha$, for which \eqref{eq:w0-target-stability} holds
for every two pairs $(\lambda_j,\mu_j)\in\mathscr C^s(\Omega;M,c_0)$,
$j=1,2$, with $0<\delta<\delta_{A,0}$.
The discrepancy $\delta$ is defined in
\eqref{eq:w0-measured-error}. If $\delta=0$, the two pairs coincide in
$\Omega$.
\end{proposition}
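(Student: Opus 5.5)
The plan is to assemble three ingredients: the boundary jet estimate, the paired extension with its DN transfer, and the common-exterior stability estimate (Proposition~\ref{prop:w0-common-assembly}) on one fixed enlarged class. The order of choices matters. First fix $m\ge4$, $s=2m+14$, and the class $\mathscr C^s(\Omega;M,c_0)$. Next fix the initial extension of Lemma~\ref{lem:w5-fixed-initial-extension}, the ball $B_{R_e}$, the background $(\lambda_*,\mu_*)$, and the bound $M_e$ from Theorem~\ref{thm:v62-compensated-extension}. Then fix a larger working ball $B_R\Supset B_{R_e}$ with radii $R_{\rm sup}<R_0<R_s<R$ as in \eqref{eq:w0-common-geometry}, taking $D=B_{R_e}$ as the measurement domain.

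This defines the enlarged common-exterior class: $C^s$ pairs on $\R^3$ with norm at most $M_e$, lower bounds $c_0/2$, and equal to the background outside $B_{R_e-\eta_e}$. Set $\gamma_{*,\mathrm{ext}}$ equal to the exponent $\gamma_*$ of \eqref{eq:w3-gamma} for this class. It depends only on $R_s/R$ and $A_1$, hence only on $\Omega,s,M,c_0$. Now fix $\alpha$ satisfying \eqref{eq:m4-general-exponent}. Since $\gamma_*\beta_m/14=\gamma_{*,\mathrm{ext}}(m-3)/(28m)$, Proposition~\ref{prop:w0-common-assembly} applies with this $\alpha$ and supplies constants $C_{\rm com}$ and $\delta_{{\rm com},0}$, uniform over the enlarged class.

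\textbf{Positive error.} Take $0<\delta<\delta_{\mathrm{tr},0}$. Corollary~\ref{cor:w5-noise-transfer} yields extensions $(\widetilde\lambda_j,\widetilde\mu_j)$ in the fixed enlarged class that agree with the originals on $\overline\Omega$ and satisfy $\Delta_e\le C'\delta^{1/s}$. These extensions need not agree with each other on $B_{R_e}\setminus\overline\Omega$. This is not an obstacle, because the common-exterior theorem only needs agreement outside $D=B_{R_e}$ and near $\partial B_R$, which holds since both equal the background there.

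Shrink $\delta_{A,0}$ so that $C'\delta^{1/s}<\delta_{{\rm com},0}$. If $\Delta_e>0$, Proposition~\ref{prop:w0-common-assembly} gives
\[
\mathfrak d_{B_{R_e}}\le C_{\rm com}|\log\Delta_e|^{-\alpha}.
\]
For small $\delta$,
\[
|\log\Delta_e|\ge \tfrac1s|\log\delta|-|\log C'|\ge \tfrac1{2s}|\log\delta|,
\]
so $\mathfrak d_{B_{R_e}}\le C_\alpha|\log\delta|^{-\alpha}$. If $\Delta_e=0$, the exact-data part of the same proposition gives $\mathfrak d_{B_{R_e}}=0$. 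In either case, restricting to $\Omega\subset B_{R_e}$, where the extensions coincide with the original pairs, gives \eqref{eq:w0-target-stability}.

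\textbf{Zero error.} Suppose $\delta=0$. Theorem~\ref{thm:v63-boundary-jet-stability} gives $e_\partial=0$. For each small $t>0$, Corollary~\ref{cor:w5-noise-transfer} with $\epsilon_{\rm tol}=t$ yields extensions in the same fixed class with $\Delta_e(t)\le Ct$. If $\Delta_e(t)=0$ for some such $t$, the exact-data conclusion gives coincidence on $\Omega$ directly. Otherwise, the positive-error estimate bounds the unchanged quantity $\|\lambda_1-\lambda_2\|_{L^\infty(\Omega)}+\|\mu_1-\mu_2\|_{L^\infty(\Omega)}$ by $C_{\rm com}|\log(Ct)|^{-\alpha}$, which tends to zero as $t\downarrow0$. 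Hence the two pairs coincide in $\Omega$.

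\textbf{Main obstacle.} The delicate point is uniformity. We must check that every constant entering Proposition~\ref{prop:w0-common-assembly}, including $\gamma_*$, $h_*$, $C_0$ and the thresholds, is fixed by the enlarged class alone, before $\delta$ or the tolerance is chosen. In particular, the collar width $r_{\rm ext}$ varies with $\delta$, but it changes neither $M_e$, the convexity bounds, nor the support geometry. We must also check that the common-collar hypothesis of Lemma~\ref{lem:w1-betti} holds near $\partial B_R$, and that the comparison lemma Lemma~\ref{lem:w1-domain-comparison} is applied with $D=B_{R_e}$.
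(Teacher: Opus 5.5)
Your proposal is correct and follows the paper's own proof essentially step by step. Like the paper, you fix the enlarged common-exterior class with $D=B_{R_e}$ before the noise, transfer the error as $\Delta_e\le C\delta^{1/s}$, use $|\log\Delta_e|\ge|\log\delta|/(2s)$ (or the exact-data case when $\Delta_e=0$), and at $\delta=0$ bound the unchanged original discrepancy by $C_{\rm com}|\log(Ct)|^{-\alpha}$ at each positive tolerance $t$, then let $t\downarrow0$.
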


\begin{proof}
Fix $\alpha$ satisfying \eqref{eq:m4-general-exponent} and use the constants
of Proposition~\ref{prop:w0-common-assembly} for the fixed enlarged class
constructed in Subsection~\ref{sec:r2-original-assembly}. Its exterior
condition is \eqref{eq:m4-common-exterior} with $D=B_{R_e}$ and with the
constant background from Lemma~\ref{lem:w5-fixed-initial-extension}.
For positive noise choose the tolerance and width in
\eqref{eq:w5-tolerance-choice}.
For $0<\delta<1$, the boundary estimate gives
\begin{equation}\label{eq:r2-calculation-040}
\delta+2e_\partial\le(1+2C_J)\delta^{1/s}.
\end{equation}
The extension and physical enlargement estimates therefore imply
\begin{equation}\label{eq:w0-outer-noise}
 \Delta_e\le K_e\delta^{1/s},\qquad
 K_e=\max\{1,C_A(1+C_E+2C_EC_J)\}.
\end{equation}
An explicit sufficient original-domain threshold is
\begin{equation}\label{eq:w0-original-threshold}
\begin{aligned}
 \delta_{A,0}=\min\Bigl\{&
 1,\delta_{B,0},
 \left(\frac{t_{{\rm ext},0}}{2(1+2C_J)}\right)^s,\\
 &\left(\frac{\delta_{{\rm com},0}}{2K_e}\right)^s,
 (2K_e)^{-2s}\Bigr\}>0.
\end{aligned}
\end{equation}
For $0<\delta<\delta_{A,0}$, the extension range is admissible and
$\Delta_e<\delta_{{\rm com},0}$. If $\Delta_e=0$, the exact-data
conclusion of Proposition~\ref{prop:w0-common-assembly} gives equality
on $B_{R_e}$, hence on $\Omega$.

If $\Delta_e>0$, let $L=|\log\delta|$. From
\eqref{eq:w0-outer-noise} and \eqref{eq:w0-original-threshold},
\begin{equation}\label{eq:w0-log-comparison}
 |\log\Delta_e|\ge\frac{L}{s}-\log K_e\ge\frac{L}{2s}.
\end{equation}
The common-exterior estimate, with $D=B_{R_e}$ and
$\delta_D=\Delta_e$, then gives
\begin{equation}\label{eq:r2-calculation-041}
\norm{\lambda_1-\lambda_2}_{L^\infty(\Omega)}+
 \norm{\mu_1-\mu_2}_{L^\infty(\Omega)}
 \le C_{\rm com}(2s)^\alpha|\log\delta|^{-\alpha}.
\end{equation}
The factor $1/s$ in the outer-noise power changes the multiplicative
constant in this comparison, not the logarithmic exponent.

If the original $\delta=0$, the exact-data boundary estimate gives
$e_\partial=0$. For every sufficiently small $t>0$, use
$\epsilon_{\rm tol}=t$ in Theorem~\ref{thm:v62-compensated-extension}. The enlarged discrepancy satisfies
\begin{equation}\label{eq:w0-exact-original-noise}
 \Delta_e(t)\le C_AC_Et.
\end{equation}
If it vanishes for any such $t$, equality follows from the common-exterior
zero-error assertion. Otherwise put $K_t=\max\{1,C_AC_E\}$ and take
$t$ small enough that $K_tt<\min\{1,\delta_{{\rm com},0}\}$. For the
fixed original pair, the same uniform common-exterior estimate yields
\begin{equation}\label{eq:r2-calculation-042}
\norm{\lambda_1-\lambda_2}_{L^\infty(\Omega)}+
 \norm{\mu_1-\mu_2}_{L^\infty(\Omega)}
 \le C_{\rm com}|\log(K_tt)|^{-\alpha}\longrightarrow0.
\end{equation}
Only this numerical upper bound is sent to zero. Each extension is used
at a positive tolerance; neither a zero-width extension nor convergence
of the extensions in $C^s$ is assumed.
\end{proof}

Taking $m=4$ in Proposition~\ref{prop:w0-interior-assembly}
yields Theorem~\ref{thm:w0-target-A}.

\subsection{Boundary stability and finite-regularity uniqueness}\label{sec:r2-main-boundary-proofs}

The boundary upper bounds and finite-regularity uniqueness require no strict
prior slack; that additional hypothesis enters only the sharpness assertion.

\begin{proof}[Proof of Theorem~\ref{thm:w0-target-B}]
For each $q=0,\ldots,s-1$, Theorem~\ref{thm:v63-boundary-jet-stability}
on $\Omega$ gives fixed constants $C_q,\delta_q>0$ such that
\[
 J_q\le C_q\delta^{(s-q)/s},\qquad 0<\delta<\delta_q.
\]
Set $C=\max_{0\le q<s}C_q$ and
$\delta_{B,0}=\min_{0\le q<s}\delta_q>0$.
These choices prove \eqref{eq:w0-target-boundary} simultaneously
for all the stated orders. At $\delta=0$, the exact-data conclusion
of the cited theorem gives $J_q=0$ for every $q<s$.
\end{proof}

\begin{proof}[Proof of Proposition~\ref{prop:w0-boundary-sharpness}]
Proposition~\ref{prop:w5-boundary-sharpness} constructs the smooth
families in the prescribed class using the strict slack
\eqref{eq:w0-sharpness-slack}. Absorb its fixed positive amplitude
into $c_q,C_q$. Its estimates \eqref{eq:w5-quantified-sharpness} give
\[
 0<\delta_{a,q}\le C_qa^s,\qquad J_q\ge c_qa^{s-q}.
\]
For $q<s$, set $\vartheta_q=(s-q)/s$.
The inequalities above imply
\[
\frac{J_q}{\delta_{a,q}^{\vartheta_q}}
\ge \frac{c_q}{C_q^{\vartheta_q}}>0.
\]
Since $0<\delta_{a,q}\le C_q a^s\to0$, a uniform estimate
$J_q\le\omega(\delta)$ with
$\omega(t)=o(t^{\vartheta_q})$ would imply
\[
0<\frac{c_q}{C_q^{\vartheta_q}}
\le \frac{\omega(\delta_{a,q})}{\delta_{a,q}^{\vartheta_q}}
\longrightarrow0,
\]
a contradiction. For $q=s$, the same construction gives
$J_s\ge c_s>0$ while $\delta_{a,s}\to0$ through positive values.
An estimate $J_s\le\omega(\delta_{a,s})$ with
$\omega(t)\to0$ as $t\downarrow0$ is therefore impossible.
\end{proof}

\begin{proof}[Proof of Corollary~\ref{cor:w0-target-C}]
For the fixed $C^{22}$ strongly convex materials, choose
\[
 \begin{aligned}
 c_0&=\tfrac12\min_{\substack{j=1,2\\x\in\overline\Omega}}
      \{\mu_j(x),3\lambda_j(x)+2\mu_j(x)\}>0,\\
 M&=1+\max_{j=1,2}
       \bigl(\|\lambda_j\|_{C^{22}(\overline\Omega)}
                 +\|\mu_j\|_{C^{22}(\overline\Omega)}\bigr)<\infty.
 \end{aligned}
\]
The strict strong-convexity inequalities on the compact set
$\overline\Omega$ give $c_0>0$, and the $C^{22}$ regularity makes $M$ finite.
Both pairs then belong to $\mathscr C^{22}(\Omega;M,c_0)$ in
\eqref{eq:w0-interior-class}. Equality of their DN maps gives
$\delta=0$, so the exact-data conclusion of
Proposition~\ref{prop:w0-interior-assembly} with $m=4$ yields
\[
 \|\lambda_1-\lambda_2\|_{L^\infty(\Omega)}
 +\|\mu_1-\mu_2\|_{L^\infty(\Omega)}=0.
\]
The coefficients are continuous, hence agree on $\overline\Omega$.
The constants $M,c_0$ are chosen for this fixed pair and need not be
prescribed in advance.
\end{proof}

\appendix

\section{Algebraic identities and local regularity}
\label{app:w1-algebra}

The computations below verify the exact reduction, its formal transpose, and
the supplementary physical polarization. Interior regularity for each fixed
$h$ then justifies the Betti identity for the exact reduced solutions.

\subsection{Verification of the exact reduction}\label{sec:r2-factorization-proof}

All indices below range from one to three, and repeated indices are
summed. For smooth fields, set
\begin{equation}\label{eq:r3-factorization-auxiliary-fields}
\begin{gathered}
 a=\mu^{-1}=c^2,\qquad \Sigma=\lambda+2\mu,\qquad
 p_f=\Delta f-(\Delta a/a)f,\\
 u^{(v)}=cv,\qquad u^{(f)}=a\nabla f-f\nabla a,\qquad
 T(v,f)=u^{(v)}+u^{(f)}.
\end{gathered}
\end{equation}
Here $c$ and $T$ are defined in \eqref{eq:w1-material-variables}
and \eqref{eq:w1-T-S-M}.

\paragraph{Scalar contribution.}
By the product rule,
\begin{equation}\label{eq:r3-scalar-component-kinematics}
\operatorname{div}u^{(f)}=a\Delta f-f\Delta a=ap_f,\qquad
\varepsilon(u^{(f)})=a\nabla^2f-f\nabla^2a.
\end{equation}
Consequently
\begin{equation}\label{eq:r2-calculation-043}
\sigma_{\lambda,\mu}(u^{(f)})_{ij}=a\lambda p_f\delta_{ij}+2f_{ij}-2\mu f a_{ij}.
\end{equation}
Taking the divergence of the stress tensor in \eqref{eq:r2-calculation-043},
we obtain the component identities
\begin{equation}\label{eq:r2-calculation-044}
(Lu^{(f)})_i=\partial_i(a\lambda p_f)+2\partial_i\Delta f
              -2\mu a_{ij}f_j-2f\mu_j a_{ij}-2\mu f\partial_i\Delta a.
\end{equation}
Since $a\Sigma p_f=a\lambda p_f+2\Delta f-2\mu(\Delta a)f$,
subtract its gradient from this equality. The derivatives
$\partial_i\Delta a$ cancel, leaving
\begin{equation}\label{eq:w1-app-scalar}
 Lu^{(f)}=\nabla(a\Sigma p_f)
  +2\mu(\Delta a I_3-\nabla^2a)\nabla f
  +2\mu^2(\nabla^2a-\Delta a I_3)\nabla a\,f.
\end{equation}
Now
$\nabla a=-ag_\mu$,
$\nabla^2a=a(g_\mu\otimes g_\mu-\nabla^2\ell_\mu)$,
$-\Delta a/a=q_\mu$, and
$\Delta a I_3-\nabla^2a=(c/2)F_\mu$.
Thus the nongradient terms of \eqref{eq:w1-app-scalar} are
$c^{-1}F_\mu\nabla f+c^{-1}F_\mu g_\mu f$, and its gradient is
$\nabla[(\Delta f+q_\mu f)/(1-\kappa)]$.

\paragraph{Vector contribution.}
The expanded Lam\'e operator is
\begin{equation}\label{eq:r2-calculation-045}
(Lu)_i=\mu\Delta u_i+(\lambda+\mu)\partial_i\operatorname{div}u
       +\lambda_i\operatorname{div}u+\mu_j(\partial_j u_i+\partial_i u_j).
\end{equation}
Define
\begin{equation}\label{eq:r3-vector-gradient-potential}
\Psi_v=c(\lambda+\mu)\operatorname{div}v+(\lambda-\mu)c_jv_j.
\end{equation}
To make the first-order cancellation explicit, before subtracting
$\partial_i\Psi_v$, the coefficients of the indicated terms are
\begin{equation}\label{eq:r2-calculation-046}
\begin{aligned}
 [\partial_jv_i]&:\quad 2\mu c_j+c\mu_j=0,\\
 [\partial_iv_j]&:\quad (\lambda+\mu)c_j+c\mu_j,\\
 [\operatorname{div}v]&:\quad (\lambda+\mu)c_i+c\lambda_i.
\end{aligned}
\end{equation}
We subtract the displayed gradient contribution. The term
$(\lambda-\mu)c_j\partial_iv_j$ cancels, and $2\mu c_j+c\mu_j=0$. We also
subtract $\partial_i[c(\lambda+\mu)]\operatorname{div}v$, which leaves
$-c\mu_i\operatorname{div}v=-c^{-1}(g_\mu)_i\operatorname{div}v$.
The mixed second derivatives cancel as well. The remaining zeroth-order
matrix is
\begin{equation}\label{eq:r2-vector-zero-order-coefficient}
Z_{ij}=2\mu c_{ij}+\mu_i c_j+\mu_j c_i
                  +(\mu\Delta c+\mu_kc_k)\delta_{ij}.
\end{equation}
Since $\mu=c^{-2}$, we obtain
\begin{equation}\label{eq:r2-calculation-048}
cZ=2c^{-1}\nabla^2c-4c^{-2}\nabla c\otimes\nabla c
        +(c^{-1}\Delta c-2c^{-2}|\nabla c|^2)I_3=V_\mu.
\end{equation}
The final equality follows by substituting
$\nabla c=-cg_\mu/2$ and
$\nabla^2c=c(-\nabla^2\ell_\mu/2+g_\mu\otimes g_\mu/4)$.
We have therefore proved
\begin{equation}\label{eq:w1-app-vector}
 L(cv)=c^{-1}(\Delta v-g_\mu\operatorname{div}v+V_\mu v)
          +\nabla\big[c(\lambda+\mu)\operatorname{div}v
                          +(\lambda-\mu)\nabla c\cdot v\big].
\end{equation}
The identities
$c(\lambda+\mu)=d/(1-\kappa)$ and
$(\lambda-\mu)\nabla c=-e g_\mu/(1-\kappa)$ convert the last gradient
into the scalar block of $S\mathcal M$. The scalar and vector product-rule identities \eqref{eq:w1-app-scalar} and
\eqref{eq:w1-app-vector} together imply $LT=S\mathcal M$, with all
coefficients given in \eqref{eq:w1-factorization}.

\paragraph{Formal transpose and constant background.}
The formal transpose is taken with respect to the complex-bilinear pairing.

For compactly supported tests,
$\int u^T\nabla(q/(1-\kappa))=-\int q\operatorname{div}u/(1-\kappa)$.
There is no term $-\nabla(1-\kappa)^{-1}\cdot u$ in this transpose.
The divergence of the physical transform is
\begin{equation}\label{eq:r2-calculation-049}
\operatorname{div}T(v,f)=c\operatorname{div}v-\tfrac12c g_\mu\cdot v
                         +\mu^{-1}(\Delta f+q_\mu f).
\end{equation}
These two identities prove \eqref{eq:w1-ST-K} directly.
For constant coefficients $g_\mu=F_\mu=V_\mu=q_\mu=0$ and
$\mathcal M(v,f)=(\Delta v,\Delta f+d\operatorname{div}v)^T$.
Applying the constant-background formula to $(v,f-d_*x\cdot v/2)$ with
$d=d_*$, we obtain $(\Delta v,\Delta f-d_*x\cdot\Delta v/2)^T=\mathsf
R_*\Delta(v,f)$, as used in \eqref{eq:w1-background}.

\paragraph{Local regularity of reduced solutions.}

We justify the local $H^4$ regularity required by the Betti identity
of Lemma~\ref{lem:w1-betti} using interior Laplacian estimates at fixed $h$. This regularity
argument is separate from the uniform two-norm remainder estimates.

If an exact reduced field $\boldsymbol z_h\in H^2$ solves
$\mathcal M\boldsymbol z_h=0$, then
$\Delta \boldsymbol z_h=-\sum A_k\partial_k\boldsymbol z_h-B\boldsymbol z_h\in H^1$ locally.
An interior Laplacian estimate on nested fixed neighborhoods gives
$\boldsymbol z_h\in H^3$; with $C^2$ bounds for $A_k,B$, the right side is then
in $H^2$ and gives $\boldsymbol z_h\in H^4$ on the smaller neighborhood.
The interior estimate follows by cutting off and using the Fourier
estimate for the scalar Laplacian componentwise, with the cutoff
commutators controlled by the $H^2$ estimate for the $H^3$ step,
and by the $H^3$ estimate for the $H^4$ step. Thus no system-specific
regularity theorem or higher-order uniform CGO inverse is needed.
For fixed $h$ this proves the regularity used by Lemma~\ref{lem:w1-betti};
it makes no $h$-uniform $H^4$ or $H^5$ claim.

\subsection{The supplementary physical polarization}\label{sec:r2-supplementary-proof}

The supplementary lift is checked in both transport blocks, including the
transverse cutoff commutators on the entire working domain.

Substitution of $g_\mu=-2\nabla c/c$ in the coefficients gives
\begin{equation}\label{eq:w1-c-coefficients}
 \begin{aligned}
 F_\mu&=-4\nabla^2c-4c^{-1}\nabla c\otimes\nabla c
                 +(4\Delta c+4c^{-1}|\nabla c|^2)I_3,\\
 cV_\mu&=2\nabla^2c-4c^{-1}\nabla c\otimes\nabla c
                 +(\Delta c-2c^{-1}|\nabla c|^2)I_3,\\
 F_\mu-cV_\mu-3(\nabla(cg_\mu))^T
       &=(3\Delta c+6c^{-1}|\nabla c|^2)I_3.
 \end{aligned}
\end{equation}
Indeed $cg_\mu=-2\nabla c$, so the third matrix is symmetric and all
Hessian terms cancel. Contracting the last identity in \eqref{eq:w1-c-coefficients} with $\theta^T$
and $q_\theta$, and using $\theta\cdot q_\theta=0$, we obtain
\begin{equation}\label{eq:w1-slow-contracted-coefficient}
 \theta^TF_\mu q_\theta=c\theta^TV_\mu q_\theta
                         +3D_\theta(cg_\mu\cdot q_\theta).
\end{equation}
Contracting the first identity in \eqref{eq:w1-c-coefficients} with $\theta$
in both slots, we obtain $-c\theta^TF_\mu\theta/2=D_\theta^2(c^2)=\alpha_\mu$.
The contraction uses nullness to remove the scalar multiple of $I_3$.

For $\boldsymbol z_0^{\rm sup}=(q_\theta,0)^T$, $\boldsymbol z_1^{\rm sup}=(v_{\rm sup},f_{\rm sup})^T$ and
$R_{\rm col}=c\theta\cdot v_{\rm sup}$, the scalar component of
$\mathcal K\boldsymbol z_0^{\rm sup}-J_\theta \boldsymbol z_1^{\rm sup}$ is
\begin{equation}\label{eq:r2-calculation-050}
\frac{\tfrac12c g_\mu\cdot q_\theta+R_{\rm col}+2c^2D_\theta f_{\rm sup}}{1-\kappa}.
\end{equation}
The scalar equation in \eqref{eq:w1-slow-two-equations} replaces
$2c^2D_\theta f_{\rm sup}$ by
$-\kappa R_{\rm col}-c^2e\,g_\mu\cdot q_\theta$.
Since $c/2-c^2e=3c(1-\kappa)/2$, this is precisely
$R_{\rm col}+3cg_\mu\cdot q_\theta/2$.
For a general $(p,\eta)^T$ the ambient dual equation has blocks
$2D_\theta p+\theta(g_\mu\cdot p-d\eta)$ and
$2D_\theta\eta-\theta^TF_\mu p$.
Insert $p=q_\theta-cf_{\rm sup}\theta$ and the scalar just computed.
The $D_\theta c=-c g_\mu\cdot\theta/2$ terms cancel in the first block,
leaving exactly the first line of \eqref{eq:w1-slow-defects}.
For the second block, use \eqref{eq:w1-slow-contracted-coefficient}
and $c\theta^TF_\mu\theta=-2\alpha_\mu$ to obtain exactly its second
line. This proves the supplementary lift and both transport equations.

In the basis $(n_\theta,\xi_q)$ of the upper two coordinates, $Y$ has
coordinate matrix
\begin{equation}\label{eq:r2-calculation-051}
\begin{pmatrix}-2cf_1&-2cf_2&2cf_{\rm sup}\\0&0&1\\-R_1&-R_2&\eta_{\rm sup}\end{pmatrix}.
\end{equation}
Its determinant is $-2c(f_1R_2-f_2R_1)$.
Multiplication by $\det(n_\theta,\xi_q)$ proves
\eqref{eq:w1-physical-Y} for every value of $f_{\rm sup},\eta_{\rm sup}$.
The geometric minor bound used for the paired lift is also explicit:
\begin{equation}\label{eq:r2-spinor-two-row-minors}
q_{12}=i s_0s_1/2,\qquad
 q_{13}=(s_0^2+s_1^2)/4,\qquad
 q_{23}=i(s_0^2-s_1^2)/4.
\end{equation}
After taking squared moduli and adding the expressions, we obtain
\eqref{eq:w1-minor-bound}; the mixed phase terms cancel. The formula also
holds on both chart axes.

For the localized supplementary column one must also keep the
cutoff term away from the test support. Since
$D_\theta\chi=0$, $\operatorname{div}(\chi q_\theta)
=\chi'(\nu_{\rm leaf}\cdot q_\theta)$, and the scalar component of
$\boldsymbol z_0^{\rm sup}$ is zero, \eqref{eq:w1-ST-K} gives
\begin{equation}\label{eq:w1-app-localized-lift}
 \begin{aligned}
 [\mathcal K,\chi]\boldsymbol z_0^{\rm sup}
   &=-\binom{0_{\mathbb C^3}}
                {c\chi'(\nu_{\rm leaf}\cdot q_\theta)/(1-\kappa)},\\
 [J_\theta,\chi]\boldsymbol z_1^{\rm sup}&=0,\\
 \mathcal K a_0^{\rm sup}-J_\theta a_1^{\rm sup}
   &=\chi\mathcal Y_{\rm sup}-J_\theta w_\chi
          +[\mathcal K,\chi]\boldsymbol z_0^{\rm sup}.
 \end{aligned}
\end{equation}
This proves \eqref{eq:w1-localized-leading-matrix} on the whole
working domain. The fixed leaf inverse defining $w_\chi$ preserves
transverse support, so $w_\chi$ and its available derivatives vanish
on every open interval where $\chi$ is constant. In particular the
localized leading column is exactly $\mathcal Y_{\rm sup}$ near the test
support, while remaining uniformly bounded off that support.

\section{Finite-order frames, free inversion, and the parametrix}
\label{app:w2-parametrix}
\label{sec:analytic-budgets}

This appendix proves the frame and local right-inverse theorems of
Section~\ref{sec:right-inverse}, with explicit derivative requirements. Both
Sobolev bounds are obtained from one Neumann inverse.

Throughout this appendix, norms of finite matrices can be any fixed
submultiplicative norm. Constants absorb the fixed equivalence constants.
All products involving matrix symbols are kept in their displayed order.
The coefficient label $\mathfrak a$ is a parameter in a compact metric
space; no coefficient-label derivatives will be taken.

\subsection{The finite-order planar input}\label{sec:r2-planar-estimates}

\begin{lemma}
\label{lem:w2-cauchy}
For compactly supported functions in a fixed disk of the complex plane,
set
\begin{equation}\label{eq:local-cauchy-contraction}
 (\mathsf C F)(z)=\frac1\pi\int_{\mathbb C}
                          \frac{F(w)}{z-w}\,dA(w).
\end{equation}
It acts entrywise on matrices. If $F\in C^{r,\alpha_{\rm H}}$ with that
support, then on any fixed bounded disk
\begin{equation}\label{eq:w2-cauchy-schauder}
 \partial_{\bar z}\mathsf C F=F,
 \qquad
 \lVert\mathsf C F\rVert_{C^{r+1,\alpha_{\rm H}}}
                  \leq C\lVert F\rVert_{C^{r,\alpha_{\rm H}}}.
\end{equation}
There is also a bound from compactly supported $L^\infty$ into
$C^{0,\alpha_{\rm H}}$ on fixed disks, for every $\alpha_{\rm H}<1$.
Derivatives in additional real parameters commute with this integral;
consequently the estimate applies separately to every mixed derivative
whose indicated input norm is available. It gains a leaf derivative,
not a transverse derivative.
\end{lemma}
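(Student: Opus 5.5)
The proof follows the classical route for the Cauchy transform. The operator $\mathsf C$ is convolution with the fundamental solution $E(z)=1/(\pi z)$ of $\partial_{\bar z}$, so $\partial_{\bar z}E=\delta_0$ in distributions. Both the identity and the estimates then come from singular-integral theory on a fixed compactly supported class.

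\textbf{The identity.} First take $F\in C_c^\infty$. Then $\mathsf CF=E*F$ and $\partial_{\bar z}(E*F)=F$. To verify this, I would write $\partial_{\bar z}\mathsf CF=E*\partial_{\bar w}F$ and apply Cauchy--Green on the complement of a small disk around $z$. The boundary term tends to $F(z)$ because
\[
\frac1{2\pi i}\oint_{|w-z|=\epsilon}\frac{F(w)}{w-z}\,dw\longrightarrow F(z),
\]
with the orientation chosen to give the correct sign. The identity then extends to $C^{r,\alpha_{\rm H}}$ by mollification, since the convergence holds in distributions.

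\textbf{The $L^\infty\to C^{0,\alpha_{\rm H}}$ bound.} For $F$ bounded and supported in a fixed disk $D_0$, take $z,z'$ in a fixed disk and set $d=|z-z'|$. Split the difference of kernels into the region $|w-z|<2d$ and its complement. On the near region the kernel $|z-w|^{-1}$ is integrable and contributes $O(d)$. On the far region the kernel difference is $O(d|z-w|^{-2})$, which integrates over the bounded support to $O(d\log(1/d))$. Since $d\log(1/d)\le C_\alpha d^{\alpha}$ for every $\alpha<1$, this gives the Hölder bound. The sup bound is immediate from local integrability of the kernel.

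\textbf{The Schauder bound.} This is the main step, and I would handle it by reduction to the Beurling transform. Convolution commutes with translations, so for $F\in C^{r,\alpha_{\rm H}}_c$ one has $\partial^\gamma\mathsf CF=\mathsf C\partial^\gamma F$ for $|\gamma|\le r$. It therefore suffices to prove the case $r=0$: show that $\mathsf CF\in C^{1,\alpha_{\rm H}}$ with norm at most $C\|F\|_{C^{0,\alpha_{\rm H}}}$.

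The $\bar z$-derivative is $F$ itself, which is already in $C^{0,\alpha}$. The $z$-derivative is the Beurling transform
\[
\partial_z\mathsf CF=-\frac1\pi\,\mathrm{p.v.}\int\frac{F(w)}{(z-w)^2}\,dA(w).
\]
This principal value is obtained by differentiating under the integral after subtracting $F(z)$. The subtraction is legitimate because the kernel has mean zero on circles, so the integrand $(F(w)-F(z))/(z-w)^2$ is absolutely integrable for Hölder $F$. The Hölder continuity of the Beurling transform on compactly supported $C^{0,\alpha}$ functions is the standard Calderón--Zygmund/Korn estimate. I would prove it by the same near/far split at scale $d=|z-z'|$:
\begin{itemize}
\item In the near region, use the Hölder increment $|F(w)-F(z)|\lesssim |w-z|^{\alpha}$, which gives $O(d^\alpha)$.
\item In the far region, combine the kernel-difference bound $d|z-w|^{-3}$ with the Hölder modulus, which also gives $O(d^\alpha)$.
\item The cancellation term over the boundary of the bounded support contributes a smooth function of $z$ multiplied by $F(z)$.
\end{itemize}
The main obstacle is the bookkeeping of this cancellation term, but it is classical (Gilbarg--Trudinger, Lemma~4.4, adapted to two dimensions). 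The constants depend only on the disks, $r$, and $\alpha_{\rm H}$.

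\textbf{Parameters.} If $F=F(w,p)$ depends on real parameters $p$, the kernel does not depend on $p$. Dominated convergence therefore justifies $\partial_p^\beta\mathsf CF=\mathsf C\partial_p^\beta F$ whenever the right-hand norm exists, and applying the estimates to $\partial_p^\beta F$ gives the mixed bounds. The transverse variable enters only as such a parameter: the integral is taken at fixed transverse position. Consequently no transverse derivative is gained, only the leaf derivative.
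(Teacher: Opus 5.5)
Your proposal is correct and follows essentially the same route as the paper. Both arguments use the same chain of steps: the identity $\partial_{\bar z}\mathsf C F=F$ by excising a small disk, the near/far split at scale $|z-z'|$ giving the $t(1+|\log t|)$ H\"older bound for $L^\infty$ data, the mean-zero subtraction of $F(z)$ for the principal-value $\partial_z$-derivative followed by the same split, reduction to $r=0$ by commuting derivatives with the convolution, and differentiation in parameters under the integral. The only difference is the far-region bookkeeping: you defer to the Gilbarg--Trudinger boundary-term version, where the relevant boundary should be that of a fixed disk containing the support rather than of the support itself, whereas the paper shifts the integration variable and tracks the resulting annular pieces directly.
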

\begin{proof}
The distributional identity
$\partial_{\bar z}(1/(\pi z))=\delta_0$ follows by integration by parts
outside a disk of radius $\varepsilon$ and passage to the limit in its
boundary integral. The normalization is thus fixed.
The integral of $|z-w|^{-1}$ over a disk of radius $a$ is at most $Ca$,
uniformly in $z$. For two points a distance $t$ apart, split the difference
of the kernels into the union of their $2t$-disks and its complement.
The first part is bounded by $C\lVert F\rVert_\infty t$; on the second,
the kernel difference is bounded by $Ct|z-w|^{-2}$. Radial integration
bounds this part by
$C\lVert F\rVert_\infty t(1+|\log t|)$ on fixed disks. Since
$t(1+|\log t|)\leq C_{\alpha_{\rm H}}t^{\alpha_{\rm H}}$, this proves
the $L^\infty$ assertion.

For the first derivative, $\partial_z\mathsf C F$ is the principal-value
integral with kernel $-1/(\pi z^2)$. Since the kernel has zero integral on centered annuli, we may replace the
local integrand $F(w)$ by $F(w)-F(z)$. The resulting integral is bounded by
$C[F]_{\alpha_{\rm H}}\int_0^R s^{\alpha_{\rm H}-1}\,ds$.
For its difference at two points separated by $t$, split at radius $4t$.
The near part is bounded by $C[F]_{\alpha_{\rm H}}t^{\alpha_{\rm H}}$.
In the common far region, subtract the value at the first point and
shift the second integration variable to put the two kernels on the
same domain. The kernel difference is bounded by $Ct s^{-3}$ and the
subtracted function by $[F]_{\alpha_{\rm H}}s^{\alpha_{\rm H}}$.
The resulting bound is
\begin{equation}\label{eq:r2-calculation-053}
C[F]_{\alpha_{\rm H}}t\int_{4t}^{R}
                         s^{\alpha_{\rm H}-2}\,ds
 \leq C_{\alpha_{\rm H}}[F]_{\alpha_{\rm H}}t^{\alpha_{\rm H}}.
\end{equation}
The annular pieces introduced by the shift have area $O(ts)$ at radius
$s$; the same subtraction and annular cancellation give the identical
bound at the inner boundary and $C\lVert F\rVert_{C^{0,\alpha_{\rm H}}}t$
at the fixed outer boundary. The nonsingular part outside a fixed disk
is bounded and Lipschitz on the observation disk. This proves the
$C^{0,\alpha_{\rm H}}$ estimate for $\partial_z\mathsf C F$.
Together with $\partial_{\bar z}\mathsf C F=F$, it proves
\eqref{eq:w2-cauchy-schauder} for $r=0$.
For higher $r$, real distributional derivatives commute with convolution;
apply the $r=0$ estimate to $\partial^\gamma F$, $|\gamma|\leq r$.
A fixed cutoff reduces interior variants to this compact-support version.
Difference quotients in the extra parameters and the same estimates
justify differentiation under the integral and then its iteration.
\end{proof}

\subsection{Frames at closure points and a uniform finite atlas}\label{sec:r2-proof-frames}

\begin{proof}[Proof of Theorem~\ref{thm:finite-frames}]
\textit{Local existence without a smoothness assumption.}
On a disk of radius $2a$ choose a cutoff equal to one on a smaller disk
and solve
\begin{equation}\label{eq:w2-small-disk-frame}
 \mathsf F_i=I+\mathsf C(\chi_i\mathsf B \mathsf F_i).
\end{equation}
On bounded continuous matrices the linear part has norm at most
$Ca\lVert\mathsf B\rVert_\infty$. Choose $a$ so that this is less than
$1/4$. Its Neumann series gives
$\lVert \mathsf F_i-I\rVert_\infty\leq1/3$, hence invertibility. The
$L^\infty$ part of Lemma~\ref{lem:w2-cauchy} first gives
$\mathsf F_i\in C^{0,\alpha_{\rm H}}$; its Schauder part then gives
$\mathsf F_i\in C^{1,\alpha_{\rm H}}$. Iterating the Schauder estimate of Lemma~\ref{lem:w2-cauchy} in
\eqref{eq:w2-small-disk-frame}, we obtain $\mathsf F_i\in C^{r+1,\alpha_{\rm
H}}$ on smaller disks. The construction applies
to finite-H\"older-regularity coefficients themselves, not only to their
smooth approximations.

\textit{Existence on a full disk.}
Choose an open disk $X$ with $\overline D\Subset X\Subset U$, and
cover $X$ by finitely many of the smaller disks just constructed; this
can be done by first covering $\overline X$ with disks lying in $U$.
On overlaps define
\begin{equation}\label{eq:r3-local-holomorphic-transitions}
g_{ij}=\mathsf F_i^{-1}\mathsf F_j.
\end{equation} Differentiation of the inverse gives
\begin{equation}\label{eq:r2-holomorphic-transition-equation}
\partial_{\bar z}g_{ij}
 =-\mathsf F_i^{-1}\mathsf B \mathsf F_i\mathsf F_i^{-1}\mathsf F_j
      +\mathsf F_i^{-1}\mathsf B \mathsf F_j=0.
\end{equation}
These $C^1$ functions are holomorphic, and
$g_{ij}g_{jk}=g_{ik}$. Glue the trivial complex manifolds
$U_i\times GL(d_{\rm sys},\mathbb C)$ by the holomorphic maps
$(z,h)_j\mapsto(z,g_{ij}(z)h)_i$. This is a locally trivial holomorphic
fiber bundle over the open disk $X$; the transition maps are
biholomorphic left multiplications on its fiber. The local continuous maps
\begin{equation}\label{eq:r3-frame-bundle-continuous-section}
h_i=\mathsf F_i^{-1},\qquad h_i=g_{ij}h_j,
\end{equation}
define a continuous section.

The precise external input is the existence consequence of
Forstneri\v c--Prezelj~\cite[Definitions 1.1--1.2 and Theorem 1.3]{FP}:
a continuous section of a locally trivial holomorphic fiber bundle over
a Stein manifold is homotopic to a holomorphic section when its fiber
admits a dominating holomorphic spray. Here the open disk $X$ is Stein.
The trivial vector bundle over the fiber with fiber
$\mathbb C^{d_{\rm sys}\times d_{\rm sys}}$ has the spray
$(g,Z)\mapsto g\exp Z$; its vertical differential at $Z=0$ is the
surjective map $Z\mapsto gZ$. All the hypotheses therefore hold.
Write the resulting holomorphic section in the local coordinates as
$\widetilde h_i$. Then $\mathsf F_i\widetilde h_i$ agree on overlaps, are
invertible, and solve $\partial_{\bar z}\mathsf F=\mathsf B\mathsf F$ on $X$.
On a neighborhood of the compact disk $\overline D$ they have the stated
finite regularity. This external input is solely qualitative. Uniform
bounds and parameter derivatives will follow from the local
perturbation construction, without a parametric Oka theorem.

\textit{Perturbation around every finite-regularity center.}
For clarity choose fixed disks
$D\Subset D_1\Subset D_2\Subset U$ within the original margin. A bounded
joint $C^{r+2}$ family, restricted to fixed slightly smaller neighborhoods,
has compact closure in $C^{r+1,\alpha_{\rm H}}$. Indeed, in each fixed
coordinate chart Arzel\`a--Ascoli gives a subsequence converging
uniformly through order $r+1$. For the differences $v$ of derivatives
of order $r+1$, their common Lipschitz bound and the interpolation
inequality
\begin{equation}\label{eq:r2-calculation-055}
[v]_{\alpha_{\rm H}}
 \leq C\|v\|_\infty^{1-\alpha_{\rm H}}
               \operatorname{Lip}(v)^{\alpha_{\rm H}}
\end{equation}
give convergence in that H\"older topology. Finitely many charts and
the fixed neighborhood margins give the joint assertion, including
parameter variables. Work on that entire
closure and the compact parameter set, not just on smooth members.
More explicitly, the coefficient labels are the joint functions on the
fixed spatial and parameter neighborhoods, with the
$C^{r+1,\alpha_{\rm H}}$ topology. Evaluation at $p$ and all parameter
derivatives through order $r$ are continuous into the corresponding
leaf H\"older spaces. An original material label enters through its
continuous coefficient map.
At a center $(\mathfrak a_0,p_0)$, the small-disk construction
\eqref{eq:w2-small-disk-frame} and the holomorphic transitions
\eqref{eq:r2-holomorphic-transition-equation} give an
invertible frame $\mathsf F_0$ on a neighborhood of $\overline D_2$.
Take one fixed cutoff $\chi=1$ near $\overline D_1$ and supported in $D_2$.
Put
\begin{equation}\label{eq:finite-frame-neumann}
 \begin{split}
 V_{\mathfrak a,p}
  &=\mathsf F_0^{-1}\bigl(\mathsf B^{\mathfrak a}(\cdot,p)
                  -\mathsf B^{\mathfrak a_0}(\cdot,p_0)\bigr)\mathsf F_0,\\
 \mathsf K_{\mathfrak a,p}u&=\mathsf C(\chi V_{\mathfrak a,p}u),
 \qquad \Psi=(I-\mathsf K_{\mathfrak a,p})^{-1}I,
 \qquad \mathsf F=\mathsf F_0\Psi .
 \end{split}
\end{equation}
Solve on $C^{0,\alpha_{\rm H}}(\overline D_2)$, with the integral extended
outside by its formula. Continuity of $\mathsf B$ into that space allows
a coefficient--parameter neighborhood on which
$\lVert\mathsf K_{\mathfrak a,p}\rVert<1/4$ and
$\lVert\Psi-I\rVert_\infty<1/2$. Then $\mathsf F$ is invertible and
$\partial_{\bar z}\mathsf F=\mathsf B^{\mathfrak a}\mathsf F$ on $D_1$.

For a nonzero parameter multi-index $\beta$, difference quotients in
\eqref{eq:finite-frame-neumann}, followed by induction, give
\begin{equation}\label{eq:parameter-differentiation}
 (I-\mathsf K)\partial_p^\beta\Psi
 =\sum_{\gamma<\beta}\binom\beta\gamma
       (\partial_p^{\beta-\gamma}\mathsf K)
                       \partial_p^\gamma\Psi.
\end{equation}
The inequality $\gamma<\beta$ means componentwise $\gamma\leq\beta$ and
$\gamma\ne\beta$. The inverse on the left is the same bounded
$C^{0,\alpha_{\rm H}}$ inverse for every $\beta$.
Thus all parameter derivatives through order $r$ are bounded in that
space. Differentiate the equation
$\partial_{\bar z}\Psi=V\Psi$ in $p$, and use
Lemma~\ref{lem:w2-cauchy} on fixed nested disks. Induction on the leaf
order then bounds $\partial_p^\beta\Psi$ in
$C^{r-|\beta|+1,\alpha_{\rm H}}$ near $\overline D$.
At each step the product on the right has total input order at most
$r+1$; the joint $C^{r+1,\alpha_{\rm H}}$ closure supplies it. No
rectangular requirement of $r$ derivatives in each variable is used.
Only finitely many nested disks are needed for these inductions:
for a fixed requested $r$, choose them between $D$ and $D_1$ before
any parameter is varied. Their inverse separation constants are
therefore fixed. This prevents the local Schauder estimates from
introducing a parameter-dependent boundary margin.
The identity $\mathsf F\mathsf F^{-1}=I$ and its differentiated versions give the inverse
bounds; for example
$\partial_j \mathsf F^{-1}=-\mathsf F^{-1}(\partial_j\mathsf F)\mathsf F^{-1}$, with ordered products
at every higher order.

The Neumann construction \eqref{eq:finite-frame-neumann} also proves the coefficient-label continuity used
in the physical construction. If $\mathfrak a_n\to\mathfrak a$ within this neighborhood,
then $\mathsf K_n\to\mathsf K$ in operator norm on
$C^{0,\alpha_{\rm H}}$, uniformly on a compact smaller parameter chart.
The resolvent identity reads
\begin{equation}\label{eq:r2-frame-resolvent-identity}
(I-\mathsf K_n)^{-1}-(I-\mathsf K)^{-1}
 =(I-\mathsf K_n)^{-1}(\mathsf K_n-\mathsf K)(I-\mathsf K)^{-1}.
\end{equation}
Its two inverse factors have norm at most $4/3$. Thus $\Psi_n\to\Psi$
in that space. Apply \eqref{eq:parameter-differentiation} successively
to obtain the same convergence for every parameter derivative through
order $r$. Applying the fixed-disk Cauchy estimate of Lemma~\ref{lem:w2-cauchy} to the
difference of the two frame equations, we obtain convergence for all total
mixed orders through $r$ on the smaller cylinders. The inverse identities give the
same continuity for $\mathsf F^{-1}$. No derivative of a coefficient label has
been used, and no neighborhood depends on $h$.

\textit{One atlas for the full prior.}
Apply the coefficient--parameter neighborhoods from
\eqref{eq:finite-frame-neumann} to the compact closure of the
\emph{whole} prior-bounded class, times $\mathfrak P$, and take a finite
subcover. The maximum of the finitely many bounds is $C_r$.
Refine to nested product neighborhoods. Choose continuous coefficient
bumps $\eta_\ell$ supported in the larger coefficient neighborhoods and
smooth parameter bumps $\varphi_\ell$ supported compactly in the larger
parameter charts, positive on their smaller covering sets. Set
\begin{equation}\label{eq:uniform-parameter-weights}
 w_\ell(\mathfrak a,p)=
 \frac{\eta_\ell(\mathfrak a)\varphi_\ell(p)}
 {\left(\sum_j\eta_j(\mathfrak a)^2\varphi_j(p)^2\right)^{1/2}}.
\end{equation}
The denominator is bounded below on the compact set. Its parameter
derivatives, and therefore those of every $w_\ell$, are uniformly
bounded. The products $w_\ell \mathsf F_\ell$ and $w_\ell \mathsf F_\ell^{-1}$ extend by
zero outside their charts because their supports lie strictly inside
the larger chart domains. A weight depending only on direction and
transverse position has zero derivative along the corresponding leaf.
For finitely many transport systems, take the product of their compact
coefficient closures with the same compact parameter set. Pull back
each system's cover and take all finite intersections, then choose nested
product neighborhoods and the coefficient and parameter cutoffs in
\eqref{eq:uniform-parameter-weights} on this common refinement.
The refinement is finite, and its largest seminorm and smallest fixed
margin are still prior-dependent constants. In particular it is fixed
before selecting any polynomial test, width, or $h$.
This proves all assertions. These oriented-pair weights are not asserted
to descend to the projectivized direction space.
\end{proof}

\subsection{Free inverse and weighted estimates}\label{sec:r2-free-inverse}

The free inverse satisfies a small same-order bound and a bound gaining one
derivative in weighted Sobolev spaces. Its full spatial output is retained
for the variable-coefficient parametrix.

For the rest of the appendix use the local notation
\begin{equation}\label{eq:frequency-convention}
 \begin{gathered}
 D_x=-i\nabla,\qquad \mathsf A_j=\widetilde A_j/(2i),\qquad
 \mathsf V=-\widetilde B,\qquad \tau=h^{-1},\\
 \xi_{\rm c}=i\tau\theta=\tau(\omega_0+i\nu),\qquad
 \omega_0=-e_2,\quad \nu=e_1,\\
 P_{\xi_{\rm c}}=(D_x+\xi_{\rm c})^2 I_{d_{\rm sys}}
       +2\sum_j \mathsf A_j(D_{x,j}+\xi_{{\rm c},j})+\mathsf V
       =-L_{h,\theta}.
 \end{gathered}
\end{equation}
Here $\mathsf A=(\mathsf A_1,\mathsf A_2,\mathsf A_3)$ is a triple of
$d_{\rm sys}\times d_{\rm sys}$ matrices, and $\mathsf V$ is a matrix potential.

For a Euclidean vector $x$, set
\begin{equation}\label{eq:r2-japanese-bracket}
 \langle x\rangle=(1+|x|^2)^{1/2}.
\end{equation}
For an integer $j\geq0$ and a real number $\sigma$, define
\begin{equation}\label{eq:w2-weighted-spaces}
 \lVert u\rVert_{H^j_\sigma}^2
 =\sum_{|\alpha|\leq j}\int_{\mathbb R^3}
          \langle x\rangle^{2\sigma}|\partial^\alpha u(x)|^2\,dx.
\end{equation}
Here $\sigma$ is only a weight exponent. The free operator and its inverse
are
\begin{equation}\label{eq:w2-free-symbol}
 P^0_{\xi_{\rm c}}=(D_x+\xi_{\rm c})^2,
 \quad p_{\xi_{\rm c}}(\xi)=|\xi|^2+2\xi_{\rm c}\cdot\xi,
 \quad E_{\xi_{\rm c}}=p_{\xi_{\rm c}}(D_x)^{-1}.
\end{equation}
The reciprocal is its locally integrable distribution, not a symbol
obtained by differentiating across its characteristic set.

\begin{lemma}
\label{lem:w2-free}
For $\tau\geq1$ and every integer $j\geq0$, the multiplier in
\eqref{eq:w2-free-symbol} has the two bounds
\begin{equation}\label{eq:free-two-estimates}
 \lVert E_{\xi_{\rm c}}f\rVert_{H^j_{-1}}
       \leq C_j\tau^{-1}\lVert f\rVert_{H^j_1},
 \qquad
 \lVert E_{\xi_{\rm c}}f\rVert_{H^{j+1}_{-1}}
       \leq C_j\lVert f\rVert_{H^j_1}.
\end{equation}
It satisfies $P^0_{\xi_{\rm c}}E_{\xi_{\rm c}}f=f$ in distributions.
Constants are uniform over all oriented orthonormal pairs
$(\omega_0,\nu)$. Its output is not assumed compactly supported.
\end{lemma}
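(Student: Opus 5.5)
The plan is to reduce to a single normalized direction, prove the weighted $L^2$ estimate at $j=0$ (the Sylvester--Uhlmann estimate), and then obtain the derivative versions by commutation and a frequency splitting.

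\emph{Normalization.} Apply a rotation $O$ that sends $(\omega_0,\nu)$ to $(e_1,e_2)$. Both the weights $\langle x\rangle^{\pm1}$ and the $H^j$ norms are rotation invariant up to constants depending only on $j$. Conjugating $E_{\xi_{\rm c}}$ by $O$ gives the multiplier for $\xi_{\rm c}=\tau(e_1+ie_2)$, so constants proved for this single direction are automatically uniform over all oriented orthonormal pairs. In these coordinates
\[
p(\xi)=|\xi|^2+2\tau\xi_1+2i\tau\xi_2 .
\]
Its zero set is the circle $\{\xi_2=0,\ (\xi_1+\tau)^2+\xi_3^2=\tau^2\}$, which has codimension two. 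Near this circle $|p|\gtrsim\tau\,\mathrm{dist}(\xi,\text{circle})$. Hence $1/p\in L^1_{\rm loc}$, and together with $|p|\ge|\xi|^2/2$ for $|\xi|\ge4\tau$, $E_{\xi_{\rm c}}f=\mathcal F^{-1}(\hat f/p)$ is a well-defined tempered distribution for $f\in L^2_1$. The identity $p\cdot(1/p)=1$ in $L^1_{\rm loc}$ then gives $P^0_{\xi_{\rm c}}E_{\xi_{\rm c}}f=f$ distributionally.

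\emph{The $j=0$ weighted estimate.} This step is the main obstacle. The target is
\[
\|E_{\xi_{\rm c}}f\|_{L^2_{-1}}\le C\tau^{-1}\|f\|_{L^2_1}.
\]
First pass to the partial Fourier transform in $(x_1,x_3)$, with dual variables $(\xi_1,\xi_3)$. For each fixed $(\xi_1,\xi_3)$ the symbol factors as a product of two first-order factors in $\xi_2$,
\[
p=(\xi_2+i\tau)^2+(\xi_1+\tau)^2+\xi_3^2-\tau^2=(\xi_2+i\tau-i\rho)(\xi_2+i\tau+i\rho),
\]
with $\rho=\bigl((\xi_1+\tau)^2+\xi_3^2-\tau^2\bigr)^{1/2}$ chosen by a fixed branch. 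Inverting one factor amounts to inverting a constant-coefficient first-order ODE $\partial_{x_2}+a$ on the line. For this ODE the one-dimensional estimate $\|\langle x_2\rangle^{-1}u\|_{L^2}\le C\,\|\langle x_2\rangle f\|_{L^2}/(\text{distance scale})$ holds uniformly in the constant $a$; this uses $\int\langle t\rangle^{-2}\,dt<\infty$ and Hardy/Schur-type bounds for the kernel $\mathbf 1_{\pm t>0}e^{-at}$. The other factor is then bounded below by $\tau$ on the relevant region, which produces the factor $\tau^{-1}$. Since the weights $\langle x\rangle^{\pm1}$ are dominated by products of one-variable weights in the appropriate direction, the one-dimensional bound integrates to the three-dimensional one. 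I expect the delicate bookkeeping to occur exactly here: the regions where $\rho$ is near $0$ or near $\tau$ have to be handled so that the constants stay uniform in $\tau\ge1$. If that proves too messy, the fallback is to cite the classical Sylvester--Uhlmann/H\"ahner argument for this step, since the lemma only restates it.

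\emph{Higher $j$.} Because $E_{\xi_{\rm c}}$ is a Fourier multiplier it commutes with $\partial^\alpha$, and in \eqref{eq:w2-weighted-spaces} the weight sits outside the derivative. Therefore
\[
\|\partial^\alpha E_{\xi_{\rm c}}f\|_{L^2_{-1}}=\|E_{\xi_{\rm c}}\partial^\alpha f\|_{L^2_{-1}}\le C\tau^{-1}\|\partial^\alpha f\|_{L^2_1},
\]
which gives the first bound in \eqref{eq:free-two-estimates} for every $j$. For the gain of one derivative it suffices, by the same commutation, to show
\[
\|\nabla E_{\xi_{\rm c}}f\|_{L^2_{-1}}\le C\|f\|_{L^2_1}.
\]
Split the frequencies with a cutoff $\chi(\xi/\tau)$ equal to one on $|\xi|\le4\tau$. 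The symbols $\chi(\xi/\tau)$ and $\xi_k\chi(\xi/\tau)/\tau$ have derivatives of order $k$ bounded by $C\tau^{-k}$. Their commutators with $\langle x\rangle^{\pm1}$ are therefore bounded with constants uniform in $\tau\ge1$, so both operators are bounded on $L^2_{\pm1}$. On the low-frequency part,
\[
\nabla E_{\xi_{\rm c}}\chi=\tau\cdot\bigl(\tau^{-1}\nabla\chi\bigr)E_{\xi_{\rm c}},
\]
and the $j=0$ bound $C\tau^{-1}$ cancels the factor $\tau$. On the high-frequency part, $|\xi|/|p|\le2/|\xi|\le1/(2\tau)$, so plain Plancherel together with $\|\cdot\|_{L^2_{-1}}\le\|\cdot\|_{L^2}\le\|\cdot\|_{L^2_1}$ suffices. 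Adding the two parts gives the second estimate in \eqref{eq:free-two-estimates}, uniformly over directions. No compact support of the output is used at any point.
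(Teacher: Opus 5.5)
You have the same architecture as the paper. You rotate to $(\omega_0,\nu)=(e_1,e_2)$, Fourier transform in the two variables orthogonal to $\nu$, and factor the symbol into two first-order factors in $t=x_2$. You then use the weighted Hilbert--Schmidt bound from $\int\langle t\rangle^{-2}\,dt=\pi$, together with $\langle x\rangle^{-1}\le\langle t\rangle^{-1}$ and $\langle t\rangle\le\langle x\rangle$, and commute $\partial^\alpha$ through the multiplier for higher $j$.

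\textbf{The gap.} The step you call the main obstacle is not carried out, and the factorization you display is false. One has $(\xi_2+i\tau)^2+(\xi_1+\tau)^2+\xi_3^2-\tau^2=p-\tau^2$; at $\xi=0$ this equals $-\tau^2$, whereas $p(0)=0$. The correct identity is
\[
p=(\xi_2+i\tau)^2+\rho^2=\bigl(\xi_2+i(\tau-\rho)\bigr)\bigl(\xi_2+i(\tau+\rho)\bigr),
\qquad \rho=\bigl|(\xi_1+\tau,\xi_3)\bigr|\ge0 .
\]
So $\rho$ is real, no branch is chosen, and the bookkeeping you anticipate near $\rho=0$ or $\rho=\tau$ comes only from the slip.

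\textbf{The fix.} In $t$ the two factors correspond to $\partial_t-(\tau\mp\rho)$, with real coefficients. Their inverses are one-sided exponentials $G_a$ with $|G_a|\le1$ and $\|G_a\|_{L^1}=|a|^{-1}$. Hence the $t$-kernel $K=-G_{\tau+\rho}*G_{\tau-\rho}$ of $1/p$ satisfies
\[
|K|\le\|G_{\tau+\rho}\|_{L^1}\|G_{\tau-\rho}\|_{L^\infty}\le(\tau+\rho)^{-1}\le\tau^{-1},
\]
uniformly for $\rho\ne\tau$; the set $\rho=\tau$ is null in $(\xi_1,\xi_3)$. Then $\|\langle t\rangle^{-1}K*(\langle t\rangle^{-1}g)\|_{L^2}\le\pi\sup|K|\,\|g\|_{L^2}$ and Plancherel in $(\xi_1,\xi_3)$ give the $j=0$ bound at once.

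This pointwise bound on the composite kernel should replace your phrase ``the other factor is bounded below by $\tau$ on the relevant region.'' It puts both weights on a single kernel, so you do not need a separate argument that $1/(\xi_2+i(\tau+\rho))$ acts boundedly between weighted spaces. Your fallback of citing Sylvester--Uhlmann with some $\delta\in(-1,0)$ is also valid, since $\langle x\rangle^{-1}\le\langle x\rangle^{\delta}$ and $\langle x\rangle^{\delta+1}\le\langle x\rangle$. But then the core of the lemma is cited rather than proved.

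\textbf{Derivative gain: a different, valid route.} The rest of your argument is correct. The paper reads the derivative gain off the same kernel: $b|K_b|\le1$, $|K_b'|\le2$, and $\tau|K_b|\le1$ (the last from undoing the modulation $e^{-i\tau x_1}$). So one Hilbert--Schmidt bound controls every first derivative.

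Your splitting at frequency scale $\tau$ also works. The multipliers $\chi(D/\tau)$ and $\tau^{-1}D_k\chi(D/\tau)$ have bounded symbols with $O(\tau^{-1})$ gradients. Since $[x_j,m(D)]=i(\partial_jm)(D)$, and by duality, they are bounded on $L^2_{\pm1}$ uniformly in $\tau\ge1$. At low frequencies you trade the factor $\tau$ against the $\tau^{-1}$ gain. On $|\xi|\ge4\tau$ one has $|p|\ge|\xi|^2/2$, so Plancherel suffices.

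Your version is modular: it derives the derivative gain from the $j=0$ estimate alone, however that estimate is obtained. The paper's version gets both estimates from one explicit kernel with no frequency cutoff. Your justification of $P^0_{\xi_{\rm c}}E_{\xi_{\rm c}}f=f$ via local integrability of $1/p$ matches the paper's.
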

\begin{proof}
Rotations reduce the proof to $\omega_0=(1,0,0)$ and $\nu=(0,1,0)$.
Write $u=e^{-i\tau x_1}v$, let $t=x_2$, and take the Fourier transform
in $y=(x_1,x_3)$. If $\eta\in\mathbb R^2$ and $b=|\eta|$, then
\begin{equation}\label{eq:w2-factor-free}
 e^{i\tau x_1}P^0_{\xi_{\rm c}}e^{-i\tau x_1}
  \quad\longleftrightarrow\quad
  -\partial_t^2+2\tau\partial_t+b^2-\tau^2
  =-(\partial_t-\tau-b)(\partial_t-\tau+b).
\end{equation}
For real $a\ne0$, put
\begin{equation}\label{eq:w2-one-dimensional-kernels}
 G_a(t)=
 \begin{cases}-\boldsymbol 1_{t<0}e^{at},&a>0,\\
               \boldsymbol 1_{t>0}e^{at},&a<0,
 \end{cases}
 \qquad K_b=-G_{\tau+b}*G_{\tau-b}.
\end{equation}
The jump at zero proves $(\partial_t-a)G_a=\delta_0$.
Since $\lVert G_{\tau+b}\rVert_{L^1}=(\tau+b)^{-1}$ and
$\lVert G_{\tau-b}\rVert_\infty=1$, convolution gives
\begin{equation}\label{eq:w2-kernel-bounds}
 |K_b(t)|\leq(\tau+b)^{-1},\qquad
 b|K_b(t)|\leq1,\qquad |K_b'(t)|\leq2.
\end{equation}
For the last inequality, differentiate distributionally:
$K_b'=-G_{\tau+b}+(\tau-b)K_b$ and use
$|\tau-b|\leq\tau+b$. The convolution is continuous, so no additional
Dirac mass occurs in this first derivative. The exceptional set $b=\tau$
has measure zero in the partial Fourier variables.

If a convolution kernel $H(t-s)$ is bounded by $B$, multiplication on
both sides by $\langle t\rangle^{-1}$ gives the Hilbert--Schmidt bound
\begin{equation}\label{eq:w2-hilbert-schmidt}
 \left\|\langle t\rangle^{-1}H*
                       \langle t\rangle^{-1}\right\|_{L^2\to L^2}
 \leq B\left(\int_{\mathbb R}\langle t\rangle^{-2}\,dt\right)=\pi B.
\end{equation}
Apply it to $K_b$, $\eta_1K_b$, $\eta_2K_b$, and $K_b'$, and integrate in
$\eta$ by Plancherel. When we undo the modulation, the $x_1$ derivative acquires the term $\tau
K_b$, which satisfies the same unit bound. Hence
\begin{equation}\label{eq:r2-calculation-057}
\lVert\langle t\rangle^{-1}u\rVert_2
       \leq C\tau^{-1}\lVert\langle t\rangle f\rVert_2,
 \qquad
 \sum_{|\alpha|\leq1}\lVert\langle t\rangle^{-1}
                         \partial^\alpha u\rVert_2
       \leq C\lVert\langle t\rangle f\rVert_2.
\end{equation}
The inequalities $\langle x\rangle^{-1}\leq\langle t\rangle^{-1}$ and
$\langle t\rangle\leq\langle x\rangle$ give the desired radial-weight
bounds for $j=0$.

For completeness the inverse just constructed is the specified
multiplier. Fourier transformation of $K_b$ in $t$ gives, for $b\ne\tau$,
\begin{equation}\label{eq:r2-one-dimensional-green-multiplier}
\widehat K_b(s)=
  -\frac1{(is-\tau-b)(is-\tau+b)}
  =\frac1{s^2+2i\tau s+b^2-\tau^2}.
\end{equation}
Near its zero set $b=\tau$, $s=0$, the real variables
$(b,s)\mapsto(b^2-\tau^2+s^2,2\tau s)$ have nonzero Jacobian.
The reciprocal is therefore locally integrable, like
$(u+iv)^{-1}$ in its two transverse variables. Fubini against Schwartz
test functions identifies the partial Fourier inverse with this
locally integrable reciprocal; the uniform kernel bound permits passage
through $b=\tau$. No distribution supported on the characteristic set
is added. Multiplication by $p_{\xi_{\rm c}}$ then proves the inverse
identity.

The original multiplier commutes with every constant-coefficient
spatial derivative. Apply the $j=0$ estimates to $\partial^\alpha f$,
$|\alpha|\leq j$, and sum. This proves both estimates at order $j$
without differentiating the modulation of the input. Density in the
weighted spaces completes the extension. Rotations preserve radial
weights and give uniformly equivalent integer Sobolev norms, proving
the directional uniformity.
\end{proof}

\subsection{Finite seminorm operator estimates}\label{sec:r2-operator-estimates}

We use left quantization
\begin{equation}\label{eq:w2-left-quantization}
 \operatorname{Op}(a)u(x)=(2\pi)^{-3}\int_{
 \mathbb R^3}e^{ix\cdot\xi}a(x,\xi)\widehat u(\xi)\,d\xi,
 \qquad \widehat u(\xi)=\int e^{-ix\cdot\xi}u(x)\,dx.
\end{equation}
We use the derivative order $N$ in \eqref{eq:uniform-neumann-threshold}
for Lemma~\ref{lem:w2-finite-pdo}. A common compact spatial support
for a symbol is denoted by $K_a$.

The following estimates apply to the left-quantized symbols and their
compositions, including weighted inputs.

\begin{lemma}
\label{lem:w2-finite-pdo}
Suppose $a(x,\xi)$ has fixed compact $x$-support and
$|\partial_x^\alpha a|\leq A$ for $|\alpha|\leq N$, uniformly in $\xi$.
Then, for $0\leq j\leq k+1$,
\begin{equation}\label{eq:finite-pdo-estimate}
 \lVert\operatorname{Op}(a)\rVert_{H^j\to H^j}\leq C A.
\end{equation}
If $c$ is likewise compactly supported in $x$ with these derivatives
bounded by $B$, and
$|\partial_x^\alpha\partial_{\xi_l}a|\leq A_1$ for $|\alpha|\leq N$,
then
\begin{equation}\label{eq:finite-composition-estimate}
 \lVert\operatorname{Op}(a)\operatorname{Op}(c)
                        -\operatorname{Op}(ac)\rVert_{H^k\to H^k}
                      \leq C A_1B.
\end{equation}
This also holds when $a$ is a pure Fourier multiplier with
$|\nabla_\xi a|\leq A_1$; its unweighted operator norm is bounded by
$\sup|a|$. Finally, if
$|\partial_x^\alpha\partial_\xi^\beta a|\leq A_2$ for
$|\alpha|\leq N$, $|\beta|\leq2$, then
\begin{equation}\label{eq:weighted-pdo-estimate}
 \lVert\operatorname{Op}(a)u\rVert_{H^j}
               \leq C A_2\lVert u\rVert_{H^j_{-1}}
                     \quad(0\leq j\leq k+1).
\end{equation}
The constants depend only on the orders, dimension and fixed spatial
supports. In \eqref{eq:weighted-pdo-estimate} no spatial truncation of
$u$ is performed.
\end{lemma}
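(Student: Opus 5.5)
The plan treats the three assertions of Lemma~\ref{lem:w2-finite-pdo} separately. Each one reduces, by Fourier analysis in $\xi$, to counting finitely many $x$-derivatives of the symbol. The common device is the representation
\[
\operatorname{Op}(a)u(x)=(2\pi)^{-3}\int e^{ix\cdot\xi}a(x,\xi)\widehat u(\xi)\,d\xi
=(2\pi)^{-3}\int e^{ix\cdot\eta}\,\widehat u(\xi)\,\widetilde a(\eta-\xi,\xi)\,d\xi\,d\eta ,
\]
where $\widetilde a(\zeta,\xi)$ is the Fourier transform of $a$ in $x$. Because $a$ has fixed compact $x$-support and $N=k+6$ bounded $x$-derivatives, integrating by parts in $x$ gives
\[
|\widetilde a(\zeta,\xi)|\le C_{K_a}A\langle\zeta\rangle^{-N}
\]
uniformly in $\xi$.

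\emph{Step 1: the bound \eqref{eq:finite-pdo-estimate}.} For $0\le j\le k+1$ we have $\langle\eta\rangle^j\le 2^{j}\langle\eta-\xi\rangle^{j}\langle\xi\rangle^j$ by Peetre's inequality. Hence the $H^j$ norm of $\operatorname{Op}(a)u$ is controlled by the integral operator with kernel $A\langle\eta-\xi\rangle^{j-N}$ acting on $\langle\xi\rangle^j|\widehat u(\xi)|$. Since $N-j\ge k+6-(k+1)=5>3$, the kernel is integrable in each variable, and Schur's test gives the bound $CA$. The multiplier case needs only Plancherel.

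\emph{Step 2: the composition \eqref{eq:finite-composition-estimate}.} Write $\operatorname{Op}(a)\operatorname{Op}(c)-\operatorname{Op}(ac)$ as the operator with symbol
\[
r(x,\xi)=(2\pi)^{-3}\int e^{ix\cdot\zeta}\,\widetilde c(\zeta,\xi)\bigl[a(x,\xi+\zeta)-a(x,\xi)\bigr]\,d\zeta .
\]
By the mean value theorem in the fibre variable, $|a(x,\xi+\zeta)-a(x,\xi)|\le A_1|\zeta|$, with the same bound for $x$-derivatives up to order $N$. Combined with $|\widetilde c(\zeta,\xi)|\le CB\langle\zeta\rangle^{-N}$, the factor $|\zeta|$ costs only one power of the decay. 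The resulting operator kernel is bounded by $CA_1B\langle\eta-\xi\rangle^{k+1-N}$, which is still Schur-integrable. Equivalently, one can take the $\eta$-transform of $r$ directly and reuse Step~1 with $N$ replaced by $N-1$. The pure-multiplier case is the same computation with only $c$ transformed.

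\emph{Step 3: the weighted bound \eqref{eq:weighted-pdo-estimate}.} This step is the main obstacle, because $u$ is merely in $H^j_{-1}$ and is not truncated. Multiplication by the weight corresponds to derivatives on the Fourier side: write $u=\langle x\rangle v$ with $v\in H^j$. It then suffices to bound $\operatorname{Op}(a)\langle x\rangle$ on $H^j$. I would replace $\langle x\rangle$ by the smooth bounded-derivative combination $1+\sum_l x_l x_l/\langle x\rangle$ and commute each $x_l$ past the quantization. Integrating by parts in $\xi$ gives
\[
\operatorname{Op}(a)x_l=x_l\operatorname{Op}(a)+i\operatorname{Op}(\partial_{\xi_l}a).
\]
The factor $x_l$ on the left is harmless because of the compact $x$-support. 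Indeed,
\[
\operatorname{Op}(a)\langle x\rangle v=\operatorname{Op}(a)\,(1+|x|^2)\langle x\rangle^{-1}v,
\]
and commuting $|x|^2$ costs two $\xi$-derivatives of $a$. This produces $\operatorname{Op}(\langle x\rangle^2 a)$, $x\cdot\operatorname{Op}(\nabla_\xi a)$ and $\operatorname{Op}(\Delta_\xi a)$ applied to $\langle x\rangle^{-1}v\in H^j$. All three symbols are compactly supported in $x$ with $N$ bounded $x$-derivatives controlled by $A_2$. Step~1 then gives the claim, with the bounded-derivative factor $\langle x\rangle^{-1}$ preserving $H^j$.

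I expect the delicate points to be two. First, the commutation identity must be justified on non-compactly supported $u$, which I would handle by density of Schwartz functions in $H^j_{-1}$. Second, the restriction $|\beta|\le2$ must indeed suffice, and it does, since exactly two powers of $x$ are commuted.
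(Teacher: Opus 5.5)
Your proposal is correct and is essentially the paper's proof: the $\langle\zeta\rangle^{-N}$ decay of the partial Fourier transform of the symbol with Peetre's inequality and Schur's test for \eqref{eq:finite-pdo-estimate}, the mean-value bound in the frequency slot of $\widetilde a$ against the decay of $\widetilde c$ for \eqref{eq:finite-composition-estimate}, and the exact commutator identity for $\operatorname{Op}(a)\langle x\rangle^2$ applied to $\langle x\rangle^{-2}u\in H^j$, followed by weighted density, for \eqref{eq:weighted-pdo-estimate}. Only cosmetic slips need fixing: the decomposition should read $\langle x\rangle=\langle x\rangle^{-1}+\sum_l x_l\,(x_l/\langle x\rangle)$ rather than $1+\sum_l x_lx_l/\langle x\rangle$ (your next display, with $(1+|x|^2)\langle x\rangle^{-1}$, is the correct one), and with the paper's Fourier convention the commutator is $\operatorname{Op}(a)x_l=x_l\operatorname{Op}(a)-i\operatorname{Op}(\partial_{\xi_l}a)$; neither slip affects any norm bound.
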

\begin{proof}
Write $\widehat a(\zeta,\xi)$ for the partial Fourier transform in $x$.
For $|\zeta|\geq1$, integrate by parts $N$ times in the unit direction
$\zeta/|\zeta|$. Compact support and the assumed derivatives give
\begin{equation}\label{eq:r2-calculation-059}
|\widehat a(\zeta,\xi)|\leq C A\langle\zeta\rangle^{-N}.
\end{equation}
This uses $N$ derivatives even when $N$ is odd; no even-order rounding is
needed. The weighted Fourier kernel for the $H^j$ estimate is bounded
by $CA\langle\eta-\xi\rangle^{-N+j}$, because
$\langle\eta\rangle^j/\langle\xi\rangle^j
 \leq C_j\langle\eta-\xi\rangle^j$. Its integrals in either variable
are finite when $N>j+3$. Schur's test proves
\eqref{eq:finite-pdo-estimate}.

Up to fixed Fourier normalization factors, the Fourier kernel of the
composition difference is exactly
\begin{equation}\label{eq:w2-composition-kernel}
 \int\!\bigl[\widehat a(\eta-\zeta,\zeta)
                    -\widehat a(\eta-\zeta,\xi)\bigr]
                          \widehat c(\zeta-\xi,\xi)\,d\zeta.
\end{equation}
The mean value theorem in the second slot, applied after $x$ Fourier
transformation, bounds it by
\begin{equation}\label{eq:r2-calculation-060}
C A_1 B\int |\zeta-\xi|\langle\eta-\zeta\rangle^{-N}
                         \langle\zeta-\xi\rangle^{-N}\,d\zeta.
\end{equation}
Insert the Sobolev weight using
$\langle\eta\rangle^k/\langle\xi\rangle^k
 \leq C\langle\eta-\zeta\rangle^k
           \langle\zeta-\xi\rangle^k$.
Both kernels are integrable since $N>k+4$. The two Schur integrals are
bounded by the product of their $L^1$ norms. For a pure left multiplier,
the kernel is $(a(\eta)-a(\xi))\widehat c(\eta-\xi,\xi)$; the same
estimate applies directly. This proves the composition assertion
without commuting any matrix factors.

To prove the weighted assertion, direct integration by parts in $\xi$
gives the exact identity
\begin{equation}\label{eq:weighted-pdo-identity}
 \operatorname{Op}(a)\langle x\rangle^2
   =\langle x\rangle^2\operatorname{Op}(a)
    -2i\sum_{j=1}^3x_j\operatorname{Op}(\partial_{\xi_j}a)
    -\operatorname{Op}(\Delta_\xi a).
\end{equation}
Define
\begin{equation}\label{eq:r3-weighted-conjugation-field}
v=\langle x\rangle^{-2}u,
\end{equation}
using the weight in \eqref{eq:r2-japanese-bracket}. The product rule gives
$\lVert v\rVert_{H^j}\leq C_j\lVert u\rVert_{H^j_{-1}}$, since every
derivative of $\langle x\rangle^{-2}$ is bounded by
$C\langle x\rangle^{-1}$. Apply the unweighted estimate to each term on
the right of \eqref{eq:weighted-pdo-identity}. All polynomial $x$ factors
are bounded, with all needed derivatives, on $K_a$; each output there
is compactly supported. This proves \eqref{eq:weighted-pdo-estimate}
first for compactly supported smooth $u$ and then by weighted density.
In particular the bound applies directly to $u=E_{\xi_{\rm c}}F$:
\begin{equation}\label{eq:r2-calculation-061}
\|\operatorname{Op}(a)E_{\xi_{\rm c}}F\|_{H^j}
 \leq CA_2\|E_{\xi_{\rm c}}F\|_{H^j_{-1}}.
\end{equation}
The weighted identity applies to the full input $u$, including its tail
outside $K_a$.
\end{proof}

\subsection{Fixed frame geometry and near/far symbols}\label{sec:r2-parametrix-symbols}

The symbol construction separates a neighborhood of the characteristic set,
where transport frames are used, from its complement, where division by the
free symbol is permitted.

Choose once and for all smooth real spatial cutoffs satisfying
\begin{equation}\label{eq:w2-spatial-cutoffs}
 \begin{gathered}
 \psi_2,\psi_3,\chi_{\rm sp}\in C_c^\infty(\Omega_0),\qquad
 0\leq\psi_2,\psi_3,\chi_{\rm sp}\leq1,\\
 \psi_2=1\ \hbox{near }\overline\Omega_1,\qquad
 \psi_3=1\ \hbox{near }\operatorname{supp}\psi_2,\qquad
 \chi_{\rm sp}=1\ \hbox{near }\operatorname{supp}\psi_3.
 \end{gathered}
\end{equation}
They are chosen before $h$, coefficients, amplitudes or test parameters.

\subsubsection{Compact coefficient families}\label{sec:r2-parametrix-compactness}

The entire coefficient ball in \eqref{eq:finite-system-class}, with fixed
support, has compact closure in $C^{k+12,1/2}$ on a fixed large ball.
Indeed derivatives through order $k+13$ are uniformly Lipschitz, and
Arzel\`a--Ascoli followed by H\"older interpolation gives the asserted
compactness at the lower order. Limits remain supported in $K_{\rm coef}$.
They need not be $C^{k+14}$, so existence is used at their actual finite
regularity.

Use $r_{\rm par}$ from \eqref{eq:uniform-neumann-threshold}.
For an oriented pair $(\omega,\nu)$ set
\begin{equation}\label{eq:r3-parametrix-leaf-normal}
n=\omega\times\nu.
\end{equation}
Use the leaf coordinates
\begin{equation}\label{eq:w2-leaf-coordinates}
 x=(\Re z)\omega+(\Im z)\nu+t n,
 \qquad z=x\cdot\omega+i x\cdot\nu,\quad t=x\cdot n.
\end{equation}
The parameter $p=(\omega,\nu,t)$ belongs to the compact oriented-pair
manifold times a fixed closed interval containing all transverse
coordinates of $\operatorname{supp}\chi_{\rm sp}$, with a fixed margin.
Take fixed large leaf disks containing all its leaf projections, also
with fixed margins. Pulled-back coefficients have joint $C^{r_{\rm par}+2}$
bounds there, including at the $C^{k+12,1/2}$ closure points. Apply
Theorem~\ref{thm:finite-frames} to
the matrix coefficient
\begin{equation}\label{eq:r3-directional-matrix-coefficient}
\mathsf A_{\omega+i\nu}=\sum_j(\omega_j+i\nu_j)\mathsf A_j,\qquad\mathsf B=\mathsf A_{\omega+i\nu}/(2i).
\end{equation} This produces a finite
atlas $\mathsf F_\ell$ and weights $w_\ell$, $1\leq\ell\leq J$, such that
\begin{equation}\label{eq:w2-transport-equation}
 i(\omega+i\nu)\cdot\nabla \mathsf F_\ell=\mathsf A_{\omega+i\nu}\mathsf F_\ell,
 \qquad (\omega+i\nu)\cdot\nabla w_\ell=0,
 \qquad \sum_{\ell=1}^Jw_\ell^2=1.
\end{equation}
Every needed derivative of the frames, inverses, weights and coordinates
through total order $r_{\rm par}$ is uniformly bounded. For a particular
coefficient, charts with zero weight contribute zero. Products are
extended by zero through inactive chart boundaries. The same finite
atlas is fixed over the entire prior; it is not reselected as $h$ changes.

\subsubsection{Frequency geometry}\label{sec:r2-frequency-geometry}

Fix a smooth $\vartheta:\mathbb R\to[0,1]$ equal to one on $[-1,1]$ and
zero outside $[-2,2]$. For $\xi\in\mathbb R^3$ let
\begin{equation}\label{eq:near-far-cutoffs}
 \begin{gathered}
 \xi_\nu=\xi\cdot\nu,\quad \xi'=\xi-\xi_\nu\nu,
 \quad r=|\xi'+\tau\omega_0|,\quad
 \omega=(\xi'+\tau\omega_0)/r,\\
 t_\tau=\vartheta(\xi_\nu/\sqrt\tau)
              \vartheta((r^2-\tau^2)/\tau^{3/2}),\qquad
 \chi_N=\sin(\pi t_\tau/2),\quad \chi_F=\cos(\pi t_\tau/2).
 \end{gathered}
\end{equation}
The scalar $r$ here is a frequency radius, not the order in the frame
theorem; $t_\tau$ is a frequency cutoff, not the transverse coordinate
$t$ of \eqref{eq:w2-leaf-coordinates}. The products in
\eqref{eq:paired-symbols} vanish near
$r=0$, so $\omega$ is only used where it is defined. Set
\begin{equation}\label{eq:r3-frequency-cutoff-supports}
\Xi_N=\operatorname{supp}\chi_N,\qquad\Xi_F=\operatorname{supp}\chi_F.
\end{equation}
For $\tau\geq16$, on $\Xi_N$,
\begin{equation}\label{eq:w2-near-geometry}
 |\xi_\nu|\leq2\sqrt\tau,\qquad
 |r^2-\tau^2|\leq2\tau^{3/2},\qquad
 \tau/\sqrt2\leq r\leq\sqrt{3/2}\tau.
\end{equation}
The characteristic circle is $\{\xi_\nu=0,r=\tau\}$; a full neighborhood
of it has $\chi_F=0$. Since $\chi_N^2+\chi_F^2=1$, define the paired
matrix symbols
\begin{equation}\label{eq:paired-symbols}
 \begin{split}
 b_\ell(x,\xi)&=\chi_{\rm sp}(x)\chi_N(\xi)
                     w_\ell(\mathfrak a,p)\mathsf F_\ell(z,p),\\
 d_\ell(x,\xi)&=\chi_{\rm sp}(x)\chi_N(\xi)
                     w_\ell(\mathfrak a,p)\mathsf F_\ell(z,p)^{-1},
                   \qquad 1\leq\ell\leq J,\\
 b_0(\xi)&=d_0(\xi)=\chi_F(\xi)I_{d_{\rm sys}}.
 \end{split}
\end{equation}
Here $d_\ell$ denotes the paired matrix symbol. The leading identity,
with matrix order preserved, is
\begin{equation}\label{eq:w2-symbol-resolution}
 \sum_{\ell=0}^J b_\ell\psi_3 d_\ell=\psi_3 I_{d_{\rm sys}}
                              \quad\hbox{on all }\mathbb R^3_x\times\mathbb R^3_\xi.
\end{equation}

\begin{lemma}
\label{lem:u1-uniform-symbol-budget}
For $\tau\geq16$ the following bounds have constants fixed by the prior
and cutoffs. First, for $|\alpha|\leq N+2$ and $|\beta|\leq2$,
\begin{equation}\label{eq:symbol-estimates}
 |\partial_x^\alpha\partial_\xi^\beta b_\ell|
   +|\partial_x^\alpha\partial_\xi^\beta d_\ell|
     \leq C\tau^{-|\beta|/2},\qquad \ell\geq1.
\end{equation}
Their supports in $(x,\xi)$ lie in
$\operatorname{supp}\chi_{\rm sp}\times\Xi_N$ and the corresponding
parameter charts. The pure multipliers $b_0,d_0$ obey the frequency
bounds in \eqref{eq:symbol-estimates}, but have no compact spatial
support.

Define the differential remainder symbols by
\begin{equation}\label{eq:differential-errors}
 \begin{split}
 r_{0,\ell}&=-\Delta_xb_\ell-2i\sum_j \mathsf A_j\partial_{x_j}b_\ell
                                  +\mathsf V b_\ell,\\
 r_{1,\ell}&=2\sum_j(\xi_j+\xi_{{\rm c},j})
                         (\mathsf A_jb_\ell-i\partial_{x_j}b_\ell).
 \end{split}
\end{equation}
For $|\alpha|\leq N$, $|\beta|\leq2$, they satisfy
\begin{equation}\label{eq:w2-five-symbol-bounds}
 \begin{aligned}
 |\partial_x^\alpha\partial_\xi^\beta(\psi_3 r_{0,\ell})|
       &\leq C\tau^{-|\beta|/2}, &&\ell\geq1,\\
 |\partial_x^\alpha\partial_\xi^\beta(\psi_3 r_{1,\ell})|
       &\leq C\tau^{(1-|\beta|)/2}, &&\ell\geq1,\\
 |\partial_x^\alpha\partial_\xi^\beta(\psi_3\mathsf V\chi_F)|
       &\leq C\tau^{-|\beta|/2},\\
 |\partial_x^\alpha\partial_\xi^\beta a_F|
       &\leq C\tau^{-(1+|\beta|)/2},
 \end{aligned}
\end{equation}
where
\begin{equation}\label{eq:w2-far-symbol}
 a_F(x,\xi)=\frac{2\sum_j \mathsf A_j(x)(\xi_j+\xi_{{\rm c},j})\chi_F(\xi)}
                         {p_{\xi_{\rm c}}(\xi)}
\end{equation}
is zero in a neighborhood of the characteristic set. The two near
symbols in \eqref{eq:w2-five-symbol-bounds} are supported in
$\operatorname{supp}\psi_3\times\Xi_N$; the two far symbols are supported
in $K_{\rm coef}\times\Xi_F$.
The maximum frame order used is $N+4$, and the maximum coefficient
$x$ order in the remainder products is $N$.
\end{lemma}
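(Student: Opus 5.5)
The plan is to verify the symbol bounds one factor at a time, then combine them with Leibniz' rule. All spatial dependence sits in the fixed cutoffs $\chi_{\rm sp},\psi_3$, the coefficients $\mathsf A_j,\mathsf V$, and the frame data $w_\ell,\mathsf F_\ell^{\pm1}$ composed with the leaf coordinates \eqref{eq:w2-leaf-coordinates}. All frequency dependence sits in $\chi_N,\chi_F$, the factor $\xi+\xi_{\rm c}$, and $p_{\xi_{\rm c}}^{-1}$.

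The one subtle point is that $\mathsf F_\ell$ depends on $(z,p)$ with $p=(\omega,\nu,t)$, and $\omega$ is itself a function of $\xi$ through \eqref{eq:near-far-cutoffs}. Thus $\xi$-derivatives of $b_\ell,d_\ell$ fall both on $\chi_N$ and, through $\omega(\xi)$ and the induced $z,t$, on the frame. Each $x$-derivative costs one frame derivative in $(z,t)$. Each $\xi$-derivative costs one direction derivative times $|\partial_\xi\omega|\lesssim r^{-1}\lesssim\tau^{-1}$ on $\Xi_N$, by \eqref{eq:w2-near-geometry}.

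First I would establish the frequency cutoff bounds. On $\Xi_N$, the chain rule gives $|\partial_\xi^\beta t_\tau|\le C\tau^{-|\beta|/2}$. The $\xi_\nu$ factor gains $\tau^{-1/2}$ per derivative directly. For the radial factor, $\partial_\xi(r^2-\tau^2)/\tau^{3/2}=O(r/\tau^{3/2})=O(\tau^{-1/2})$, and its second derivative is $O(\tau^{-3/2})$. Composing with the smooth $\sin,\cos$ preserves these bounds. The direction map $\omega(\xi)$ has $|\partial_\xi^\beta\omega|\le C\tau^{-|\beta|}$ on $\Xi_N$, which is better than required. With $|\alpha|+|\beta|\le N+4=r_{\rm par}$ frame derivatives available from Theorem~\ref{thm:finite-frames}, via the atlas fixed in Subsection~\ref{sec:r2-parametrix-compactness}, Leibniz' rule gives \eqref{eq:symbol-estimates}, with the worst factor $\tau^{-|\beta|/2}$ coming from $\chi_N$. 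The supports are read off from the factors.

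Next come the remainders. The bound for $r_{0,\ell}$ is immediate from \eqref{eq:symbol-estimates}, using two extra $x$-derivatives (hence order $N+2$) and $C^N$ bounds on $\mathsf A_j,\mathsf V$. For $r_{1,\ell}$, on $\Xi_N$ the naive size of $\xi+\xi_{\rm c}$ is $\tau$, so the transport equation \eqref{eq:w2-transport-equation} is essential. Write $\xi+\xi_{\rm c}=(\xi'+\tau\omega_0)+\xi_\nu\nu+i\tau\nu=r\omega+i\tau\nu+\xi_\nu\nu$. The combination $\tau(\omega+i\nu)\cdot(\mathsf A b_\ell-i\nabla b_\ell)$ vanishes on the support of $\psi_3$: there $\chi_{\rm sp}=1$, the transport equation kills the frame part, and $(\omega+i\nu)\cdot\nabla w_\ell=0$. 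What remains is $(r-\tau)\omega+\xi_\nu\nu$, which has size $O(\tau^{1/2})$ on $\Xi_N$ by \eqref{eq:w2-near-geometry}, since $|r-\tau|=|r^2-\tau^2|/(r+\tau)\le C\tau^{1/2}$. Each $\xi$-derivative of this prefactor is $O(1)$, so Leibniz' rule gives $\tau^{(1-|\beta|)/2}$. This cancellation step is where care is needed: the transport identity holds at the $\xi$-dependent direction $\omega(\xi)$ with the frame evaluated in the same leaf coordinates, so it must be checked pointwise before any $\xi$-differentiation.

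For the far symbols, $\psi_3\mathsf V\chi_F$ is immediate. For $a_F$, I need a lower bound for $p_{\xi_{\rm c}}$ on $\Xi_F$. Write $p=|\xi|^2+2\xi_{\rm c}\cdot\xi$, so $\operatorname{Re}p=r^2-\tau^2+\xi_\nu^2$ and $\operatorname{Im}p=2\tau\xi_\nu$. On $\Xi_F$, either $|\xi_\nu|\ge\sqrt\tau$ or $|r^2-\tau^2|\ge\tau^{3/2}$, which I expect gives $|p|\ge c\,\tau^{1/2}(\tau+|\xi|)$ in each case. Combined with $|\xi+\xi_{\rm c}|\lesssim\tau+|\xi|$, this bounds $a_F$ by $C\tau^{-1/2}$. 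Each $\xi$-derivative gains a factor of at least $(\tau+|\xi|)^{-1}|p|\,|p|^{-1}$ times the cutoff derivative, which is $O(\tau^{-1/2})$, so I get $\tau^{-(1+|\beta|)/2}$. The derivative orders used match those stated in the lemma.
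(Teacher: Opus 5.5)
Your argument follows the paper's proof closely. You use the same Leibniz bookkeeping, in which each $\xi$-derivative of a frame or weight costs one leaf/parameter derivative and a factor $O(\tau^{-1})$ through $\omega(\xi)$ and the induced $z,t$. You use the same pointwise transport cancellation, which reduces $\xi+\xi_{\rm c}$ to $v:=(r-\tau)\omega+\xi_\nu\nu$ on $\operatorname{supp}\psi_3$. And you use the same dichotomy, $|\xi_\nu|\ge\sqrt\tau$ or $|r^2-\tau^2|\ge\tau^{3/2}$, to bound $p_{\xi_{\rm c}}$ from below on $\Xi_F$.

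\textbf{The gap is in the $r_{1,\ell}$ bound at $|\beta|=2$.} You assert only that each $\xi$-derivative of $v$ is $O(1)$, and conclude $\tau^{(1-|\beta|)/2}$ by Leibniz' rule. At $|\beta|=2$, Leibniz produces the term $2\psi_3(\partial_\xi^2v)\cdot(\mathsf A b_\ell-i\nabla_xb_\ell)$, whose second factor is only $O(1)$. An $O(1)$ bound on $\partial_\xi^2v$ then gives $O(1)$ for this term, not the required $O(\tau^{-1/2})$.

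You need the sharper second-derivative bound, which the paper records explicitly. On $\Xi_N$ one has $\partial_\xi^2r=O(\tau^{-1})$, because the Hessian of $\eta\mapsto|\eta|$ at a point of size $r\ge\tau/\sqrt2$ is $O(r^{-1})$. Also $\partial_\xi^2\xi_\nu=0$. Combine these with your bounds $|\partial_\xi^\beta\omega|\le C\tau^{-|\beta|}$ and $|r-\tau|\le C\sqrt\tau$. The three pieces $(\partial_\xi^2r)\,\omega$, $(\partial_\xi r)(\partial_\xi\omega)$ and $(r-\tau)\,\partial_\xi^2\omega$ of $\partial_\xi^2v$ are then $O(\tau^{-1})$, $O(\tau^{-1})$ and $O(\tau^{-3/2})$. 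Hence $\partial_\xi^2v=O(\tau^{-1})$, and all three Leibniz terms at $|\beta|=2$ are $O(\tau^{-1/2})$.

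(The later use in Lemma~\ref{lem:w2-five-errors} needs only the uniform seminorm bound $C\sqrt\tau$ over $|\beta|\le2$. So your weaker estimate would not break the parametrix, but it does not prove the lemma as stated.)

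\textbf{The $a_F$ part needs two repairs.} Your unified lower bound $|p_{\xi_{\rm c}}|\ge c\,\tau^{1/2}(\tau+|\xi|)$ on $\Xi_F$ is correct. It is a tidy substitute for the paper's split at $|\xi|=4\tau$, but you should prove it rather than "expect" it.
\begin{itemize}
\item For $|\xi|\le4\tau$, it follows from $|\operatorname{Im}p|\ge2\tau^{3/2}$ in the first case, or $|\operatorname{Re}p|\ge\tau^{3/2}-\tau\ge\tfrac34\tau^{3/2}$ in the second.
\item For $|\xi|\ge4\tau$, it follows from $|p|\ge|\xi|^2-2\sqrt2\,\tau|\xi|$.
\end{itemize}

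Your sentence on the derivative gain is garbled. What you need is the following.
\begin{itemize}
\item For the reciprocal, $|\partial_\xi p|/|p|\le C(\tau+|\xi|)/|p|\le C\tau^{-1/2}$.
\item For the term $p^{-2}\partial_\xi^2p$, the relative factor is $2/|p|\le C\tau^{-3/2}$.
\item Differentiating the numerator gives a factor $(\tau+|\xi|)^{-1}\le\tau^{-1}$.
\item Differentiating $\chi_F$ gives $\tau^{-1/2}$ per derivative.
\end{itemize}
With these, each derivative gains at least $\tau^{-1/2}$, as claimed.
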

\begin{proof}
On $\Xi_N$, direct differentiation gives
$\partial_\xi r=O(1)$, $\partial_\xi^2r=O(\tau^{-1})$,
$\partial_\xi\omega=O(\tau^{-1})$, and
$\partial_\xi^2\omega=O(\tau^{-2})$.
Where a derivative of either cutoff factor is nonzero, the first
derivatives of its scaled argument are $O(\tau^{-1/2})$; its second
derivatives are $O(\tau^{-3/2})$ or zero. Outside these transition
regions the corresponding cutoff derivatives vanish.
The chain rule therefore bounds the first and second derivatives of
$\chi_N,\chi_F$ by $C\tau^{-1/2}$ and $C\tau^{-1}$.
On the compact spatial support, \eqref{eq:w2-leaf-coordinates} shows that
one $\xi$ derivative of a pulled-back frame or weight costs at most one
additional leaf/parameter derivative and a factor $O(\tau^{-1})$;
two cost at most two and a factor $O(\tau^{-2})$.
Thus a mixed derivative with $|\alpha|\leq N+2$, $|\beta|\leq2$ costs
at most $N+4$ total frame derivatives. Leibniz' rule, allowing derivatives
to fall on the wider frequency cutoffs, gives \eqref{eq:symbol-estimates}.
All assertions extend through $r=0$ by the zero products.

The $r_{0,\ell}$ formula costs two additional $x$ derivatives of the base
frame product, so its maximal mixed input order is $N+2+2=N+4$.
Derivatives falling on $\mathsf A_j$ or $\mathsf V$ have order at most $N$.
This proves the first bound in \eqref{eq:w2-five-symbol-bounds}.
On a neighborhood of $\operatorname{supp}\psi_3$ the spatial cutoff is
constant. Equation~\eqref{eq:w2-transport-equation} then gives the exact
ordered cancellation
\begin{equation}\label{eq:appendix-u1-transport-remainder}
 \psi_3 r_{1,\ell}=2\psi_3\bigl((r-\tau)\omega+\xi_\nu\nu\bigr)
          \cdot(\mathsf A b_\ell-i\nabla_x b_\ell),\qquad \ell\geq1.
\end{equation}
The vector in parentheses and its first and second frequency
derivatives have orders $O(\sqrt\tau)$, $O(1)$ and $O(\tau^{-1})$,
respectively. Combining these frequency-derivative estimates with
\eqref{eq:symbol-estimates}, we obtain the second estimate in
\eqref{eq:w2-five-symbol-bounds}. Its maximum frame order is only $N+1+2=N+3$.
There are no cutoff-gradient terms from $\chi_{\rm sp}$ on this support.

For $\ell=0$ the spatial derivatives of $b_0$ vanish and
$r_{0,0}=\mathsf V\chi_F$. This proves the third bound. To treat
$r_{1,0}$, write
\begin{equation}\label{eq:r2-far-characteristic-symbol}
p_{\xi_{\rm c}}=r^2+\xi_\nu^2-\tau^2+2i\tau\xi_\nu.
\end{equation}
On $\Xi_F$, either $|\xi_\nu|\geq\sqrt\tau$ and the imaginary part
has modulus at least $2\tau^{3/2}$, or
$|r^2-\tau^2|\geq\tau^{3/2}$ with $|\xi_\nu|\leq\sqrt\tau$.
In the second case the real part has modulus at least
$\tau^{3/2}-\tau\geq(3/4)\tau^{3/2}$.
For $|\xi|\leq4\tau$ we also have
$|\partial_\xi p|\leq C\tau$ and $\partial_\xi^2p=2I$.
The formulae
\begin{equation}\label{eq:r2-reciprocal-symbol-derivatives}
\partial_l(p^{-1})=-p^{-2}\partial_lp,\qquad
 \partial_l\partial_j(p^{-1})
   =2p^{-3}(\partial_lp)(\partial_jp)-p^{-2}\partial_l\partial_jp
\end{equation}
give the first two derivatives of the reciprocal on this far set.
The uncut numerator of \eqref{eq:w2-far-symbol} has frequency orders
$O(\tau)$, $O(1)$, and zero. Applying the product rule also to $\chi_F$, we obtain the factor
$\tau^{-(1+|\beta|)/2}$ for $|\beta|\le2$.

For $|\xi|\geq4\tau$, we have $|p|\geq |\xi|^2/2$ and
$|\partial_\xi p|\leq C|\xi|$. The near support in
\eqref{eq:w2-near-geometry} lies inside $|\xi|<4\tau$, so $\chi_F=1$
in this exterior region. Differentiation gives
$C|\xi|^{-1-|\beta|}$, which is stronger than the displayed bounds.
The cutoff vanishes on an open neighborhood of the poles, so these
reciprocal derivatives never cross the characteristic circle. All
spatial derivatives cost only the indicated coefficient derivatives,
and support in $K_{\rm coef}$ follows from the factor $\mathsf A$.
\end{proof}

\subsection{Ordered parametrix errors and two Sobolev bounds}\label{sec:r2-parametrix-errors}

The parametrix error is decomposed into five terms with the matrix factors
kept in their stated order. Both Sobolev bounds are proved for this
parametrix before the Neumann correction.

Define, initially on Schwartz functions and then by the bounds in
Lemma~\ref{lem:w2-five-errors},
\begin{equation}\label{eq:parametrix}
 Q_\tau=\sum_{\ell=0}^J\operatorname{Op}(b_\ell)E_{\xi_{\rm c}}
                          \psi_3\operatorname{Op}(d_\ell).
\end{equation}
The full output of $E_{\xi_{\rm c}}$ is used by
$\operatorname{Op}(b_\ell)$.
The inner source
$F_\ell=\psi_3\operatorname{Op}(d_\ell)g$ is supported in
$\operatorname{supp}\psi_3$, and
\begin{equation}\label{eq:w2-inner-source}
 \lVert F_\ell\rVert_{H^k_1}\leq C\lVert g\rVert_{H^k}.
\end{equation}
For $\ell=0$ this uses the multiplier bound before multiplication by
$\psi_3$. For $\ell\geq1$ it uses the compact-symbol bound. Compact
spatial support then bounds the weight. Notice that $F_\ell$ need not
be frequency-supported in $\Xi_N$ or $\Xi_F$, because multiplication
by $\psi_3$ mixes frequencies.

A direct product-rule differentiation in left quantization gives
\begin{equation}\label{eq:w2-exact-differential-composition}
 P_{\xi_{\rm c}}\operatorname{Op}(b_\ell)E_{\xi_{\rm c}}
 =\operatorname{Op}(b_\ell)
   +\operatorname{Op}(r_{0,\ell})E_{\xi_{\rm c}}
   +\operatorname{Op}(r_{1,\ell})E_{\xi_{\rm c}}.
\end{equation}
For instance $D_{x,j}(b_\ell e^{ix\cdot\xi})=
(\xi_jb_\ell-i\partial_{x_j}b_\ell)e^{ix\cdot\xi}$;
applying it twice produces precisely \eqref{eq:differential-errors}.
There is no derivative of $\mathsf A_j$ here because it multiplies on the left
in the nondivergence operator. Write
\begin{equation}\label{eq:w2-error-identity}
 \begin{split}
 S_\tau&=\sum_{\ell=0}^J\operatorname{Op}(b_\ell)
                         \psi_3\operatorname{Op}(d_\ell)-\psi_3 I,\\
 \mathcal E_\tau&=\sum_{\ell=0}^J\sum_{j=0}^1
         \operatorname{Op}(r_{j,\ell})E_{\xi_{\rm c}}
                         \psi_3\operatorname{Op}(d_\ell),\\
 P_{\xi_{\rm c}}Q_\tau&=\psi_3 I+S_\tau+\mathcal E_\tau.
 \end{split}
\end{equation}
The last identity is distributional. Its localization is bounded on
$H^k$ by the following five estimates.

\begin{lemma}\label{lem:w2-five-errors}
Let
\begin{equation}\label{eq:uniform-five-term-errors}
 \begin{split}
 T_{{\rm comp},\tau}&=\psi_3 S_\tau,\\
 T_{{\rm near},j,\tau}&=\sum_{\ell=1}^J
      \operatorname{Op}(\psi_3 r_{j,\ell})E_{\xi_{\rm c}}
                          \psi_3\operatorname{Op}(d_\ell),\quad j=0,1,\\
 T_{{\rm far},j,\tau}&=
      \operatorname{Op}(\psi_3 r_{j,0})E_{\xi_{\rm c}}
                          \psi_3\operatorname{Op}(d_0),\quad j=0,1,\\
 T_\tau&=T_{{\rm comp},\tau}+T_{{\rm near},0,\tau}
     +T_{{\rm near},1,\tau}+T_{{\rm far},0,\tau}+T_{{\rm far},1,\tau}
       =\psi_3(S_\tau+\mathcal E_\tau).
 \end{split}
\end{equation}
There are five common constants $C_j$, independent of the coefficient,
normalized direction and $\tau\geq16$, such that
\begin{equation}\label{eq:uniform-five-term-bounds}
 \begin{aligned}
 \lVert T_{{\rm comp},\tau}\rVert_{H^k\to H^k}&\leq C_1\tau^{-1/2},\\
 \lVert T_{{\rm near},0,\tau}\rVert_{H^k\to H^k}&\leq C_2\tau^{-1},\\
 \lVert T_{{\rm near},1,\tau}\rVert_{H^k\to H^k}&\leq C_3\tau^{-1/2},\\
 \lVert T_{{\rm far},0,\tau}\rVert_{H^k\to H^k}&\leq C_4\tau^{-1},\\
 \lVert T_{{\rm far},1,\tau}\rVert_{H^k\to H^k}&\leq C_5\tau^{-1/2}.
 \end{aligned}
\end{equation}
Every error has output supported in $\operatorname{supp}\psi_3$; the two
far errors in fact have output in $K_{\rm coef}$. The symbol frequency
supports are those of Lemma~\ref{lem:u1-uniform-symbol-budget}; no
preservation of the Fourier support of an input or output function is
asserted.
\end{lemma}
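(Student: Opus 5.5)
I would prove the five bounds separately, writing each operator as a left-quantized symbol applied to the output of $E_{\xi_{\rm c}}$ acting on the compactly supported inner source $F_\ell=\psi_3\operatorname{Op}(d_\ell)g$. For the four error terms built from $E_{\xi_{\rm c}}$, the weighted pseudodifferential estimate \eqref{eq:weighted-pdo-estimate} with $j=k$ reduces the problem to bounding $\|E_{\xi_{\rm c}}F_\ell\|_{H^k_{-1}}$. The free inverse of Lemma~\ref{lem:w2-free} bounds this by $C\tau^{-1}\|F_\ell\|_{H^k_1}$, and \eqref{eq:w2-inner-source} bounds $\|F_\ell\|_{H^k_1}$ by $C\|g\|_{H^k}$. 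Hence each such term is controlled by $C A_2\tau^{-1}$, where $A_2$ is the size of the symbol and its $x$-derivatives through order $N$ and $\xi$-derivatives through order two.

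From Lemma~\ref{lem:u1-uniform-symbol-budget}, the symbol $\psi_3r_{0,\ell}$ has $A_2=O(1)$, which gives $C_2\tau^{-1}$. The symbol $\psi_3\mathsf V\chi_F$ also has $A_2=O(1)$, which gives $C_4\tau^{-1}$. The symbol $\psi_3r_{1,\ell}$ has $A_2=O(\tau^{1/2})$, since its $\beta=0$ term is the worst one; this gives $C_3\tau^{-1/2}$. All $\tau$-dependence therefore sits in explicit symbol bounds, and the atlas, cutoffs and prior are fixed independently of $\tau$, the direction and the coefficient.

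The far first-order term needs a different route, because $\psi_3r_{1,0}$ has size $\tau$ and the crude bound gives only $O(1)$. I would use instead that $\chi_F$ vanishes near the characteristic set. On the support of $\chi_F$ one can write $\operatorname{Op}(\psi_3r_{1,0})E_{\xi_{\rm c}}=\psi_3\operatorname{Op}(a_F)$ up to left/right ordering. This identity is exact for left quantization because $r_{1,0}=2\mathsf A\cdot(\xi+\xi_{\rm c})\chi_F(\xi)$ has $x$-dependence only in the left factor $\mathsf A(x)$, and $E_{\xi_{\rm c}}$ is a Fourier multiplier, so the composition is $\operatorname{Op}(a_F)$. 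Then \eqref{eq:finite-pdo-estimate} with $A=O(\tau^{-1/2})$ from \eqref{eq:w2-five-symbol-bounds}, combined with the unweighted multiplier bound for $\operatorname{Op}(d_0)$, gives $C_5\tau^{-1/2}$. The output lies in $K_{\rm coef}$ because of the factor $\mathsf A$, and likewise for $\mathsf V$.

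The composition term is the step I expect to be the main obstacle. Using \eqref{eq:w2-symbol-resolution}, I would write $\psi_3S_\tau=\psi_3\sum_\ell[\operatorname{Op}(b_\ell)\psi_3\operatorname{Op}(d_\ell)-\operatorname{Op}(b_\ell\psi_3d_\ell)]$. Each bracket is a difference of a composition of compactly supported symbols, where $\psi_3$ can be treated as a symbol independent of $\xi$, or a pure multiplier when $\ell=0$. I would apply \eqref{eq:finite-composition-estimate} twice: once to $\operatorname{Op}(b_\ell)\operatorname{Op}(\psi_3)$, whose symbol is exact since $\psi_3$ is a function of $x$ only, and once to $\operatorname{Op}(b_\ell\psi_3)\operatorname{Op}(d_\ell)$. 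The constant $A_1$ is controlled by one $\xi$-derivative of $b_\ell$, which is $O(\tau^{-1/2})$ by \eqref{eq:symbol-estimates}, and $B=O(1)$. This gives $C_1\tau^{-1/2}$. The delicate points are the following. The pure multipliers $b_0,d_0$ lack compact spatial support, so the mean-value kernel argument must be the version for left multipliers noted after \eqref{eq:finite-composition-estimate}, with the compact factor $\psi_3$ or $d_\ell$ supplying decay. The number of $x$-derivatives, $N=k+6$, must exceed $k+4$. Finally, every constant must remain uniform over the fixed finite atlas.
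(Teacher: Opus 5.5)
Your proposal is correct and follows the paper's proof essentially step for step. The two near terms and the far zeroth-order term use the weighted estimate after the free inverse, the far first-order term uses the exact identity $\operatorname{Op}(r_{1,0})E_{\xi_{\rm c}}=\operatorname{Op}(a_F)$, and $T_{{\rm comp},\tau}$ uses the composition estimate together with the resolution \eqref{eq:w2-symbol-resolution}. Two small corrections: (i) in left quantization the exact identity is $\psi_3\operatorname{Op}(d_\ell)=\operatorname{Op}(\psi_3 d_\ell)$, not $\operatorname{Op}(b_\ell)\psi_3=\operatorname{Op}(b_\ell\psi_3)$, so the paper absorbs $\psi_3$ into the compact right symbol $c_\ell=\psi_3 d_\ell$ and needs only one composition estimate per $\ell$ (the pure-multiplier version for $\ell=0$), though your two-step version still closes because you estimate both steps; (ii) $\psi_3 r_{1,0}$ grows like $|\xi|$ on the unbounded far set, so the weighted route is unavailable there, not merely lossy.
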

\begin{proof}
\textit{Composition.} For each $\ell$, use the compact right symbol
$c_\ell=\psi_3 d_\ell$, since
$\psi_3\operatorname{Op}(d_\ell)=\operatorname{Op}(c_\ell)$ exactly.
For $\ell\geq1$, the left-symbol first frequency derivative, with $N$
spatial derivatives, has size $C\tau^{-1/2}$. For $\ell=0$ apply the
pure-multiplier version of Lemma~\ref{lem:w2-finite-pdo}. The principal
products sum to $\psi_3 I$ by \eqref{eq:w2-symbol-resolution}. Summing the finitely many composition remainders and multiplying on the left
by $\psi_3$, we obtain the first estimate in
\eqref{eq:uniform-five-term-bounds}.

\textit{Near zeroth order.} The inner source satisfies
\eqref{eq:w2-inner-source}. The first free estimate maps it to
$H^k_{-1}$ with factor $C\tau^{-1}$. The symbol
$\psi_3r_{0,\ell}$ has all seminorms required by the weighted estimate
bounded by a constant, by \eqref{eq:w2-five-symbol-bounds}.
Apply that weighted estimate after the free inverse and sum in $\ell$.
This gives the second bound.

\textit{Near first order.} The same ordered composition applies.
The seminorms of $\psi_3r_{1,\ell}$ are now bounded by $C\sqrt\tau$;
the $\tau^{-1}$ factor of the free inverse gives $C\tau^{-1/2}$.
The transport cancellation used to obtain this factor was proved in
\eqref{eq:appendix-u1-transport-remainder}, including the spatial-cutoff
condition. This proves the third bound.

\textit{Far zeroth order.} Here $r_{0,0}=\mathsf V\chi_F$.
Its compactly supported left symbol has uniformly bounded weighted
seminorms. Apply \eqref{eq:w2-inner-source}, the first free bound and the
weighted PDO bound in that order. This proves the fourth bound.

\textit{Far first order.} Since the right-hand free inverse is a Fourier
multiplier, there is the exact, not asymptotic, identity
\begin{equation}\label{eq:r2-calculation-064}
\operatorname{Op}(r_{1,0})E_{\xi_{\rm c}}
                   =\operatorname{Op}(a_F).
\end{equation}
It is valid because the numerator vanishes near the singular set of
$p^{-1}$. The unweighted PDO bound and the last bound in
\eqref{eq:w2-five-symbol-bounds} give $C\tau^{-1/2}$.
The remaining factor $\psi_3\operatorname{Op}(d_0)$ is uniformly
bounded on $H^k$. This proves the fifth bound.

All these constants are products and finite sums of the constants in
Lemmas~\ref{lem:w2-free}, \ref{lem:w2-finite-pdo} and
\ref{lem:u1-uniform-symbol-budget}, with the fixed cutoff multiplier
norms. The frame atlas was taken over the full compact prior closure,
so its number of charts and its seminorms are already common constants.
Thus all five bounds are uniform over the fixed compact coefficient class
and frame atlas. The output support assertions follow from the left spatial
symbols. They do not restrict the nonlocal input of those symbols.
\end{proof}

The symbols $b_\ell,d_\ell$ in \eqref{eq:paired-symbols} require
at most $N+2$ spatial and two frequency derivatives; the frame order
is at most $N+4$. For the near symbols in
\eqref{eq:appendix-u1-transport-remainder}, the bounds in
\eqref{eq:w2-five-symbol-bounds} use at most $N$ spatial and two frequency
derivatives. Their largest frame orders are $N+4$ for the zeroth-order
symbol and $N+3$ for the first-order symbol. The far symbols contain
no frame. The orders fixed in \eqref{eq:uniform-neumann-threshold} satisfy
\begin{equation}\label{eq:derivative-budget}
 N>k+4,\qquad r_{\rm par}\ge N+4.
\end{equation}
The composition estimate actually needs only $N$ spatial derivatives and
one frequency derivative of its left symbol. The bounds for
$b_\ell,d_\ell$ through spatial order $N+2$ and frequency order two
also control the differential remainders.
For an $H^{k+1}$ bound, Schur integrability needs $N>k+4$; for the
composition kernel its extra frequency-difference factor gives the same
strict inequality. Both are met by $N=k+6$.

The free estimates \eqref{eq:free-two-estimates} and weighted bound
\eqref{eq:weighted-pdo-estimate} yield both Sobolev bounds for the same
parametrix, in the stated operator order.

\begin{lemma}\label{lem:w2-Q-two-level}
For $\tau\geq16$ and arbitrary $g\in H^k(\mathbb R^3)$,
\begin{equation}\label{eq:localized-error-bounds}
 \lVert\psi_2 Q_\tau g\rVert_{H^k}\leq C_{Q,0}\tau^{-1}\lVert g\rVert_{H^k},
 \qquad
 \lVert\psi_2 Q_\tau g\rVert_{H^{k+1}}\leq C_{Q,1}\lVert g\rVert_{H^k}.
\end{equation}
Both constants are uniform over the full class and all directions.
\end{lemma}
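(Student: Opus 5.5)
The plan is to expand $Q_\tau g=\sum_{\ell=0}^J\operatorname{Op}(b_\ell)E_{\xi_{\rm c}}F_\ell$, with inner sources $F_\ell=\psi_3\operatorname{Op}(d_\ell)g$. I will estimate each of the finitely many summands separately, keeping the stated operator order. By \eqref{eq:w2-inner-source}, every inner source satisfies $\|F_\ell\|_{H^k_1}\le C\|g\|_{H^k}$, uniformly in $\ell$, the coefficient and the direction. Lemma~\ref{lem:w2-free} then gives the two free bounds
\[
\|E_{\xi_{\rm c}}F_\ell\|_{H^k_{-1}}\le C\tau^{-1}\|g\|_{H^k},\qquad
\|E_{\xi_{\rm c}}F_\ell\|_{H^{k+1}_{-1}}\le C\|g\|_{H^k}.
\]
What remains is to transfer these weighted bounds through $\psi_2\operatorname{Op}(b_\ell)$ into unweighted norms.

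\emph{Near charts} ($\ell\ge1$). The left symbol $\psi_2 b_\ell$ has compact $x$-support. By \eqref{eq:symbol-estimates}, its derivatives satisfy $|\partial_x^\alpha\partial_\xi^\beta(\psi_2b_\ell)|\le C$ for $|\alpha|\le N$ and $|\beta|\le2$. The weighted estimate \eqref{eq:weighted-pdo-estimate} therefore applies with $j=k$ and $j=k+1$; both are allowed because $j\le k+1$ and $N>k+4$ by \eqref{eq:derivative-budget}. Feeding in the free bounds above gives the $H^k$ estimate with factor $\tau^{-1}$ and the $H^{k+1}$ estimate with factor one. The estimate is applied to the full output $E_{\xi_{\rm c}}F_\ell$, tail included, with no truncation, as the remark after \eqref{eq:r2-calculation-061} permits.

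\emph{Far term} ($\ell=0$). Here $b_0=\chi_F$ is a pure Fourier multiplier without spatial support, so I will commute it past the free inverse. Since $\chi_F$ and $E_{\xi_{\rm c}}$ are both Fourier multipliers, $\operatorname{Op}(\chi_F)E_{\xi_{\rm c}}=\operatorname{Op}(\chi_F/p_{\xi_{\rm c}})$ exactly, and $\chi_F$ vanishes near the characteristic set. The lower bounds for $p_{\xi_{\rm c}}$ on $\Xi_F$ from the proof of Lemma~\ref{lem:u1-uniform-symbol-budget} give
\[
\Bigl|\frac{\chi_F}{p_{\xi_{\rm c}}}\Bigr|\le C\tau^{-3/2}\ \text{for }|\xi|\le4\tau,\qquad
\Bigl|\frac{\chi_F}{p_{\xi_{\rm c}}}\Bigr|\le C|\xi|^{-2}\ \text{for }|\xi|\ge4\tau .
\]
Hence $\sup_\xi|\chi_F/p_{\xi_{\rm c}}|\le C\tau^{-3/2}$ and $\sup_\xi\langle\xi\rangle|\chi_F/p_{\xi_{\rm c}}|\le C\tau^{-1/2}$. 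This multiplier maps $H^k\to H^k$ with norm $C\tau^{-1}$ (indeed better) and $H^k\to H^{k+1}$ with norm $C$. Its input $F_0$ is bounded in $H^k$ by $C\|g\|_{H^k}$, and multiplication by the fixed cutoff $\psi_2$ is bounded on both spaces. This handles the far term directly in unweighted spaces, with no weighted estimate needed.

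Summing the $J+1$ terms, where $J$ is fixed by the uniform atlas, proves \eqref{eq:localized-error-bounds}. The constants are uniform because every ingredient is: Lemma~\ref{lem:w2-free} is uniform over directions, the symbol bounds are uniform over the compact prior closure, and the cutoffs are fixed in advance.

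The main obstacle is the far term. Lemma~\ref{lem:w2-finite-pdo} is stated only for compactly supported symbols, so it cannot be used there. I expect the exact multiplier composition to resolve this, but the $H^{k+1}$ gain for large $|\xi|$ must be checked carefully across the transition region of $\chi_F$. If that gain fails there, I would instead bound $\langle\xi\rangle\chi_F/p_{\xi_{\rm c}}$ region by region using \eqref{eq:r2-far-characteristic-symbol}.
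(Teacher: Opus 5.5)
Your argument is correct, and for the near charts $\ell\ge1$ it is exactly the paper's: you use the inner-source bound \eqref{eq:w2-inner-source} and both free estimates, then apply \eqref{eq:weighted-pdo-estimate} at $j=k$ and $j=k+1$ to the untruncated output. The only divergence is the far term $\ell=0$.

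The paper does not treat that term separately. Since $\psi_2\operatorname{Op}(b_0)=\operatorname{Op}(\psi_2 b_0)$, and the symbol $\psi_2(x)\chi_F(\xi)$ has compact $x$-support with $|\partial_x^\alpha\partial_\xi^\beta(\psi_2\chi_F)|\le C\tau^{-|\beta|/2}$, Lemma~\ref{lem:w2-finite-pdo} applies to it verbatim. One weighted step therefore covers every $\ell$, and the obstacle you name (that the lemma ``cannot be used there'') disappears once the left cutoff is absorbed into the symbol.

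Your substitute is nevertheless sound. The exact composition $\operatorname{Op}(\chi_F)E_{\xi_{\rm c}}=\operatorname{Op}(\chi_F/p_{\xi_{\rm c}})$ is justified in the same way as \eqref{eq:r2-calculation-064}, because $\chi_F$ vanishes near the zero set of the locally integrable reciprocal. The lower bounds $|p_{\xi_{\rm c}}|\ge c\tau^{3/2}$ on all of $\Xi_F$ and $|p_{\xi_{\rm c}}|\ge|\xi|^2/2$ for $|\xi|\ge4\tau$ then give the unweighted bounds you state. Your closing worry about the transition region of $\chi_F$ is unfounded: that region lies in $\Xi_F\cap\Xi_N\subset\{|\xi|<4\tau\}$, where the $\tau^{3/2}$ lower bound already holds.

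The paper's route is shorter and uniform in $\ell$. Yours avoids weighted spaces for the far part and shows that this part is better than needed, with factors $\tau^{-3/2}$ and $\tau^{-1/2}$ instead of $\tau^{-1}$ and $1$.
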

\begin{proof}
For each summand of \eqref{eq:parametrix}, put
$F_\ell=\psi_3\operatorname{Op}(d_\ell)g$. By
\eqref{eq:w2-inner-source},
$\|F_\ell\|_{H^k_1}\le C\|g\|_{H^k}$ uniformly in $\ell$.
Apply the two free estimates \eqref{eq:free-two-estimates} to obtain
\[
 \|E_{\xi_{\rm c}}F_\ell\|_{H^k_{-1}}
       \le C\tau^{-1}\|g\|_{H^k},\qquad
 \|E_{\xi_{\rm c}}F_\ell\|_{H^{k+1}_{-1}}
       \le C\|g\|_{H^k}.
\]
The symbol $\psi_2b_\ell$ has fixed compact spatial support and
the mixed seminorm bounds of Lemma~\ref{lem:u1-uniform-symbol-budget}.
This also holds for $\ell=0$ after multiplication of the Fourier
multiplier $b_0$ by $\psi_2$. The derivative condition
\eqref{eq:derivative-budget} permits use of
\eqref{eq:weighted-pdo-estimate} at both orders $k$ and $k+1$.
Consequently
\[
 \begin{aligned}
 \|\operatorname{Op}(\psi_2b_\ell)E_{\xi_{\rm c}}F_\ell\|_{H^k}
       &\le C\tau^{-1}\|g\|_{H^k},\\
 \|\operatorname{Op}(\psi_2b_\ell)E_{\xi_{\rm c}}F_\ell\|_{H^{k+1}}
       &\le C\|g\|_{H^k}.
 \end{aligned}
\]
Since $\operatorname{Op}(\psi_2b_\ell)=
\psi_2\operatorname{Op}(b_\ell)$, summing over the fixed finite
atlas proves \eqref{eq:localized-error-bounds}.
The weighted estimate acts on the full output of the free inverse;
no spatial cutoff has been inserted between the two operators.
\end{proof}

\subsection{One Neumann inverse and the local equation}\label{sec:r2-proof-inverse}

\begin{proof}[Proof of Theorem~\ref{thm:finite-right-inverse}]
The frame construction of Theorem~\ref{thm:finite-frames}, the free
inverse of Lemma~\ref{lem:w2-free}, and the operator estimates
of Lemma~\ref{lem:w2-finite-pdo} apply to every finite-regularity point of
the weaker compact coefficient closure. Lemma~\ref{lem:w2-five-errors}
gives five common constants. Use \eqref{eq:uniform-neumann-threshold}.
For $\tau\geq\tau_0$, since $\tau^{-1}\leq\tau^{-1/2}$,
\begin{equation}\label{eq:w2-error-smallness}
 \lVert T_\tau\rVert_{H^k\to H^k}
        \leq C_\Sigma\tau^{-1/2}\leq1/4.
\end{equation}
Construct exactly one inverse, on $H^k(\mathbb R^3;\mathbb C^{d_{\rm sys}})$:
\begin{equation}\label{eq:exact-neumann-inverse}
 J_\tau=(I+T_\tau)^{-1}=\sum_{n=0}^{\infty}(-T_\tau)^n,
 \qquad \lVert J_\tau\rVert_{H^k\to H^k}\leq4/3.
\end{equation}
For $f\in E_k$ extend by zero, put $g=J_\tau f$, and define
\begin{equation}\label{eq:w2-final-inverse}
 \mathcal R_{h,\theta}f=-\psi_2Q_\tau J_\tau f\big|_{\Omega_0}.
\end{equation}
The negative sign is forced by $P_{\xi_{\rm c}}=-L_{h,\theta}$.
The relation $g=f-T_\tau g$ shows only
\begin{equation}\label{eq:r2-calculation-065}
\operatorname{supp}g\subset K_{\rm src}\cup\operatorname{supp}\psi_3;
\end{equation}
it does not say $g\in E_k$. This larger fixed support is harmless:
the actual inputs to the free inverse are always
$\psi_3\operatorname{Op}(d_\ell)g$ as in \eqref{eq:w2-inner-source}.

On $\Omega_1$ both spatial cutoffs equal one on a neighborhood. Thus
no derivatives of $\psi_2$ occur there, and
\eqref{eq:w2-error-identity} gives
\begin{equation}\label{eq:w2-local-exactness}
 \begin{split}
 L_{h,\theta}\mathcal R_{h,\theta}f
 &=P_{\xi_{\rm c}}Q_\tau g
   =g+(S_\tau+\mathcal E_\tau)g\\
 &=g+\psi_3(S_\tau+\mathcal E_\tau)g
   =(I+T_\tau)g=f \quad\text{in }\mathcal D'(\Omega_1).
 \end{split}
\end{equation}
In the middle equality the comparison is restricted to $\Omega_1$;
it is not an identity on all of $\mathbb R^3$.
Lemma~\ref{lem:w2-Q-two-level} and \eqref{eq:exact-neumann-inverse}
now give
\begin{equation}\label{eq:u1-same-neumann-two-levels}
 \begin{split}
 \lVert\psi_2Q_\tau J_\tau f\rVert_{H^k}
       &\leq\tfrac43 C_{Q,0}\tau^{-1}\lVert f\rVert_{H^k},\\
 \lVert\psi_2Q_\tau J_\tau f\rVert_{H^{k+1}}
       &\leq\tfrac43 C_{Q,1}\lVert f\rVert_{H^k}.
 \end{split}
\end{equation}
Choose $C_k=\tfrac43(C_{Q,0}+C_{Q,1})$ multiplied by the fixed extension norm.
Then \eqref{eq:u1-same-neumann-two-levels} gives the required sum of norms in
\eqref{eq:finite-right-inverse}. No Neumann series on $H^{k+1}$ has been
constructed or used. The atlas includes all oriented pairs, hence both
phases have the same $h_*$. Their respective $Q_\tau,T_\tau,J_\tau$ are
allowed to differ. The geometric cutoff of the \emph{final output}
limits the inverse identity to $\Omega_1$, but never deletes a tail
inside the parametrix. There is no amplitude-size restriction.
\end{proof}

\section{Finite-order physical boundary residual}\label{app:boundary-residual}
This appendix proves the physical residual and energy-remainder estimates
from the fixed charts, the constant half-space inverse and the recursion of
Section~\ref{sec:v63-boundary-jets}.

\subsection{Transformed tensor and profile estimates}\label{sec:r2-weighted-profiles}

The full transformed elasticity tensor and weighted half-space profiles
control the Taylor remainder and the remote cutoff terms at order $s$,
without additional material derivatives.

For the chart \eqref{eq:w5-explicit-normal-chart} define
\begin{equation}\label{eq:r2-elasticity-coordinate-tensors}
 \begin{gathered}
 J_p(z)=D\Psi_p(z),\qquad
 C_{ijkl}=\lambda\delta_{ij}\delta_{kl}
       +\mu(\delta_{ik}\delta_{jl}+\delta_{il}\delta_{jk}).
 \end{gathered}
\end{equation}
Here $\delta_{ij}$ is the Kronecker symbol, $J_p$ is a $3\times3$
Jacobian matrix, and $R_p$ is the fixed component rotation in
\eqref{eq:w5-explicit-normal-chart}. The transformed tensor in
\eqref{eq:v63-pulled-energy} is
\begin{equation}\label{eq:v64-coordinate-tensor}
 \begin{aligned}
 \mathbb A_{abcd,p}(z)
 ={}&|\det J_p(z)|
 \sum_{i_1,i_2,i_3,i_4=1}^3 C_{i_1i_2i_3i_4}(\Psi_p(z))\\
 &\quad{}\times (R_p)_{i_1a}(R_p)_{i_3c}
                 (J_p^{-1}(z))_{bi_2}(J_p^{-1}(z))_{di_4}.
 \end{aligned}
\end{equation}
Indeed, writing $u(\Psi_p(z))=R_pv(z)$, the chain rule gives
\[
 (\partial_{x_{i_2}}u_{i_1})(\Psi_p(z))
 =\sum_{a,b=1}^3(R_p)_{i_1a}(\partial_{z_b}v_a)(z)
                          (J_p^{-1}(z))_{bi_2}.
\]
Substitution in the physical energy gives
\eqref{eq:v64-coordinate-tensor} without differentiating $R_p$.
The absolute determinant handles either orientation of the orthogonal
base matrix. At $z=0$, $J_p=R_p$ and isotropy gives the frozen tensor in
\eqref{eq:v63-pulled-energy}. Away from the base point one must use the
entire tensor \eqref{eq:v64-coordinate-tensor}, not its frozen isotropic
formula. The fixed smooth charts give a uniform $C^s$ tensor bound from
the stated $C^s$ material bound. Taylor expansion is performed only along
segments contained in a fixed coordinate half-ball.

For this subsection $s\ge2$ is the fixed integer of
\eqref{eq:w0-interior-class}. Let $V_r$, $0\le r<s$, be the profiles in
\eqref{eq:v63-recursion}. Define the following single finite collection of
seminorms:
\begin{equation}\label{eq:v64-profile-budget}
 \mathfrak B_s:=\max_{0\le r<s}\left(
 \lVert \langle y\rangle^s V_r\rVert _{L^2(\mathbb{R}^3_+)}
 +\lVert \langle y\rangle^s\nabla V_r\rVert _{L^2(\mathbb{R}^3_+)}\right).
\end{equation}
The weight \eqref{eq:r2-japanese-bracket} is equivalent to the weight in
\eqref{eq:v63-weighted-profiles}.

\begin{lemma}
\label{lem:v64-profile-budget}
The profiles in \eqref{eq:v63-recursion} can be written in
\eqref{eq:v63-profile-class} with normal polynomial degree at most $3r+1$.
The number $\mathfrak B_s$ is uniformly bounded by the fixed $C^s$ material
class, the fixed charts, and the fixed annular tests. Moreover, for any
$N>1$, $L\ge1$, and any scalar, vector or matrix field $F$ on $\mathbb{R}^3_+$,
\begin{equation}\label{eq:v64-tail-bounds}
 \begin{aligned}
 \lVert \mathbf1_{|y|>L}F\rVert _2
   &\le C L^{-N}\lVert \langle y\rangle^N F\rVert _2,\\
 \lVert \mathbf1_{|y|>L}F\rVert _{6/5}
   &\le C_N L^{1-N}\lVert \langle y\rangle^N F\rVert _2.
 \end{aligned}
\end{equation}
\end{lemma}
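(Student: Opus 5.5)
The lemma has three parts: the polynomial-degree bound for the profiles, the uniform bound on $\mathfrak B_s$, and the two tail inequalities. I will prove them in that order, using only Lemma~\ref{lem:v63-profile} together with the recursion \eqref{eq:v63-recursion}.

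\emph{Polynomial degree.} I argue by induction on $r$. For $r=0$ the Poisson profile \eqref{eq:v63-Poisson} has the form $e^{-\varpi t}(a+\varpi t\,b)$, so its degree is $1=3\cdot0+1$. For $r\ge1$, the source of $L_0^+V_r$ is $\operatorname{div}(\sum_{\ell\le r}\mathbb A_{p,\ell}\nabla V_{r-\ell})$. The coefficient $\mathbb A_{p,\ell}$ is a homogeneous polynomial of degree $\ell$ in $y=(y',t)$.
\begin{itemize}
\item Tangential factors $y'$ act on the Fourier side as $\xi$-derivatives. These raise the $t$-degree by at most one each, since each hits $e^{-\varpi t}$.
\item Normal factors $t$ raise the degree by one.
\item The outer divergence and the inner gradient do not raise the degree.
\end{itemize}
So the source has degree at most $\deg V_{r-\ell}+\ell\le 3(r-\ell)+1+\ell$. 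Next I track the solution operator in the proof of Lemma~\ref{lem:v63-profile}. Its Green kernel is a first-degree polynomial times $e^{-\varpi|t|}$. Convolving $t^n e^{-\varpi t}$ against it and splitting at $t$ gives degree at most $n+2$: the $\int_0^t$ piece contributes $n+1$, and the kernel's own factor contributes one more. Subtracting the Poisson correction adds degree one, which does not exceed this bound. Hence $\deg V_r\le 3(r-1)+1+1+2=3r+1$ in the worst case $\ell=1$, and larger $\ell$ gives smaller degree. This closes the induction.

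\emph{Uniform bound on $\mathfrak B_s$.} The coefficients $v_n(\xi)$ depend on three things: the fixed test $\psi_\partial$, the frozen moduli, and the Taylor jets of $\mathbb A_p$ through order $r\le s-1$. These jets are bounded by the $C^s$ prior via \eqref{eq:v64-coordinate-tensor} and the fixed charts. All material denominators are uniformly positive. Lemma~\ref{lem:v63-profile}, with $B=1$ and $N=s$, bounds each weighted seminorm by finitely many seminorms of the input coefficients. These are continuous functions on a compact parameter set, so their maximum over the finitely many $r<s$ is finite.

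\emph{Tail bounds.} The $L^2$ bound is immediate from $\langle y\rangle^{-N}\le L^{-N}$ on $|y|>L$. For the $L^{6/5}$ bound I apply Hölder with $\frac56=\frac12+\frac13$:
\[
\|\mathbf1_{|y|>L}F\|_{6/5}\le\|\mathbf1_{|y|>L}\langle y\rangle^{-N}\|_{3}\,\|\langle y\rangle^NF\|_2 .
\]
The first factor equals $(\int_{|y|>L}\langle y\rangle^{-3N})^{1/3}\le C_N(L^{3-3N})^{1/3}=C_NL^{1-N}$. This is finite because $3N>3$, which holds since $N>1$.

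The main obstacle is the degree bookkeeping: checking that the Fourier differentiation from $y'$ weights and the half-space Green convolution together never exceed $3r+1$. If the count as stated fails, I would simply state a larger explicit degree bound, since only finiteness matters for $\mathfrak B_s$.
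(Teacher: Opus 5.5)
Your proposal is correct and follows essentially the same route as the paper: the degree recursion $D_r\le\max_{1\le\ell\le r}(D_{r-\ell}+\ell+2)\le 3r+1$ via the Green-kernel convolution and the Poisson trace correction, the bound on $\mathfrak B_s$ from finitely many smooth profile coefficients with uniformly nonvanishing material denominators, and the H\"older argument with $\tfrac56=\tfrac12+\tfrac13$ for the $L^{6/5}$ tail. The paper additionally states that the divergence source is computed for $t>0$ before zero extension, and it counts the test derivatives needed (through order $2s+2$); these are bookkeeping refinements of the same argument.
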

\begin{proof}
The homogeneous Poisson profile has degree at most one. If a source has
normal polynomial degree $D$, convolution with the Green kernel in
Lemma~\ref{lem:v63-profile} has degree at most $D+2$: on $0<u<t$ one
integrates a polynomial of degree at most $D$ against a polynomial of
degree at most one in $t-u$, and on $u>t$ the exponentially decaying
integral has no larger degree. We subtract the homogeneous Poisson solution with boundary data equal to the
trace of the particular solution. This correction increases the normal
polynomial degree by at most one. The interior
source is first calculated for $t>0$ and only then extended by zero for
this convolution. Extending a divergence-form flux before differentiating
would create a boundary distribution and is not the construction used
here.

Multiplication by a physical polynomial of degree $\ell$ increases the
normal polynomial degree by at most $\ell$: each tangential frequency
derivative can create at most one additional power of $t$ in $e^{-\varpi t}$.
Spatial differentiation does not increase this degree. If $D_r$ denotes
a degree bound for $V_r$, the recursion therefore gives
\begin{equation}\label{eq:r2-calculation-066}
D_0\le1,\qquad
 D_r\le\max_{1\le\ell\le r}(D_{r-\ell}+\ell+2)\le3r+1.
\end{equation}
All frequency coefficients have support in the same compact subannulus.
Their finitely many seminorms depend smoothly on the finite-order tensor derivatives
and on the frozen moduli, with uniformly nonzero denominators.
The integral formulas in Lemma~\ref{lem:v63-profile} then bound
\eqref{eq:v64-profile-budget}. The profile bounds use derivatives of the Lam\'e parameters only
through the finite Taylor orders, not derivatives of order $3r+1$ of
the original material. Polynomial degree of a constructed profile and
material regularity are different quantities.

One may take as a concrete input seminorm for the fixed annular
test $\max_{|\gamma|\le2s+2}\|\partial_\xi^\gamma\psi_\partial\|_\infty$.
Indeed, to bound $N$ frequency derivatives of a coefficient of $V_r$,
multiplication by a Taylor polynomial of degree $\ell$ uses at most
$\ell$ additional frequency derivatives of $V_{r-\ell}$; the Green and
Poisson operations introduce none. Induction needs at most $N+r$
derivatives of the initial test. The weights in $\mathfrak B_s$ use
$N\le s$, and $r\le s-1$. Thus derivatives of the fixed annular
test through order $2s+2$ suffice. More explicitly, if $T_r(N)$ is the largest derivative order of the
initial annular test needed for $N$ tangential frequency derivatives
of the coefficients of $V_r$, then
\begin{equation}\label{eq:w5-finite-profile-recurrence}
 T_0(N)\le N,\qquad
 T_r(N)\le\max_{1\le\ell\le r}T_{r-\ell}(N+\ell)
                  \le N+r.
\end{equation}
Spatial differentiation of a profile only multiplies its frequency
coefficients or differentiates the finite normal polynomial; it creates
no derivatives of the test. The Green and trace-subtraction operations
are finite smooth multipliers and integrals on the fixed annulus.
Their own frequency derivatives cost fixed multiplier constants.
The weight $\langle y\rangle^s$ requires at most $s$ tangential
frequency derivatives and at most $s$ normal powers. Thus $r\le s-1$
uses at most $2s-1$ derivatives of the fixed test. In particular,
the seminorm involving test derivatives through order $2s+2$ controls
both polarizations and their gradients. Tensor jets are
still used only through order $r$, and the tensor Taylor remainder
only through order $s$.

The first tail inequality is immediate. For the second, apply H\"older
with $1/(6/5)=1/2+1/3$ to
$\langle y\rangle^NF\,\mathbf1_{|y|>L}\langle y\rangle^{-N}$.
The $L^3$ norm of the second factor is at most $C_NL^{1-N}$ in three
dimensions, by radial integration. The weighted H\"older argument proves \eqref{eq:v64-tail-bounds}. Choosing $N=s$ is legitimate since $s\ge2$.
\end{proof}

\subsection{The physical residual and energy expansion}\label{sec:r2-physical-residual}

The weak residual splits into Taylor interactions, a tensor remainder, and
two cutoff commutators. Each term is estimated against arbitrary zero-trace
physical tests.

For this proof, abbreviate
\begin{equation}\label{eq:boundary-residual-notation}
 \begin{gathered}
 V_\epsilon=\sum_{r=0}^{s-1}\epsilon^r V_r,\qquad
 \chi_\epsilon(y)=\chi_p(\epsilon y),\qquad
 \mathbb A_\epsilon(y)=\mathbb A_p(\epsilon y),\\
 \mathbb T_\epsilon(y)=\sum_{\ell=0}^{s-1}\epsilon^\ell\mathbb A_{p,\ell}(y),
 \qquad \mathbb R_\epsilon=\mathbb A_\epsilon-\mathbb T_\epsilon.
 \end{gathered}
\end{equation}
$\mathbb A_\epsilon$ is used only on the scaled chart. The polynomial
$\mathbb T_\epsilon$ is defined throughout the half-space. Here $\mathbb R_\epsilon$ denotes the Taylor remainder defined in
\eqref{eq:boundary-residual-notation}.
Let $w_\epsilon$ be the scaled physical test defined on the scaled chart by
\begin{equation}\label{eq:w5-scaled-test-definition}
 w_\epsilon(y)=\epsilon^{1/2}R_p^T w(\Psi_p(\epsilon y)).
\end{equation}
All terms containing it in \eqref{eq:v64-four-term-identity} have support inside that chart.

\begin{proposition}
\label{prop:boundary-residual}
For $U_{p,\epsilon}$ in \eqref{eq:v63-transplant} and every
$w\in H_0^1(\Omega;\mathbb{C}^3)$, one has the exact identity
\begin{equation}\label{eq:v64-four-term-identity}
 \mathfrak a(U_{p,\epsilon},w)
       =H_\epsilon(w)+D_\epsilon(w)+C_{1,\epsilon}(w)
                                      -C_{2,\epsilon}(w),
\end{equation}
with
\begin{equation}\label{eq:v64-four-terms}
 \begin{aligned}
 H_\epsilon(w)&=
   \sum_{\substack{0\le\ell,r<s\\\ell+r\ge s}}
       \epsilon^{\ell+r}\mathfrak a_\ell(V_r,\chi_\epsilon w_\epsilon),\\
 D_\epsilon(w)&=\int \chi_\epsilon \mathbb R_\epsilon\nabla V_\epsilon:
                                         \overline{\nabla w_\epsilon},\\
 C_{1,\epsilon}(w)&=\int \mathbb A_\epsilon
             (V_\epsilon\otimes\nabla\chi_\epsilon):
                                         \overline{\nabla w_\epsilon},\\
 C_{2,\epsilon}(w)&=\int \mathbb T_\epsilon\nabla V_\epsilon:
                 (\overline{w_\epsilon}\otimes\nabla\chi_\epsilon).
 \end{aligned}
\end{equation}
The integrals are over the scaled coordinate half-ball, with zero
extension on the region of vanishing cutoffs. Uniformly in the material, $p$ and
$0<\epsilon<\epsilon_*$,
\begin{equation}\label{eq:v64-four-bounds}
 \begin{aligned}
 |H_\epsilon(w)|+|D_\epsilon(w)|
       &\le C\epsilon^s\lVert w\rVert _{H^1},\\
 |C_{1,\epsilon}(w)|&\le C\epsilon^{s+1}\lVert w\rVert _{H^1},\\
 |C_{2,\epsilon}(w)|&\le C\epsilon^s\lVert w\rVert _{H^1}.
 \end{aligned}
\end{equation}
Moreover, for the fixed test $\phi$ used to construct
$U_{p,\epsilon}$,
\begin{equation}\label{eq:v64-approximate-energy}
\left|\mathfrak a(U_{p,\epsilon},U_{p,\epsilon})
-\sum_{r=0}^{s-1}\epsilon^rE_{p,r}^{\phi}\right|
\le C\epsilon^s,
\end{equation}
where $E_{p,r}^{\phi}=\sum_{\ell+b+c=r}\mathfrak a_\ell(V_b,V_c)$,
with $\mathfrak a_\ell$ defined in \eqref{eq:v63-Taylor-tensors}
and the profiles defined in \eqref{eq:v63-Poisson} and
\eqref{eq:v63-recursion}.
\end{proposition}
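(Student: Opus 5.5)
The proof will start by deriving the exact identity \eqref{eq:v64-four-term-identity}. Pull $\mathfrak a(U_{p,\epsilon},w)$ back through the chart using \eqref{eq:v63-pulled-energy} and the change of variables $z=\epsilon y$. The factor $\epsilon^{-1/2}$ in \eqref{eq:v63-transplant} and the factor $\epsilon^{1/2}$ in \eqref{eq:w5-scaled-test-definition} combine with the Jacobian $\epsilon^{3}$ and the two gradients $\epsilon^{-2}$, leaving no net power of $\epsilon$. Next split $\nabla(\chi_\epsilon V_\epsilon)=\chi_\epsilon\nabla V_\epsilon+V_\epsilon\otimes\nabla\chi_\epsilon$; the second piece is $C_{1,\epsilon}$. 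In the first piece, write $\mathbb A_\epsilon=\mathbb T_\epsilon+\mathbb R_\epsilon$; the remainder part is $D_\epsilon$. For the $\mathbb T_\epsilon$ part, move $\chi_\epsilon$ onto the test, using $\chi_\epsilon\overline{\nabla w_\epsilon}=\overline{\nabla(\chi_\epsilon w_\epsilon)}-\overline{w_\epsilon}\otimes\nabla\chi_\epsilon$; the correction produces $-C_{2,\epsilon}$. What remains is $\sum_{\ell,r<s}\epsilon^{\ell+r}\mathfrak a_\ell(V_r,\chi_\epsilon w_\epsilon)$. Since $\chi_\epsilon w_\epsilon$ lies in $\dot H^1_0$ of the half-space, the recursion \eqref{eq:v63-recursion} together with the homogeneity of $V_0$ cancels every term with $\ell+r\le s-1$, so only $H_\epsilon$ survives.

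The bounds \eqref{eq:v64-four-bounds} then come term by term. The chart change gives $\|\nabla w_\epsilon\|_{L^2}\le C\|w\|_{H^1}$ uniformly. Hardy's inequality for zero-trace functions (or Sobolev in $L^6$) controls $w_\epsilon$ itself in the weighted and $L^6$ sense needed below.
\begin{itemize}
\item For $H_\epsilon$: each term carries $\epsilon^{\ell+r}$ with $\ell+r\ge s$. The polynomial growth $|y|^\ell$ of $\mathbb A_{p,\ell}$ is absorbed by the weight in $\mathfrak B_s$, since $\ell<s$. Powers of $\epsilon$ beyond $s$ only help.
\item For $D_\epsilon$: \eqref{eq:v63-Taylor-remainder} gives $|\mathbb R_\epsilon|\le C\epsilon^s|y|^s$ on the chart, and $\langle y\rangle^s\nabla V_r\in L^2$ by Lemma~\ref{lem:v64-profile-budget}.
\item For $C_{1,\epsilon}$: $|\nabla\chi_\epsilon|\le C\epsilon$, supported where $|y|\gtrsim\epsilon^{-1}$. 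The first tail bound in \eqref{eq:v64-tail-bounds} with $L\sim\epsilon^{-1}$, $N=s$ gives the extra factor $\epsilon^{s}$, hence $\epsilon^{s+1}$ overall.
\item For $C_{2,\epsilon}$: here $w_\epsilon$ appears undifferentiated, so pair $L^{6/5}$ against $L^6$. The second tail bound gives $\epsilon\cdot\epsilon^{s-1}=\epsilon^s$. The polynomial growth of $\mathbb T_\epsilon$ on $|y|\lesssim\epsilon^{-1}$ is harmless because $\epsilon^\ell|y|^\ell\le C$ there.
\end{itemize}

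For \eqref{eq:v64-approximate-energy}, take $w=U_{p,\epsilon}$ itself. This is not zero-trace, so the four-term identity is not applied directly. Instead expand $\mathfrak a(U,U)$ with the same decomposition in both slots. The $\mathbb T_\epsilon$ part gives $\sum\epsilon^{\ell+b+c}\mathfrak a_\ell(V_b,V_c)$ up to cutoff tails. Collect orders $r<s$ into $E^\phi_{p,r}$, and bound the orders $\ge s$ by $\mathfrak B_s$ as in the $H_\epsilon$ term. The remainder and cutoff terms are bounded as above, using the weighted $L^2$ norms of $V_\epsilon$ in place of the test. The energy normalization of $U$ (bounded $H^1$ norm, \eqref{eq:v63-trace-norm}) keeps everything uniform.

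The main obstacle is the bookkeeping in $C_{2,\epsilon}$ and in the energy version. In $C_{2,\epsilon}$ an undifferentiated factor sits on the far cutoff region, and an $L^2$ pairing would lose a power of $\epsilon$, so the $L^{6/5}$–$L^6$ Hölder route via \eqref{eq:v64-tail-bounds} is what I would try first. In the energy expansion I would verify that terms with $\ell+b+c\ge s$ remain summable with polynomial weights; in the profile class \eqref{eq:v63-profile-class} the annular frequency support plus exponential decay should make this routine.
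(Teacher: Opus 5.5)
Your proposal is correct and follows essentially the same route as the paper. It uses the same product-rule split of $\nabla(\chi_\epsilon V_\epsilon)$, with $\mathbb A_\epsilon=\mathbb T_\epsilon+\mathbb R_\epsilon$ inserted only in the $\chi_\epsilon\nabla V_\epsilon$ term and $\chi_\epsilon$ moved onto the test to produce $-C_{2,\epsilon}$, followed by cancellation of all orders below $s$ by the recursion, the same term-by-term bounds through $\mathfrak B_s$ and the two tail inequalities (including the $L^{6/5}$--$L^6$ pairing for $C_{2,\epsilon}$), and the same direct expansion of $\mathfrak a(U_{p,\epsilon},U_{p,\epsilon})$ with cutoff cross terms and remote tails.

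Two small points need tightening. First, when $w$ is written through $w_\epsilon$ it contributes a factor $\epsilon^{-1/2}$, not $\epsilon^{1/2}$; that is what makes the powers cancel. Second, your $H_\epsilon$ estimate needs $\lVert\nabla(\chi_\epsilon w_\epsilon)\rVert_2\le C\lVert w\rVert_{H^1}$, which you should state; it follows from $\lVert w_\epsilon\rVert_6\lVert\nabla\chi_\epsilon\rVert_3$ together with the scale-invariant bound $\lVert\nabla\chi_\epsilon\rVert_3\le C$.
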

\begin{proof}
Physical scaling and \eqref{eq:v64-coordinate-tensor} give exactly
\begin{equation}\label{eq:r2-calculation-067}
\mathfrak a(U_{p,\epsilon},w)
 =\int \mathbb A_\epsilon\nabla(\chi_\epsilon V_\epsilon):
                                       \overline{\nabla w_\epsilon}.
\end{equation}
Apply the product rule, insert $\mathbb A_\epsilon=\mathbb T_\epsilon+\mathbb R_\epsilon$
only in the term with $\chi_\epsilon\nabla V_\epsilon$, and move
$\chi_\epsilon$ onto the test in the $\mathbb T_\epsilon$ term. The last operation contributes $-C_{2,\epsilon}$. The remaining Taylor-polynomial
integral is
\begin{equation}\label{eq:r2-calculation-068}
\sum_{\ell,r<s}\epsilon^{\ell+r}\mathfrak a_\ell(V_r,
\chi_\epsilon w_\epsilon).
\end{equation}
For every $n<s$, its coefficient of $\epsilon^n$ is
\begin{equation}\label{eq:r2-calculation-069}
\sum_{\ell+r=n}\mathfrak a_\ell(V_r,\chi_\epsilon w_\epsilon)=0.
\end{equation}
For $n=0$ this is the frozen homogeneous equation. For $n\ge1$ it is
\eqref{eq:v63-recursion}. The test is in $\dot H_0^1(\mathbb{R}^3_+)$ because
$w$ has zero physical boundary trace and the cutoff vanishes before the
edge of the chart. This gives the identity \eqref{eq:v64-four-term-identity}.

We record the scale factors. The volume scaling is $\epsilon^3$ and each
gradient scaling is $\epsilon^{-3/2}$, so there is no extra prefactor in
the identity. Also
\begin{equation}\label{eq:v64-scaled-norms}
 \lVert \nabla w_\epsilon\rVert _2+\lVert w_\epsilon\rVert _6
   +\lVert \nabla(\chi_\epsilon w_\epsilon)\rVert _2
       \le C\lVert w\rVert _{H^1(\Omega)},\qquad
 \lVert \nabla\chi_\epsilon\rVert _3\le C.
\end{equation}
These bounds are local on the scaled chart; its radius is of order
$\epsilon^{-1}$. No Poincare constant for that expanding chart is
used. The $L^6$ bound follows from the physical three-dimensional Sobolev
inequality; $\epsilon^{1/2}$ cancels the change of volume in that norm.
The last gradient bound follows by $L^3$--$L^6$ H\"older.

Each polynomial $\mathbb A_{p,\ell}$ grows at most as $\langle y\rangle^\ell$,
and $\ell\le s-1$. The uniform weighted half-space profile bounds therefore give
\begin{equation}\label{eq:r2-calculation-070}
|\mathfrak a_\ell(V_r,\chi_\epsilon w_\epsilon)|
       \le C\mathfrak B_s\lVert w\rVert _{H^1}.
\end{equation}
Since $\ell+r\ge s$ in $H_\epsilon$ and $\epsilon\le1$, the asserted
bound for $H_\epsilon$ follows. Similarly,
$|\mathbb R_\epsilon(y)|\le C\epsilon^s|y|^s$ and the first gradient bound in
\eqref{eq:v64-scaled-norms} control $D_\epsilon$. There is no derivative
of $\mathbb R_\epsilon$ in this calculation.

There are fixed $a,A>0$ such that
\begin{equation}\label{eq:r2-physical-cutoff-annulus}
\operatorname{supp}\nabla\chi_\epsilon\subset
       \{a/\epsilon\le |y|\le A/\epsilon\},\qquad
 |\nabla\chi_\epsilon|\le C\epsilon.
\end{equation}
On this set $\mathbb A_\epsilon$ is bounded by the physical coefficient bound,
and $\mathbb T_\epsilon$ is also bounded because
$\epsilon^\ell|\mathbb A_{p,\ell}(y)|\le C|\epsilon y|^\ell$.
The first tail inequality with $N=s$ yields
\begin{equation}\label{eq:r2-calculation-072}
\lVert \mathbb A_\epsilon(V_\epsilon\otimes\nabla\chi_\epsilon)\rVert _2
     \le C\epsilon\,\epsilon^s\mathfrak B_s.
\end{equation}
By Cauchy--Schwarz, \eqref{eq:r2-calculation-072}, and the gradient bound in
\eqref{eq:v64-scaled-norms}, we obtain the required estimate for
$C_{1,\epsilon}$.
For the other commutator, the second tail inequality gives
\begin{equation}\label{eq:r2-calculation-073}
\lVert \mathbf1_{\operatorname{supp}\nabla\chi_\epsilon}
          \mathbb T_\epsilon\nabla V_\epsilon\nabla\chi_\epsilon\rVert _{6/5}
     \le C\epsilon\,\epsilon^{s-1}\mathfrak B_s.
\end{equation}
By H\"older's inequality, \eqref{eq:r2-calculation-073}, and the $L^6$ bound
for $w_\epsilon$ in \eqref{eq:v64-scaled-norms}, we obtain the required
estimate for $C_{2,\epsilon}$. Equivalently, with $\mathcal R_\epsilon$ the
physical functional in \eqref{eq:v64-four-term-identity},
\begin{equation}\label{eq:w5-physical-Hminus1}
 \|\mathcal R_\epsilon\|_{H^{-1}(\Omega)}
 :=\sup_{w\in H_0^1(\Omega),\,\|w\|_{H^1}=1}
       |\mathfrak a(U_{p,\epsilon},w)|\le C\epsilon^s.
\end{equation}
For a scalar scaled density $F$, the corresponding physical density
$F_\epsilon^{\rm phys}(\Psi_p(\epsilon y))=\epsilon^{-5/2}F(y)$
(with the bounded tensor/Jacobian factors retained) pairs against
$w_\epsilon$ with no extra power. Its $L^{6/5}$ norm is invariant,
since $3/(6/5)-5/2=0$. A scaled flux becomes a physical flux of size
$\epsilon^{-3/2}$ and its $L^2$ norm is invariant, since $3/2-3/2=0$.
These are exactly the density pairing used for $C_{2,\epsilon}$ and
the flux pairings used for $D_\epsilon,C_{1,\epsilon}$.
The high-interaction term $H_\epsilon$ uses
$\nabla(\chi_\epsilon w_\epsilon)$, whose bound is already in
\eqref{eq:v64-scaled-norms}. Thus the dual estimate is on the fixed
physical domain, with no inhomogeneous $L^2$ bound for $w_\epsilon$
on the expanding half-ball.

For the approximate energy $\mathfrak a(U_{p,\epsilon},U_{p,\epsilon})$,
the Taylor error is controlled by
$\epsilon^s\int |y|^s|\nabla V_\epsilon|^2$. To make the energy-cutoff
terms explicit, set
\begin{equation}\label{eq:r3-energy-cutoff-flux}
Q_\epsilon=V_\epsilon\otimes\nabla\chi_\epsilon,
\end{equation}
where $V_\epsilon$ and $\chi_\epsilon$ are defined in
\eqref{eq:boundary-residual-notation}.
The already proved tail estimate gives
$\|Q_\epsilon\|_2\le C\epsilon^{s+1}\mathfrak B_s$, whereas
$\|\chi_\epsilon\nabla V_\epsilon\|_2\le C\mathfrak B_s$.
Boundedness of $\mathbb A_\epsilon$ therefore bounds the cross terms involving
one $Q_\epsilon$ by $C\epsilon^{s+1}\mathfrak B_s^2$ and the quadratic
term by $C\epsilon^{2s+2}\mathfrak B_s^2$. No differentiation of the
material remainder occurs in this calculation. The remaining cutoff
factor $\chi_\epsilon^2$ differs from one only on the remote tail.
Extending a Taylor-polynomial integral to the full half-space adds at most
\begin{equation}\label{eq:v64-energy-tail}
 \epsilon^\ell\int_{|y|>a/\epsilon}|y|^\ell
                     |\nabla V_b|\,|\nabla V_c|\,\mathrm{d} y
 \le C\epsilon^{2s}\mathfrak B_s^2,
 \qquad 0\le\ell<s.
\end{equation}
Indeed insert two weights $\langle y\rangle^s$ and bound the remaining
power on the tail. All polynomial products of total degree at least
$s$ are $O(\epsilon^s)$ by the finite weighted bounds. Thus the retained
coefficients are exactly
\begin{equation}\label{eq:r2-calculation-074}
\sum_{\ell+b+c=r}\mathfrak a_\ell(V_b,V_c),
\end{equation}
which proves \eqref{eq:v64-approximate-energy}.
\end{proof}

\section*{Acknowledgments}

This work was supported by the National Natural Science Foundation of China (Grant No. 12371422) and the Fundamental Research Funds for the Central Universities (Jilin University). The authors used ChatGPT (OpenAI) for exploratory
mathematical discussions and editorial assistance
during the preparation of this manuscript.
The authors take full responsibility for all
mathematical statements, arguments, and proofs.


\begin{thebibliography}{99}

\bibitem{Ahlfors1979}
L.~V.~Ahlfors,
\newblock \emph{Complex Analysis: An Introduction to the Theory
of Analytic Functions of One Complex Variable},
\newblock 3rd ed., McGraw--Hill, New York, 1979.

\bibitem{AkamatsuNakamuraSteinberg1991}
M.~Akamatsu, G.~Nakamura, and S.~Steinberg,
\newblock Identification of Lam\'e coefficients from boundary observations,
\newblock \emph{Inverse Problems} \textbf{7} (1991),
no.~3, 335--354.

\bibitem{Alessandrini1988}
G.~Alessandrini,
\newblock Stable determination of conductivity by boundary measurements,
\newblock \emph{Appl. Anal.} \textbf{27} (1988), 153--172.

\bibitem{AlessandriniDiCristoMorassiRosset2014}
G.~Alessandrini, M.~Di Cristo, A.~Morassi, and E.~Rosset,
\newblock Stable determination of an inclusion in an elastic body
by boundary measurements,
\newblock \emph{SIAM J. Math. Anal.} \textbf{46} (2014),
no.~4, 2692--2729.

\bibitem{AngIkehataTrongYamamoto1998}
D.~D.~Ang, M.~Ikehata, D.~D.~Trong, and M.~Yamamoto,
\newblock Unique continuation for a stationary isotropic Lam\'e
system with variable coefficients,
\newblock \emph{Comm. Partial Differential Equations}
\textbf{23} (1998), nos.~1--2, 371--385.

\bibitem{AstalaPaivarinta2006}
K.~Astala and L.~P\"aiv\"arinta,
\newblock Calder\'on's inverse conductivity problem in the plane,
\newblock \emph{Ann. of Math.} (2) \textbf{163} (2006),
no.~1, 265--299.

\bibitem{BerettaDeHoopFranciniVessellaZhai2017}
E.~Beretta, M.~V.~de Hoop, E.~Francini, S.~Vessella, and J.~Zhai,
\newblock Uniqueness and Lipschitz stability of an inverse
boundary value problem for time-harmonic elastic waves,
\newblock \emph{Inverse Problems} \textbf{33} (2017),
no.~3, 035013.

\bibitem{BerettaFranciniMorassiRossetVessella2014}
E.~Beretta, E.~Francini, A.~Morassi, E.~Rosset, and S.~Vessella,
\newblock Lipschitz continuous dependence of piecewise constant
Lam\'e coefficients from boundary data: the case of non-flat interfaces,
\newblock \emph{Inverse Problems} \textbf{30} (2014),
no.~12, 125005.

\bibitem{BerettaFranciniVessella2014}
E.~Beretta, E.~Francini, and S.~Vessella,
\newblock Uniqueness and Lipschitz stability for the identification
of Lam\'e parameters from boundary measurements,
\newblock \emph{Inverse Probl. Imaging} \textbf{8} (2014),
no.~3, 611--644.

\bibitem{Calderon1980}
A.-P.~Calder\'on,
\newblock On an inverse boundary value problem,
\newblock in \emph{Seminar on Numerical Analysis and its
Applications to Continuum Physics}
(Rio de Janeiro, 1980),
Soc. Brasil. Mat., Rio de Janeiro, 1980, 65--73.

\bibitem{CaroLaiLinZhou2021}
P.~Caro, R.-Y.~Lai, Y.-H.~Lin, and T.~Zhou,
\newblock Boundary determination of electromagnetic and
Lam\'e parameters with corrupted data,
\newblock \emph{Inverse Probl. Imaging} \textbf{15} (2021),
no.~5, 1171--1198.

\bibitem{Cekic2025}
M.~Ceki\'c,
\newblock Calder\'on problem for systems via complex parallel transport,
\newblock \emph{SIAM J. Math. Anal.} \textbf{57} (2025),
no.~4, 3524--3560.

\bibitem{ChenJiangLiuTao2026}
L.~Chen, Y.~Jiang, H.~Liu, and L.~Tao,
\newblock On an inverse boundary problem in elastodynamics,
\newblock preprint, arXiv:2609.20134v1 (2026).

\bibitem{Ciarlet1988}
P.~G.~Ciarlet,
\newblock \emph{Mathematical Elasticity. Vol.~I:
Three-Dimensional Elasticity},
\newblock Studies in Mathematics and Its Applications,
vol.~20, North-Holland, Amsterdam, 1988.

\bibitem{DDsource}
H.~Diao and U.~Durdiev,
\newblock Unique determination of the Lam\'e parameters
in three dimensions,
\newblock preprint, arXiv:2610.14741v1 (2026).

\bibitem{EberleHarrachMeftahiRezgui2021}
S.~Eberle, B.~Harrach, H.~Meftahi, and T.~Rezgui,
\newblock Lipschitz stability estimate and reconstruction
of Lam\'e parameters in linear elasticity,
\newblock \emph{Inverse Probl. Sci. Eng.} \textbf{29} (2021),
no.~3, 396--417.

\bibitem{EskinRalston2002}
G.~Eskin and J.~Ralston,
\newblock On the inverse boundary value problem for linear
isotropic elasticity,
\newblock \emph{Inverse Problems} \textbf{18} (2002),
no.~3, 907--921.

\bibitem{ER}
G.~Eskin and J.~Ralston,
\newblock Inverse boundary value problems for systems of
partial differential equations,
\newblock in \emph{Recent Development in Theories and Numerics},
World Scientific Publishing, River Edge, NJ, 2003, 105--113.

\bibitem{EskinRalston2004}
G.~Eskin and J.~Ralston,
\newblock On the inverse boundary value problem for linear
isotropic elasticity and Cauchy--Riemann systems,
\newblock in \emph{Inverse Problems and Spectral Theory},
Contemp. Math. \textbf{348},
Amer. Math. Soc., Providence, RI, 2004, 53--69.

\bibitem{FP}
F.~Forstneri\v{c} and J.~Prezelj,
\newblock Oka's principle for holomorphic fiber bundles with sprays,
\newblock \emph{Math. Ann.} \textbf{317} (2000), 117--154.

\bibitem{GarciaZhang2013}
A.~Garc\'ia and G.~Zhang,
\newblock Reconstruction from boundary measurements for less
regular conductivities,
\newblock \emph{Inverse Problems} \textbf{32} (2016),
no.~11, 115015.

\bibitem{ImanuvilovUhlmannYamamoto2012}
O.~Yu.~Imanuvilov, G.~Uhlmann, and M.~Yamamoto,
\newblock On uniqueness of Lam\'e coefficients from partial
Cauchy data in three dimensions,
\newblock \emph{Inverse Problems} \textbf{28} (2012),
no.~12, 125002.

\bibitem{ImanuvilovYamamoto2015}
O.~Yu.~Imanuvilov and M.~Yamamoto,
\newblock Global uniqueness in inverse boundary value problems
for the Navier--Stokes equations and Lam\'e system in two dimensions,
\newblock \emph{Inverse Problems} \textbf{31} (2015),
no.~3, 035004.

\bibitem{LinNakamura2017}
Y.-H.~Lin and G.~Nakamura,
\newblock Boundary determination of the Lam\'e moduli for the
isotropic elasticity system,
\newblock \emph{Inverse Problems} \textbf{33} (2017),
no.~12, 125004.

\bibitem{Mandache2001}
N.~Mandache,
\newblock Exponential instability in an inverse problem
for the Schr\"odinger equation,
\newblock \emph{Inverse Problems} \textbf{17} (2001),
no.~5, 1435--1444.

\bibitem{McDuffSalamon2004}
D.~McDuff and D.~Salamon,
\newblock \emph{$J$-Holomorphic Curves and Symplectic Topology},
\newblock American Mathematical Society Colloquium Publications,
vol.~52, Amer. Math. Soc., Providence, RI, 2004.

\bibitem{Nachman1996}
A.~I.~Nachman,
\newblock Global uniqueness for a two-dimensional inverse
boundary value problem,
\newblock \emph{Ann. of Math.} (2) \textbf{143} (1996),
no.~1, 71--96.

\bibitem{NakamuraUhlmann1994}
G.~Nakamura and G.~Uhlmann,
\newblock Global uniqueness for an inverse boundary problem
arising in elasticity,
\newblock \emph{Invent. Math.} \textbf{118} (1994), 457--474.

\bibitem{NakamuraUhlmann1995}
G.~Nakamura and G.~Uhlmann,
\newblock Inverse problems at the boundary for an elastic medium,
\newblock \emph{SIAM J. Math. Anal.} \textbf{26} (1995),
no.~2, 263--279.

\bibitem{NakamuraUhlmannErratum2003}
G.~Nakamura and G.~Uhlmann,
\newblock Erratum: Global uniqueness for an inverse boundary
value problem arising in elasticity,
\newblock \emph{Invent. Math.} \textbf{152} (2003), 205--207.

\bibitem{OpenAIElasticity2026}
OpenAI,
\newblock Global uniqueness for the smooth isotropic elasticity
inverse problem,
\newblock \href{https://github.com/openai/math/blob/main/preprints/Global-Uniqueness-for-the-Smooth-Isotropic-Elasticity-Inverse-Problem-September-24-2026/article.pdf}{OpenAI Math Release preprint} (2026).

\bibitem{SylvesterUhlmann1987}
J.~Sylvester and G.~Uhlmann,
\newblock A global uniqueness theorem for an inverse
boundary value problem,
\newblock \emph{Ann. of Math.} (2) \textbf{125} (1987),
no.~1, 153--169.

\bibitem{TanLiu2023}
X.~Tan and G.~Liu,
\newblock Determining Lam\'e coefficients by the elastic
Dirichlet-to-Neumann map on a Riemannian manifold,
\newblock \emph{Inverse Problems} \textbf{39} (2023),
no.~9, 095004.

\end{thebibliography}
\end{document}